\documentclass[12pt, reqno]{amsart}

\usepackage{amsfonts}
\usepackage{amssymb}
\usepackage{amsthm}
\usepackage{amsmath}
\usepackage{a4wide}
\usepackage{mathrsfs}
\usepackage{epsfig}
\usepackage{palatino}
\usepackage{tikz,fp,ifthen}
\usepackage{float}
\usepackage{graphicx,cite}
\usepackage{comment}
\usepackage{accents}
\usepackage{xfrac}
\usepackage{bm}
\usepackage{appendix}
\usepackage[framemethod=tikz]{mdframed}
\mdfsetup{frametitlealignment=\center}
\makeatletter
\newenvironment{ProblemSpecBox}[2]{
	\protected@edef\@currentlabelname{#2}
	\protected@edef\@currentlabel{#2}
	\begin{mdframed}[
		innerlinewidth=0.5pt,
		innerleftmargin=10pt,
		innerrightmargin=10pt,
		innertopmargin = 10pt,
		innerbottommargin=10pt,
		skipabove=\dimexpr\topsep+\ht\strutbox\relax,
		roundcorner=5pt,
		frametitle={#1},
		frametitlerule=true,
		frametitlerulewidth=1pt]
	}{
	\end{mdframed}
}
\makeatother

\usepackage{marginnote}

\usepackage{graphicx}

\usepackage{color}
\definecolor{citegreen}{rgb}{0,0.6,0}
\definecolor{refred}{rgb}{0.8,0,0}
\usepackage[colorlinks, citecolor=citegreen,linkcolor=refred]{hyperref}

\usepackage{mathtools}
\mathtoolsset{showonlyrefs}
%\usepackage{showkeys}

%\usepackage[]{draftwatermark}
%\SetWatermarkText{${\mathcal{DRAFT}}$}
%\SetWatermarkScale{6.2}

\newtheorem{thm}{Theorem}[section]
\newtheorem{lem}[thm]{Lemma}
\newtheorem{prop}[thm]{Proposition}
\newtheorem{cor}[thm]{Corollary}

\theoremstyle{definition}
\newtheorem{defn}[thm]{Definition}

\theoremstyle{remark}
\newtheorem{rem}[thm]{Remark}

\numberwithin{equation}{section}

\def\F{\mathcal F}

\def\cL{\mathcal L}

\def\E{{\sf E}}
\def\R{\mathbb R}

\def\R{{{\mathbb R}}}

\def\NN{\mathbb N}
\def\N{\mathbb N}

\def\cP{\mathcal P}
\def\cM{\mathcal M}
\def\cB{\mathcal B}
\def\cT{\mathcal T}
\def\cK{\mathcal K}
\def\cA{\mathcal A}
\def\fB{\mathfrak B}
\def\bP{{\sf P}}

\newcommand{\W}{\mathcal{W}}
\newcommand{\Norm}[2]{\left\Vert #1 \right\Vert_{#2}}

\usetikzlibrary{backgrounds}
\usetikzlibrary{decorations.pathmorphing,backgrounds,fit,calc,through}
\usetikzlibrary{arrows}
\usetikzlibrary{shapes,decorations,shadows}
\usetikzlibrary{fadings}
\usetikzlibrary{patterns}
\usetikzlibrary{mindmap}
\usetikzlibrary{decorations.text}
\usetikzlibrary{decorations.shapes}

\begin{document}

\title[Optimal control problems driven by nonlinear degenerate FP equations]{Optimal control problems driven by nonlinear degenerate Fokker-Planck equations}

\author[F. Anceschi]{Francesca Anceschi}
\address{Francesca Anceschi\\
Dipartimento di Ingegneria Industriale e Scienze Matematiche, Università Politecnica delle Marche, Via Brecce Bianche 12, 
60131 Ancona, Italy}
\email{f.anceschi@staff.unipvm.it}

\author[G. Ascione]{Giacomo Ascione}
\address{Giacomo Ascione\\
Scuola Superiore Meridionale,
Universit\`a di Napoli, Largo San Marcellino 10, 80138 Napoli,
Italy}
\email{g.ascione@ssmeridionale.it}

\author[D. Castorina]{Daniele Castorina}
\address{Daniele Castorina\\
Dipartimento di Matematica e Applicazioni,
Universit\`a di Napoli, Via Cintia, Monte S. Angelo 80126 Napoli,
Italy}
\email{daniele.castorina@unina.it}

\author[F. Solombrino]{Francesco Solombrino}
\address{Francesco Solombrino\\
Dipartimento di Matematica e Applicazioni,
Universit\`a di Napoli, Via Cintia, Monte S. Angelo 80126 Napoli,
Italy}
\email{francesco.solombrino@unina.it}

\begin{abstract}  
The well-posedness of a class of optimal control problems is analysed, where the state equation couples a nonlinear degenerate Fokker-Planck equation  with a system of Ordinary Differential Equations (ODEs). Such problems naturally arise as mean-field limits of Stochastic Differential
models for multipopulation dynamics, where a large number of agents (followers) is steered through parsimonious intervention on a selected class of leaders.
The proposed approach combines stability estimates for  measure solutions of nonlinear degenerate Fokker-Planck equations with a general framework of assumptions on the cost functional, ensuring compactness and lower semicontinuity properties. The Lie structure of the state equations allows one for considering non-Lipschitz nonlinearities, provided some suitable dissipativity assumptions are considered in addition to non-Euclidean H\"older and sublinearity conditions.
%We consider the problem of controlling a nonlinear degenerate Fokker-Planck equation coupled with a system of Ordinary Differential Equations (ODEs) by means of a parsimonious control applied directly in the dynamics described by the ODEs, and thus acting indirectly on the measure solution of the Fokker-Planck equation. Precisely, we first prove existence and uniqueness of measure solutions for nonlinear degenerate Fokker-Planck equations where the nonlinearity satisfies non-Euclidean H\"older and subilinearity conditions together with a suitable dissipativity assumption. Then, we prove the existence, in a suitable class of controls, of an optimal control for the system of Fokker-Planck equation with the ODEs, under suitable assumptions on the cost functional. Furthermore, we stress, in a suitable example, that such equations arise as mean-field limit of a leader-follower system involving second order Stochastic Differential Equations (SDEs) and controlled first order ODEs, while the considered control problem is the $\Gamma$-limit of suitable control problems on these systems as the number of the followers tends to infinity.
\end{abstract}
\keywords{Mean-field limit, $\Gamma$-limit, optimal control with ODE-SDE constraints}
\subjclass{49J20, 49J55, 60H10}
\thanks{FA is partially supported by INdAM-GNAMPA Project “Problemi ellittici e sub-ellittici: non linearità, singolarità e crescita critica” CUP E53C23001670001. GA is supported by the GNAMPA-INdAM Project ”Deterministic Control of Stochastic Dynamics”, CUP E53C23001670001 and the PRIN 2022XZSAFN Project ”Anomalous Phenomena on Regular and Irregular Domains: Approximating Complexity for the Applied Sciences” CUP E53D23005970006. DC and FS are supported by the PRIN 2022HKBF5C Project “Variational Analysis of Complex Systems in Material Science, Physics and Biology” CUP E53D23005720006. PRIN projects are part of PNRR Italia Domani, financed by European Union through NextGenerationEU. FS additionally acknowledeges support from the GNAMPA-INdAM Project ”Problemi di controllo ottimo nello spazio di Wasserstein delle misure definite su spazi di Banach” CUP E53C23001670001.}

\setcounter{tocdepth}{1}

\maketitle
\tableofcontents

\section{Introduction} \label{intro}
\subsection{Presentation of the problem}
The aim of this work is to study a class of optimal control problems driven by a dynamic of the type
\begin{equation}\label{probmunl2-intro}
	\begin{cases} \displaystyle \partial_t \mu_t = - v \cdot \nabla_x \mu_t + \sigma \Delta_v \mu_t - \mathrm{div}_v (\mathfrak{v}[t,\bm{\mu}](z) \mu_t) \quad &(t,x,v) \in  (0, T] \times\mathbb{R}^{2d}, \\
		\mu_0 = \bar{\mu}  \quad & (x,v) \in \R^{2d},
	\end{cases}
\end{equation}
where $\sigma \in \R$ and the drift field $\mathfrak{v}$ is allowed to depend explicitly on a solution $\bm{\mu}$ of the problem. Hence,
the above is a nonlinear Fokker-Planck equation driven by a nonlocal drift field $\mathfrak{v}[t,\bm{\mu}]$ depending on the state of the system, 
a typical example being the choice 
\begin{equation}\label{introeq:nonlocaldrift}
	\mathfrak{v}[t,\bm{\mu}](z)=H(t,z)\ast \mu_t\,.
\end{equation}
Furthermore, \eqref{probmunl2-intro} is the natural mean field counterpart of second order multi-agents systems with additive noise, see \eqref{eq:MKVSDEauxdef} below. 
In fact, in this case a drift field of the form $\mathfrak{v}[t,\bm{\mu}]$ naturally arises if one assumes that the natural dynamic is driven by mutual interactions 
(see Appendix \ref{appendix:example}).
These type of mean field systems are in connection with a great number of applications. Indeed, multiagent systems have been used in several contexts, such as in biology to model cell aggregation \cite{camazine2020self,cucker2007emergent,hofbauer1998evolutionary,keller1970initiation}, in chemistry to model chemical networks \cite{lim2020quantitative,mozgunov2018review,oelschlager1989derivation}, or even to describe human interaction in social sciences \cite{during2009boltzmann,toscani2006kinetic,lombardi2020nonverbal,calabrese2021spontaneous}. Furthermore, controlled leader-follower multiagent systems have been used, for instance, in the context of gene regulation in microbial consortia \cite{martinelli2022multicellular,salzano2023vivo}, traffic control \cite{piacentini2018traffic}, swarms control \cite{armbruster2017elastic}, or even oil cleaning through controlled robots \cite{zahugi2013oil}. The controllability of this kind of leader-follower systems, with a fixed finite (sufficiently small) number of followers, has been widely studied, see, for instance, \cite{ko2020asymptotic,lama2024shepherding,auletta2022herding}. On the other hand, the mean-field approach for a large number of followers has been considered for instance in \cite{albi2020mathematical} for crowd control and in \cite{orlando2023mean} for prevention of maritime crime. A general study of these mean-field control problems arising from first-order leader-follower systems with additive noise has been presented in \cite{Ascione20236965}, while the notion of \textit{continuification of controlled systems} has been further explored for instance in \cite{maffettone2022continuification,maffettone2023continuification}. It is clear that this list of applications and references is far from being exhaustive. Furthermore, let us stress that one could be interested in the control of a second-order system, as in \cite{albi2016invisible,albi2017mean}, in which the followers are subject to an additive noise only in the velocity component. Hence, we consider \eqref{probmunl2-intro} as a second-order generalization of the mean-field system obtained in \cite{Ascione20236965}.

% {\color{red}aggiungere applicazioni}, and are a second order generalization of previous results on control dynamics
%(see for instance \cite{Ascione20236965, Orlando2023}).

Before describing our point of view in detail, it is useful to give a closer look to the notion of solution we need to consider when dealing with 
\eqref{probmunl2-intro}. Indeed, the well-posedness of the class of optimal control problems we consider heavily relies on stability estimates for 
solutions in suitable spaces. Thus, this structure immediately suggests that it is natural to look for
solutions to \eqref{probmunl2-intro} in Wasserstein spaces (see Section \ref{sub:notation}), and hence the definition of solution to \eqref{probmunl2-intro} we are interested into is the following. 
\begin{defn} \label{solutionnl}
	Let $p \ge 1$, $T>0$ (possibly $T=\infty$) and $\overline{\mu} \in \W_p(\R^{2d})$. A{ continuous} curve of probability measures $\bm{\mu}\in C([0,T]; 
	\W_p(\R^{2d}))$ is a solution of \eqref{probmunl2-intro} if and only if for all $\psi \in C^\infty_c(\R^{2d})$ and for all $t \in [0,T]$ it holds
	\begin{equation}\label{eq:weaksolnl}
		\int_{\R^{2d}}\psi \, d\mu_t-\int_{\R^{2d}}\psi \, d\overline{\mu}=\int_0^t \int_{\R^{2d}}\left(v \cdot \nabla_x \psi+ \mathfrak{v}[s,\bm{\mu}](z)\cdot \nabla_v \psi+ \sigma\Delta_v \psi\right) \, d\mu_s \,  ds. 
	\end{equation}
	If $T=\infty$, we say that $\bm{\mu}$ is a global solution of \eqref{probmunl2-intro}, otherwise we call it a local solution.
\end{defn}

As a first goal of our work, we will focus on uniqueness and stability issues for \eqref{probmunl2-intro}. In doing so, 
we consider a set of assumptions on the drift term $\mathfrak{v}[t,\bm{\mu}]$ which is as general as possible 
and allows for possibly unbounded non-globally Lipschitz drift fields provided some H\"older continuity combined with
a dissipativity condition are satisfied (see Assumptions \ref{ass:v} below). Indeed, we refer to $\mathfrak{v}_3$ as a dissipativity condition since for $p=2$ it is implied by the metrical dissipativity condition considered in \cite{cavagnari2023dissipative}, see also \cite{cavagnari2023lagrangian}, as shown in Appendix \ref{appendix:equivalence}.

\medskip

Despite the presence of a nonlinearity in the structure of \eqref{probmunl2-intro}, the results we are going to show strongly rely on well-posedness results for a linear counterpart, that has been analyzed in \cite{Anceschi2025}.
Furthermore a key point for uniqueness and stability issues consists in showing that \eqref{probmunl2-intro} is indeed equivalent to its McKean-Vlasov counterpart,
in the sense that a solution $\mu$ is always the law of a stochastic process 
of the type
\begin{equation}\label{eq:MKVSDEauxdef}
		\begin{cases}
			dX(t)=V(t)dt\\
			dV(t)=\mathfrak{v}[t,\bm{\mu}](X(t),V(t))dt+\sqrt{2\sigma} dB(t)\\
			X(0)=X_0 \qquad V(0)=V_0 \qquad \bm{\mu}={\rm Law}(X,V),
		\end{cases}
	\end{equation}
where $B$ is a $d$-dimensional Brownian motion on $(\Omega, \Sigma, \bP)$, $\mathfrak{v}:[0,T]\times C([0,T];\W_p(\R^{2d})) \times \R^{2d} \to \R^{d}$ is a measurable function, $\sigma>0$ and $Z_0=(X_0,V_0) \in L^p(\Omega;\R^{2d})$. 
For this reason, throughout this work we fix a probability space $(\Omega, \Sigma, \bP)$ and we consider the following adapted formalism for McKean-Vlasov SDEs to the case of our interest. For simplicity, by $Z \in L^p(\Omega;C(\R_0^+;\R^{2d}))$ we actually mean that $\{Z(t)\}_{t \in [0,T]} \in L^p(\Omega;C([0,T];\R^{2d}))$ for all $T>0$.

\begin{defn}\label{def:uniq}
	Let $p \ge 1$, $T>0$ (possibly $T=\infty$) and consider the McKean-Vlasov SDE \eqref{eq:MKVSDEauxdef}, for $t \in [0,T]$.
	We say that a stochastic process $Z=(X,V)$ is a strong soution of \eqref{eq:MKVSDEauxdef} if $Z \in L^p(\Omega;C([0,T];\R^{2d}))$, $\int_0^t\left|\mathfrak{v}[t,\bm{\mu}](Z(s))\right|ds<\infty$ for all $t \in [0,T]$ and
	\begin{equation}\label{eq:strongsol}
		X(t)=X_0+\int_0^tV(s)ds \ \mbox{ and } \ 
		V(t)=V_0+\int_0^t\mathfrak{v}[s,\bm{\mu}](Z(s))ds+\sqrt{2\sigma}B(t),
	\end{equation}
	for a.a. $t \in [0,T]$ a.s., where $\bm{\mu}={\rm Law}(Z)$. We say that uniqueness in law holds for \eqref{eq:MKVSDEauxdef} if for any two strong solutions $Z_1,Z_2$ we have ${\rm Law}(Z_1(t))={\rm Law}(Z_2(t))$ for all $t \in [0,T]$. We say that pathwise uniqueness holds for \eqref{eq:MKVSDEauxdef} if for any two strong solutions $Z_1,Z_2$ we have $\bP(Z_1=Z_2)=1$. Clearly pathwise uniqueness implies uniqueness in law, while the converse is also true by \cite[Theorem 3.2]{cherny2002uniqueness}. For such a reason, we just state that the strong solution is unique.
\end{defn}

\subsection{Our assumptions}
Specifically, in order to state our main results we need to introduce a suitable set of assumptions to consider. 
For any $p \ge 1$, $t \ge 0$ and $\bm{\mu} \in C([0,T];\W_p(\R^{2d}))$ we set 
$$
	\overline{M}_p(t,\bm{\mu}):=\sup_{0 \le s \le t}M_p(\mu_s):=\sup_{0 \le s \le t}\int_{\R^{2d}}|z|^{p}d\mu_s,
$$ 
then the assumptions we will consider throughout this work are the following ones.
\begin{ProblemSpecBox}{Assumptions on $\mathfrak{v}$: $(\mathfrak{v})$}{$(\mathfrak{v})$}\label{ass:v}
	\begin{itemize}
		\item[$(\mathfrak{v}_0)$] $\mathfrak{v}:[0,+\infty)\times C(\R_0^+;\W_p(\R^{2d})) \times \R^{2d} \to \overline{\R}^d$ is a Carath\'eodory map, i.e. it is continuous in the variable $(\bm{\mu},z) \in C([0,T];\W_p(\R^{2d}))\times \R^{2d}$ and measurable in the variable $t \in [0,T]$.
		\item[$(\mathfrak{v}_1)$] For all $T>0$ there exist two constants $K>0$ and $\beta \in [0,1)$ such that 
		\begin{equation*}	
			|\mathfrak{v} [t,\bm{\mu}] (z)| \le K \left(1+|x|^{\frac{\beta}{3}}+|v|^\beta+\left(\overline{M}_p(T,\bm{\mu})\right)^{\frac{1}{p}}\right)
		\end{equation*} 
		for every $z \in \R^{2d}$, $t\in [0,T]$ and for every $\bm{\mu} \in  C([0,T];\W_p(\R^{2d}))$.
		\item[$(\mathfrak{v}_2)$] For all $T>0$ there exists an exponent $\alpha \in (\beta,1]$ with the property that for all $R>0$ 
		there exists a constant $L>0$ and  such that 
		for all $t \in [0,T]$, for all $\bm{\mu} \in C([0,T];\W_p(\R^{2d}))$ with $\overline{M}_p(T,\bm{\mu}) \le R$ and for all $z_1,z_2 \in \R^{2d}$ with $|z_1|,|z_2| \le R$ it holds
		\begin{align*}
			\left|\mathfrak{v}[t,\mu](z_1)-\mathfrak{v}[t,\mu](z_2)\right| \le L\left(|x_1-x_2|^{\frac{\alpha}{3}}+|v_1-v_2|^{\alpha}\right).
		\end{align*}
		\item[$(\mathfrak{v}_3)$] For all $T>0$ there exists a constant $D \ge 0$ such that for any couple of stochastic processes $Z_j=(X_j,V_j) \in L^p(\Omega;C([0,T];\R^{2d})$ with $X_j^\prime=V_j$ in $[0,T]$, $Z_1(0)=Z_2(0)$ and $\bm{\mu}^j={\rm Law}(X_j,V_j)$, $j=1,2$, it holds
		\begin{align*}
			\quad \int_0^t\E & \left[|V_1(s)-V_2(s)|^{p-2} (\mathfrak{v}[s,\bm{\mu}^1](Z_1(s))-\mathfrak{v}[s,\bm{\mu}^2](Z_2(s))) \cdot (V_1(s)-V_2(s)) \right]\, ds \\
			&\qquad \le D\int_0^t \sup_{0 \le z \le s}\E[|Z_1(z)-Z_2(z)|^p]\, ds \qquad \forall t \in [0,T].
		\end{align*}
		\item[$(\mathfrak{v}_4)$] For all $T>0$, $t \in [0,T]$ and ${\bm \mu}^1,{\bm \mu}^2 \in C([0,T];\W_p(\R^{2d}))$ with $\mu^1_s=\mu^2_s$ for all $s \in [0,t]$ it holds $\mathfrak{v}[t,\bm{\mu}^1]=\mathfrak{v}[t,\bm{\mu}^2]$.
	\end{itemize}
\end{ProblemSpecBox}

Despite its generality, it is clear that the dissipativity condition $(\mathfrak{v}_3)$ is not easily verified. However, we can still provide some
 sufficient conditions for its validity, see Appendix
\ref{appendix:equivalence}. In the notation for $\mathfrak{v}$, we separate the variables $t$ and $\bm{\mu}$ and $(X(t),V(t))$ for the following reason: throughout the proof of the main theorem, we will freeze the variable $\bm{\mu}$ and study \eqref{eq:MKVSDEauxdef} as a simple SDE, and we only need measurability in $t$.

\subsection{Main results}
Our first main result concerns well-posedness of \eqref{probmunl2-intro} in the sense discussed above
of which we give a precise statement here, while for the proof we refer to Section \ref{exuninl}. 

\begin{thm}\label{thm:main1}
	Let $p \ge 1$, $\overline{\mu} \in \W_p(\R^{2d})$ and suppose Assumptions \ref{ass:v} hold. 
	Then the non linear Cauchy problem \eqref{probmunl2-intro} 
	admits a unique global solution 
	$\bm{\mu} \in C^{\gamma_p}_{\rm loc}(\R_0^+;\W_p(\R^{2d}))$, where $\gamma_p=\frac{1}{\max\{2,p\}}$,
	that can be charachterized as follows.
	Let $B$ be a $d$-dimensional Brownian motion and $(X_0,V_0) \in L^p(\Omega;\R^{2d})$ be independent of $B$ with $\overline{\mu}={\rm Law}(X_0,V_0)$. Then there exists a unique global strong solution $(X,V) \in L^p(\Omega;C(\R_0^+;\R^{2d}))$ of \eqref{eq:MKVSDEauxdef}
	and $\bm{\mu}$ is the unique solution of \eqref{probmunl2-intro}. 
	Furthermore, for all $T>0$ there exist two functions $\mathcal{C}:\R_0^+ \to \R_0^+$ and $\widetilde{\Phi}:\R_0^+ \times \R_0^+ \to \R_0^+$ such that $\mathcal{C}$ is non-decreasing, $\widetilde{\Phi}(t;\cdot)$ is non-increasing for all $t \ge 0$, $\widetilde{\Phi}(\cdot;h)$ is a Young function for all $h \ge 0$ and it holds
	\begin{equation}\label{eq:momentbounds}
			\overline{M}_p(T,\bm{\mu})+\overline{M}_{\widetilde{\Phi}_p(\cdot;K)}(T,\bm{\mu})+\sup_{\substack{0 \le t,s \le T \\ t \not = s}}\frac{\W_p(\mu_{t},\mu_s)}{|t-s|^{\gamma_p}} \le \mathcal{C}(K),
	\end{equation}
		where $K$ is the constant defined in $(\mathfrak{v}_1)$.
\end{thm}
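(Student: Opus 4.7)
The plan is to prove Theorem \ref{thm:main1} by establishing the equivalence between the nonlinear Fokker-Planck problem \eqref{probmunl2-intro} and the McKean-Vlasov SDE \eqref{eq:MKVSDEauxdef}, for which Assumptions $(\mathfrak{v}_0)$--$(\mathfrak{v}_4)$ are tailored. First I would record the equivalence: if $Z=(X,V)$ is a strong solution of \eqref{eq:MKVSDEauxdef} and $\bm{\mu}={\rm Law}(Z)$, then applying It\^o's formula to $\psi(Z(t))$ for $\psi\in C^\infty_c(\R^{2d})$ and taking expectations yields \eqref{eq:weaksolnl}. Conversely, given a weak solution $\bm{\mu}$ of the PDE, the non-anticipating condition $(\mathfrak{v}_4)$ allows us to freeze $\mathfrak{v}[\cdot,\bm{\mu}]$ and invoke the linear theory from \cite{Anceschi2025} to realize $\bm{\mu}$ as the law of the unique strong solution of \eqref{eq:MKVSDEauxdef}. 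Solving \eqref{probmunl2-intro} thus reduces to finding a fixed point of the map $\Phi\colon \bm{\nu} \mapsto {\rm Law}(Z^{\bm{\nu}})$, where $Z^{\bm{\nu}}$ denotes the strong solution of the linear SDE obtained by freezing $\bm{\nu}$ in the drift.

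For \emph{existence} I would set up a Picard-type iteration, defining $\bm{\mu}^{(n+1)}:=\Phi(\bm{\mu}^{(n)})$ from some initial curve $\bm{\mu}^{(0)}$. The sublinearity bound $(\mathfrak{v}_1)$ combined with It\^o's formula applied to $|Z^{(n)}(t)|^p$ and Gr\"onwall's inequality yields uniform-in-$n$ bounds $\overline{M}_p(T,\bm{\mu}^{(n)})\le \mathcal{C}(K)$; a finer analysis exploiting $\beta<1$ upgrades these to the exponential-type bound involving the Young function $\widetilde{\Phi}_p$ in \eqref{eq:momentbounds}. Applying It\^o to $|Z^{(n)}(t)-Z^{(n)}(s)|^p$ and estimating the drift via $(\mathfrak{v}_1)$ and the Brownian increment via the Burkholder-Davis-Gundy inequality yields the $\gamma_p$-H\"older estimate in $\W_p$, with $\gamma_p=1/\max\{2,p\}$ emerging from the worst of the BDG exponent $1/2$ and the $L^p$-scaling. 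These uniform estimates grant tightness in $C([0,T];\W_p(\R^{2d}))$, and passage to the limit in $\Phi$ is ensured by the continuity of $\mathfrak{v}$ in $(\bm{\mu},z)$ granted by $(\mathfrak{v}_0)$ together with $(\mathfrak{v}_2)$.

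For \emph{uniqueness}, the dissipativity condition $(\mathfrak{v}_3)$ is precisely what is needed. Given two strong solutions $Z_1,Z_2$ with the same initial datum and same Brownian motion, I would apply It\^o's formula to $|V_1(t)-V_2(t)|^p$ (for $p<2$ working with a regularization $(|V_1-V_2|^2+\eps)^{p/2}$ and passing to the limit). The martingale part cancels since both processes share the same noise, and the finite-variation part produces exactly the integrand controlled by $(\mathfrak{v}_3)$. Combining this with the elementary bound $|X_1(t)-X_2(t)|^p\le t^{p-1}\int_0^t|V_1(s)-V_2(s)|^p\,ds$ coming from $X_j'=V_j$ leads, after taking expectations and suprema, to an integral inequality of the form
\begin{equation*}
\sup_{0\le r\le t}\E[|Z_1(r)-Z_2(r)|^p]\le C\int_0^t \sup_{0\le r\le s}\E[|Z_1(r)-Z_2(r)|^p]\,ds,
\end{equation*}
whence pathwise uniqueness follows by Gr\"onwall, and uniqueness in law by Definition \ref{def:uniq}.

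The main obstacle I expect is reconciling the lack of global Lipschitz regularity of $\mathfrak{v}$ (only H\"older of non-Euclidean type by $(\mathfrak{v}_2)$, dictated by the Lie structure of the kinetic operator) with the degeneracy of the Fokker-Planck equation, which precludes any direct contraction argument on $\Phi$. The resolution is that existence is obtained via compactness, where H\"older continuity together with the linear theory of \cite{Anceschi2025} is enough, while pathwise uniqueness is entirely decoupled from regularity issues and rests solely on the one-sided monotonicity built into $(\mathfrak{v}_3)$. Once existence and pathwise uniqueness for \eqref{eq:MKVSDEauxdef} are in hand, the PDE/SDE equivalence transfers both properties to \eqref{probmunl2-intro}, and globality on $\R_0^+$ is secured by concatenating local solutions, with $(\mathfrak{v}_4)$ ensuring that the pasting respects the non-anticipating structure of the drift.
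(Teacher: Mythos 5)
Your overall architecture — realize any PDE solution as the law of a strong solution of the McKean--Vlasov SDE, reduce existence to a fixed point of $\bm{\nu}\mapsto{\rm Law}(Z^{\bm{\nu}})$, get uniqueness from $(\mathfrak{v}_3)$ applied to $|V_1-V_2|^p$ combined with $X'=V$, and glue local solutions using $(\mathfrak{v}_4)$ — matches the paper's. However, there is a genuine gap in the existence step. You propose a Picard iteration $\bm{\mu}^{(n+1)}=\Phi(\bm{\mu}^{(n)})$, obtain uniform moment and H\"older bounds, extract a subsequence by compactness, and claim to ``pass to the limit in $\Phi$.'' But a subsequential limit of a non-contractive iteration is not automatically a fixed point: if $\bm{\mu}^{(n_k)}\to\bm{\mu}^\star$, then $\bm{\mu}^{(n_k+1)}=\Phi(\bm{\mu}^{(n_k)})\to\Phi(\bm{\mu}^\star)$ by continuity, but $\{\bm{\mu}^{(n_k+1)}\}_k$ is a different subsequence, so nothing forces its limit to coincide with $\bm{\mu}^\star$. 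You yourself observe that the degeneracy and lack of global Lipschitz regularity preclude contraction, which is exactly why the iterative scheme cannot be closed this way. The paper avoids this by never iterating: it shows the image of $\cT$ is relatively compact in $\W_p(C([0,T];\R^{2d}))$ (uniform tightness from the equi-H\"older modulus plus uniform $L^p$-integrability from the de la Vall\'ee-Poussin / Proposition~\ref{prop:Young} moment upgrade) and that $\cT$ is continuous, and then invokes Schauder's fixed-point theorem directly.

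A second, smaller omission: your a priori bound $\overline{M}_p(T,\bm{\mu}^{(n)})\le\mathcal{C}(K)$ is taken for granted, but under the general $(\mathfrak{v}_1)$ the growth of $|\mathfrak{v}[t,\bm{\mu}](z)|$ involves $\big(\overline{M}_p(T,\bm{\mu})\big)^{1/p}$ itself, so the moment bound is not immediate. The paper handles this by a truncation step: it first proves existence under the strengthened hypothesis $(\overline{\mathfrak{v}}_1)$ (no measure-dependent term in the growth bound), then defines $\mathfrak{v}_N[t,\bm{\mu}](z)=\mathfrak{v}[t,\bm{\mu}](z)\,\eta_N(\overline{M}_p(t,\bm{\mu}))$ with a smooth cutoff $\eta_N$, proves the Gr\"onwall-type bound \eqref{eq:Mps2} uniformly in $N$, and then picks $N$ so large that the cutoff never activates. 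If you keep the Picard framework you would still need some such bootstrap to justify the uniform bounds; as written, the argument is circular.

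The uniqueness and globalization steps, as well as the heuristic that $\gamma_p=1/\max\{2,p\}$ comes from the competition between the diffusive $1/2$-H\"older scaling and the $L^p$-integral scaling, are all consistent with the paper (the paper implements the H\"older estimate with a regularized $H_{p,\varepsilon}$ and a stopping-time argument rather than BDG, but your route should also work). The core fix you need is to replace the iteration by an application of Schauder's fixed-point theorem on a suitable convex compact set, preceded by the cutoff/bootstrap that tames the measure-dependent growth in $(\mathfrak{v}_1)$.
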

Furthermore, we derive the following stability result for solutions to the nonlinear system.
\begin{cor}\label{cor:stab}
	Fix $\overline{\mu} \in \W_p(\R^{2d})$ and consider a function $\mathfrak{v}:[0,T]\times C([0,T];\W_p(\R^{2d})) \times \R^{2d} \to \R^d$ sequence $\mathfrak{v}_j:[0,T]\times C([0,T];\W_p(\R^{2d})) \times \R^{2d} \to \R^d$ satisfying Assumptions \ref{ass:v}, where the constants $K$ and $D$ in $(\mathfrak{v}_1)$ and $(\mathfrak{v}_3)$ are independent of $j$. Denote by $\cK$ the set of curves of probability measures $\bm{\mu} \in C([0,T];\W_p(\R^{2d}))$ satisfying \eqref{eq:momentbounds}. Assume that  
	\begin{equation*}
		\lim_{j \to \infty}\mathfrak{v}_j[t,\bm{\mu}](z)=\mathfrak{v}[t,\bm{\mu}](z), \qquad \mbox{ for any } \quad t \in [0,T],\bm{\mu} \in \cK, z \in \R^{2d}.
	\end{equation*}
	Let $\bm{\mu}^j$, $\bm{\mu}$ be the unique solutions of \eqref{probmunl2-intro} with $\mathfrak{v}_j$ and $\mathfrak{v}$. Then
	\begin{equation*}
		\lim_{j \to \infty}\sup_{0 \le t \le T}\W_p(\mu_t^j,\mu_t)=0.
	\end{equation*}
\end{cor}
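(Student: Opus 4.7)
My plan is a compactness-plus-uniqueness scheme. First, apply Theorem \ref{thm:main1} to each $\mathfrak{v}_j$: since the constant $K$ in $(\mathfrak{v}_1)$ is uniform in $j$, the a priori bound \eqref{eq:momentbounds} holds with a common constant $\mathcal{C}(K)$, hence $\{\bm{\mu}^j\}\subset \cK$. The $\gamma_p$-Hölder-in-time control gives equicontinuity of this family in $C([0,T]; \W_p(\R^{2d}))$, while the bound on the superlinear moment $\overline{M}_{\widetilde{\Phi}_p(\cdot;K)}$ yields both tightness of $\{\mu_t^j\}_j$ and uniform integrability of $|z|^p$, hence precompactness of the time slices in $\W_p$. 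A Wasserstein version of the Arzelà--Ascoli theorem then furnishes a subsequence $\bm{\mu}^{j_k} \to \bm{\mu}^*$ in $C([0,T]; \W_p(\R^{2d}))$, with $\bm{\mu}^* \in \cK$ by closedness of the constraints under $\W_p$-convergence.

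I would then identify $\bm{\mu}^*$ as the unique solution of \eqref{probmunl2-intro} driven by $\mathfrak{v}$ by passing to the limit in the weak formulation \eqref{eq:weaksolnl}. The test, transport and diffusion terms converge thanks to the $\W_p$-convergence of the time marginals and the compact support of $\psi \in C_c^\infty(\R^{2d})$; the crux is the drift term. For this I decompose
\begin{equation*}
\mathfrak{v}_{j_k}[s, \bm{\mu}^{j_k}](z) - \mathfrak{v}[s, \bm{\mu}^*](z) = \bigl(\mathfrak{v}_{j_k}[s, \bm{\mu}^{j_k}](z) - \mathfrak{v}[s, \bm{\mu}^{j_k}](z)\bigr) + \bigl(\mathfrak{v}[s, \bm{\mu}^{j_k}](z) - \mathfrak{v}[s, \bm{\mu}^*](z)\bigr)
\end{equation*}
and restrict the integration to a ball $B_R$ containing $\mathrm{supp}(\nabla_v \psi)$, on which the sublinear bound $(\mathfrak{v}_1)$ supplies the needed $k$-uniform domination. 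The second summand converges pointwise to zero by the continuity of $\mathfrak{v}$ in $\bm{\mu}$ from $(\mathfrak{v}_0)$, and the equi-Hölder-in-$z$ estimate $(\mathfrak{v}_2)$, valid uniformly on $\cK$, upgrades the pointwise to uniform convergence on $B_R$. The first summand converges pointwise by the standing hypothesis $\mathfrak{v}_j \to \mathfrak{v}$ on $\cK$ combined with the equi-regularity in $\bm{\mu}$ of the approximating drifts. A further appeal to dominated convergence in $s \in [0,t]$ concludes that the drift integral converges, so $\bm{\mu}^*$ satisfies \eqref{eq:weaksolnl} with drift $\mathfrak{v}$.

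The uniqueness assertion of Theorem \ref{thm:main1} then forces $\bm{\mu}^* = \bm{\mu}$. Since the argument applied to an arbitrary subsequence of $\{\bm{\mu}^j\}$ produces a further subsequence converging to this common limit, a Urysohn-type principle promotes the subsequential convergence to convergence of the full sequence in $C([0,T]; \W_p(\R^{2d}))$, which is precisely the claim.

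The hardest point is the passage to the limit in the drift term. The pointwise hypothesis $\mathfrak{v}_j \to \mathfrak{v}$ is formulated at a \emph{frozen} measure argument $\bm{\mu}\in\cK$, whereas in the weak formulation one evaluates at the moving argument $\bm{\mu}^{j_k}$. The triangle-inequality split above separates this into a change-of-drift part (handled by the standing hypothesis together with the equi-regularity of the $\mathfrak{v}_{j_k}$ in $\bm{\mu}$) and a change-of-measure part (handled by the joint continuity from $(\mathfrak{v}_0)$ and the equi-Hölder control in $z$ from $(\mathfrak{v}_2)$); the compact support of $\nabla_v \psi$ is what ultimately makes dominated convergence available on the resulting bounded set.
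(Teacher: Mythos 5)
Your compactness-plus-uniqueness scheme diverges fundamentally from the paper's proof, which works at the level of the stochastic representation: it compares the strong solutions $Z_j$ and $Z$ of the McKean--Vlasov SDEs directly, applies the chain rule to $|\delta V_j|^p$, and decomposes the drift difference as
\begin{equation*}
\mathfrak{v}_j[\tau,\bm{\mu}^j](Z_j)-\mathfrak{v}[\tau,\bm{\mu}](Z)
= \bigl(\mathfrak{v}_j[\tau,\bm{\mu}^j](Z_j)-\mathfrak{v}_j[\tau,\bm{\mu}](Z)\bigr)
+\bigl(\mathfrak{v}_j[\tau,\bm{\mu}](Z)-\mathfrak{v}[\tau,\bm{\mu}](Z)\bigr),
\end{equation*}
using $(\mathfrak{v}_3)$ on the first piece, the pointwise hypothesis plus dominated convergence on the second, and then Gr\"onwall. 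This is not merely a stylistic difference: the decomposition, and the formulation in which the crucial inequality lives, is different.

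The key gap in your argument is the first summand of your triangle-inequality split, $\mathfrak{v}_{j_k}[s,\bm{\mu}^{j_k}](z)-\mathfrak{v}[s,\bm{\mu}^{j_k}](z)$, which you propose to control via ``the standing hypothesis together with the equi-regularity of the $\mathfrak{v}_{j_k}$ in $\bm{\mu}$''. No such equi-regularity is available. Assumption $(\mathfrak{v}_0)$ provides continuity in $\bm{\mu}$ for each fixed $\mathfrak{v}_j$ separately but no modulus uniform in $j$; $(\mathfrak{v}_2)$ is equi-H\"older in $z$ only; $(\mathfrak{v}_1)$ is a growth bound; and $(\mathfrak{v}_3)$ is a dissipativity inequality along coupled stochastic processes that does not translate into a pointwise Lipschitz-type estimate of $\mathfrak{v}_j$ in the $\bm{\mu}$ variable, so it cannot be brought to bear in the weak formulation. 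With only pointwise convergence $\mathfrak{v}_j[t,\bm{\mu}](z)\to\mathfrak{v}[t,\bm{\mu}](z)$ at each frozen $\bm{\mu}\in\cK$ and no equicontinuity in $\bm{\mu}$, you cannot conclude convergence along the moving sequence $\bm{\mu}^{j_k}$, even though $\cK$ is compact; the upgrade from pointwise to locally uniform convergence requires precisely the modulus you invoke but do not possess. The paper sidesteps this entirely by ordering the split so that the change-of-measure term compares the \emph{same} drift $\mathfrak{v}_j$ at the two laws $\bm{\mu}^j$ and $\bm{\mu}$ (handled by $(\mathfrak{v}_3)$, which is built for exactly this comparison), while the change-of-drift term is evaluated at the \emph{frozen} $\bm{\mu}$ (handled by the pointwise hypothesis and dominated convergence). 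To repair your argument you would either need to adopt the paper's decomposition and work at the SDE level where $(\mathfrak{v}_3)$ is usable, or add an equicontinuity-in-$\bm{\mu}$ assumption on the family $\{\mathfrak{v}_j\}$ that the corollary does not assume.
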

Eventually, we will turn our attention to optimal control problems connected with nonlinear Fokker-Planck equations. 
In the situation we consider, a two population system is given. A discrete leader population $Y_i(t)$, with $i=1, \ldots, m$, 
is controlled by a policy maker whose aim is steering a population of followers towards a decided goal
through mutual interactions. As naturally done in the kinetic approximation of multi-agent systems, the followers population is 
described by a distribution $\mu(t)$, whereas the dynamic is a second order one. Indeed, control fields act on the leaders $Y_i$, with
$i=1, \ldots, m$. Furthermore, the diffusion term $\Delta_v \mu$ reflects the presence of additive noise on the velocity of the generic 
follower as in \eqref{eq:MKVSDEauxdef}.
Specifically, the control dynamic we consider is of the form 
\begin{equation}\label{PDEODE-intro}
	\begin{cases}
		\partial_t \mu_t=-v\cdot \nabla_x \mu_t+\sigma\Delta_v\mu_t-{\rm div}_v((\mathfrak{v}[t,\bm{\mu}](z)\\
		\qquad \qquad \quad \, \, \, \qquad +\mathfrak{w}[t,\bm{Y}(t), {\bm W}(t)](z))\mu_t)) & (t,x,v) \in (0,T] \times \R^{2d} \\
		\dot{\mathbf{Y}}(t)=\mathbf{W}(t)=F[t,\bm{\mu}](\bm{Y}(t))+\mathbf{u}(t,\bm{\mu}) & t \in (0,T]\\
		\mu_0=\overline{\mu}, \qquad \mathbf{Y}(0)=\overline{\mathbf{Y}}, \qquad \mathbf{W}(0)=\overline{\mathbf{W}},
	\end{cases}
\end{equation}
under some specific assumptions thoroughly described in Section \ref{sec:optimal} and that allow for admissible controls ${\bf u}(t, \mu)$
to allow the policy-maker to tune its action on the state of the system. In this sense, our results can be seen as a genaralization to second 
order systems of control problems considered in \cite{Anceschi2025}. However, a relevant difference is that we allow for a way more general
structure of the drift $\mathfrak{v}[t,\bm{\mu}]$ considering unbounded Lipschitz frameworks. Then equation \eqref{PDEODE} is coupled with 
the cost functional 
\begin{equation*}
\mathcal{F} [\bm{u}] = \int_{0}^{T} \mathcal{L} (t, \bm{Y}, \bm{\mu} ) \, dt + \int_0^T \Psi (\bm{u}(t,\bm{\mu})) \, dt,
\end{equation*}
where $(\bm{Y},\bm{\mu})$ is the solution of \eqref{PDEODE} with control $\bm{u}$, $\mathcal{L}$
is a Lagrangian functional accounting for closedness to the decided target and $\Psi$ is a convex control cost. 
The specific assumptions on admissible controls and on the functional $\mathcal{F}$ are discussed in Section \ref{sec:optimal}. Thus, our second main result consists in showing the existence of a minimizer as it is stated below. 
%{\color{red}Inserire adeguato main result}
\begin{thm} \label{main2}
	Suppose that Assumptions \ref{ass:v}, \ref{ass:w}, \ref{ass:F} and \ref{ass:calF} hold and let $\mathcal{A}$ be the class of admissible controls described via Assumptions \ref{ass:A}. Then the optimal control problem
	\begin{ProblemSpecBox}{Problem $1$}{$1$}\label{prob:1}
		Find $\bm{u}^\star \in \cA$ such that
		\begin{equation*}
			\F[\bm{u}^\star]=\min_{\bm{u} \in \cA}\F[\bm{u}].
		\end{equation*}
	\end{ProblemSpecBox}
	admits at least a solution.
\end{thm}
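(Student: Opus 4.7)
The plan is to apply the direct method of the calculus of variations. I would fix a minimizing sequence $\{\bm{u}_n\} \subset \cA$ with $\F[\bm{u}_n]\to \inf_{\cA}\F$, denote by $(\bm{Y}_n,\bm{W}_n,\bm{\mu}_n)$ the associated state trajectories solving \eqref{PDEODE-intro}, and aim to extract a subsequence converging to an admissible limit $(\bm{u}^\star,\bm{Y}^\star,\bm{W}^\star,\bm{\mu}^\star)$ along which the lower-semicontinuity of $\F$ closes the argument.

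For the compactness step I would rely on the a priori bounds supplied by Theorem \ref{thm:main1} applied to the effective drift $\mathfrak{v}+\mathfrak{w}$. The growth conditions prescribed in Assumptions \ref{ass:v} and \ref{ass:w} ensure that $(\mathfrak{v}_1)$ holds for the sum with a constant that is uniform in $n$, provided $|\bm{Y}_n|,|\bm{W}_n|$ remain controlled. Invoking the estimate \eqref{eq:momentbounds} then yields a uniform $C^{\gamma_p}$ modulus of continuity in $\W_p$ together with a uniform $\widetilde{\Phi}_p$-moment bound; by a standard Ascoli--Arzel\`a argument (the moment bound giving uniform tightness) the family $\{\bm{\mu}_n\}$ is precompact in $C([0,T];\W_p(\R^{2d}))$. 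On the ODE side, the sublinearity of $F$ from Assumption \ref{ass:F} and the coercivity built into $\Psi$ via Assumption \ref{ass:calF} give uniform $L^1$ bounds on $\bm{u}_n$, hence equi-boundedness of $\bm{W}_n$ and equi-continuity of $\bm{Y}_n$, producing $C^0$-compactness of the leaders along a subsequence. The closedness properties assumed in \ref{ass:A} then deliver a candidate limit control $\bm{u}^\star\in\cA$ together with the appropriate mode of convergence $\bm{u}_n\to \bm{u}^\star$.

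The third step is to identify the limit as a solution of the coupled system. Once $\bm{Y}_n\to \bm{Y}^\star$ and $\bm{W}_n\to \bm{W}^\star$ in $C([0,T])$ and $\bm{\mu}_n\to\bm{\mu}^\star$ in $C([0,T];\W_p)$, the pointwise convergence of the effective drift $\mathfrak{v}[t,\bm{\mu}_n]+\mathfrak{w}[t,\bm{Y}_n,\bm{W}_n]$ to $\mathfrak{v}[t,\bm{\mu}^\star]+\mathfrak{w}[t,\bm{Y}^\star,\bm{W}^\star]$ on the limiting curve follows from the Carath\'eodory continuity in $(\mathfrak{v}_0)$ and the analogous assumption on $\mathfrak{w}$. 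Since the uniform bounds place every $\bm{\mu}_n$ inside the set $\cK$ where the constants $K,D$ of $(\mathfrak{v}_1),(\mathfrak{v}_3)$ are uniform, Corollary \ref{cor:stab} applies and upgrades this pointwise convergence to $\W_p$-convergence of the induced Fokker-Planck solutions, so that $\bm{\mu}^\star$ solves the nonlinear FP equation driven by the limit drift. The ODE part passes to the limit by straightforward integration once $\bm{u}_n(t,\bm{\mu}_n)\to \bm{u}^\star(t,\bm{\mu}^\star)$ in a suitable weak sense built into \ref{ass:A}. Lower semicontinuity of the cost then follows from the lower-semicontinuity of $\mathcal{L}$ posited in Assumption \ref{ass:calF} via Fatou, together with convexity of $\Psi$ and an Ioffe-type argument for the convex integral functional.

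The main obstacle I foresee is the coupling introduced by the fact that admissible controls $\bm{u}(t,\bm{\mu})$ are themselves measure-dependent: the convergence $\bm{u}_n\to \bm{u}^\star$ extracted from $\cA$ must be \emph{compatible} with the simultaneously varying states $\bm{\mu}_n\to\bm{\mu}^\star$, so that the composite $\bm{u}_n(t,\bm{\mu}_n)$ converges to $\bm{u}^\star(t,\bm{\mu}^\star)$ in a mode strong enough to pass to the limit in the ODE and in $\Psi$. This is precisely the issue the structural Assumption \ref{ass:A} is designed to address (presumably via a Scorza--Dragoni-type continuity in the measure argument), and its careful interplay with the stability statement of Corollary \ref{cor:stab} is the technical heart of the proof; everything else reduces to a textbook application of the direct method.
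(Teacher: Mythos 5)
Your outline follows the direct method and correctly identifies the three pillars (state compactness via the moment estimates of Theorem \ref{thm:main1}, closedness of the control class, lower semicontinuity of the cost), but it leaves precisely the hard part — extracting a subsequential limit control that is again in $\cA$ — as a black box, and it contains a factual error about the assumptions.

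Concretely, you write that ``coercivity built into $\Psi$ via Assumption \ref{ass:calF}'' gives uniform $L^1$ bounds on $\bm{u}_n$. But Assumptions \ref{ass:calF} only require $\Psi$ to be convex and bounded below; there is no coercivity. The uniform bound $|\bm{u}(t,\bm{\mu})| \le K_{\bm{u}}\bigl(1+(\overline{M}_p(t;\bm{\mu}))^{1/p}\bigr)$ comes solely from $(\cA_1)$, $(\cA_2)$ and the a priori moment bounds on the state, not from $\Psi$. More importantly, you then assert that ``the closedness properties assumed in \ref{ass:A} then deliver a candidate limit control $\bm{u}^\star\in\cA$,'' but $(\cA)$ prescribes only a Carath\'eodory regularity, a Lipschitz condition in $\bm{\mu}$, and a pointwise bound at the Dirac measure — it does not specify any topology on $\cA$, let alone a compactness property. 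This is the genuine gap: the family of admissible controls $\{\bm{u}(t,\cdot)\}$ is a family of Lipschitz maps on an infinite-dimensional Wasserstein space, and it is not at all clear in which sense a subsequence of $\bm{u}_n$ converges, nor why the limit is again an element of $\cA$.

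The paper resolves this through a mechanism you do not mention. The key observation is that only the restriction of a control to the compact set $\cK$ of reachable curves affects $\F$, so one can pass to the quotient $\widetilde{\cA}=\cA/\!\sim$ and identify it with a subset of $L^1([0,T];C(\cK;\R^m))$. On this Banach space, uniform integrability (from the a priori bound) together with uniform tightness (from the Arzel\`a--Ascoli compactness of the image set $\widetilde{\cK}\subset C(\cK;\R^m)$) allows applying Balder's generalization of the Dunford--Pettis theorem to obtain a weak $L^1$-type limit $\bm{U}^\star$. One must then show that $\bm{U}^\star(t)$ inherits the Lipschitz and boundedness properties through a Lebesgue-point argument, and — crucially — that $\bm{U}^\star$ can be extended from $\cK$ back to a Carath\'eodory function on the whole space $C([0,T];\W_p(\R^{2d}))$ using McShane's extension theorem and a careful analysis of the equivalence classes $\sim_t$, which produces a bona fide $\bm{u}^\star\in\cA$ with $\Pi\bm{u}^\star=\bm{U}^\star$. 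Finally, since the Balder-type convergence is only on set averages and the argument of $\Psi$ is the composite $\bm{U}^n(t)(\bm{\mu}^n)$ with \emph{both} factors changing along the sequence, the lower semicontinuity of the control cost is not a textbook Ioffe-type statement; the paper derives it via the Legendre transform of $\Psi$, a superadditive set function $\mathcal G(E)=\liminf_n \int_E \Psi(\bm{U}^n(t)(\bm{\mu}^n))\,dt$, and the Localization Lemma, then closes with Theorem \ref{thm:PDEODEWell} (which uses Corollary \ref{cor:stab}) to pass to the limit in the state equation and in $\cL$. Your proposal correctly guesses the architecture, but the compactness, extension, and semicontinuity steps are exactly where the mathematical content lies, and none of them is carried out.
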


These control type results can be applied to various problems arising from real life phenomena. 
A model application is presented in Appendix \ref{appendix:example}. It can be considered as a second order generalization of multiagent control problem analysed in \cite{Ascione20236965,auletta2022herding}, as proposed, for instance, in \cite{albi2016invisible,albi2017mean}. 
%see Appendix \ref{appendix:example} for a possible application
%zmore precise treatment. 

\subsection{Structure of our work}
The paper is structured as follows. In Section \ref{sub:notation} we introduce the notation and we give some preliminaries on the involved spaces of functions and measures. In particular, in Section \ref{subs:linear} we recall the statement of the well-posedness result in the linear case, as shown in \cite{Anceschi2025}, and, at the same time, we prove, in Proposition \ref{prop:Young}, a moment estimate that will be crucial in the remainder of the work. 

Section \ref{exuninl} is entirely devoted to the proofs of Theorem \ref{thm:main1} and Corollary \ref{cor:stab}. The proof of Theorem \ref{thm:main1} is divided in $6$ steps: first we prove the existence of the solution in case $\mathfrak{v}$ is \textit{uniformly} sublinear (i.e., when $\mathfrak{v}_1$ holds without the additional term involving the measure) for a finite time horizon; this is right after extended to the general case, still for a finite time horizon; next, uniqueness of the local solution is shown and then this is used to construct the unique global solution; once existence and uniqueness has been shown, the H\"older continuity (in the Wasserstein distance) and finally the higher moment estimate is shown. Corollary \ref{cor:stab} represent a stability result in terms of suitable perturbations of the velocity field $\mathfrak{v}$. Notice that, since $\mathfrak{v}_3$ could be not trivial to verify, sufficient conditions, including the metric dissipativity condition, are given in Appendix \ref{appendix:equivalence}.

In Section \ref{sec:optimal} we discuss the optimal control Problem \ref{prob:1} on the PDE-ODE system \ref{PDEODE-intro}. Precisely, we first prove the existence and uniqueness of the solution of \eqref{PDEODE-intro} for any given control $\mathbf{u}$, together with a stability result with respect to suitable perturbations of the control function. Technical results related to the PDE-ODE systems are left in Appendix \ref{wp-pre}. Right after, we set the control problem and then we prove Theorem \ref{main2}. Possible different classes of admissible controls are described in Appendix \ref{sec:opcc} while a specific example is given in Appendix \ref{appendix:example}.

\section{Useful notation and preliminary results} 
\label{sub:notation}
Throughout the paper, $C$ will denote any constant depending at most on 
$d,\alpha,\beta,\sigma,p,T$ and $\overline{\mu}$ whose value is not relevant.

\subsection{Wasserstein spaces}
Let $(\fB,d_{\fB})$ be a complete separable metric space and $\cB(\fB)$ be the Borel $\sigma$-algebra on $\fB$. We denote by $\cP(\fB)$ the space of (Borel) probability measures on $\fB$. For any $\mu,\nu \in \cP(\fB)$, let $\Pi(\mu,\nu)$ be the set of (measure) couplings of $\mu$ and $\nu$ (see \cite[Definition 1.1]{villani}), i.e. Borel probability measures $\gamma \in \cP(\fB \times \fB)$ such that for any $A,B \in \cB(\fB)$ we have
\begin{equation*}
	\gamma(A \times \fB)=\mu(A), \qquad \gamma(\fB \times B)=\nu(B).
\end{equation*}
For any $p \ge 1$ and $x_0 \in \fB$ we denote by
\begin{equation*}
	M_p(\mu;x_0)=\int_{\fB}(d_{\fB}(x,x_0))^pd\mu(x)
\end{equation*}
the $p$-th moment (with respect to $x_0 \in \fB$) of $\mu \in \cP(\fB)$. It is clear that if $M_p(\mu;x_0)<\infty$ for some $x_0 \in \fB$, then it is finite for all $x_0 \in \fB$. The $p$-th Wasserstein space over $\fB$ is defined as (see \cite[Definition 6.4]{villani})
\begin{equation*}
	\W_p(\fB):=\{\mu \in \cP(\fB): \ M_p(\mu;x_0)<\infty, \forall x_0 \in \fB\},
\end{equation*}
equipped with the distance (see \cite[Definition 6.1]{villani})
\begin{equation*}
	\W_p(\mu,\nu)=\inf_{\gamma \in \Pi(\mu,\nu)}\left(\int_{\fB \times \fB}(d_{\fB}(x,y))^p\, d\gamma(x,y)\right)^{\frac{1}{p}}.
\end{equation*}
It is well-known (see \cite[Theorem 6.18]{villani}) that $\W_p(\fB)$ is a complete separable metric space. If $\fB$ is a Banach space, then we will denote $M_p(\mu):=M_p(\mu;0)$.
\subsection{Spaces of continuous functions and related Wasserstein spaces}
%\subsection{Space of continuous functions} 
For any $T>0$, we denote by $C([0,T];\fB)$ the space of continuous functions $f:[0,T] \to \fB$ equipped with the uniform distance, i.e.
\begin{equation*}
	d_{\infty}(f,g)=\sup_{t \in [0,T]}d_{\fB}(f(t),g(t)).
\end{equation*}
Clearly, $C([0,T];\fB)$ is a complete separable metric space.  Furthermore, we denote by $C(\R_0^+;\fB)$, where $\R_0^+:=[0,+\infty)$, the space of continuous functions $f:\R_0^+ \to \fB$. In general, for reader's convenience, we will directly refer to $C(\R_0^+;\fB)$ as $C([0,T];\fB)$ with $T=\infty$.
%\subsection{Evaluation map}

Furthermore, for any $t \in [0,T]$, we denote by ${\rm ev}_t$ the \textit{evaluation map} defined on $f \in C([0,T];\fB)$ as ${\rm ev}_t(f)=f(t)$. For any given measure $\bm{\mu} \in \cP(C([0,T];\fB))$ and any $t \in [0,T]$, we denote 
$$
\mu_t={\rm ev}_t \sharp \bm{\mu},
$$ 
where $\sharp$ denotes the pushforward.

In what follows, we will use the notation $\bm{\mu} \in C([0,T];\W_p(\mathfrak{B}))$ to denote continuous curves of probability measures, with $\bm{\mu}=\{\mu_t\}_{t \in [0,T]}$. In particular, if $\mathfrak{B}$ is a Banach space and $T<\infty$, also $C([0,T];\W_p(\mathfrak{B}))$ is a Banach space under the norm
\begin{equation*}
	\left(\overline{M}_p(T,\bm{\mu})\right)^{\frac{1}{p}}:=\sup_{0 \le s \le T}\left(M_p(\mu_s)\right)^{\frac{1}{p}}.
\end{equation*}
We write $\bm{\mu} \in \W_p(C([0,T];\mathfrak{B}))$ to state that there exists a Borel probability measure (that we still denote $\bm{\mu}$) on $C([0,T];\mathfrak{B})$ such that $\mu_t={\rm ev}_t \sharp \bm{\mu}$. With this identification, we can say that $\W_p(C([0,T];\mathfrak{B}))\subset C([0,T];\W_p(\mathfrak{B}))$, since, clearly,
\begin{equation*}
	\overline{M}_p(T,\bm{\mu}) \le M_p(\bm{\mu}),
\end{equation*}
where the second $p$-th order moment is taken in $\W_p(C([0,T];\mathfrak{B}))$.

Notice that for any $T_1>T_2$ and any $\bm{\mu}=\{\mu_t\}_{t \in [0,T_1]} \in C([0,T_1];\W_p(\mathfrak{B}))$ we still denote $\bm{\mu}=\{\mu_t\}_{t \in [0,T_2]}$ (i.e. truncating the flow at $T_2$), so that with such an identification we have $C([0,T_1];\W_p(\mathfrak{B})) \subset C([0,T_2];\W_p(\mathfrak{B}))$.

For any $\gamma \in (0,1]$ and $T>0$ (with $T \not = \infty$), we can consider the set $C^\gamma([0,T];\W_p(\mathfrak{B}))$ of $\gamma$-H\"older continuous curves, i.e. $\bm{\mu} \in C^\gamma([0,T];\W_p(\mathfrak{B}))$ if and only if
	\begin{equation*}
		\sup_{\substack{0 \le t,s \le T \\
				t\not = s}}\frac{\W_p(\mu_t,\mu_s)}{|t-s|^\gamma}<\infty.
	\end{equation*}
Thanks to the previous identification, we can also define the set $C^\gamma_{\rm loc}(\R_0^+;\W_p(\mathfrak{B}))$ of locally $\gamma$-H\"older continuous curves, that is to say that $\bm{\mu} \in C^\gamma_{\rm loc}(\R_0^+;\W_p(\mathfrak{B}))$ if and only if $\bm{\mu} \in C^\gamma([0,T];\W_p(\mathfrak{B}))$ for all $T>0$.

\subsection{Random variables and couplings}
Throughout the paper, we will fix a probability space $(\Omega, \Sigma, \bP)$ on which all the random quantities will be defined. In general, we call any measurable function $X:\Omega \to \fB$ a $\fB$-valued random variable and we denote the set of all $\fB$-valued random variables as $\cM(\Omega;\fB)$. 

The law of a random variable $X \in \cM(\Omega;\fB)$ is defined as ${\rm Law}(X)=X \sharp \bP$, i.e. for any $A \in \cB(\fB)$
\begin{equation*}
	{\rm Law}(X)(A)=\bP(X^{-1}(A)).
\end{equation*}
We denote by $L^p(\Omega; \fB)$ the subspace of $\cM(\Omega;\fB)$ of random variables $X$ 
with the property that there exists $x_0 \in \fB$ such that
\begin{equation}\label{Lpgen}
	\E[(d_{\fB}(X,x_0))^p]<\infty,
\end{equation}
where $\E$ is the expected value. It is clear that if \eqref{Lpgen} holds for a specific choice of $x_0 \in \fB$, then it holds for any $x_0 \in \fB$.
We define a coupling of two probability measures $\mu,\nu \in \cP(\fB)$ as any random variable $(X,Y) \in \cM(\Omega;\fB \times \fB)$ such that ${\rm Law}(X)=\mu$ and ${\rm Law}(Y)=\nu$ (see \cite[Definition 1.1]{villani}). It is clear that for any $\mu,\nu \in \cP(\fB)$ and any measure coupling $\gamma \in \Pi(\mu,\nu)$, there exists a coupling $(X,Y)$ such that ${\rm Law}(X,Y)=\gamma$. Also note that $\mu \in \W_p(\fB)$ if and only if for any $X \in \cM(\Omega;\fB)$ with ${\rm Law}(X)=\mu$ it holds $X \in L^p(\Omega;\fB)$. In particular, this means that we can rewrite the Wasserstein distance between $\mu$ and $\nu$ as
\begin{equation*}
	\W^p_p(\mu,\nu)=\inf_{\substack{(X,Y) \in L^p(\Omega;\fB \times \fB)\\ {\rm Law}(X)=\mu, \ {\rm Law}(Y)=\nu}}\E[(d_{\fB}(X,Y))^p].
\end{equation*}
For $T>0$ (possibly $T=\infty)$ and $X \in \cM(\Omega;C([0,T];\fB))$, we can define for any $t \in [0,T]$ 
\begin{align*}
	X(t): &\, \Omega \mapsto \fB \\
	&\, \omega \mapsto X(t)(\omega)={\rm ev}_t(X(\omega)).
\end{align*} 
Hence, $X(t) \in \cM(\Omega;\fB)$ for any $t \in [0,T]$, and if ${\rm Law}(X)=\mu$, then ${\rm Law}(X(t))=\mu_t$. In particular, we can identify $X$ as a $\fB$-valued stochastic process with continuous trajectories.

\subsection{Young functions}
Let us recall the definition of Young function: we say that $\Phi$ is a Young function if for any $x \ge 0$
\begin{equation*}
	\Phi(x)=\int_0^x \varphi(y)dy,
\end{equation*}
where
\begin{itemize}
	\item[$(i)$] $\varphi(0)=0$ and $\varphi(y)>0$ for any $y>0$;
	\item[$(ii)$] $\varphi$ is non-decreasing;
	\item[$(iii)$] $\varphi$ is right-continuous;
	\item[$(iv)$] $\lim\limits_{y \to +\infty}\varphi(y)=+\infty$.
\end{itemize}
We say that a Young function $\Phi$ satisfies the $\Delta_2$ condition if there exists a constant $C_\Phi>1$ such that for any $x \ge 0$
\begin{equation*}
	\Phi(2x) \le C_\Phi\Phi(x).
\end{equation*}
It is known (see \cite[Remark 4.4.6]{pick2012function}) that any Young function $\Phi$ satisfying the $\Delta_2$ condition is such that there exist two constants $C_0>0$ and $p>1$ such that
\begin{equation*}
	\Phi(x) \le C_0x^p, \ \forall x \ge 1.
\end{equation*}
The converse is not always true. By definition, Young functions are non-decreasing, convex, and satisfy 
\begin{equation}\label{eq:suplin}
	\lim_{x \to +\infty}\frac{\Phi(x)}{x}=+\infty
\end{equation}
For further details on Young functions, we refer to \cite[Chapter 4]{pick2012function}. 

For our purposes, it will be useful to consider moments with respect to Young functions. Indeed, let $\Phi:[0,+\infty) \to [0,+\infty)$ be a Young function
and $(\fB, d_{\fB})$ be a complete separable metric space. Then for any $\mu \in \cP(\fB)$, we define
\begin{equation*}
	M_\Phi(\mu;x_0)=\int_{\fB}\Phi(d_{\fB}(x,x_0))\, d\mu(x)
\end{equation*}
and we say that $\mu \in \W_{\Phi}(\fB)$ if and only if there exists $x_0 \in \fB$ such that $M_\Phi(\mu;x_0)<\infty$ (or, equivalently, $M_\Phi(\mu;x_0)<\infty$ for any $x_0 \in \fB$). If $\fB$ is a Banach space, then we set $M_\Phi(\mu):=M_\Phi(\mu;0)$. Observe that, for instance, if we set $\Phi_p(x)=x^p$ for $p>1$, then $\W_{\Phi}(\fB)=\W_p(\fB)$. Similarly to what we did for the moments of order $p$, we set
\begin{equation*}
	\overline{M}_\Phi(t,\bm{\mu})=\sup_{0 \le s \le t}M_\Phi(\mu_s)
\end{equation*}
for any $\bm{\mu} \in C([0,T];\W_\Phi(\fB))$, where $\fB$ is a Banach space.

An important result that will be frequently employed is the de la Vall\'ee-Poussin Theorem (see \cite[Theorem T22]{meyer1966probability}).
More precisely, it states that a sequence $X_n$ of random variables is uniformly integrable if and only if 
there exists a Young function $\Phi:[0,+\infty) \to [0,+\infty)$ such that $\Phi(x) \le x^2$ and
\begin{equation}\label{eq:dVP}
	\sup \limits_{n} \E\left[\Phi(|X_n|)\right]<\infty.
\end{equation}
We use this result in combination with the Dunford-Pettis Theorem (see \cite[Theorem 4.30]{brezis2011functional}). 
Indeed, if for some $p \ge 1$ we have $X \in L^p(\Omega)$, then the set $\{ |X|^p \}$ is clearly a weakly compact set in $L^1$. Hence, it must be uniformly integrable and thus 
by the de la Vall\'ee-Poussin Theorem we know that there exists a Young function such that $\E [ \Phi(|X|^P) ] < \infty$. This proves the following result. 
\begin{prop}
	\label{prop:improvmom}
	Let $\mu \in \W_p(\fB)$. Then there exists a Young function $\Phi$ with the property that $\Phi(x) \le x^2$ and defining $\Phi_p(x) = \Phi(x^p)$ we have
	$\mu \in \W_{\Phi_p}(\fB)$.
\end{prop}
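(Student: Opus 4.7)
The statement essentially packages the argument sketched in the paragraph preceding it, so the plan is to make that sketch rigorous. First I would fix $x_0 \in \fB$ and realise $\mu$ as the law of a random variable $X \in \cM(\Omega;\fB)$ (this is always possible by picking, e.g., $\Omega = \fB$ with $\bP = \mu$ and $X = \mathrm{id}$). The hypothesis $\mu \in \W_p(\fB)$ translates via the change of variables formula to
\begin{equation*}
\E\bigl[(d_{\fB}(X,x_0))^p\bigr] = \int_{\fB} (d_{\fB}(x,x_0))^p \, d\mu(x) = M_p(\mu;x_0) < \infty,
\end{equation*}
so the real-valued random variable $Y := (d_{\fB}(X,x_0))^p$ lies in $L^1(\Omega)$.

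Second, I would invoke the de la Vall\'ee-Poussin criterion quoted just before the proposition. The singleton $\{Y\}$ is trivially relatively weakly compact in $L^1(\Omega)$ (indeed, any single $L^1$ function is uniformly integrable, as absolute continuity of the integral and tightness hold for one function). Hence there exists a Young function $\Phi$ satisfying the extra bound $\Phi(x) \le x^2$ and
\begin{equation*}
\E[\Phi(|Y|)] = \E\bigl[\Phi\bigl((d_{\fB}(X,x_0))^p\bigr)\bigr] < \infty.
\end{equation*}

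Third, I would translate this back to the moment formulation by defining $\Phi_p(x) := \Phi(x^p)$. Note that $\Phi_p$ is itself non-decreasing with $\Phi_p(0)=0$, and the computation
\begin{equation*}
M_{\Phi_p}(\mu;x_0) = \int_{\fB} \Phi\bigl((d_{\fB}(x,x_0))^p\bigr) \, d\mu(x) = \E\bigl[\Phi\bigl((d_{\fB}(X,x_0))^p\bigr)\bigr] < \infty
\end{equation*}
yields $\mu \in \W_{\Phi_p}(\fB)$, which is the conclusion. Since $M_{\Phi_p}(\mu;\cdot)$ is finite at one point iff at every point (as remarked after the definition of $\W_\Phi$), the choice of $x_0$ is immaterial.

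There is no substantial obstacle: the only points requiring a little care are the justification that a single $L^1$ random variable satisfies the hypothesis of de la Vall\'ee-Poussin (handled by noting that a singleton is relatively weakly compact, hence uniformly integrable by Dunford-Pettis) and the verification that the construction of $\Phi$ can be made to satisfy the quadratic upper bound $\Phi(x)\le x^2$, which is already embedded in the form of the theorem as recalled in \eqref{eq:dVP}. No additional structure of $\fB$ beyond completeness and separability is used.
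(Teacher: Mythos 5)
Your proposal is correct and takes essentially the same route as the paper: the paper's ``proof'' is precisely the paragraph preceding the proposition (realize the $p$-th power of the distance as an $L^1$ random variable, observe that a singleton in $L^1$ is uniformly integrable, apply the de la Vall\'ee-Poussin theorem in the form quoted with the bound $\Phi(x)\le x^2$, and translate back to the moment condition). Your write-up merely makes explicit the bookkeeping steps — choosing $x_0$, realizing $\mu$ as a law, and noting that finiteness of $M_{\Phi_p}(\mu;\cdot)$ at one base point suffices — that the paper leaves implicit.
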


\subsection{The linear equation}\label{subs:linear}

Given a function $F:[0,+\infty) \times \R^{2d} \to \R^d$ we will consider the following set of assumptions:

\begin{itemize} 
	\item[$(A_0)$] $F:[0,+\infty) \times \R^{2d} \to \R^d$ is a Carath\'{e}odory map, i.e. it is measurable in the first variable and continuous in the second one; \label{itemA0}
	\item[$(A_1)$] For any $T>0$, there exist two constants $C>0$ and $\beta \in (0,1)$ such that 
	\begin{equation*}
			F(t,x,v) \le C(1+|x|^{\frac{\beta}{3}}+|v|^\beta),
	\end{equation*}
	for all $t \in [0,T]$ and $(x,v) \in \R^{2d}$;
	\item[$(A_2)$] For any $T>0$ and any compact set $K \subset \R^{2d}$ there exist two constants $L>0$ and $\alpha\in (\beta,1]$ such that
	\begin{equation*}
		|F(t,x_1,v_1)-F(t,x_2,v_2)| \le L(|x_1-x_2|^{\frac{\alpha}{3}}+|v_1-v_2|^\alpha)
	\end{equation*}
	for all $t \in [0,T]$ and $(x_1,v_1),(x_2,v_2) \in K$.
\end{itemize}
We also define 

\begin{equation*}
[F]_{\beta,T}:=\sup_{\substack{t \in [0,T] \\ (x,v) \in \R^{2d}}}\frac{F(t,x,v)}{1+|x|^{\frac{\beta}{3}}+|v|^\beta}.
\end{equation*}

We recall the following existence and uniqueness result for an associated linear system
proved in \cite{Anceschi2025}.

\begin{thm}\label{thm:existence}
	Under Assumptions $(A_0)$,$(A_1)$ and $(A_2)$, for any $\mu_0 \in \W_1(\R^{2d})$ and $T>0$, the equation
	\begin{equation}\label{probmulin1}
		\begin{cases} \partial_t \mu_t = - v \cdot \nabla_x \mu_t + \sigma \Delta_v \mu_t - \mathrm{div}_v (F(t,z) \mu_t) \quad &(t,x,v) \in  \R^+_0 \times\mathbb{R}^{2d}, \\
			\mu_0 = \overline{\mu}  \quad &(x,v) \in \R^{2d},
		\end{cases}
	\end{equation}
	%Equation~\eqref{probmu} 
	admits exactly one global solution ${\bm \mu} :=  \{\mu_t\}_{t \in \R_0^+}\in C(\R_0^+; \W_1(\R^{2d}))$, which can be characterized as follows. Let $B$ be a $d$-dimensional Brownian motion and $(X_0,V_0) \in \cM(\Omega;\R^{2d})$ be independent of $B$ and such that ${\rm Law}(X_0,V_0)=\overline{\mu}$, then there exists a stochastic process $(X,V) \in L^1(\Omega; C([0,T]; \R^{2d}))$ for all $T>0$ such that ${\bm \mu} = Law(X,V)$ and $(X,V)$ is the pathwise unique strong solution to %\eqref{eq:SDEstrongaux-intro}.
	\begin{equation}\label{eq:SDEstrongauxlin}
		\begin{cases}
			dX(t)=V(t)\, dt & t \in \R_0^+\\
			dV(t)=F(t,X(t),V(t))\, dt+\sqrt{2\sigma} \, dB(t) & t \in \R_0^+ \\
			X(0)=X_0, \quad V(0)=V_0.
		\end{cases}
	\end{equation} 
\end{thm}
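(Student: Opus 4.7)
The theorem combines a probabilistic construction (strong solution of the degenerate SDE \eqref{eq:SDEstrongauxlin}) with a PDE well-posedness statement for the Fokker-Planck equation \eqref{probmulin1}. The plan is to establish the SDE side first, then pass to laws to obtain existence for the PDE, and finally use a superposition-type argument to get PDE uniqueness.

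\textbf{Step 1: weak existence for the SDE.} I would start by mollifying the drift $F$ spatially (say $F_n = F \ast \rho_n$ in the variable $z$) to obtain smooth drifts with the \emph{same} sublinear growth constant in $(A_1)$, up to a uniform additive correction. The classical Itô theory then yields a global strong solution $(X_n, V_n)$ to the regularised SDE. Using $(A_1)$ together with Grönwall's lemma applied to $\E[|X_n(t)|^2 + |V_n(t)|^2]$ one gets uniform $L^p$ moment bounds on bounded time intervals (the $|x|^{\beta/3}$ and $|v|^\beta$ growth with $\beta < 1$ is exactly what is needed to close the estimate). Kolmogorov's continuity criterion then provides tightness in $C([0,T]; \R^{2d})$, and Skorokhod's representation plus a standard martingale-problem limiting procedure delivers a weak solution of \eqref{eq:SDEstrongauxlin} on some probability space.

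\textbf{Step 2: pathwise uniqueness.} This is the key difficulty. Because $F$ is only anisotropically Hölder (exponent $\alpha/3$ in $x$ and $\alpha$ in $v$, with $\alpha > \beta$), classical Yamada--Watanabe does not apply directly, and neither does Zvonkin's original transformation since noise is present only in $v$. The natural route is a hypoelliptic Zvonkin / Itô--Tanaka approach: one solves the backward kinetic Kolmogorov equation
\begin{equation*}
\partial_t u + v\cdot\nabla_x u + \sigma\Delta_v u + F\cdot\nabla_v u = F, \qquad u(T,\cdot)=0,
\end{equation*}
in anisotropic Hölder--Schauder spaces adapted to the kinetic scaling (where regularity in $x$ costs three times as much as in $v$, explaining the $\alpha/3$ exponent in $(A_2)$). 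Provided $\alpha > 2/3$ -- or more generally that the regularity gain from hypoelliptic Schauder estimates compensates the Hölder loss of $F$ -- the transformation $\Phi(t,x,v) := (x, v+u(t,x,v))$ is a $C^1$ diffeomorphism in $v$, and the transformed SDE has a drift that is Lipschitz in $v$ and Hölder of exponent $> 2/3$ in $x$. Pathwise uniqueness for the transformed equation then follows by the standard Yamada--Watanabe + kinetic/Gronwall estimates. The combination of weak existence and pathwise uniqueness yields, by Yamada--Watanabe, strong existence of a pathwise unique solution.

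\textbf{Step 3: from SDE to PDE.} Given the unique strong solution $(X,V)$, set $\mu_t := \mathrm{Law}(X(t),V(t))$. Applying Itô's formula to $\psi(X(t),V(t))$ for $\psi \in C_c^\infty(\R^{2d})$ and taking expectations shows directly that $\bm{\mu}$ satisfies the weak formulation of \eqref{probmulin1}. Continuity of $t\mapsto \mu_t$ in $\W_1$ follows from the uniform moment bounds and the Burkholder--Davis--Gundy inequality applied to $\E[|X(t)-X(s)|+|V(t)-V(s)|]$. The finiteness of the first moment is a direct consequence of the $L^1$ bounds on $(X,V)$.

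\textbf{Step 4: PDE uniqueness via superposition.} For uniqueness of the measure solution I would invoke a superposition principle adapted to the kinetic Fokker-Planck operator (in the spirit of Ambrosio--Trevisan, Figalli, and Röckner--Trevisan): any weak solution $\bm{\mu}\in C(\R_0^+;\W_1(\R^{2d}))$ of \eqref{probmulin1} is the time-marginal of the law of \emph{some} weak solution of the SDE \eqref{eq:SDEstrongauxlin} with initial distribution $\bar\mu$. By uniqueness in law for the SDE (which follows from pathwise uniqueness via \cite{cherny2002uniqueness}), any such representation must coincide with $\mathrm{Law}(X,V)$, hence $\mu_t$ is uniquely determined. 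The \emph{main obstacle} I expect is Step 2: designing and controlling the Zvonkin transformation in the hypoelliptic setting with only Hölder regularity in $x$, since standard hypoelliptic Schauder estimates must be combined with the precise anisotropic scaling encoded in $(A_2)$ and the growth bound in $(A_1)$.
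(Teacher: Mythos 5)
Note first that this paper does not prove Theorem~\ref{thm:existence}; it is quoted as an external result from \cite{Anceschi2025} (the sentence preceding the statement reads ``We recall the following existence and uniqueness result for an associated linear system proved in \cite{Anceschi2025}''). There is therefore no internal proof in this paper to compare your argument against; what follows assesses your proposal on its own terms.

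Your blueprint --- weak SDE existence by mollification and tightness, pathwise uniqueness by a hypoelliptic Zvonkin transform, PDE existence via It\^o's formula, and PDE uniqueness via a superposition principle plus Yamada--Watanabe --- is the natural route, and Steps 1, 3 and 4 are sound; Step 3 in particular is exactly what this paper carries out explicitly in Section~\ref{exuninl} to pass from the McKean--Vlasov SDE to the Fokker--Planck equation. The genuine gap, which you yourself flag, is Step 2. A Zvonkin transform $\Phi(t,x,v)=(x,v+u(t,x,v))$ requires the backward kinetic PDE to gain enough anisotropic regularity to produce a transformed SDE with controllable drift. Kinetic Schauder estimates give $\nabla_v u$ H\"older of exponent $1+\alpha$ in $v$ and $(1+\alpha)/3$ in $x$ in the anisotropic scale; since $(1+\alpha)/3 < 1$ for every $\alpha \le 1$, the transformed drift is never Lipschitz in $x$, and the published results in this direction (Chaudru de Raynal, Zhang, and others) therefore all impose a strict quantitative lower bound on the H\"older exponent. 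Assumption $(A_2)$ in the present paper only requires $\alpha \in (\beta,1]$ with $\beta \in (0,1)$, allows $\alpha$ and $L$ to depend on the compact set $K$, and imposes no absolute lower bound. You cannot therefore write ``provided $\alpha > 2/3$'' as though it were a by-product of the hypotheses: either the proof in \cite{Anceschi2025} closes the gap by a different device (plausibly exploiting the structural inequality $\alpha>\beta$, i.e.\ local regularity strictly dominating global growth, together with a truncation or localization), or the statement there carries a sharper hypothesis that this paper has suppressed when restating it. As written, your Step 2 is an honest sketch of the known technique, not a verification that the stated hypotheses suffice.
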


Actually, we can show that the solution ${\bm \mu}$ is slightly more regular, depending on the regularity of the initial data $\overline{\mu}$. Before doing this, we state an inequality that we will use often throughout the paper, whose interest is not only related to the proof of the next proposition. Precisely, for any $T>0$ and $p>1$ it holds
\begin{align}\label{eq:Doob}
	\E\left[\sup_{t \in [0,T]}|B(t)|^p\right] \le \frac{1}{\sqrt{\pi}}\left(\frac{2p\sqrt{T}}{p-1}\right)^p\Gamma\left(\frac{p+1}{2}\right),
\end{align}
that follows by Doob's maximal inequality (see \cite[Theorem II.1.7]{revuzyor}) and the formula for the absolute $p$-moments of a Gaussian random variable [.....].

We have the following result.
\begin{prop}\label{prop:Young}
	Let Assumptions $(A_0)$, $(A_1)$ and $(A_2)$ hold true. Let $\Phi:[0,+\infty) \to [0,+\infty)$ be either the identity or a Young function satisfying $\Phi(x) \le x^2$ and define $\Phi_p(x)=\Phi(x^p)$ for any $x \ge 0$ and $p \ge 1$. Let $\bm{\mu} \in C([0,T];\W_1(\R^d))$ be the unique solution of \eqref{probmulin1}. If $\overline{\mu} \in \W_{\Phi_p}(\R^{2d})$ for some $p \ge 1$, then $\bm{\mu} \in \W_{\Phi_p}(C([0,T];\R^{2d}))$ and there exist two constants $\overline{C}_1,\overline{C}_2$ depending only on $p,\beta,T$ and $\sigma$ such that
	\begin{equation}\label{eq:Phi1}
		\overline{M}_{\widetilde{\Phi}_p}(T,\bm{\mu}) \le M_{\widetilde{\Phi}_p}(\bm{\mu}) \le  \overline{C}_2\left(1+M_{\Phi_p}(\overline{\mu})+[F]_{\beta,T}^{2p}\right),
	\end{equation}
	where
	\begin{equation}\label{eq:tildephi}
		\widetilde{\Phi}(r)=\Phi\left(\frac{e^{-\overline{C}_1[F]_{\beta,T}^pT}}{3\overline{C}_1}r\right).
	\end{equation}
	If, furthermore, $\Phi$ satisfies the $\Delta_2$ condition, then there exists a constant $\overline{C}_3$ depending only on $\Phi,p,\beta,T$ and $\sigma$ such that
	\begin{equation}\label{eq:Phi2}
		\overline{M}_{\Phi_p}(T,\bm{\mu}) \le \overline{C}_3\left(1+M_{\Phi_p}(\overline{\mu})+[F]_{\beta,T}^{2p}\right)e^{\overline{C}_3[F]_{\beta,T}^p}.
	\end{equation}
	Finally, if we consider the solution $Z=(X,V)$ of \eqref{eq:SDEstrongauxlin}, it also holds, almost surely,
	\begin{equation}\label{Zsupbound}
			\sup_{0 \le s \le T}|Z(s)| \le C_4\left(|Z_0|+[F]_{\beta,T}+\sigma\sup_{0 \le s \le T}|B(s)|\right)e^{C_4([F]_{\beta,T}+1)T}.
	\end{equation}
\end{prop}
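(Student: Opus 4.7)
By Theorem \ref{thm:existence}, $\bm{\mu}$ is realized as the law on $C([0,T];\R^{2d})$ of the strong solution $Z=(X,V)$ to the SDE \eqref{eq:SDEstrongauxlin}; hence $M_{\Phi_p}(\bm{\mu})=\E\bigl[\Phi_p(\sup_{s\le T}|Z(s)|)\bigr]$ and every estimate on $\bm{\mu}$ reduces to a moment bound on the pathwise envelope of $Z$. This probabilistic lift is also what produces the upgrade from a curve of marginals in $\W_1$ to a true Borel measure on $C([0,T];\R^{2d})$.

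First I would prove the pathwise bound \eqref{Zsupbound}. The integral form of \eqref{eq:SDEstrongauxlin}, the sublinearity $(A_1)$, and the elementary inequality $r^q\le 1+r$ (valid for $r\ge 0$ and $q\in(0,1]$, applied to $q\in\{\beta/3,\beta\}$) yield
\begin{equation*}
	|X(t)|+|V(t)| \le |Z_0| + 3T[F]_{\beta,T} + \sqrt{2\sigma}\,|B(t)| + \bigl(1+[F]_{\beta,T}\bigr)\int_0^t \bigl(|X(s)|+|V(s)|\bigr)\,ds.
\end{equation*}
Passing to the monotone envelope $\phi(t):=\sup_{s\le t}(|X(s)|+|V(s)|)$ and applying Gronwall's inequality produces \eqref{Zsupbound}, since $|Z(t)|\le\phi(t)\le\sqrt{2}\sup_{s\le t}|Z(s)|$.

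For \eqref{eq:Phi1} I would repeat the computation at the level of $p$-th powers, using $r^{\beta p/3},r^{\beta p}\le 1+r^p$ together with the inequality $(\sum a_i)^p\le k^{p-1}\sum a_i^p$, to obtain (after Gronwall applied to $\psi(t):=\sup_{s\le t}|Z(s)|^p$)
\begin{equation*}
	\psi(T) \le \overline{C}_1\, e^{\overline{C}_1 [F]_{\beta,T}^p T}\Bigl(|Z_0|^p + [F]_{\beta,T}^p T^p + \sigma^{p/2}\sup_{s\le T}|B(s)|^p\Bigr),
\end{equation*}
for a constant $\overline{C}_1=\overline{C}_1(p,\beta,T,\sigma)$. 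The scaling $c=e^{-\overline{C}_1[F]_{\beta,T}^p T}/(3\overline{C}_1)$ in \eqref{eq:tildephi} is calibrated precisely so that $c\psi(T)\le\tfrac13\bigl(|Z_0|^p+[F]_{\beta,T}^p T^p+\sigma^{p/2}\sup_{s\le T}|B(s)|^p\bigr)$. Applying the convex increasing $\Phi$ and the discrete Jensen inequality to three equally weighted summands gives
\begin{equation*}
	\widetilde{\Phi}_p\Bigl(\sup_{s\le T}|Z(s)|\Bigr)=\Phi(c\,\psi(T))\le\tfrac{1}{3}\bigl[\Phi(|Z_0|^p)+\Phi([F]_{\beta,T}^p T^p)+\Phi(\sigma^{p/2}\sup_{s\le T}|B(s)|^p)\bigr],
\end{equation*}
and the expectation of the right-hand side is controlled because (i) $\E[\Phi(|Z_0|^p)]=M_{\Phi_p}(\overline{\mu})$, (ii) the deterministic term is at most $[F]_{\beta,T}^{2p}T^{2p}$ by the hypothesis $\Phi(x)\le x^2$, and (iii) the Brownian term is bounded by $\sigma^p\,\E[\sup_{s\le T}|B(s)|^{2p}]<\infty$ thanks to Doob's estimate \eqref{eq:Doob}. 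The first inequality in \eqref{eq:Phi1} is immediate from the monotonicity of $\widetilde{\Phi}_p$, and the finiteness of $M_{\widetilde{\Phi}_p}(\bm{\mu})$ delivers $\bm{\mu}\in\W_{\widetilde{\Phi}_p}(C([0,T];\R^{2d}))$.

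Finally, \eqref{eq:Phi2} follows by iterating the $\Delta_2$ inequality $\Phi(2x)\le C_\Phi\Phi(x)$: writing $c^{-1}\le 3\overline{C}_1\cdot 2^n$ with $n=\lceil\log_2(3\overline{C}_1)+\overline{C}_1[F]_{\beta,T}^p T/\log 2\rceil$, one obtains $\Phi(x)\le C_\Phi^n\widetilde{\Phi}(x)\le\exp(\overline{C}_3[F]_{\beta,T}^p)\,\widetilde{\Phi}(x)$ for a suitable $\overline{C}_3=\overline{C}_3(\Phi,p,\beta,T,\sigma)$, which lifts \eqref{eq:Phi1} to \eqref{eq:Phi2}. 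The main technical obstacle is the joint calibration of the Gronwall exponent and the scaling factor inside $\widetilde{\Phi}$: the hypothesis $\Phi(x)\le x^2$ and the factor $1/(3\overline{C}_1)$ in \eqref{eq:tildephi} must be tuned so that, after $\Phi$ is applied, the deterministic term $([F]^p T^p)$ produces exactly $[F]^{2p}$ (up to multiplicative constants in $T$) and so that the Gronwall exponential is completely absorbed by the scaling $c$ rather than surviving in the final bound.
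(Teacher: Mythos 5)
Your proof is correct and takes essentially the same route as the paper's: represent $\bm{\mu}$ as the path law of the strong SDE solution, majorize the sublinear drift to get a Grönwall bound on $\sup_{s\le T}|Z(s)|^p$, apply $\Phi$ with a three-term convexity split calibrated through the scaling constant inside $\widetilde{\Phi}$, use $\Phi(x)\le x^2$ together with Doob's inequality on the resulting terms, and upgrade to \eqref{eq:Phi2} by iterating the $\Delta_2$ inequality. The only differences are presentational (you derive \eqref{Zsupbound} independently rather than specializing the general-$p$ Grönwall estimate to $p=1$, and you prove \eqref{eq:Phi1} before the $\Delta_2$ implication, whereas the paper establishes the implication first) and in incidental constant bookkeeping (e.g.\ $\sigma^{p/2}$ versus the paper's $\sigma^p$); neither affects the argument.
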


%In this appendix we provide some simple uniform bounds for higher moments of solutions of \eqref{probmu}, 
%whose existence and uniqueness is ensured in Section \ref{proofthm}.
\begin{proof}
		Let us first show that if $\Phi$ satisfies the $\Delta_2$ condition, then \eqref{eq:Phi1} implies \eqref{eq:Phi2}. Indeed, in such a case, since $\Phi(2x) \le C_\Phi \Phi(x)$ for a constant $C_\Phi>1$, we have
	\begin{equation*}
		\Phi(x^p) \le C_\Phi^{|\log_2(2\overline{C}_1)|}e^{\overline{C}_1\log(C_\Phi)[F]_{\beta,T}^pT}\widetilde{\Phi}(x^p).
	\end{equation*}
	This is enough to prove the desired implication.
	
	Now we prove \eqref{eq:Phi1}. Let $Z=(X,V)$ be any strong solution of \eqref{eq:SDEstrongauxlin} and recall that $\bm{\mu}={\rm Law}(Z)$. By Assumption $(A_1)$ and the second equation in \eqref{eq:SDEstrongauxlin} we know that there exists a constant $C_1$ depending on $p$ and $T$ such that
	\begin{equation*}
		|V(s)|^p\le C_1\left(|V_0|^p+ [F]_{\beta,T}^p  + [F]_{\beta,T}^p \int_0^t \left(|X(\tau)|^{\frac{p\beta}{3}}+|V(\tau)|^{p\beta}\right) d\tau+\sup_{0 \le s \le t}\sigma^p |B(s)|^p\right)
	\end{equation*}
	By using Young's inequality on $|X(\tau)|^{\frac{p\beta}{3}}$ and $|V(\tau)|^{p\beta}$ with exponents respectively $\frac{3}{\beta}$ and $\frac{1}{\beta}$ and taking the supremum inside the integral, we get
	\begin{equation}\label{eq:Vp}
		|V(s)|^p\le C_2\left(|V_0|^p+ [F]_{\beta,T}^p  + [F]_{\beta,T}^p\int_0^t \sup_{0 \le z \le \tau}\left(|X(z)|^{p}+|V(z)|^{p}\right) d\tau+\sigma^p\sup_{0 \le s \le t}|B(s)|^p\right)
	\end{equation}
	for some constant $C_2$ depending only on $p,T$ and $\beta$. Moreover, we also have, by the first eqution in \eqref{eq:SDEstrongauxlin},
	\begin{align}\label{eq:Xp}
		|X(s)|^p \le C_3\left(|X_0|^p+\int_0^t\sup_{0 \le z \le \tau}|V(z)|^pd\tau\right),
	\end{align}
	where $C_3$ is a constant depending only on $p$ and $T$. Summing \eqref{eq:Xp} and \eqref{eq:Vp} and taking the supremum, it is clear that
	\begin{align}
		\sup_{0 \le s \le t}|Z(s)|^p \le C_4\left(|Z_0|^p+[F]_{\beta,T}^p+([F]_{\beta,T}^p+1)\int_0^t\sup_{0 \le z \le \tau}|Z(z)|^pd\tau+\sigma^p\sup_{0 \le s \le t} |B(s)|^p\right),
	\end{align}
	where $C_4=\max\{C_2,C_3\}$ and we can use Gr\"onwall's inequality to get
	\begin{align}\label{eq:Zp}
		\sup_{0 \le s \le t}|Z(s)|^p \le C_4\left(|Z_0|^p+[F]_{\beta,T}^p+\sigma^p\sup_{0 \le s \le t}|B(s)|^p\right)e^{C_4([F]_{\beta,T}^p+1)T}.
	\end{align}
	For $p=1$, \eqref{eq:Zp} implies \eqref{Zsupbound}.
	
	Now we apply the function $\Phi$ on both sides of \eqref{eq:Zp}, obtaining, by convexity and the bound $\Phi(x) \le x^2$, setting $\overline{C}_1=\frac{1}{3C_4}$, \begin{align}\label{eq:Zp3}
		\widetilde{\Phi}\left(\sup_{0 \le s \le t}|Z(s)|^p\right) \le \frac{1}{3}\Phi\left(|Z_0|^p\right)+[F]_{\beta,T}^{2p}+\sigma^{2p}\sup_{0 \le s \le t}|B(s)|^{2p}.
	\end{align}
	Finally, we take the expectation on both sides of \eqref{eq:Zp3} and we use \eqref{eq:Doob} to achieve \eqref{eq:Phi1}.
\end{proof}

\section{Proof of Theorem \ref{thm:main1} and Corollary \ref{cor:stab}}\label{exuninl}
\subsection{Proof of Theorem \ref{thm:main1}}
	We proceed in four steps: first we prove the existence of a solution under a stronger assumption; then we weaken such assumption to get the actual existence result; next we prove uniqueness of the local solution; finally, we prove existence and uniqueness of the global solution. 
	\subsubsection{Existence with $\mathfrak{v}$ uniformly sublinear}\label{step1}
	We first assume that $\mathfrak{v}$ satisfies $(\mathfrak{v}_0)$, $(\mathfrak{v}_2)$ and $(\mathfrak{v}_3)$, while we substitute $(\mathfrak{v}_1)$ with the following further assumption:
	\begin{itemize}
		\item[$(\overline{\mathfrak{v}}_1)$] For all $T>0$ there exist two constants $K>0$ and $\beta \in [0,1)$ such that 
			\begin{equation*}	
				|\mathfrak{v} [t,\bm{\mu}] (z)| \le K \left(1+|x|^{\frac{\beta}{3}}+|v|^\beta\right)
			\end{equation*} 
			for every $z \in \R^{2d}$, $t\in [0,T]$ and for every $\bm{\mu} \in  C([0,T];\W_p(\R^{2d}))$.
	\end{itemize}
	Let $T>0$ (with $T \not = +\infty$) and $(X_0,V_0)$ be any random variable independent of $B$ with ${\rm Law}(X_0,V_0)=\overline{\mu}$. Fix any curve of probability measures $\bm{\nu} \in C([0,T];\W_p(\R^{2d}))$ and consider the following SDE.
	\begin{equation}\label{eq:SDEaux}
		\begin{cases}
			dX(t)=V(t)dt & t \in (0,T]\\
			dV(t)=F_{\bm{\nu}}(t,X(t),V(t))dt+\sqrt{2\sigma} dB(t) & t \in (0,T]\\
			X(0)=X_0, \qquad V(0)=V_0,
		\end{cases}
	\end{equation}
	where $F_{\bm{\nu}}(t,x,v)=\mathfrak{v}[t,\bm{\nu}](x,v)$.
	Notice that, by Assumptions $(\mathfrak{v}_0)$, $(\overline{\mathfrak{v}}_1)$ and $(\mathfrak{v}_2)$, we know that the function $F_{\bm{\nu}}$ satisfies Assumptions $(A_0)$, $(A_1)$ and $(A_2)$, hence, by Theorem~\ref{thm:existence}, there exists a unique strong solution $(X_{\bm{\nu}},V_{\bm{\nu}}) \in L^1(\Omega;C([0,T];\R^{2d}))$ of \eqref{eq:SDEaux}. Let $\bm{\widetilde{\nu}}={\rm Law}(X_{\bm{\nu}},V_{\bm{\nu}})$. By Proposition~\ref{prop:improvmom}, we know that there exists a Young functions $\Phi$ such that $\Phi(x) \le x^2$ and $\overline{\mu} \in \W_{\Phi_p}(\R^{2d})$, where $\Phi_p(r)=\Phi(r^p)$. Thus, by Proposition~\ref{prop:Young}, in particular \eqref{eq:Phi2}, we know that
	\begin{equation}\label{eq:Mp1}
		M_p(\bm{\widetilde{\nu}}) \le C(1+[F_{\bm{\nu}}]_{\beta,T}^{2p})e^{C[F_{\bm{\nu}}]^p_{\beta,T}} \le C(1+K^{2p})e^{CK^p},
	\end{equation}
	where we also used Assumption $(\overline{\mathfrak{v}}_1)$, and
	\begin{equation}\label{eq:MPhi1}
		M_{\widetilde{\Phi}^{\bm{\nu}}_p}(\bm{\widetilde{\nu}}) \le C(1+[F_{\bm{\nu}}]_{\beta,T}^{2p}) \le C(1+K^{2p}),
	\end{equation}
	where
	\begin{equation*}
		\widetilde{\Phi}^{\bm{\nu}}(r)=\Phi\left(Ce^{-C[F_{\bm{\nu}}]_{\beta,T}^p}r\right).
	\end{equation*}
	It is clear, since $\Phi$ is increasing, that for all $r \ge 0$
	\begin{equation*}
		\overline{\Phi}(r):=\Phi\left(Ce^{-CK^p}r\right) \le \Phi\left(Ce^{-C[F_{\bm{\nu}}]_{\beta,T}^p}r\right)=\widetilde{\Phi}^{\bm{\nu}}(r),
	\end{equation*}
	hence, by \eqref{eq:MPhi1} we have
	\begin{equation}\label{eq:MPhi2}
		M_{\overline{\Phi}_p}(\bm{\widetilde{\nu}}) \le C(1+K^{2p}).
	\end{equation}
 	Notice that both bounds \eqref{eq:Mp1} and \eqref{eq:MPhi1} are uniform with respect to $\bm{\nu}$. It is also clear, by \eqref{eq:Mp1}, that $(X_{\bm{\nu}},Y_{\bm{\nu}}) \in L^p(\Omega;C([0,T];\R^{2d}))$. Then we can consider the map
 	\begin{equation*}
 	\cT:\bm{\nu} \in \W_p(C([0,T];\R^{2d})) \mapsto \cT\bm{\nu}={\rm Law}(X_{\bm{\nu}},Y_{\bm{\nu}}) \in \W_p(C([0,T];\R^{2d})).
 	\end{equation*}
	We want to show that $\cT$ admits a fixed point. To do this, we first prove that
	\begin{equation*}
		{\sf Im}(\cT)=\{\cT{\bm{\nu}}: \ {\bm{\nu}} \in \W_p(C([0,T];\R^{2d}))\}
	\end{equation*}
	is relatively (sequentially) compact in $\W_p(C([0,T];\R^{2d}))$. First, we claim that the family $\{(X_{\bm{\nu}},V_{\bm{\nu}})\}_{\nu \in \W_p(C([0,T];\R^{2d}))}$ is uniformly tight, i.e. for any $\varepsilon>0$ there exists a compact set $K_\varepsilon \subset C([0,T];\R^{2d})$ such that
	\begin{equation*}
		\sup_{\bm{\nu} \in \W_p(C([0,T];\R^{2d}))}\bP\left((X_{\bm{\nu}},V_{\bm{\nu}}) \not \in K_\varepsilon\right)<\varepsilon.
	\end{equation*}
	 To prove this, we first fix $\gamma<\sfrac{1}{2}$ and consider the random variable
	\begin{equation*}
		L_{B,\gamma}=\sup_{\substack{t,s \in [0,T]\\ t \not = s }}\frac{|B(t)-B(s)|}{|t-s|^\gamma},
	\end{equation*}
	that is a.s. finite by the $\gamma$-H\"older continuity of the Brownian motion. By Fernique's theorem (see \cite[Theorem 1.3.2]{fernique1974regularite}), we know that there exists $\alpha>0$ such that $\E\left[e^{\alpha L_{B,\gamma}}\right]<\infty$, which in turn implies that $\E\left[|L_{B,\gamma}|^n\right]<\infty$ for all $n \in \N$. Furthermore, we observe that
	\begin{align}
		|V_{\bm{\nu}}(t)-V_{\bm{\nu}}(s)| &\le \int_s^t|\mathfrak{v}[\tau,\bm{\nu}](X_{\bm{\nu}}(\tau),V_{\bm{\nu}}(\tau))|d\tau +\sqrt{2\sigma} L_{B,\gamma}|t-s|^{\gamma}\\
		&\le K|t-s|+K\int_s^t(|X_{\bm{\nu}}(\tau)|^{\frac{\beta}{3}}+|V_{\bm{\nu}}(\tau)|^\beta)d\tau +\sqrt{2\sigma} L_{B,\gamma}|t-s|^{\gamma}\\
		&\le CK(1+\sup_{\tau \in [0,T]}|Z_{\bm{\nu}}(\tau)|)|t-s|+\sqrt{2\sigma} L_{B,\gamma}|t-s|^{\gamma}\\
		&\le CK(1+|Z_0|+K+\sup_{\tau \in [0,T]}|B(\tau)|+L_{B,\gamma})e^{CK}|t-s|^{\gamma},\label{eq:tightV}
	\end{align}
	where we used Assumption $(\overline{\mathfrak{v}}_1)$ in the second inequality, Young's inequality with exponents $\frac{3}{\beta}$ and $\frac{1}{\beta}$ in the third inequality and \eqref{Zsupbound} in the fourth inequality.
	%	
	%	\begin{equation*}
		%		|V_n(t)-V_n(s)| \le \Norm{\mathfrak{v}}{L^\infty([0,T]\times \W_1(\R^{2d}) \times \R^{2d})}|t-s|+\sigma L_{B,\gamma}|t-s|^{\gamma}.
		%	\end{equation*}
	%	and
	%	\begin{equation}\label{eq:supVnbound}
		%		\sup_{t \in [0,T]}|V_n(t)| \le |V_0|+\Norm{\mathfrak{v}}{L^\infty([0,T]\times \W_1(\R^{2d})\times \R^{2d})}T+\sigma M_B,
		%	\end{equation}
	%	where $M_B$ is defined in \eqref{eq:MB}. 
	Furthermore, we easily have
	\begin{equation}\label{eq:tightX}
		|X_{\bm{\nu}}(t)-X_{\bm{\nu}}(s)| \le \left(\sup_{\tau \in [0,T]}|V_{\bm{\nu}}(\tau)|\right)|t-s| \le C\left(1+|Z_0|+K+\sup_{0 \le \tau \le T}|B(\tau)|\right)|t-s|,
	\end{equation}
	where we used again \eqref{Zsupbound}. Consider, for any $\delta \in (0,T]$ and $n \in \NN$, the (random) modulus of continuity
	\begin{equation*}
		m_{\bm{\nu}}(\delta)=\sup_{\substack{t,s \in (0,T] \\ |t-s|<\delta}}|(X_{\bm{\nu}}(t),V_{\bm{\nu}}(t))-(X_{\bm{\nu}}(s),V_{\bm{\nu}}(s))|.
	\end{equation*}
	Then we have, by \eqref{eq:tightV} and \eqref{eq:tightX},
	\begin{equation*}
		m_{\bm{\nu}}(\delta) \le C\left(1+|Z_0|+K+\sup_{\tau \in [0,T]}|B(\tau)|+L_{B,\gamma}\right)\delta^\gamma
	\end{equation*}
	and taking the expectation
	\begin{align*}
		\E\left[m_{\bm{\nu}}(\delta)\right] &\le C\left(1+M_1(\overline{\mu})+K+\E[\sup_{\tau \in [0,T]}|B(\tau)|]+\E\left[L_{B,\gamma}\right]\right)\delta^\gamma \le C(1+K)\delta^\gamma
	\end{align*}
	where we used the inequality $\E[\sup_{\tau \in [0,T]}|B(\tau)|] \le \sqrt{\E[\sup_{\tau \in [0,T]}|B(\tau)|^2]}$ and \eqref{eq:Doob}.
	%	\begin{align*}
		%		\E\left[m_n(\delta)\right] &\le C\left(\E\left[|V_0|\right]+(1+T)\Norm{\mathfrak{v}}{L^\infty([0,T]\times \W_1(\R^{2d})\times \R^{2d})}+2\sigma \sqrt{T}\right)\delta+\sigma \E\left[L_{B,\gamma}\right]\delta^{\gamma}\\
		%		& \le \left(M_1(\overline{\mu})T^{1-\gamma}+(1+T)T^{1-\gamma}\Norm{\mathfrak{v}}{L^\infty([0,T]\times \W_1(\R^{2d})\times \R^{2d})}+2\sigma T^{\frac{3}{2}-\gamma}+\sigma \E\left[L_{B,\gamma}\right]\right)\delta^{\gamma}\\
		%		&=:C_\gamma(\overline{\mu},T,\mathfrak{v},\sigma)\delta^{\gamma},
		%	\end{align*}
	%where we also used \eqref{eq:expMB}. 
	Now fix any $\varepsilon>0$ and any $\eta \in (0,1]$. By Markov's inequality we have
	\begin{equation*}
		\bP(m_{\bm{\nu}}(\delta) \ge \varepsilon) \le \frac{\E[m_{\bm{\nu}}(\delta)]}{\varepsilon} \le C(1+K)\frac{\delta^{\gamma}}{\varepsilon}.
	\end{equation*}
	Hence, if we set $\delta_0=\frac{(\varepsilon \eta)^{\frac{1}{\gamma}}}{C(1+K)}$, we get, for any $\delta<\delta_0$ and any $n \in \NN$,
	\begin{equation*}
		\bP(m_n(\delta) \ge \varepsilon) \le \eta.
	\end{equation*}
	Combining this with the fact that ${\rm Law}(X_{\bm{\nu}}(0),V_{\bm{\nu}}(0))=\overline{\mu}$ for any $n \in \NN$, we conclude that the sequence $\{(X_{\bm{\nu}},V_{\bm{\nu}})\}_{n \in \N}$ is uniformly tight by \cite[Theorem 7.3]{billingsley2013convergence}. 
	
	Next, we claim that $\{(X_{\bm{\nu}},V_{\bm{\nu}})\}_{{\bm{\nu}} \in \W_p(C([0,T];\R^{2d}))}$ are uniformly $L^p$-integrable, i.e., setting $\widetilde{\bm{\nu}}={\rm Law}(X_{\bm{\nu}},Y_{\bm{\nu}})$ and $B_R=\{Z \in C([0,T];\R^{2d}), \ \sup_{t \in [0,T]}|Z(t)|<R\}$, we have
	\begin{equation*}
		\sup_{\bm{\nu} \in \W_p(C[0,T];\R^{2d})}\lim_{R \to +\infty}\int_{C([0,T];\R^{2d})\setminus B_R}\sup_{t \in [0,T]}|Z(t)|^pd\widetilde{\bm{\nu}}(Z)=0.
	\end{equation*}
%	\begin{equation*}
%		\sup_{\bm{\nu} \in \W_p(C[0,T];\R^{2d})}\lim_{R \to +\infty}\E\left[\mathbf{1}_{B_R(0)}\left(\sup_{\tau \in [0,T]}|(X_{\bm{\nu}}(\tau),V_{\bm{\nu}}(\tau))|^p\right)\sup_{\tau \in [0,T]}|(X_{\bm{\nu}}(\tau),V_{\bm{\nu}}(\tau))|^p\right]=0.
%	\end{equation*}
	Indeed, \eqref{eq:MPhi2} guarantees that there exists a Young function $\overline{\Phi}$ such that
	\begin{equation*}
		\sup_{{\bm{\nu}} \in \W_p(C([0,T];\R^{2d}))}\E\left[\overline{\Phi}\left(\sup_{\tau \in [0,T]}|(X_{\bm{\nu}}(\tau),V_{\bm{\nu}}(\tau))|^p\right)\right] \le C(1+K^{2p}),
	\end{equation*}
	which in turn implies the uniform $L^p$-integrability by the de la Vall\'ee-Poussin Theorem. Thus, by the characterization of relatively compact sets in Lebesgue-Bochner spaces, shown in \cite[Theorem 2.1]{wang2019compactness}, we know that
	\begin{equation}
		{\sf Im}^\star(\cT):=\{(X_{\bm \nu},Y_{\bm \nu}) \in L^p(\Omega;C([0,T];\R^{2d})), \ \bm \nu \in \W_p(C([0,T];\R^{2d}))\}
	\end{equation}
	is relatively compact in $L^p(\Omega;C([0,T];\R^{2d}))$, while by \cite[Proposition 7.1.5]{ambrosio2005gradient} we get that ${\sf Im}(\cT)$ is relatively compact.
%	
%	
%	
%	 Now consider any sequence $\{{\bm \nu}^n\}_{n \in \N} \subset \W_p(C([0,T];\R^{2d}))$ and denote, for any $n \in \N$, by $(X_n,Y_n)$ the solution of \eqref{eq:SDEaux} with respect to $n \in \N$. Then, since ${\sf Im}^\star(\cT)$ is relatively compact, there exists a subsequence $\{(X_{n_k},Y_{n_k})\}_{k \in \N}$ converging towards $(X_\infty,V_\infty)$ in the $L^p$-norm. Setting $\bm{\overline{\nu}}={\rm Law}(X_\infty,V_\infty)$, it is clear that, since $(X_{n_k},V_{n_k})$ and $(X_\infty,V_\infty)$ constitute a coupling of $\cT\bm{\nu}^{n_k}$ and $\bm{\overline{\nu}}$ for any $n \in \N$, 
%	\begin{equation*}
%		\limsup_{k \to +\infty}\W_p(\cT\bm{\nu}^{n_k},\bm{\overline{\nu}}) \le \limsup_{k \to +\infty}\E\left[\sup_{t \in [0,T]}|(X_{n_k}(t),V_{n_k}(t))-(X_\infty(t),V_\infty(t))|^p\right]=0.
%	\end{equation*}
%	Hence $\cT\bm{\nu}^{n_k} \to \bm{\overline{\nu}}$ in $W^p(C([0,T];\R^{2d}))$, and then ${\sf Im}(\cT)$ is relatively compact. 
%	
%	
	
	Now we prove that $\cT$ is continuous. Indeed, consider any sequence $\{\bm{\nu}^n\}_{n \in \N} \subset \W_p(C([0,T];\R^{2d}))$ converging towards $\bm{\nu} \in \W_p(C([0,T];\R^{2d}))$ and denote by $(X_n,V_n)$ the solutions of \eqref{eq:SDEaux} with respect to $\bm{\nu}^n$ for each $n \in \N$. Assume, without loss of generality, that
	\begin{equation*}
		\limsup_{n \to +\infty}\W_p(\cT\bm{\nu}^n,\cT\bm{\nu})=\lim_{n \to +\infty}\W_p(\cT\bm{\nu}^n,\cT\bm{\nu})
	\end{equation*}
	and recall that there exists a subsequence $\{(X_{n_k},V_{n_k})\}_{k \in \N}$ converging towards $(X_\infty,V_\infty) \in L^p(\Omega;C([0,T];\R^{2d}))$ both in the $L^p$-norm and almost surely. Now we show that the limit $(X_\infty,V_\infty)$ is the solution of \eqref{eq:SDEaux} associated with $\bm{\nu}$. Indeed, by Assumption $(\mathfrak{v}_0)$, we know that for any $t \in [0,T]$ and almost surely
	\begin{equation}\label{eq:limit}
		\lim_{k \to \infty}\mathfrak{v}[t,\bm{\nu}^{n_k}](X_{n_k}(t),V_{n_k}(t))=\mathfrak{v}[t,\bm{\nu}](X_\infty(t),V_\infty(t)).
	\end{equation}
	Now fix any $\omega \in \Omega$ such that \eqref{eq:limit} holds. In the following we will omit the dependence on $\omega$ for the ease of the reader. By Assumption $(\overline{\mathfrak{v}}_1)$, it holds
	\begin{align*}
		\mathfrak{v}[t,\bm{\nu}^{n_k}](X_{n_k}(t),V_{n_k}(t))&\le K(1+|X_{n_k}(t)|^{\frac{\beta}{3}}+|V_{n_k}(t)|^\beta) \le CK(1+\sup_{0 \le s \le T}|Z_{n_k}(t)|) \\
		&\le CK(1+|Z_0|+K+\sup_{0 \le s \le T}|B(s)|)e^{CK},
	\end{align*}
	where the upper bound is independent of $t \in [0,T]$ and $n \in \N$. Hence, by the dominated convergence theorem, we get
	\begin{align*}
		V_\infty(t)&=\lim_{k \to \infty}V_{n_k}(t)=V_0+\lim_{k \to \infty}\int_0^t\mathfrak{v}[s,\bm{\nu}^{n_k}](X_{n_k}(s),V_{n_k}(s))ds+\sqrt{2\sigma}B(s)\\
		&=V_0+\int_0^t\mathfrak{v}[s,\bm{\nu}](X_\infty(s),V_\infty(s))ds+\sqrt{2\sigma}B(s),
	\end{align*}
	while, since $V_{n_k} \to V_\infty$ uniformly almost surely,
	\begin{equation*}
		X_\infty(t)=\lim_{k \to \infty}X_{n_k}(t)=X_0+\lim_{k \to \infty}\int_0^tV_{n_k}(s)ds=X_0+\int_0^tV_\infty(s)ds,
	\end{equation*}
	i.e. $(X_\infty,V_\infty)$ solves \eqref{eq:SDEaux} with respect to $\bm{\nu}$. This shows that for all $k \in \N$ $(X_{n_k},V_{n_k})$ and $(X_\infty,V_\infty)$ constitute a coupling of $\cT\bm{\nu}^{n_k}$ and $\cT\bm{\nu}$ and then
	\begin{align*}
		\lim_{n \to +\infty}\W_p(\cT\bm{\nu}^n,\cT\bm{\nu})&=\lim_{k \to +\infty}\W_p(\cT\bm{\nu}^{n_k},\cT\bm{\nu}) \\
		&\le \lim_{k \to +\infty}\E\left[\sup_{t \in [0,T]}|(X_{n_k}(t),V_{n_k}(t))-(X_\infty(t),V_\infty(t))|^p\right]=0.
	\end{align*}
	Since $\bm{\nu} \in W_p(C([0,T];\R^{2d}))$ is arbitrary, this implies that $\cT$ is continuous.

	We can then use Schauder's fixed point theorem to ensure that there exists \linebreak ${\bm{\mu}} \in \W_p(C([0,T];\R^{2d}))$ such that ${\bm{\mu}}=\cT{\bm{\mu}}$. We only need to show that ${\bm{\mu}}$ is a solution of \eqref{probmunl2-intro}. To do this, let $(X,V)$ be the solution of \eqref{eq:SDEaux} associated with ${\bm{\mu}}$. Then, since ${\bm{\mu}}=\cT{\bm{\mu}}={\rm Law}(X,V)$, the process $(X,V)$ solves the McKean-Vlasov SDE \eqref{eq:MKVSDEauxdef}. To obtain \eqref{eq:weaksolnl}, let $\psi \in C^\infty_c(\R^{2d})$. By It\^o's formula \cite[Theorem 4.1.2]{oksendal2013stochastic} we have
	\begin{align*}
		\psi(X(t),V(t))&=\psi(X_0,V_0)+\int_0^t\left(V(s)\cdot \nabla_x\psi(X(s),V(s))\right.\\
		&\quad \left. +\mathfrak{v}[s,\bm{\mu}](X(s),V(s))\cdot \nabla_v \psi(X(s),V(s))+\sigma \Delta_v\psi(X(s),V(s))\right)ds\\
		&\quad +\sqrt{2\sigma}\sum_{j=1}^{d}\int_{0}^{s}\frac{\partial \, \psi}{\partial \, v_j}(X(s),V(s))dB_j(s),
	\end{align*}
	where $B_j$ is the $j$-th coordinate of the $d$-dimensional Brownian motion $B=(B_1,\dots,B_d)$. Since $\psi \in C^\infty_c(\R^{2d})$, it is clear that for all $j=1,\dots,d$ it holds
	\begin{equation*}
		\left|\frac{\partial \, \psi}{\partial \, v_j}(X(s),V(s))\right| \le \Norm{\nabla \psi}{L^\infty(\R^{2d})}, \ \forall s \ge 0,
	\end{equation*}
	hence $\int_{0}^{s}\frac{\partial \, \psi}{\partial \, v_j}(X(s),V(s))dB_j(s)$ is a martingale \cite[Corollary 3.2.6]{oksendal2013stochastic} and then
	\begin{equation*}
		\E\left[\int_{0}^{s}\frac{\partial \, \psi}{\partial \, v_j}(X(s),V(s))dB_j(s)\right]=0, \ j=1,\dots,d.
	\end{equation*}
	As a consequence we get
	\begin{align}
		\E\left[\psi(X(t),V(t))\right]&=\E\left[\psi(X_0,V_0)\right]+\E\left[\int_0^t\left(V(s)\cdot \nabla_x\psi(X(s),V(s))\right.\right.\\
		&\left.\vphantom{\int_0^t}\left.+\mathfrak{v}[s,\bm{\mu}](X(s),V(s))\cdot \nabla_v \psi(X(s),V(s))+\sigma \Delta_v\psi(X(s),V(s))\right)ds\right].\label{eq:expect}
	\end{align}
	Next, notice that
	\begin{align*}
		|V(s)\cdot \nabla_x\psi(X(s),V(s))| &\le |Z(s)|\Norm{\nabla \psi}{L^\infty(\R^{2d})}\\
		|\mathfrak{v}[s,\bm{\mu}](X(s),V(s))\cdot \nabla_v \psi(X(s),V(s))|&\le CK(1+|Z(s)|)\Norm{\nabla \psi}{L^\infty(\R^{2d})}\\
		\sigma \Delta_v\psi(X(s),V(s)) & \le \sigma \Norm{\Delta \psi}{L^\infty(\R^{2d})},
	\end{align*}
	hence
	\begin{align*}
		\E&\left[\int_0^t\left|V(s)\cdot \nabla_x\psi(X(s),V(s))\right.\right.\\
		&\left.\vphantom{\int_0^t}\left.\qquad \qquad +\mathfrak{v}[s,\bm{\mu}](X(s),V(s))\cdot \nabla_v \psi(X(s),V(s))+\sigma \Delta_v\psi(X(s),V(s))\right|ds\right]\\
		&\qquad \le C(1+K)(\Norm{\nabla \psi}{L^\infty(\R^{2d})}+\Norm{\Delta \psi}{L^\infty(\R^{2d})})\int_0^t \E[1+|Z(s)|]ds\\
		&\qquad \le C(1+K)(\Norm{\nabla \psi}{L^\infty(\R^{2d})}+\Norm{\Delta \psi}{L^\infty(\R^{2d})})(1+\overline{M}_1(T;\bm{\mu}))T,
	\end{align*}
	where we recall, by H\"older's inequality, that $\overline{M}_1(T;\bm{\mu}) \le \left(\overline{M}_p(T;\bm{\mu})\right)^{\frac{1}{p}}<\infty$. Thus we can use Fubini's theorem in \eqref{eq:expect} to achieve
	\begin{align}
		\E\left[\psi(X(t),V(t))\right]&=\E\left[\psi(X_0,V_0)\right]+\int_0^t\E\left[\left(V(s)\cdot \nabla_x\psi(X(s),V(s))\right.\right.\\
		&\left.\left.+\mathfrak{v}[s,\bm{\mu}](X(s),V(s))\cdot \nabla_v \psi(X(s),V(s))+\sigma \Delta_v\psi(X(s),V(s))\right)\right]ds.\label{eq:expect2}
	\end{align}
	Now we use the fact that $\mu_t={\rm Law}(X(t),V(t))$ and $\overline{\mu}={\rm Law}(X_0,V_0)$ to finally get
	\begin{align}
		\int_{\R^{2d}}\psi d\mu_t&=\int_{\R^{2d}}\psi d\overline{\mu}+\int_0^t\int_{\R^{2d}}\left(v \cdot \nabla_x\psi+\mathfrak{v}[s,\bm{\mu}](z)\cdot \nabla_v \psi+\sigma \Delta_v\psi\right) d\mu_s ds,
	\end{align}
	that is \eqref{eq:weaksolnl}.

	\subsubsection{Existence for a general $\mathfrak{v}$}\label{step2}
	Now, let us consider $\mathfrak{v}$ as in the statement of Theorem~\ref{thm:main1}. For any $N>0$ let $\eta_N \in C^\infty_{c}(\R)$ such that ${\sf supp}(\eta_N)\subset [-1,N+1]$, $\eta_N(r)=1$ for any $r \in [0,N]$ and $\eta_N(x) \in [0,1]$ for any $r \in \R$. Define, for any $t \in [0,T]$, ${\bm{\mu}} \in C([0,T];\W_p(\R^{2d}))$ and $z \in \R^{2d}$,
	\begin{equation*}
		\mathfrak{v}_{N}[t,{\bm{\mu}}](z)=\mathfrak{v}[t,{\bm{\mu}}](z)\, \eta_N(\overline{M}_p(t,{\bm{\mu}}))
	\end{equation*} 
	By definition, $\mathfrak{v}_N$ satisfies $(\mathfrak{v}_0)$, $(\mathfrak{v}_2)$ and $(\mathfrak{v}_4)$. Furthermore, notice that
	\begin{equation*}
		|\mathfrak{v}_N[t,\bm{\mu}](z)| \le K\left(1+|x|^{\frac{\beta}{3}}+|y|^\beta+(\overline{M}_p(T,\bm{\mu}))^{\frac{1}{p}}\right)\eta_N(\overline{M}_p(t,{\bm{\mu}})).
	\end{equation*}
	If $\overline{M}_p(t,\bm{\mu}) \le N+1$, then
	\begin{equation*}
		|\mathfrak{v}_N[t,\bm{\mu}](z)| \le K(N+1)^{\frac{1}{p}}\left(1+|x|^{\frac{\beta}{3}}+|y|^\beta\right),
	\end{equation*}
	otherwise
	\begin{equation*}
		|\mathfrak{v}_N[t,\bm{\mu}](z)|=0 \le K(N+1)^{\frac{1}{p}}\left(1+|x|^{\frac{\beta}{3}}+|y|^\beta\right),
	\end{equation*}
	hence $\mathfrak{v}_N$ satisfies $(\overline{\mathfrak{v}}_1)$. To use the first clam of this proof, it is not necessary to check whether $(\mathfrak{v}_3)$ still holds. For any $N \in \N$, we can find a solution $\bm{\mu}^N \in \W_p(C([0,T];\R^{2d}))$ of \eqref{probmunl2-intro} with drift $\mathfrak{v}_N$ and a stochastic process $(X_N,Y_N) \in L^p(\Omega;C([0,T];\R^{2d}))$ solving \eqref{eq:MKVSDEauxdef} such that $\bm{\mu}^N={\rm Law}(X_N,V_N)$. Now we need to prove a further uniform bound on the moments $\overline{M}_p(t,\bm{\mu}^N)$. To do this we argue as in the proof of Proposition \ref{prop:Young}. Fix $t \in [0,T]$ and observe that for any $s \le t$
	\begin{align*}
		&|V_N(s)|^p \le C\left(|V_0|^p+\int_0^s|\mathfrak{v}_N[\tau,\bm{\mu}^N](X_N(\tau),V_N(\tau))|^pd\tau+\sup_{0 \le \tau \le T}|B(\tau)|^p\right)\\
		&\le C\left(|V_0|^p+K +K\int_0^s\left(|X_N(\tau)|^{\frac{p\beta}{3}}+|V_N(\tau)|^{p\beta}+\overline{M}_p(\tau;\bm{\mu}^N)\right)d\tau+\sup_{0 \le \tau \le T}|B(\tau)|^p\right)\\
		&\le C\left(|V_0|^p+K +K\int_0^s\left(|X_N(\tau)|^p+|V_N(\tau)|^p\right)d\tau+K\int_0^t\overline{M}_p(\tau;\bm{\mu}^N)d\tau+\sup_{0 \le \tau \le T}|B(\tau)|^p\right),
	\end{align*}
	where we used the fact that $|\mathfrak{v}_N[\tau,\bm{\mu}^N](z)| \le |\mathfrak{v}[\tau,\bm{\mu}^N](z)|$, Assumption $(\mathfrak{v}_1)$ and Young's inequality with exponents $\frac{3}{\beta}$ and $\frac{1}{\beta}$. On the other hand, for any $s \le t$
	\begin{align*}
		|X_N(t)|^p \le C\left(|X_0|^p+\int_0^s|V_N(\tau)|^pd\tau\right)
	\end{align*}
	and then, setting $Z_N=(X_N,V_N)$,
	\begin{align*}
		&|Z_N(s)|^p \le C\left(|Z_0|^p+K +K\int_0^s|Z_N(\tau)|^pd\tau+K\int_0^t\overline{M}_p(\tau;\bm{\mu}^N)d\tau+\sup_{0 \le \tau \le T}|B(\tau)|^p\right).
	\end{align*}
	By Gr\"onwall's inequality, this implies
	\begin{align*}
		&|Z_N(s)|^p \le C\left(|Z_0|^p+K +K\int_0^t\overline{M}_p(\tau;\bm{\mu}^N)d\tau+\sup_{0 \le \tau \le T}|B(\tau)|^p\right)e^{KT}.
	\end{align*}
	Now take the expectation to achieve
	\begin{equation}\label{eq:Mps}
		M_p(\mu_s^N) \le C\left(1+M_p(\overline{\mu})+K +K\int_0^t\overline{M}_p(\tau;\bm{\mu}^N)d\tau\right)e^{KT},
	\end{equation}
	where we also used \eqref{eq:Doob} (and H\"older's inequality if $p=1$). Since \eqref{eq:Mps} holds for all $s \in [0,t]$, we can take the supremum and get
	\begin{equation*}
		\overline{M}_p(t,\bm{\mu}^N) \le C\left(1+M_p(\overline{\mu})+K +K\int_0^t\overline{M}_p(\tau;\bm{\mu}^N)d\tau\right)e^{KT},
	\end{equation*}
	which in turn implies, by Gr\"onwall's inequality,
	\begin{equation}\label{eq:Mps2}
		\overline{M}_p(t,\bm{\mu}^N) \le C\left(1+K\right)e^{KT(1+e^{KT})}.
	\end{equation}
	Now consider any $N>C\left(1+K\right)e^{KT(1+e^{KT})}$. Then 
	\begin{equation*}
	\mathfrak{v}_N[t,\bm{\mu}^N](X_N(t),V_N(t))=\mathfrak{v}[t,\bm{\mu}^N](X_N(t),V_N(t)),	
	\end{equation*}
	thus $(X_N,V_N)$ solves \eqref{eq:MKVSDEauxdef}, while $\bm{\mu}^N={\rm Law}(X_N,V_N)$ solves \eqref{probmunl2-intro}. 
	This also proves the first bound in \eqref{eq:momentbounds}.
	
	\subsubsection{Uniqueness of the local solution}
	Let us first show that if $\bm{\mu}$ is a solution of \eqref{probmunl2-intro}, then there exists a strong solution $(X,V)$ of \eqref{eq:MKVSDEauxdef} such that $\bm{\mu}={\rm Law}(X,V)$. To do this, fix a solution $\bm{\mu}$ of \eqref{eq:MKVSDEauxdef} and define $F_{\bm{\mu}}(t,x,v)=\mathfrak{v}[t,\bm{\mu}](x,v)$. Then $\bm{\mu}$ solves the linear PDE
	\begin{equation}\label{probmulin}
		\begin{cases} \partial_t \mu_t = - v \cdot \nabla_x \mu_t + \sigma \Delta_v \mu_t - \mathrm{div}_v (F_{\bm{\mu}}(t,z) \mu_t) \quad &(t,x,v) \in  \R^+_0 \times\mathbb{R}^{2d}, \\
			\mu_0 = \bar{\mu}  \quad &(x,v) \in \R^{2d},
		\end{cases}
	\end{equation}
	hence, by Theorem~\ref{thm:existence} we know that $\bm{\mu}={\rm Law}(X,V)$ where $(X,V)$ is the strong solution of \eqref{eq:SDEaux}. The fact that $(X,V) \in L^p(\Omega;C([0,T];\R^{2d}))$ follows by \eqref{eq:Phi2} where $\Phi$ is the identity. Hence, in particular, by definition of $F_{\bm{\mu}}$, $(X,V)$ solves \eqref{eq:MKVSDEauxdef} as required.
	
	Now assume that $\bm{\mu}^1,\bm{\mu}^2 \in C([0,T];\W_p(\R^{2d}))$ are both solutions of \ref{probmunl2-intro} and let $Z_j=(X_j,V_j) \in L^p(\Omega;C([0,T];\R^{2d}))$ be the respective solutions of \eqref{eq:MKVSDEauxdef} with ${\rm Law}(X_j,V_j)=\bm{\mu}^j$, $j=1,2$. Define the auxiliary processes $\delta X(t)=X_1(t)-X_2(t)$, $\delta V(t)=V_1(t)-V_2(t)$ and $\delta Z(t)=Z_1(t)-Z_2(t)$. Consider any $t \in [0,T]$ and $s \in [0,t]$. For a fixed $\omega \in \Omega$ for which \eqref{eq:strongsol} holds, we have
	\begin{equation*}
		\delta V(s,\omega)=\int_0^s(\mathfrak{v}[\tau,\bm{\mu}^1](Z_1(\tau,\omega))-\mathfrak{v}[\tau,\bm{\mu}^2](Z_2(\tau,\omega)))d\tau,
	\end{equation*}
	i.e. $\delta V(\cdot,\omega)$ is absolutely continuous. As a consequence also $|\delta V(\cdot,\omega)|^p$ is absolutely continuous and by the chain rule and taking the expectation it holds
	\begin{equation*}
		\E\left[\left|\delta V(s)\right|^p\right]=p\E\left[\int_0^s\left|\delta V(\tau)\right|^{p-2}(\mathfrak{v}[\tau,\bm{\mu}^1](Z_1(\tau))-\mathfrak{v}[\tau,\bm{\mu}^2](Z_2(\tau)))\cdot \delta V(\tau)d\tau\right].
	\end{equation*}
	Now we want to use Fubini's theorem to exchange the expectation with the integral. To do this, we observe that
	\begin{align*}
		&\left|\delta V(\tau)\right|^{p-1}|\mathfrak{v}[\tau,\bm{\mu}^1](Z_1(\tau))-\mathfrak{v}[\tau,\bm{\mu}^2](Z_2(\tau))| \\
		&\qquad \le CK(|V_1(\tau)|^{p-1}+|V_2(\tau)|^{p-1})\left(1+|X_1(\tau)|^{\frac{\beta}{3}}+|V_1(\tau)|^\beta+|X_2(\tau)|^{\frac{\beta}{3}}+|V_2(\tau)|^\beta\right.\\
		&\left.\qquad \qquad +\left(\overline{M}_p(\tau,\bm{\mu}^1)\right)^{\frac{1}{p}}+\left(\overline{M}_p(\tau,\bm{\mu}^1)\right)^{\frac{1}{p}}\right)\\
		&\le CK(|V_1(\tau)|^{p-1}+|V_2(\tau)|^{p-1})(|X_1(\tau)|^{\frac{\beta}{3}}+|X_1(\tau)|^{\frac{\beta}{3}})\\
		&\quad +CK(|V_1(\tau)|^{p-1}+|V_2(\tau)|^{p-1}+|V_1(\tau)|^{p-1+\beta}+|V_2(\tau)|^{p-1+\beta})\\
		&\quad +CK(|V_1(\tau)|^{p-1}+|V_2(\tau)|^{p-1})\left(\left(\overline{M}_p(\tau,\bm{\mu}^1)\right)^{\frac{1}{p}}+\left(\overline{M}_p(\tau,\bm{\mu}^2)\right)^{\frac{1}{p}}\right)\\
		&\le CK\left[1+(|V_1(\tau)|^{p}+|V_2(\tau)|^{p})+(|X_1(\tau)|^{p}+|X_2(\tau)|^{p})+\left(\left(\overline{M}_p(\tau,\bm{\mu}^1)\right)+\left(\overline{M}_p(\tau,\bm{\mu}^2)\right)\right)\right],
	\end{align*}
	where in the last inequality we used Young's inequality. Taking the expectation it is not difficult to check that
	\begin{align*}
		\E\left[\left|\delta V(\tau)\right|^{p-1}|\mathfrak{v}[\tau,\bm{\mu}^1](Z_1(\tau))-\mathfrak{v}[\tau,\bm{\mu}^2](Z_2(\tau))|\right] \le CK(1+\overline{M}_p(T,\bm{\mu}^1)+\overline{M}_p(T,\bm{\mu}^2))
	\end{align*}
	where the right-hand side is independent of $\tau$. Hence, we can use Fubini's theorem to get
	 \begin{align}
	 	\E\left[\left|\delta V(s)\right|^p\right]&=p\int_0^s\E\left[\left|\delta V(\tau)\right|^{p-2}(\mathfrak{v}[\tau,\bm{\mu}^1](Z_1(\tau))-\mathfrak{v}[\tau,\bm{\mu}^2](Z_2(\tau)))\cdot \delta V(\tau)\right]d\tau\\
	 	&\le CD\int_0^s\sup_{0 \le r \le \tau}\E[|\delta Z(r)|^p]d\tau,\label{eq:bounduniq1}
	 \end{align}
	 where we used the dissipativity assumption $(\mathfrak{v}_3)$. We also have, clearly,
	 \begin{equation}\label{eq:bounduniq2}
	 	\E\left[|\delta X(s)|^p\right] \le C\int_0^s\E\left[|\delta V(\tau)|^p\right] d\tau \le C\int_0^s\sup_{0 \le r \le \tau}\E\left[|\delta Z(r)|^p\right] d\tau.
	 \end{equation}
	 Combining \eqref{eq:bounduniq1} and \eqref{eq:bounduniq2} and taking the supremum over $[0,t]$ we get
	 \begin{align}
	 	\sup_{0 \le s \le t}\E\left[\left|\delta Z(s)\right|^p\right]\le C(D+1)\int_0^t\sup_{0 \le r \le \tau}\E[|\delta Z(r)|^p]d\tau.
	 \end{align}
	 Since this holds for all $t \in [0,T]$, we can use Gr\"onwall's inequality to achieve \linebreak $\sup_{0 \le s \le t}\E\left[\left|\delta Z(s)\right|^p\right]=0$ for all $t \in [0,T]$, which in turn implies $Z_1=Z_2$ a.s. Hence $\bm{\mu}^1=\bm{\mu}^2$. Observe that this also shows that \eqref{eq:MKVSDEauxdef} admits a unique strong solution.
	 
	 \subsubsection{Constructon of the global solution}\label{step4}
	Now consider any increasing sequence $T_n \uparrow +\infty$ and denote by $\bm{\mu}^{n}$ the local solution of \eqref{probmunl2-intro} on $[0,T_n]$. Let $n_1<n_2$ and observe that $\bm{\mu}^{n_2}$ still solves \eqref{probmunl2-intro} on $[0,T_1]$. Hence, by uniqueness of the local solution, $\bm{\mu}^{n_1}=\bm{\mu}^{n_2}$ on $[0,T_1]$. For such a reason, if we denote $n(t)=\min\{n \ge 0: \ t \ge T_n\}$, we can define the continuous curve of probability measures $\bm{\mu} \in C(\R_0^+; \W_p(\R^{2d}))$ by setting
	 \begin{equation*}
	 	\mu_t=\mu_t^{n(t)}, \ \forall t \in \R_0^+.
	 \end{equation*}
	 It is clear that $\bm{\mu}$ is the unique global solution we are searching for. Analogously, if we set $(X_n,V_n)$ to be the respective solutions of \eqref{eq:MKVSDEauxdef}, a similar argument shows that the stochastic process $(X,V) \in L^p(\Omega;C(\R_0^+; \R^{2d}))$ defined as
	 \begin{equation*}
	 	(X(t),V(t))=(X_{n(t)}(t),V_{n(t)}(t))
	 \end{equation*}
	still solves \eqref{eq:MKVSDEauxdef} and $\bm{\mu}={\rm Law}(X,V)$.

\subsubsection{H\"older continuity of the solution}
For any $t \ge 0$, let $\Sigma_t$ be the $\sigma$-algebra generated by $(X_0,V_0)$ and $\{B(s); \ s \le t\}$. Then it is not difficult to check that $(X,V)$ is a stochastic process adapted to the filtration $\{\Sigma_t\}_{t \ge 0}$. Now let $\bm{\mu}$ be the solution fo \eqref{probmunl2-intro} and $Z=(X,V)$ the solution of \eqref{eq:MKVSDEauxdef} with ${\bm{\mu}}={\rm Law}(X,V)={\rm Law}(Z)$. Let $T>0$, $t \ge 0$ and consider $h>0$ such that $0 \le t < t+h \le T$, without loss of generality. Clearly, $(Z(t+h),Z(t))$ constitute a coupling of $\mu_{t+h}$ and $\mu_t$, hence
\begin{equation*}
	\W_p(\mu_{t+h},\mu_t) \le \left(\E\left[\left|Z(t+h)-Z(t)|^p\right|\right]\right)^{\frac{1}{p}}
\end{equation*}
Set $\widetilde{V}(h)=V(t+h)-V(t)$, $\widetilde{X}(h)=X(t+h)-X(t)$ and $\widetilde{Z}(h)=Z(t+h)-Z(t)$. Observe in particular that
\begin{equation*}
	\widetilde{X}(h)=\int_t^{t+h}V(s)ds
\end{equation*}
and then
\begin{equation}\label{eq:exptildeX}
	\E\left[|\widetilde{X}(h)|^p\right] \le h^{p-1}\E\left[\int_t^{t+h}|V(s)|^p ds\right] \le h^p \overline{M}_p(T;\bm{\mu}) \le C(1+K)e^{KT(1+e^{KT})}h^p, 
\end{equation}
where we used \eqref{eq:Mps2}. As a consequence, we get
\begin{align}
	\W_p(\mu_{t+h},\mu_t) &\le \left(\E\left[\left|\widetilde{Z}(h)|^p\right|\right]\right)^{\frac{1}{p}} \le C \left(\left(\E\left[\left|\widetilde{X}(h)|^p\right|\right]\right)^{\frac{1}{p}}+\left(\E\left[\left|\widetilde{V}(h)|^p\right|\right]\right)^{\frac{1}{p}}\right) \\
	&\le C \left((1+K)^{\frac{1}{p}}e^{\frac{KT(1+e^{KT})}{p}}h+\left(\E\left[\left|\widetilde{V}(h)|^p\right|\right]\right)^{\frac{1}{p}}\right),\label{eq:Holder1}
\end{align}
hence we only need to estimate $\E\left[\left|\widetilde{V}(h)|^p\right|\right]$. We have
\begin{equation*}
	\widetilde{V}(h)=\int_0^{h}\widetilde{\mathfrak{v}}[s,\bm{\mu}](\widetilde{X}(s)+X(t),\widetilde{V}(s)+V(t))\, ds+\sqrt{2\sigma}\widetilde{B}(h).
\end{equation*}
Where $\widetilde{\mathfrak{v}}[h,\bm{\mu}](z)=\mathfrak{v}[t+h,\bm{\mu}](z)$ and $\widetilde{B}(h)=B(t+h)-B(t)$. Fix $\varepsilon>0$ and set $H_{p,\varepsilon}(x)=(|x|^2+\varepsilon)^{\frac{p}{2}}$. Once we recall that $\widetilde{B}$ is still a Brownian motion and $H_{p,\varepsilon} \in C^2(\R^{d})$, we can use It\^{o}'s formula to write
\begin{align*}
	\begin{split}
	&H_{p,\varepsilon}(\widetilde{V}(h))=\varepsilon^{\frac{p}{2}}\\
	&+\int_0^{h}\left(\nabla H_{p,\varepsilon}(\widetilde{V}(\tau)) \cdot \widetilde{\mathfrak{v}}[\tau,\bm{\mu}](\widetilde{X}(\tau)+X(t),\widetilde{V}(\tau)+V(t))+\sigma \Delta H_{p,\varepsilon}(\widetilde{V}(\tau))\right)d\tau\\
	&\quad +\sqrt{2\sigma} \int_0^h \nabla H_{p,\varepsilon}(\widetilde{V}(\tau)) d\widetilde{B}(\tau).	
	\end{split}
\end{align*}
Next, for any $h \ge 0$, let $\widetilde{\Sigma}^t_h$ be the $\sigma$-algebra generated by $\{(X(s),V(s)), \ s \in [0,t]\}$ and $\{\widetilde{B}(\tau); \ \tau \in [0,h]\}$. Then $(\widetilde{X},\widetilde{V})$ is adapted to the filtration $\{\widetilde{\Sigma}^t_h\}_{h \ge 0}$ and then, for any $N \in \N$, the random variable
\begin{equation*}
	\mathcal{T}_N(h):=\min\{\inf \{\tau>0: \ |\widetilde{V}(\tau)| \ge N\},h\}
\end{equation*}
is a $\{\widetilde{\Sigma}^t_h\}_{h \ge 0}$-stopping time. Observe further that
\begin{align}
	&H_{p,\varepsilon}(\widetilde{V}(\mathcal{T}_N(h)))=\varepsilon^{\frac{p}{2}}\\
	&+\int_0^{\mathcal{T}_N(h)}\left(\nabla H_{p,\varepsilon}(\widetilde{V}(\tau)) \cdot \widetilde{\mathfrak{v}}[\tau,\bm{\mu}](\widetilde{X}(\tau)+X(t),\widetilde{V}(\tau)+V(t))+\sigma \Delta H_{p,\varepsilon}(\widetilde{V}(\tau))\right)d\tau\\
	&\quad +\sqrt{2\sigma} \int_0^{\mathcal{T}_N(h)} \nabla H_{p,\varepsilon}(\widetilde{V}(\min\{\tau,\mathcal{T}_N(h)\})) d\widetilde{B}(\tau).\label{eq:preexp}
\end{align}
Before taking the expectation on both sides, observe that 
\begin{equation}\label{eq:nablaH}
\nabla H_{p,\varepsilon}(x)=p(|x|^2+\varepsilon)^{\frac{p-2}{2}}x	
\end{equation}
and then
\begin{align*}
	|\nabla H_{p,\varepsilon}(\widetilde{V}(\min\{\tau,\mathcal{T}_N(h)\}))| \le pN(N^2+\varepsilon)^{\frac{p-2}{2}}.
\end{align*}
Thus the process $\int_0^{h}\nabla H_{p,\varepsilon}(\widetilde{V}(\min\{\tau,\mathcal{T}_N(h)\}))d\widetilde{B}(\tau)$ is a martingale \cite[Corollary 3.2.6]{oksendal2013stochastic}. Hence, since $\mathcal{T}_N(h)$ is a bounded stopping time, we can use the optional stopping theorem \cite[Theorem II.3.2]{revuzyor} to get
\begin{equation*}
	\E\left[\int_0^{\mathcal{T}_N(h)}\nabla H_{p,\varepsilon}(\widetilde{V}(\min\{\tau,\mathcal{T}_N(h)\}))d\widetilde{B}(\tau)\right]=0.
\end{equation*}
Taking the expectation in \eqref{eq:preexp} and then the absolute value we achieve
\begin{align}
	&\E\left[H_{p,\varepsilon}(\widetilde{V}(\mathcal{T}_N(h)))\right] \le \varepsilon^{\frac{p}{2}}\\
	&+\E\left[\int_0^{h}\left(\left|\nabla H_{p,\varepsilon}(\widetilde{V}(\tau)) \cdot \widetilde{\mathfrak{v}}[\tau,\bm{\mu}](\widetilde{X}(\tau)+X(t),\widetilde{V}(\tau)+V(t))\right|+\sigma \left|\Delta H_{p,\varepsilon}(\widetilde{V}(\tau))\right|\right)d\tau\right].\label{eq:exppost}
\end{align}
Now we use $(\mathfrak{v}_1)$ and \eqref{eq:nablaH} to get
\begin{align*}
	&\left|\nabla H_{p,\varepsilon}(\widetilde{V}(\tau)) \cdot \widetilde{\mathfrak{v}}[\tau,\bm{\mu}](\widetilde{X}(\tau)+X(t),\widetilde{V}(\tau)+V(t))\right| \\
	&\le pK(|\widetilde{V}(\tau)|^2+\varepsilon)^{\frac{p-2}{2}}|\widetilde{V}(\tau)|(1+|\widetilde{X}(\tau)+X(t)|^{\frac{\beta}{3}}+|\widetilde{V}(\tau)+V(t)|^{\beta}+\left(\overline{M}_p(t+\tau;\bm{\mu})\right)^{\frac{1}{p}})\\
	&\le CK(|\widetilde{V}(\tau)|^2+\varepsilon)^{\frac{p-1}{2}}(1+|\widetilde{X}(\tau)|^{\frac{\beta}{3}}+|X(t)|^{\frac{\beta}{3}}+|\widetilde{V}(\tau)|^\beta+|V(t)|^{\beta}+\left(\overline{M}_p(t+\tau;\bm{\mu})\right)^{\frac{1}{p}})\\
	&\le CKH_{p,\varepsilon}(\widetilde{V}(\tau))+CK(|\widetilde{V}(\tau)|^2+\varepsilon)^{\frac{p-1}{2}}(1+|\widetilde{X}(\tau)|^{\frac{\beta}{3}}+|X(t)|^{\frac{\beta}{3}}+|V(t)|^{\beta}+\left(\overline{M}_p(t+\tau;\bm{\mu})\right)^{\frac{1}{p}}).
\end{align*}
To handle the second summand, we use Young's inequality with exponent $p$, and then we achieve
\begin{align}
	&\left|\nabla H_{p,\varepsilon}(\widetilde{V}(\tau)) \cdot \widetilde{\mathfrak{v}}[\tau,\bm{\mu}](\widetilde{X}(\tau)+X(t),\widetilde{V}(\tau)+V(t))\right| \\
	&\quad \le CKH_{p,\varepsilon}(\widetilde{V}(\tau))+CK^p(1+|\widetilde{X}(\tau)|^{\frac{\beta p }{3}}+|X(t)|^{\frac{\beta p }{3}}+|V(t)|^{\beta p}+\overline{M}_p(t+\tau;\bm{\mu}))\\
	&\quad \le CKH_{p,\varepsilon}(\widetilde{V}(\tau))+CK(1+|\widetilde{X}(\tau)|^{p}+|Z(t)|^{p}+\overline{M}_p(T;\bm{\mu})). %\label{eq:nablaH2}
\end{align}
Taking the expectation this leads to
\begin{align}
	&\E\left[\left|\nabla H_{p,\varepsilon}(\widetilde{V}(\tau)) \cdot \widetilde{\mathfrak{v}}[\tau,\bm{\mu}](\widetilde{X}(\tau)+X(t),\widetilde{V}(\tau)+V(t))\right|\right] \\
	&\qquad \le CK\E\left[H_{p,\varepsilon}(\widetilde{V}(\tau))\right]+CK(1+(1+K)e^{KT(1+e^{KT})}), \label{eq:nablaH2}
\end{align}
where we also used \eqref{eq:Mps2} and \eqref{eq:exptildeX}. Plugging \eqref{eq:nablaH2} into \eqref{eq:exppost} we have
\begin{align}
	\E\left[H_{p,\varepsilon}(\widetilde{V}(\mathcal{T}_N(h)))\right] &\le \varepsilon^{\frac{p}{2}}+CK\int_0^{h}\E\left[H_{p,\varepsilon}(\widetilde{V}(\tau))\right]d\tau\\
	&+CK(1+(1+K)e^{KT(1+e^{KT})})h+\sigma\int_0^h \E\left[\left|\Delta H_{p,\varepsilon}(\widetilde{V}(\tau))\right|d\tau\right].\label{eq:exppost2}
\end{align}
Next, observe that since $V$ is a global solution of \eqref{eq:MKVSDEauxdef}, it cannot blow up in finite time, hence it must hold $\lim_{N \to \infty}\cT_N(h)=h$. Furthermore, for almost all $\omega \in \Omega$, $V$ (and then $\widetilde{V}$) is continuous. Hence, by Fatou's Lemma, taking the limit inferior in \eqref{eq:exppost2} we get
\begin{align}
	\E\left[H_{p,\varepsilon}(\widetilde{V}(h))\right] &\le \varepsilon^{\frac{p}{2}}+CK\int_0^{h}\E\left[H_{p,\varepsilon}(\widetilde{V}(\tau))\right]d\tau\\
	&+CK(1+(1+K)e^{KT(1+e^{KT})})h+\sigma\int_0^h \E\left[\left|\Delta H_{p,\varepsilon}(\widetilde{V}(\tau))\right|d\tau\right].\label{eq:exppost3}
\end{align}
Furthermore, notice that
\begin{equation*}
	\Delta H_{p,\varepsilon}(x)=p(|x|^2+\varepsilon)^{\frac{p-4}{2}}[(p+d-2)|x|^2+d\varepsilon],
\end{equation*}
hence
\begin{equation*}
	\left|\Delta H_{p,\varepsilon}(\widetilde{V}(h))\right| \le C(|\widetilde{V}(h)|^2+\varepsilon)^{\frac{p-2}{2}}
\end{equation*}
Now we have to distinguish among two cases. If $p>2$ we get, by Young's inequality with exponent $\frac{p}{p-2}$,
\begin{equation}\label{eq:DeltaHp}
	\left|\Delta H_{p,\varepsilon}(\widetilde{V}(h))\right| \le C(1+H_{p,\varepsilon}(|\widetilde{V}(h)|)).
\end{equation}
Hence, using \eqref{eq:DeltaHp} into \eqref{eq:exppost3}, we achieve
\begin{align}
	\E\left[H_{p,\varepsilon}(\widetilde{V}(h))\right] &\le \varepsilon^{\frac{p}{2}}+C(1+K)\int_0^{h}\E\left[H_{p,\varepsilon}(\widetilde{V}(\tau))\right]d\tau\\
	&\quad +CK(1+(1+K)e^{KT(1+e^{KT})})h.
\end{align}
Since $h \in [0,T-t]$ is arbitrary, we can use Gr\"onwall's inequality to get
\begin{align}
	\E\left[H_{p,\varepsilon}(\widetilde{V}(h))\right] \le \left(\varepsilon^{\frac{p}{2}}+CK(1+(1+K)e^{KT(1+e^{KT})})h\right)e^{C(1+K)h}.
\end{align}
It remains to take the limit as $\varepsilon \to 0$ to finally achieve, by the dominated convergence theorem,
\begin{align}
	\E\left[|\widetilde{V}(h)|^p\right] \le CK(1+(1+K)e^{KT(1+e^{KT})})e^{C(1+K)}h.
\end{align}
Using the latter inequality into \eqref{eq:Holder1}, we have the local $1/p$-H\"older continuity of $\bm{\mu}$ and the third bound in \eqref{eq:momentbounds}.

If $p \le 2$ instead we use
\begin{equation}\label{eq:DeltaHp2}
	\left|\Delta H_{p,\varepsilon}(\widetilde{V}(h))\right| \le C\varepsilon^{\frac{p-2}{2}}
\end{equation}
and then
\begin{align}
	\E\left[H_{p,\varepsilon}(\widetilde{V}(h))\right] &\le \varepsilon^{\frac{p}{2}}+CK\int_0^{h}\E\left[H_{p,\varepsilon}(\widetilde{V}(\tau))\right]d\tau\\
	&\quad +CK(1+(1+K)e^{KT(1+e^{KT})})h+C\varepsilon^{\frac{p-2}{2}}h.
\end{align}
Again, by Gr\"onwall's inequality we have
\begin{align}
	\E[|\widetilde{V}(h)|^p] \le \E\left[H_{p,\varepsilon}(\widetilde{V}(h))\right] &\le \left(\varepsilon^{\frac{p}{2}}+CK(1+(1+K)e^{KT(1+e^{KT})})h+C\varepsilon^{\frac{p-2}{2}}h\right)e^{CK}.
\end{align}
The latter inequality holds for any $\varepsilon>0$ and any $h \in [0,T-t]$, hence, we can take $\varepsilon=h$ so that
\begin{align}
	\E[|\widetilde{V}(h)|^p] \le \left(1+CK(1+(1+K)e^{KT(1+e^{KT})})\right)h^{\frac{p}{2}}e^{CK}.
\end{align}
Again, we get the local $1/2$-H\"older continuity of $\bm{\mu}$ and the third bound in \eqref{eq:momentbounds} by \eqref{eq:Holder1}.

\subsubsection{Higher moment estimate}
It remains to prove the second bound in \eqref{eq:momentbounds}. Again, consider the Young function $\Phi$ such that $\overline{\mu} \in \W_{\Phi_p}(\R^{2d})$. Observe that, by $(\mathfrak{v}_1)$ and \eqref{eq:Mps2}, we have, for all $t \in [0,T]$ and $z \in \R^{2d}$
\begin{equation*}
	\mathfrak{v}[t,\bm{\mu}](z) \le C(1+K)e^{KT(1+e^{KT})}(1+|x|^{\frac{\beta}{3}}+|v|^\beta).
\end{equation*}
Hence we can use \eqref{eq:Phi1} to get
\begin{equation*}
	\overline{M}_{\widetilde{\Phi}_p(\cdot;K)}(T,\bm{\mu}) \le C(1+(1+K)e^{KT(1+e^{KT})}),
\end{equation*}
where
\begin{equation*}
	\widetilde{\Phi}(r;K)=\Phi\left(Ce^{-C(1+K)e^{KT(1+e^{KT})}}r\right).
\end{equation*}
This ends the proof.
	
	\qed

\begin{rem}
	Notice that, to prove the existence of local solutions, Assumption $(\mathfrak{v}_3)$ is not needed. Furthermore, for uniqueness, one only needs that Assumption 
	$(\mathfrak{v}_3)$ holds on solutions of \eqref{eq:MKVSDEauxdef}.
\end{rem}

\subsection{Proof of Corollary \ref{cor:stab}}
From now on, we fix the time horizon $T>0$.
	Let $Z_j=(X_j,V_j)$ and $Z=(X,V)$ be the solutions of \eqref{eq:MKVSDEauxdef} associated with $\mathfrak{v}_j$ and $\mathfrak{v}$ respectively. Then, clearly, $\mu_t^j={\rm Law}(X_j(t),V_j(t))$ and $\mu_t={\rm Law}(X(t),V(t))$, hence 
	\begin{align*}
		\sup_{0 \le t \le T}\left(\W_p(\mu_t^j,\mu_t)\right)^p \le C\sup_{0 \le t \le T}\left(\E[|Z_j(t)-Z(t)|^p]\right).
	\end{align*}
	%we get
	%\begin{align*}
	%	\sup_{0 \le t \le T}\left(\W_p(\mu_t^j,\mu_t)\right)^p \le C\sup_{0 \le t \le T}\E[|V_j(t)-V(t)|^p].
	%\end{align*}
	Let $0 \le t \le T$. Once we notice that $\delta V_j(t):=V_j(t)-V(t)$ is absolutely continuous, we can use the chain rule to get for any $s \in [0,t]$
	\begin{align*}
		\E\left[\left|\delta V_j(s)\right|^p\right]=p\E\left[\int_0^s\left|\delta V_j(\tau)\right|^{p-2}(\mathfrak{v}_j[\tau,\bm{\mu}^j](Z_j(\tau))-\mathfrak{v}[\tau,\bm{\mu}](Z(\tau)))\cdot \delta V_j(\tau)d\tau\right].
	\end{align*}
	Now notice that, by Young's inequality and $(\mathfrak{v}_1)$ we get
	\begin{multline*}
		\left|\delta V_j(\tau)\right|^{p-1}\left|\mathfrak{v}_j[\tau,\bm{\mu}^j](Z_j(\tau))-\mathfrak{v}[\tau,\bm{\mu}](Z(\tau))\right|\\
		 \le C(K+1)\left(1+\left|Z_j(\tau)\right|^{p}+|Z(\tau)|^p+\overline{M}_p(T;\bm{\mu}_j)+\overline{M}_p(T;\bm{\mu}_j)\right),
	\end{multline*}
	and then
	\begin{align*}
		\int_{0}^{s} &\E\left[\left|\delta V_j(\tau)\right|^{p-1}\left|\mathfrak{v}_j[\tau,\bm{\mu}^j](Z_j(\tau))-\mathfrak{v}[\tau,\bm{\mu}](Z(\tau))\right|\right]d\tau \\
		&\le C(K+1)T\left(1+\overline{M}_p(T;\bm{\mu}_j)+\overline{M}_p(T,\bm{\mu})\right)\\
		&\le C(K+1)T\left(1+H_T(K)\right),
	\end{align*}
	where we also used \eqref{eq:momentbounds}. Hence we can employ Fubini's theorem to get
	\begin{align}
		\E\left[\left|\delta V_j(s)\right|^p\right]&=p\int_0^s\E\left[\left|\delta V_j(\tau)\right|^{p-2}(\mathfrak{v}_j[\tau,\bm{\mu}^j](Z_j(\tau))-\mathfrak{v}[\tau,\bm{\mu}](Z(\tau)))\cdot \delta V_j(\tau)\right]d\tau\\
		&=p\int_0^s\E\left[\left|\delta V_j(\tau)\right|^{p-2}(\mathfrak{v}_j[\tau,\bm{\mu}^j](Z_j(\tau))-\mathfrak{v}_j[\tau,\bm{\mu}](Z(\tau)))\cdot \delta V_j(\tau)\right]d\tau\\
		&\quad +p\int_0^s\E\left[\left|\delta V_j(\tau)\right|^{p-2}(\mathfrak{v}_j[\tau,\bm{\mu}](Z(\tau))-\mathfrak{v}[\tau,\bm{\mu}](Z(\tau)))\cdot \delta V_j(\tau)\right]d\tau\\
		&=I_1+I_2.\label{eq:stab1}
	\end{align}
	For the first term, we use $(\mathfrak{v}_3)$ to get
	\begin{align}\label{eq:stab2}
		I_1 \le D\int_0^s \sup_{0 \le r \le \tau}\E[|\delta Z_j(r)|^p]d\tau,
	\end{align}
	where $\delta Z_j(r):=Z_j(r)-Z(r)$. Next, we move to $I_2$. To handle it we use Young's inequality as follows
	\begin{align}
		I_2 &\le p\int_0^s\E\left[\left|\delta V_j(\tau)\right|^{p-1}|\mathfrak{v}_j[\tau,\bm{\mu}](Z(\tau))-\mathfrak{v}[\tau,\bm{\mu}](Z(\tau))|\right]d\tau\\
		&\le C\left(\int_0^s\sup_{0 \le r \le \tau}\E\left[\left|\delta Z_j(r)\right|^{p}\right]d\tau+\int_0^s\E\left[|\mathfrak{v}_j[\tau,\bm{\mu}](Z(\tau))-\mathfrak{v}[\tau,\bm{\mu}](Z(\tau))|^p\right]d\tau\right).\label{eq:stab3}
	\end{align}
	We denote
	\begin{equation*}
		\mathcal{R}_j(s)=\int_0^s\E\left[|\mathfrak{v}_j[\tau,\bm{\mu}](Z(\tau))-\mathfrak{v}[\tau,\bm{\mu}](Z(\tau))|^p\right]d\tau
	\end{equation*}
	and we notice that it is a nondecreasing function. Hence we get, combining \eqref{eq:stab1}, \eqref{eq:stab2} and \eqref{eq:stab3}
	\begin{align}
		\E\left[\left|\delta V_j(s)\right|^p\right]&\le C(1+D)\int_0^s\sup_{0 \le r \le \tau}\E\left[\left|\delta Z_j(r)\right|^{p}\right]d\tau+C\mathcal{R}_j(s).\label{eq:stab4}
	\end{align}
	Next, we observe that
	\begin{equation}\label{eq:stabi5}
		\E\left[|\delta X_j(s)|^p\right] \le T^{p-1}\int_0^s \E\left[|\delta V_j(\tau)|^p\right]d\tau \le T^{p-1}\int_0^s \sup_{0 \le r \le \tau}\E\left[|\delta Z_j(r)|^p\right]d\tau,
	\end{equation}
	where $\delta X_j(t):=X_j(t)-X(t)$. Combining \eqref{eq:stab4} and \eqref{eq:stabi5} and taking the supremum on $[0,t]$ we have
	\begin{align}
		\sup_{0 \le s \le t}\E\left[\left|\delta Z_j(s)\right|^p\right]&\le C(1+D)\int_0^t\sup_{0 \le r \le \tau}\E\left[\left|\delta Z_j(r)\right|^{p}\right]d\tau+C\mathcal{R}_j(t),
	\end{align}
	that, by Gr\"onwall's inequality, implies
	\begin{align}
		\sup_{0 \le s \le T}\E\left[\left|\delta Z_j(s)\right|^p\right]&\le e^{C(1+D)}\mathcal{R}_j(T)
	\end{align}
	and then 
	\begin{align*}
		\sup_{0 \le t \le T}\left(\W_p(\mu_t^j,\mu_t)\right)^p \le e^{C(1+D)}\mathcal{R}_j(T).
	\end{align*}
	It remains to show that the right-hand side converges to $0$. To do this, it is sufficient to observe that, by $(\mathfrak{v}_1)$ and Young's inequality
	\begin{equation*}
		\left|\mathfrak{v}_j[\tau,\bm{\mu}](Z(\tau))-\mathfrak{v}_j[\tau,\bm{\mu}](Z(\tau))\right| \le CK\left(1+|Z(\tau)|^p+\overline{M}_p(T,\bm{\mu})\right),
	\end{equation*}
	where
	\begin{equation*}
		\int_0^T\E\left[CK\left(1+|Z(\tau)|^p+\overline{M}_p(T,\bm{\mu})\right)\right]d\tau \le CK(1+\overline{M}_p(T,\bm{\mu}))<\infty.
	\end{equation*}
	Hence, by the dominated convergence theorem and the fact that $\bm{\mu}$ satisfies \eqref{eq:momentbounds}, \linebreak  $\lim_{j \to \infty}\mathcal{R}_j(T)=0$. This concludes the proof. \hfill $\square$

%{\color{red}
%\begin{rem}
%	Notice that the same {\rm cosa ce ne facciamo?}
%\end{rem}}

%\section{Proof of Theorem \ref{main2}} 
\section{The PDE-ODE system and a related optimal control problem}\label{sec:optimal}
%\subsection{The PDE-ODE system}
In this section we apply the previous results to a mean field sparse optimal control problem. Precisely, we consider the following PDE-ODE system \cite{albi2016invisible,albi2017mean}
\begin{equation}\label{PDEODE}
	\begin{cases}
		\partial_t \mu_t=-v\cdot \nabla_x \mu_t+\sigma\Delta_v\mu_t-{\rm div}_v((\mathfrak{v}[t,\bm{\mu}](z)\\
				\qquad \qquad\quad \qquad \, \, \,+\mathfrak{w}[t,\bm{H}](z))\mu_t)) & (t,x,v) \in (0,T] \times \R^{2d} \\
		\dot{\mathbf{Y}}(t)=\mathbf{W}(t)=F[t,\bm{\mu}](\bm{Y})+\mathbf{u}(t,\bm{\mu}) & t \in (0,T]\\
		\mu_0=\overline{\mu}, \qquad \mathbf{Y}(0)=\overline{\mathbf{Y}},
	\end{cases}
\end{equation}
where $\bm{\mu} \in C([0,T];\W_p(\R^{2d}))$, $\bm{H}=(\mathbf{Y},\mathbf{W}):[0,T] \to \R^{2m}$, $\mathfrak{v}:[0,T]\times C([0,T];\W_p(\R^{2d}))\times \R^{2d} \to \R$, $\mathfrak{w}:[0,T] \times C([0,T];\R^{2m}) \times \R^{2d} \to \R^{d}$, $F:[0,T] \times C([0,T];\W_p(\R^{2d})) \times C([0,T];\R^{m}) \to \R^m$, $\mathbf{u}: [0,T] \times C([0,T];\W_p(\R^{2d})) \to \R^m$, $\overline{\mu} \in \W_p(\R^{2d})$, and $\overline{\bm{H}}=(\overline{\bm{Y}},\overline{\bm{W}}) \in \R^{2m}$.
\begin{defn}\label{def:solnlPDEODE}
	We say that $(\bm{\mu},\bm{Y}) \in C([0,T];\W_p(\R^{2d}) \times \R^{2m})$ is a solution of \eqref{PDEODE} if and only if $\bm{\mu}$ is solution of
	\begin{equation*}
		\begin{cases}
			\partial_t \mu_t=-v\cdot \nabla_x \mu_t+\sigma\Delta_v\mu_t-{\rm div}_v((\mathfrak{v}[t,\bm{\mu}](z)+\mathfrak{w}[t,\bm{H}](z))\mu_t)) & (t,x,v) \in (0,T] \times \R^{2d} \\
			\mu_0=\overline{\mu},
		\end{cases}
	\end{equation*}
	in the sense of Definition \ref{solutionnl} and  $\bm{Y}$ satisfies  
	\begin{align*}
		\mathbf{Y}(t)=\overline{\bm{Y}}+\int_0^t\left(F[s,\bm{\mu}](\bm{Y})+\mathbf{u}(s,\bm{\mu})\right)\, ds && t \in (0,T]
	\end{align*}
\end{defn}
From now on, we assume that $\mathfrak{v}$ satisfies Assumptions \ref{ass:v}. Furthermore we consider the following assumptions of $\mathfrak{w}$ and $F$:

\begin{ProblemSpecBox}{Assumptions on $\mathfrak{w}$: $(\mathfrak{w})$}{$(\mathfrak{w})$}\label{ass:w}
	\begin{itemize}
		\item[$(\mathfrak{w}_0)$] $\mathfrak{w}:[0,T]\times C([0,T];\R^{2m})\times \R^{2d} \to \R^d$ is a Carath\'eodory map, i.e., it is measurable in the variable $t \in [0,T]$ and continuous in $(\bm{H},z) \in C([0,T];\R^{2m})\times \R^{2d}$.
		\item[$(\mathfrak{w}_1)$] There exists a constant $K_{\mathfrak{w}}>0$ such that for any $t \in [0,T]$, $z \in \R^{2d}$ and $\bm{H} \in C([0,T];\R^{2m})$ it holds
		\begin{equation*}
			\left|\mathfrak{w}[t,\bm{H}](z)\right| \le K_{\mathfrak{w}}(1+\sup_{0 \le s \le t}|\bm{H}(s)|+|x|^{\frac{\beta}{3}}+|v|^\beta),
		\end{equation*}
		where $\beta$ is the exponent in $(\mathfrak{v}_1)$.
		\item[$(\mathfrak{w}_2)$] There exists a constant $L_{\mathfrak{w}}$ such that for any $t \in [0,T]$, $\bm{H}^j \in C([0,T];\R^{2m})$ and $z_j \in \R^{2d}$, $j=1,2$, it holds
		\begin{equation*}
			\left|\mathfrak{w}[t,\bm{H}^1](z_1)-\mathfrak{w}[t,\bm{H}^2](z_2)\right| \le L_{\mathfrak{w}}\left(\sup_{0 \le s \le t}|\bm{H}^1-\bm{H}^2|+|z_1-z_2|\right).
		\end{equation*}
	\end{itemize}
\end{ProblemSpecBox}

\begin{ProblemSpecBox}{Assumptions on $F$: $(F)$}{$(F)$}\label{ass:F}
	\begin{itemize}
		\item[$(F_0)$] $F:[0,T]\times C([0,T];\W_p(\R^{2d})) \times C([0,T];\R^{m}) \to \R^m$ is a Carath\'eodory map, i.e. is measurable in $t \in [0,T]$ and continuous in $(\bm{\mu},\bm{Y}) \in C([0,T];\W_p(\R^{2d})) \times C([0,T];\R^{m})$.
		\item[$(F_1)$] There exists a constant $K_F>0$ such that
		\begin{equation*}
			|F(t,\bm{\mu})(\bm{Y})| \le K_F\left(1+\sup_{0 \le s \le t}|\bm{Y}(s)|+\left(\overline{M}_p(T;\bm{\mu})\right)^{\frac{1}{p}}\right),
		\end{equation*}
		for all $\bm{\mu} \in C([0,T];\W_p(\R^{2d}))$, $\bm{Y} \in C([0,T];\R^{m})$ and $t \in [0,T]$.
		\item[$(F_2)$] There exists a constant $L_F>0$ such that
		\begin{equation*}
			|F[t,\bm{\mu}](\bm{Y}^1)-F[t,{\bm{\nu}}](\bm{Y}^2)| \le L_F\left(\sup_{0 \le s \le t}|\bm{Y}^1(s)-\bm{Y}^2(s)|+\sup_{0 \le s \le t}\W_p(\mu_s,\nu_s)\right),
		\end{equation*}
		for all $\bm{\mu},\bm{\nu} \in C([0,T];\W_p(\R^{2d}))$, $\bm{Y}^j \in C([0,T];\R^{m})$, $j=1,2$ and $t \in [0,T]$.
	\end{itemize}
\end{ProblemSpecBox}

Lastly, concerning the controls $\bm{u}=(u_j)_{j=0,\dots,m}$, we fix two constants $M_{\bm{u}}$ and $L_{\bm{u}}$ and we denote by $\cA$ the set of admissible controls, characterized as follows:
\begin{ProblemSpecBox}{Assumptions on $\bm{u} \in \mathcal{A}$: $(\mathcal{A})$}{$(\mathcal{A})$}\label{ass:A}
	\begin{itemize}
		\item[$(\mathcal{A}_0)$] $\bm{u}:[0,T]\times C([0,T];\W_p(\R^{2d})) \to \R^m$ is a Carath\'eodory map, i.e., it is measurable in $t \in [0,T]$ and continuous in $\bm{\mu} \in C([0,T];\W_p(\R^{2d}))$.
		\item[$(\mathcal{A}_1)$] For all $t \in [0,T]$ it holds
		\begin{equation*}
			|\bm{u}(t,\bm{\delta}_0)| \le M_{\bm{u}},
		\end{equation*}
		where $\bm{\delta}_0 \in \W_p(C([0,T];\R^{2d}))$ is the Dirac delta measure concentrated in the constant function $0 \in C([0,T];\R^{2d})$.
		\item[$(\cA_2)$] For all $\bm{\mu},\bm{\nu} \in C([0,T];\W_p(\R^{2d}))$, $j=1,\dots,m$ and $t \in [0,T]$, it holds
		\begin{equation*}
			|u_j(t,\bm{\mu})-u_j(t,\bm{\nu})| \le \frac{L_{\bm{u}}}{m}\sup_{0 \le s \le t}\W_p(\mu_s,\nu_s).
		\end{equation*}
	\end{itemize}
\end{ProblemSpecBox}

\medskip

By Assumptions $(\cA_1)$ and $(\cA_2)$, for all $\bm{u} \in \cA$ it holds
\begin{equation*}
	\left|\bm{u}(t,\bm{\mu})\right| \le \left|\bm{u}(t,\bm{\mu})-\bm{u}(t,\bm{\delta}_0)\right|+\left|\bm{u}(t,\bm{\delta}_0)\right| \le L_{\bm{u}}(\overline{M}_p(t;\bm{\mu}))^{\frac{1}{p}}+M_0,
\end{equation*}
where we used the fact that $(\overline{M}_p(t;\bm{\mu}))^{\frac{1}{p}}=\sup_{0 \le s \le t}\W_p(\mu_s,\delta_0)$. Hence, if we set
\begin{equation*}
	K_{\bm{u}}=\max\{L_{\bm{u}},M_{\bm{u}}\},
\end{equation*}
then we have that for all $\bm{\mu} \in C([0,T];\W_p(\R^{2d}))$ and all $t \in [0,T]$ it holds
\begin{equation}\label{eq:sublin}
	\left|\bm{u}(t,\bm{\mu})\right| \le K_{\bm{u}}(1+(\overline{M}_p(t;\bm{\mu}))^{\frac{1}{p}}).
\end{equation}

\subsection{Well-posedness of the PDE-ODE system \eqref{PDEODE}}
Before setting the control problem, let us show that the system \eqref{PDEODE} is well-posed.
Our proof relies on some preliminary results that are proved in Appendix \ref{wp-pre}.
First of all, for any fixed $T>0$, $\bm{u} \in \cA$ and $\bm{\mu} \in C([0,T];\W_p(\R^{2d}))$ we introduce the system
\begin{equation}\label{PDEODE2-sec}
		\begin{cases}
			\dot{\mathbf{Y}}_{\bm{\mu}}(t)=\mathbf{W}_{\bm{\mu}}(t)=F[t,\bm{\mu}](\bm{Y}_{\bm{\mu}})+\mathbf{u}(t,\bm{\mu}) & t \in (0,T]\\
			\mu_0=\overline{\mu}, \qquad \mathbf{Y}_{\bm{\mu}}(0)=\overline{\mathbf{Y}},
		\end{cases}
\end{equation}
which admits a unique solution $\bm{Y}_{\mu}$ thanks to Lemma \ref{lem:PDEODEWell1}.
Then we define the map $\mathcal{S}:C([0,T];\W_p(\R^{2d})) \times \cA \times [0,T] \to \R^{2m}$ as follows:
\begin{equation*}
	\mathcal{S}[\bm{\mu},\bm{u}](t)=\bm{H}_{\bm{\mu}}(t)=(\bm{Y}_{\bm{\mu}}(t),\bm{W}_{\bm{\mu}}(t)),
\end{equation*}
where $\bm{Y}_{\bm{\mu}}$ is the unique solution of \eqref{PDEODE2-sec}. For any fixed $\bm{u} \in \cA$, we also define the map
\begin{equation*}
	G_{\bm{u}}:[0,T]\times C([0,T];\W_p(\R^{2d})) \times \R^{2d} \to \R^{d}
\end{equation*}
as follows:
\begin{equation*}
G_{\bm{u}}[t,\bm{\mu}](z)=\mathfrak{v}[t,\bm{\mu}](z)+\mathfrak{w}[t,\mathcal{S}[\bm{\mu},\bm{u}]](z),
\end{equation*}
which satisfies assumptions $(\mathfrak{v}_0)$, $(\mathfrak{v}_1)$, $(\mathfrak{v}_2)$ and $(\mathfrak{v}_3)$ thanks to Lemma \ref{lem:PDEODEWell2}.

Once this is established, we set
\begin{equation*}
	K_{G}:=K_{\mathfrak{v}}+K_{\mathfrak{w}}(1+K_F)(1+C_1+K_F+K_{\bm{u}}),
\end{equation*}
where $C_1$ is defined in Lemma \ref{lem:PDEODEWell1}, and we denote by
\begin{equation*}
	\mathcal{K}:=\left\{\bm{\mu} \in C([0,T];\W_p(\R^{2d})): \ \overline{M}_p(T,\bm{\mu})+\overline{M}_{\widetilde{\Phi}_p(\cdot;K_{G})}(T;\bm{\mu})+\sup_{\substack{0 \le s,t \le T \\ t \not = s }}\frac{\W_p(\mu_t,\mu_s)}{|t-s|^{\gamma_p}} \le \mathcal{C}(K_G)\right\},
\end{equation*}
where $\widetilde{\Phi}$ and $\mathcal{C}$ are the functions defined in Theorem \ref{thm:main1} and $\gamma_p=\frac{1}{\max\{2,p\}}$. We also set
\begin{equation*}
	\mathcal{C}_1(K):=(C_1+C_3)\left(1+(\mathcal{C}(K_G))^{\frac{1}{p}}\right),
\end{equation*}
where $C_1$ and $C_3$ are defined in Lemma \ref{lem:PDEODEWell1}, and
\begin{equation*}
	\mathcal{K}_1:=\left\{\bm{Y} \in C([0,T];\R^{m}): \ \sup_{0 \le s \le T}|\bm{Y}(s)| +\sup_{\substack{0 \le t,s \le T \\ t \not = s }}\frac{|\bm{Y}(t)-\bm{Y}(s)|}{|t-s|} \le \mathcal{C}_1(K_G)\right\}.
\end{equation*}
By Lemma \ref{lem:PDEODEWell3} we know both $\mathcal{K}$ and $\mathcal{K}_1$ are compact subsets and $\cK^\prime=\cK \times \cK_1$. 
Then we are in a position to prove \eqref{PDEODE} is well-posed.

\begin{thm}\label{thm:PDEODEWell}
	For any fixed $\bm{u} \in \cA$ there exists a unique solution $(\bm{\mu},\bm{Y}) \in C([0,T];\W_p(\R^{2d}))\times C([0,T];\R^{m})$ of \eqref{PDEODE}. In particular, if $B$ is a $d$-dimensional Brownian motion and $(X_0,V_0) \in L^p(\Omega;\R^{2d})$ is independent of $B$ and $\overline{\mu}={\rm Law}(X_0,V_0)$, then $\bm{\mu}={\rm Law}(X,V)$, where $(X,V)$ is the unique global strong solution in $L^p(\Omega;C(\R_0^+;\R^{2d}))$ of
		\begin{equation}\label{eq:MKVSDEaux2}
			\begin{cases}
				dX(t)=V(t)dt & t \in [0,T]\\
				dV(t)=\left(\mathfrak{v}[t,\bm{\mu}](X(t),V(t))+\mathfrak{w}[t,\bm{H}](X(t),V(t))\right)dt+\sqrt{2\sigma} dB(t) & t \in [0,T]\\
				\dot{\bm{Y}}(t)=\bm{W}(t)=F[t,\bm{\mu}](\bm{Y})+\bm{u}(t,\bm{\mu}) & t \in (0,T]\\
				X(0)=X_0, \qquad V(0)=V_0, \qquad \bm{\mu}={\rm Law}(X,V)\\
				\bm{Y}(0)=\overline{\bm{Y}}
			\end{cases}
		\end{equation}
	 Moreover, $(\bm{\mu},\bm{Y}) \in \cK^\prime$. Finally, if $\{\bm{u}_j\}_{j \in \N} \subset \cA$ is a sequence of admissible controls such that for all $t \in [0,T]$ and $\bm{\nu} \in \cK$ 
	\begin{equation*}
		\lim_{j \to \infty}\int_0^t \bm{u}_j(s,\bm{\nu})\, ds=\int_0^t \bm{u}(s,\bm{\nu})\, ds
	\end{equation*}
	and $(\bm{\mu}^j,\bm{Y}^j)$ are the respective solutions of \eqref{PDEODE}, then,
	\begin{equation*}
		\lim_{j \to \infty}\sup_{0 \le t \le T}\left(\W_p(\mu_t^j,\mu_t)+|\bm{H}^j(t)-\bm{H}(t)|\right)=0.
	\end{equation*}
\end{thm}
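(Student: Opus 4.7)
The strategy is to reduce \eqref{PDEODE} to a single nonlinear Fokker--Planck equation with an augmented nonlocal drift, so that Theorem \ref{thm:main1} and Corollary \ref{cor:stab} can be invoked essentially as black boxes.

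\emph{Existence, uniqueness and SDE representation.} Fix $\bm{u}\in\cA$. By Lemma \ref{lem:PDEODEWell1}, for every $\bm{\mu}\in C([0,T];\W_p(\R^{2d}))$ the ODE \eqref{PDEODE2-sec} admits a unique solution $\cS[\bm{\mu},\bm{u}]$, and by Lemma \ref{lem:PDEODEWell2} the drift
\begin{equation*}
G_{\bm{u}}[t,\bm{\mu}](z)=\mathfrak{v}[t,\bm{\mu}](z)+\mathfrak{w}[t,\cS[\bm{\mu},\bm{u}]](z)
\end{equation*}
satisfies Assumptions \ref{ass:v}. Theorem \ref{thm:main1} then produces a unique $\bm{\mu}\in C([0,T];\W_p(\R^{2d}))$ solving the nonlinear Fokker--Planck equation with drift $G_{\bm{u}}$, together with the identification $\bm{\mu}=\mathrm{Law}(X,V)$ for the unique strong solution $(X,V)$ of the associated McKean--Vlasov SDE. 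Setting $\bm{H}=(\bm{Y},\bm{W}):=\cS[\bm{\mu},\bm{u}]$ yields a solution of \eqref{PDEODE} and of \eqref{eq:MKVSDEaux2}. Conversely, any solution of \eqref{PDEODE} must have $\bm{Y}=\cS[\bm{\mu},\bm{u}]$ by uniqueness in Lemma \ref{lem:PDEODEWell1}, and hence its $\bm{\mu}$-component solves the FP equation with drift $G_{\bm{u}}$, which pins it down by Theorem \ref{thm:main1}.

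\emph{Membership in $\cK'$.} Estimate \eqref{eq:momentbounds} applied with $K=K_G$ gives $\bm{\mu}\in\cK$ directly. Inserting the resulting bound $\overline{M}_p(T,\bm{\mu})\le\cC(K_G)$ into the uniform and Lipschitz estimates of Lemma \ref{lem:PDEODEWell1} on $\cS[\bm{\mu},\bm{u}]$ produces exactly the $\sup$- and Lipschitz-bounds $\le\cC_1(K_G)$ defining $\cK_1$.

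\emph{Stability.} Let $(\bm{\mu}^j,\bm{Y}^j)$ be the solutions corresponding to $\bm{u}_j$. The plan is to apply Corollary \ref{cor:stab} to the drifts $G_{\bm{u}_j}$. Uniformity of the constants in $(\mathfrak{v}_1)$ and $(\mathfrak{v}_3)$ across $j$ follows because every $\bm{u}_j\in\cA$ shares the constants $M_{\bm{u}},L_{\bm{u}}$, so $K_{\bm{u}}$ and hence $K_G$ are independent of $j$, and an inspection of the derivation in Lemma \ref{lem:PDEODEWell2} shows that the dissipativity constant is built from the same data. The pointwise convergence $G_{\bm{u}_j}[t,\bm{\nu}](z)\to G_{\bm{u}}[t,\bm{\nu}](z)$ for $\bm{\nu}\in\cK$ reduces, via continuity of $\mathfrak{w}$ in its second variable $(\mathfrak{w}_0)$, to $\cS[\bm{\nu},\bm{u}_j]\to\cS[\bm{\nu},\bm{u}]$ in $C([0,T];\R^{2m})$; this follows by a standard Gr\"onwall argument applied to the ODE \eqref{PDEODE2-sec}, using $(F_2)$ and the hypothesis $\int_0^t\bm{u}_j(s,\bm{\nu})\,ds\to\int_0^t\bm{u}(s,\bm{\nu})\,ds$. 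Corollary \ref{cor:stab} then yields $\sup_{[0,T]}\W_p(\mu_t^j,\mu_t)\to0$, and a further Gr\"onwall estimate on the ODE satisfied by $\bm{Y}^j-\bm{Y}$ (using $(F_2)$, $(\cA_2)$ and the convergence of the integrated controls evaluated at $\bm{\mu}$) gives $\sup_{[0,T]}|\bm{H}^j(t)-\bm{H}(t)|\to0$.

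\emph{Main obstacle.} The delicate issue is not any single estimate but the uniformity in $j$ of all constants entering Corollary \ref{cor:stab}: once the fixed nature of $M_{\bm{u}},L_{\bm{u}},K_F,K_{\mathfrak{w}}$ is exploited to bound $\cS[\bm{\nu},\bm{u}_j]$ uniformly on $\cK$, the remaining work is mechanical. The second subtle point is that the convergence of $\int_0^t\bm{u}_j(s,\bm{\nu})\,ds$ is required only at $\bm{\nu}$ varying in $\cK$ (as in the hypothesis), which is precisely what is needed to run the ODE-continuity step for $\bm{\nu}=\bm{\mu}$ and $\bm{\nu}=\bm{\mu}^j$ after having extracted a sequence of values on the compact set $\cK$.
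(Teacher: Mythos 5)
Your proposal is correct and follows essentially the same route as the paper's proof: you reduce \eqref{PDEODE} to a nonlinear Fokker--Planck equation with the augmented drift $G_{\bm{u}}$ via Lemmas \ref{lem:PDEODEWell1} and \ref{lem:PDEODEWell2}, invoke Theorem \ref{thm:main1} for existence/uniqueness and the estimate \eqref{eq:momentbounds} giving $\bm{\mu}\in\cK$, feed this into the ODE bounds to get $\bm{Y}\in\cK_1$, and obtain stability by combining Corollary \ref{cor:stab} (via the pointwise convergence of $G_{\bm{u}_j}$, which is exactly Lemma \ref{lem:PDEODEWell4}) with a Gr\"onwall estimate on the ODE for $\bm{Y}^j-\bm{Y}$ (the paper phrases this as a triangle inequality through $\mathcal{S}[\bm{\mu},\bm{u}^j]$ using \eqref{eq:bounds3} and \eqref{eq:limits}, but the content is the same). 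The only cosmetic difference is that where you invoke $(\mathfrak{w}_0)$ for the pointwise convergence of $G_{\bm{u}_j}$, the paper uses the quantitative $(\mathfrak{w}_2)$, which is strictly stronger and makes the step even more immediate.
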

\begin{proof}
	Fix $\bm{u} \in \cA$ and consider the equation
	\begin{equation}\label{eq:PDEaux}
		\begin{cases}
			\partial_t\mu_t=-v\cdot \nabla_x\mu_t+\sigma \Delta_v\mu_t-{\rm div}_v\left(G_{\bm{u}}[t,\bm{\mu}](z)\mu_t\right) & (t,z) \in (0,T] \times \R^{2d}\\
			\mu_0=\overline{\mu} & z \in \R^{2d}.
		\end{cases}
	\end{equation}
	Since $G_{\bm{u}}$ satisfies Assumptions \ref{ass:v}, then \eqref{eq:PDEaux} admits a unique solution $\bm{\mu} \in C^{\gamma_p}([0,T];\W_p(\R^{2d}))$ that can be expressed as $\bm{\mu}={\rm Law}(X,V)$ as in the statement, by Theorem \ref{thm:main1}. In particular, $\bm{\mu}$ belongs to $\cK$ by \eqref{eq:momentbounds}. Furthermore, setting $\bm{H}=\mathcal{S}[\bm{\mu},\bm{u}]$, we get that $(\bm{\mu},\bm{Y})$ clearly satisfies \eqref{PDEODE} and $\bm{Y} \in \cK_1$ by Lemma \ref{lem:PDEODEWell1}. This shows the existence of a solution for \eqref{PDEODE}. To prove uniqueness, let $(\bm{\mu}^\prime,\bm{Y}^\prime)$ be another solution. Then, by Definition \ref{def:solnlPDEODE} and uniqueness of the solution of \eqref{PDEODE2-sec}, we know that $\bm{H}^\prime=\mathcal{S}[\bm{\mu}^\prime,\bm{u}]$. However, this means that $\bm{\mu}^\prime$ is solution of \eqref{eq:PDEaux}, hence by Lemma \ref{lem:PDEODEWell1}, then $\bm{\mu}^\prime=\bm{\mu}$ and $\bm{H}^\prime=\mathcal{S}[\bm{\mu}^\prime,\bm{u}]=\mathcal{S}[\bm{\mu},\bm{u}]=\bm{H}$.
	
	Now, let $\{\bm{u}_j\}_{j \in \N}\subset \cA$ be a sequence of controls assumed as in the assumption. 
	Then, by Lemma \ref{lem:PDEODEWell4} and Corollary \ref{cor:stab}, we get that, for all $t \in [0,T]$,
	\begin{equation}\label{eq:limits2}
		\lim_{j \to \infty}\sup_{0 \le s \le t}\W_p(\mu_s^j,\mu_s)=0.
	\end{equation}
	Furthermore, we observe that by definition
	\begin{align*}
		\left|\bm{H}^j(t)-\bm{H}(t)\right| &\le \left|\mathcal{S}[\bm{\mu}^j,\bm{u}^j](t)-\mathcal{S}[\bm{\mu},\bm{u}^j](t)\right|+\left|\mathcal{S}[\bm{\mu},\bm{u}^j](t)-\mathcal{S}[\bm{\mu},\bm{u}](t)\right|\\
		&\le C\sup_{0 \le s \le T}\W_p(\mu_s^j,\mu_s)+\left|\mathcal{S}[\bm{\mu},\bm{u}^j](t)-\mathcal{S}[\bm{\mu},\bm{u}](t)\right|,
	\end{align*}
	where we applied \eqref{eq:bounds3} and the assumptions on $F$ and $\cA$. We get the desired statement by taking the supremum, then the limit on both sides of the previous inequality and using \eqref{eq:limits} and \eqref{eq:limits2}.
\end{proof}

\subsection{The Control Problem}\label{subsec:CP}
Now, we want to set a control problem on \eqref{PDEODE}. Precisely, we consider the cost functional
\begin{equation*}
	\F[\bm{u}]=\int_0^T\cL(t,\bm{\mu},\bm{Y})\, dt+\int_0^T \Psi(\bm{u}(t,\bm{\mu}))\, dt
\end{equation*}
where $(\bm{\mu},\bm{Y})$ is the solution of \eqref{PDEODE} with given control $\bm{u} \in \cA$, 
$\mathcal{L}$ is a lagrangian functional accounting for closedness to the decided target and $\Psi$ is a convex control cost. We consider the following assumptions on the cost functional $\F$.

\begin{ProblemSpecBox}{Assumptions on $\F$: $(\mathcal{F})$}{$(\mathcal{F})$}\label{ass:calF}
	\begin{itemize}
	\item[$(\mathcal{F}_0)$] $\cL:[0,T]\times C([0,T];\W_p(\R^{2d})) \times C([0,T];\R^{m})$ and $\Psi:\R^m \to \R$ are measurable and bounded from below.
	\item[$(\mathcal{F}_1)$] $\cL$ is continuous in the variables $(\bm{\mu},\bm{Y})$.
	\item[$(\F_2)$] There exists a function $M_{\cK^\prime} \in L^1(0,T)$ such that for a.a. $t \in [0,T]$ and all $(\bm{\mu},\bm{Y}) \in \cK^\prime$
	\begin{equation*}
		|\cL(t,\bm{\mu},\bm{Y})| \le M_{\cK^\prime}(t)
	\end{equation*} 
	\item[$(\F_3)$] $\Psi:\R^m \to \R$ is convex.
	%\item[$(\cA_3)$]  If $\bm{\mu}^1, \bm{\mu}^2 \in C([0,T];\W_p(\R^{2d}))$ are such that $\mu_s^1=\mu_s^2$ for all $s \in [0,t]$, then $\bm{u}(t,\bm{\mu}^1)=\bm{u}(t,\bm{\mu}^2)$.
\end{itemize}
\end{ProblemSpecBox}
%
%\begin{mdframed}
%	\textbf{Assumptions on $\F$}:
%	\begin{itemize}
%		\item[$(\mathcal{F}_0)$] $\cL:[0,T]\times C([0,T];\W_p(\R^{2d})) \times C([0,T];\R^{2m})$ and $\Psi:\R^m \to \R$ are measurable and bounded from below;
%		\item[$(\mathcal{F}_1)$] $\cL$ is continuous in the variables $(\bm{\mu},\bm{H})$.
%		\item[$(\F_2)$] There exists a function $M_{\cL} \in L^1(0,T)$ such that for a.a. $t \in [0,T]$ and all $(\bm{\mu},\bm{H}) \in \cK^\prime$
%		\begin{equation*}
%			|\cL(t,\bm{\mu},\bm{H})| \le M_{\cK^\prime}(t)
%		\end{equation*} 
%		\item[$(\F_3)$] $\Psi:\R^m \to \R$ is convex.
%		%\item[$(\cA_3)$]  If $\bm{\mu}^1, \bm{\mu}^2 \in C([0,T];\W_p(\R^{2d}))$ are such that $\mu_s^1=\mu_s^2$ for all $s \in [0,t]$, then $\bm{u}(t,\bm{\mu}^1)=\bm{u}(t,\bm{\mu}^2)$.
%	\end{itemize}
%\end{mdframed}
We are now ready to prove Theorem~\ref{main2}.
\subsection{Proof of Theorem \ref{main2}}

%We are interested in the following optimal control problem:
%\begin{ProblemSpecBox}{Problem $1$}{$1$}\label{prob:1}
%Find $\bm{u}^\star \in \cA$ such that
%\begin{equation*}
%	\F[\bm{u}^\star]=\min_{\bm{u} \in \cA}\F[\bm{u}].
%\end{equation*}
%\end{ProblemSpecBox}

%\begin{thm}\label{main2}
%Problem \ref{prob:1} admits at least a solution.
%\end{thm}

%\begin{proof}
	By Theorem \ref{thm:PDEODEWell}, for any fixed $\bm{u} \in \cA$ the solution $(\bm{\mu},\bm{Y})$ of \eqref{PDEODE} belongs to 
	$\cK^\prime$.
	First, we show that $\F[\bm{u}]<\infty$ for all $\bm{u} \in \cA$. Indeed, by \eqref{eq:sublin} and letting $(\bm{\mu},\bm{Y})$ be the solution of 
	\eqref{PDEODE} with control $\bm{u}$, we have
	\begin{equation*}
		\left|\bm{u}(t,\bm{\mu})\right| \le K_{\bm{u}}(1+\mathcal{C}(K_G)^{\frac{1}{p}}).
	\end{equation*}
	Now, let $M_\Psi=\sup_{|x| \le K_{\bm{u}}(1+\mathcal{C}(K_G)^{\frac{1}{p}})}\Psi(x)$, which exists since $\Psi$ is convex and thus continuous. Hence
	\begin{equation*}
		\F[\bm{u}] \le \int_0^T M_{\cL}(t)\, dt+ M_\Psi T<\infty.
	\end{equation*}
	
	Now, we show that for any two controls $\bm{u}_1,\bm{u}_2 \in \cA$ such that for all $\bm{\nu} \in \cK$ and $t \in [0,T]$ it holds $\bm{u}_1(t,\bm{\nu})=\bm{u}_2(t,\bm{\nu})$, we have $\F[\bm{u}_1]=\F[\bm{u}_2]$. To do this, let $(\bm{\mu}^1,\bm{Y}^1)$ and $(\bm{\mu}^2,\bm{Y}^2)$ be the solutions of \eqref{PDEODE} with controls $\bm{u}_1$ and $\bm{u}_2$. Since $\bm{\mu}^1 \in \mathcal{K}$, then $\bm{u}_1(t,\bm{\mu}^1)=\bm{u}_2(t,\bm{\mu}^1)$, and hence we have
	\begin{align*}
		\bm{Y}^1(t)&=\overline{\bm{Y}}+\int_0^t(F[s,\bm{\mu}^1](\bm{Y}^1)+\bm{u}_1(s,\bm{\mu}^1))\, ds
		=\overline{\bm{Y}}+\int_0^t(F[s,\bm{\mu}^1](\bm{Y}^1)+\bm{u}_2(s,\bm{\mu}^1))\, ds.
	\end{align*}
	Thus, $\bm{Y}^1$ also solves \eqref{PDEODE2-sec} with control $\bm{u}_2$ and measure $\bm{\mu}^1$, i.e.
	\begin{equation*}
	\mathcal{S}[\bm{\mu}^1,\bm{u}_2]=\bm{H}^1=\mathcal{S}[\bm{\mu}^1,\bm{u}_2].
	\end{equation*}
 However, this means that
	\begin{align*}
		G_{\bm{u}_1}[t,\bm{\mu}^1](z)&=\mathfrak{v}[t,\bm{\mu}^1](z)+\mathfrak{w}[t,\mathcal{S}[\bm{\mu}^1,\bm{u}_1]](z)\\
		&=\mathfrak{v}[t,\bm{\mu}^1](z)+\mathfrak{w}[t,\mathcal{S}[\bm{\mu}^1,\bm{u}_2]](z)=G_{\bm{u}_2}[t,\bm{\mu}^1](z)
	\end{align*}
	and then $\bm{\mu}^1$ solves \eqref{eq:PDEaux} with field $G_{\bm{u}_2}$. However, also $\bm{\mu}^2$ solves \eqref{eq:PDEaux} with the field $G_{\bm{u}_2}$. Hence, since for a given field $G_{\bm{u}_2}$ \eqref{eq:PDEaux} admits a unique solution, $\bm{\mu}^1=\bm{\mu}^2$ and also $\bm{H}^1=\bm{H}^2$. Then, it is clear, by definition of $\F$, that $\F[\bm{u}_1]=\F[\bm{u}_2]$. Thus we are in a position to consider the equivalence relation $\sim$ over $\cA$ defined as
	\begin{equation*}
		\bm{u}_1 \sim \bm{u}_2 \quad \Leftrightarrow \quad \bm{u}_1(t,\bm{\mu})=\bm{u}_2(t,\bm{\mu}) \ \forall t \in [0,T], \ \forall \bm{\mu} \in \cK,
	\end{equation*}
	the quotient set $\widetilde{\cA}=\cA/\sim$ and the functional $\widetilde{\F}:\widetilde{\cA} \to \R$ defined as
	\begin{equation*}
		\widetilde{\F}[[\bm{u}]_{\sim}]=\F[\bm{u}], \quad \forall [\bm{u}]_{\sim} \in \widetilde{\cA}.
	\end{equation*}
	To actually work with $\widetilde{\F}$, we need to provide a suitable representation of the quotient set $\widetilde{\cA}$. To do this, first notice that for any $\bm{u} \in \cA$ we consider a function $\bm{U}:[0,T] \to C(\cK;\R^{m})$ defined as $\bm{U}(t)=\bm{u}(t,\cdot)$ for $t \in [0,T]$, so that $\bm{U}(t)(\bm{\mu})=\bm{u}(t,\bm{\mu})$ for any $\bm{\mu} \in \cK$. Since
	\begin{equation*}
		|\bm{u}(t,\bm{\mu})| \le K_{\bm{u}}(1+(\mathcal{C}(K_G))^{\frac{1}{p}}),
	\end{equation*}
	and then
	\begin{equation*}
		\int_0^T \sup_{\bm{\mu} \in \cK}|\bm{u}(t,\bm{\mu})|\, dt \le K_{\bm{u}}(1+(\mathcal{C}(K_G))^{\frac{1}{p}})T,
	\end{equation*}
	it is clear that $\bm{U} \in L^1([0,T];C(\cK;\R^{m}))$. Let $\Pi:\cA \to L^1([0,T];C(\cK;\R^{m}))$ be the function that maps $\bm{u}$ into $\bm{U}$ as before. If $\bm{u}_1 \sim \bm{u}_2$, then $\Pi\bm{u}_1=\Pi\bm{u}_2$, i.e., $\Pi$ is compatible with the equivalence relation $\sim$. In particular, this means that we can define the map $\widetilde{\Pi}:\widetilde{\cA} \to \Pi\cA$ such that $\widetilde{\Pi}[\bm{u}]_{\sim}=\Pi\bm{u}$. By definition, this map is surjective. Furthermore, assume that $\widetilde{\Pi}[\bm{u}_1]_{\sim}=\widetilde{\Pi}[\bm{u}_2]_{\sim}$. Then $\Pi\bm{u}_1=\Pi\bm{u}_2$ hence $\bm{u}_1(t,\bm{\mu})=\bm{u}_2(t,\bm{\mu})$ for all $t \in [0,T]$ and $\bm{\mu} \in \cK$, which in turn implies $\bm{u}_1 \sim \bm{u}_2$ and $[\bm{u}_1]_{\sim}=[\bm{u}_2]_{\sim}$. In particular, we have shown that $\widetilde{\Pi}$ is a bijection. From now on, without loss of generality, we identify $\widetilde{\cA}$ with $\Pi\cA$ and
	\begin{equation*}
		\F[\bm{u}]=\widetilde{\F}[\Pi\bm{u}].
	\end{equation*}
	We can then rewrite Problem \ref{prob:1} as follows:
	\begin{ProblemSpecBox}{Problem $1^\star$}{$1^\star$}\label{prob:1star}
		Find $\bm{U}^\star \in \widetilde{\cA}$ such that
		\begin{equation*}
			\widetilde{\F}[\bm{U}^\star]=\min_{\bm{U} \in \widetilde{\cA}}\widetilde{\F}[\bm{U}].
		\end{equation*}
	\end{ProblemSpecBox}
	
	Now let us consider a minimizing sequence $\{\bm{U}^n\}_{n \in \N}\subset \widetilde{\cA}$ for $\widetilde{\mathcal{F}}$. We will prove that this minimizing sequence admits (up to a subsequence) a limit $\bm{U}^\star$ in some sense that will be specified later. To do this, however, we preliminarily need to prove that $\{\bm{U}^n\}_{n \in \N}$ is uniformly integrable and uniformly tight, i.e.
	\begin{equation}\label{eq:unifintU}
		\lim_{R \to +\infty} \sup_{n \in \N}\int_{\{t \in [0,T]: \ \sup_{\bm{\mu} \in \cK}|\bm{U}^n(t)(\bm{\mu})|>R\}}\sup_{\bm{\mu} \in \cK}|\bm{U}^n(t)(\bm{\mu})|\, dt=0,
	\end{equation}
	and for all $\varepsilon>0$ there exists a compact-valued multifunction $\Gamma_\varepsilon:[0,T] \to 2^{C(\cK;\R^{m})}$ 
	with measurable graph such that
	\begin{equation*}
		\sup_{n \in \N}|\{t \in [0,T]: \bm{U}^n(t) \not \in \Gamma_\varepsilon(t)\}<\varepsilon,
	\end{equation*}
	respectively.
	Let us first show the uniform integrability. Indeed, for all $n \in \N$, there exists $\bm{u}^n \in \cA$ such that $\bm{U}^n=\Pi\bm{u}^n$ and then, for all $t \in [0,T]$ and $\bm{\mu} \in \cK$,
	\begin{equation*}
		|\bm{U}^n(t)(\bm{\mu})|=|\bm{u}^n(t,\bm{\mu})| \le K_{\bm{u}}(1+(\mathcal{C}(K_G))^{\frac{1}{p}}).
	\end{equation*}
	Hence, for $R>K_{\bm{u}}(1+(\mathcal{C}(K_G))^{\frac{1}{p}})$, we clearly have
	\begin{equation*}
		\int_{\{t \in [0,T]: \ \sup_{\bm{\mu} \in \cK}|\bm{U}^n(t)(\bm{\mu})|>R\}}\sup_{\bm{\mu} \in \cK}|\bm{U}^n(t)(\bm{\mu})|\, dt=0
	\end{equation*}
	and then \eqref{eq:unifintU} holds. 
	
	Now we show the uniform tightness. To do this, we fix $t \in [0,T]$ and notice that, by Assumption $(\cA_2)$ it holds
	\begin{equation*}
		|\bm{U}^n (t)(\bm{\nu}^1)-\bm{U}^n (t)(\bm{\nu}^2)| \le L_{\bm{u}}\sup_{0 \le t \le T}\W_p(\mu^1_t,\mu^2_t), \, 
		\quad \forall \bm{\mu}^1,\bm{\mu}^2 \in \cK.
	\end{equation*}
	Notice that the uniform topology of $C([0,T];\W_p(\R^{2d}))$ is generated by the metric
	\begin{equation*}
		\mathfrak{d}(\bm{\mu}^1,\bm{\mu}^2)=\sup_{t \in [0,T]}\W_p(\mu^1_t,\mu^2_t), \ 
		\quad \forall \bm{\mu}^1,\bm{\mu}^2 \in C([0,T];\W_p(\R^{2d})).
	\end{equation*}
	Hence by Lemma \ref{lem:PDEODEWell3}, $\cK$ equipped with $\mathfrak{d}$ is a compact Hausdorff space. Defining
	\begin{equation*}
		\widetilde{\cK}=\left\{f \in C(\cK;\R^m): \ \sup_{\bm{\mu} \in \cK}\left|f(\bm{\mu})\right|+\sup_{\substack{\bm{\mu}^1,\bm{\mu}^2 \\ \bm{\mu}^1 \not = \bm{\mu}^2}}\frac{|f(\bm{\mu}^1)-f(\bm{\mu}^2)|}{\mathfrak{d}(\bm{\mu}^1,\bm{\mu}^2)} \le L_{\bm{u}}+K_{\bm{u}}(1+(\mathcal{C}(K_G))^{\frac{1}{p}})\right\},
	\end{equation*}
	which is a compact subset of $C(\cK;\R^{m})$ by the Arzel\'a-Ascoli Theorem, we have $\bm{U}^n(t) \in \widetilde{\cK}$ for all $t \in [0,T]$ and $n \in \N$. Hence, to get the uniform tightness of $\{\bm{U}^n\}_{n \in \N}$ it is sufficient to set $\Gamma_\varepsilon(t)=\widetilde{\cK}$  for all $\varepsilon>0$ and $t \in [0,T]$.
	
	Now, we can use a suitable generalization of the Dunford-Pettis theorem, as in \cite[Theorem 1.3]{balder1997extension}, which tells us that there exists a function $\bm{U}^\star \in L^1([0,T];C(\cK,\R^m))$ such that, up to a non-relabelled subsequence, for all measurable subsets $E \subset [0,T]$
	\begin{equation}\label{eq:limiting}
		\lim_{n \to \infty}\sup_{\bm{\mu} \in \cK}\frac{1}{|E|}\left|\int_{E} (\bm{U}^n(t)(\bm{\mu})-\bm{U}^\star(t)(\bm{\mu}))\, dt\right|=0.
\end{equation}
	Then we need to show that $\bm{U}^\star \in \widetilde{\cA}$. To do this, first observe that for any $\bm{\mu}^1,\bm{\mu}^2 \in \cK$ it holds, by Assumption $(\cA_2)$,
	\begin{equation*}
		\frac{1}{|E|}\left|\int_{E} (U^n(t)_j(\bm{\mu}^1)-U^n_j(t)(\bm{\mu}^2))\, dt\right| \le \frac{L_{\bm{u}}}{m}\sup_{0 \le s \le \sup E}\W_p(\mu^1_s,\mu^2_s),
	\end{equation*}
	for any $j=1,\dots,m$, where $\bm{U}^n=(U^n_1,\cdots,U^n_j)$. Taking the limit as $n \to \infty$ and using \eqref{eq:limiting} we get
	\begin{equation}\label{eq:Lipschitz1}
		\frac{1}{|E|}\left|\int_{E} (U^\star_j(t)(\bm{\mu}^1)-U^\star_j(t)(\bm{\mu}^2))\, dt\right| \le \frac{L_{\bm{u}}}{m}\sup_{0 \le s \le \sup E}\W_p(\mu^1_s,\mu^2_s),
	\end{equation}
	where $\bm{U}^\star=(U^\star_1,\cdots,U^\star_m)$. Since $\bm{U}^\star \in L^1([0,T];C(\cK;\R^m))$, by \cite[Proposition 1.2.2]{arendt2011vector}, we know that almost any $t \in [0,T]$ is a Lebesgue point for $\bm{U}^\star$, i.e. there exists a set $E_{\sf Leb}\subset [0,T]$ such that $|[0,T]\setminus E_{\sf Leb}|=0$ and for all $t \in E_{\sf Leb}$ it holds
	\begin{equation*}
		\lim_{h \to 0}\frac{1}{h}\int_t^{t+h}\sup_{\bm{\mu} \in \cK}|\bm{U}^\star(s)(\bm{\mu})-\bm{U}^\star(t)(\bm{\mu})|\, ds=0.
	\end{equation*}
	In particular, if we fix $\bm{\mu}\in \cK$ for all $t \in E_{\sf Leb}$ we have
	\begin{equation*}
		\lim_{h \to 0}\left|\frac{1}{h}\int_t^{t+h}\bm{U}^\star(s)(\bm{\mu}) \, ds-\bm{U}^\star(t)(\bm{\mu})\right| \le \lim_{h \to 0}\frac{1}{h}\int_t^{t+h}\sup_{\bm{\mu} \in \cK}|\bm{U}^\star(s)(\bm{\mu})-\bm{U}^\star(t)(\bm{\mu})|\, ds=0.
	\end{equation*}
	Fix $t \in E_{\sf Leb}$ and observe that by \eqref{eq:Lipschitz1} for any $h>0$ it holds
	\begin{align*}
		\left|\frac{1}{h}\int_{t}^{t+h} (U_j^\star(s)(\bm{\mu}^1)-U_j^\star(s)(\bm{\mu}^2))\, ds\right| \le \frac{L_{\bm{u}}}{m}\sup_{0 \le s \le t+h}\W_p(\mu^1_s,\mu^2_s).
	\end{align*}
	Now we take the limit as $h \to 0$ on both sides: on the left-hand side, we use the fact that $t \in E_{\sf Leb}$, while on the right-hand side we use the continuity of $\bm{\mu}^1,\bm{\mu}^2 \in \cK$. Finally, for any $j=1, \ldots, m$ we achieve
	\begin{align}\label{eq:Lipschitz2}
		\left|U_j^\star(t)(\bm{\mu}^1)-U_j^\star(t)(\bm{\mu}^2)\right| \le \frac{L_{\bm{u}}}{m}\sup_{0 \le s \le t}\W_p(\mu^1_s,\mu^2_s).
	\end{align}
	For $t \not \in E_{\sf Leb}$, without loss of generality, we set $\bm{U}^\star(t) \equiv 0$, since $|[0,T]\setminus E_{\sf Leb}|=0$ and \eqref{eq:Lipschitz2} still holds. Hence, we have $\bm{U}^\star$ satisfies \eqref{eq:Lipschitz2} for all $t \in [0,T]$. Next, notice that, since $\bm{\delta}_0 \in \cK$ and by Assumption $(\cA_1)$, for any measurable subset $E \subset [0,T]$ we have
	\begin{equation*}
		\frac{1}{|E|}\left|\int_{E}\bm{U}^n(t)(\bm{\delta}_0)\, dt\right| \le M_{\bm{u}}.
	\end{equation*}
	With the same argument as before, we get, for all $t \in E_{\sf Leb}$,
	\begin{equation}\label{eq:boundedness}
		\left|\bm{U}^\star(t)(\bm{\delta}_0)\right| \le M_{\bm{u}}.
	\end{equation}
	Combining this with the fact that we set $\bm{U}^\star(t)\equiv 0$ whenever $t \not \in E_{\sf Leb}$, we infer \eqref{eq:boundedness} holds for all $t \in [0,T]$. 
	
	Now, we fix $t \in [0,T]$ and introduce another equivalence relation $\sim_t$ on $\cK$, as follows:
	\begin{equation*}
		\bm{\mu}^1 \sim_t \bm{\mu}^2 \Leftrightarrow \mu_s^1=\mu_s^2, \ \forall s \in [0,t].
	\end{equation*}
	Notice that, by \eqref{eq:Lipschitz2}, if $\bm{\mu}^1 \sim_t \bm{\mu}^2$ then $U_j^\star(t)(\bm{\mu}^1)=U_j^\star(t)(\bm{\mu}^2)$ for all $j=1,\dots,m$. This means that $U_j^\star(t)$ is compatible with the equivalence relation $\sim_t$ and then we can define the function $\widetilde{U}_j^\star(t):\widetilde{\cK}_t \to \R$, where $\widetilde{\cK}_t=\cK/\sim_t$ is the quotient of $\cK$ with respect to the equivalence relation $\sim_t$, as
	\begin{equation*}
		\widetilde{U}_j^\star(t)([\bm{\mu}]_{\sim_t})=U_j^\star(t)(\bm{\mu}).
	\end{equation*}
	Let us also denote by $\pi_t:\cK \mapsto \widetilde{\cK}_t$ the projection, i.e. $\pi_t\bm{\mu}=[\bm{\mu}]_{\sim_t}$, and endow $\widetilde{\cK}_t$ with the quotient topology. Notice also that the function $\Pi_t : \cK \to C([0,t];\W_p(\R^{2d}))$ defined as $(\Pi_t\bm{\mu})_s=\mu_s$ for all $s \in [0,t]$ (i.e., $\Pi_t$ is the function that maps $\bm{\mu}$ to its restriction on the interval $[0,t]$) is compatible with $\sim_t$, i.e., if $\bm{\mu}^1 \sim_t \bm{\mu}^2$ then $\Pi_t \bm{\mu}^1=\Pi_t \bm{\mu}^2$. We endow $\Pi_t\cK$ with the uniform metric $\mathfrak{d}_t$ on $C([0,t];\W_p(\R^{2d}))$, i.e.,
	\begin{equation*}
		\mathfrak{d}_t(\bm{\mu}^1,\bm{\mu}^2)=\sup_{0 \le s \le t}\W_p(\mu^1_s,\mu^2_s), \ \bm{\mu}^1,\bm{\mu}^2 \in \Pi_t\cK
	\end{equation*}
	It is clear that $\Pi_t$ is non-expansive, i.e., for all $\bm{\mu}^1,\bm{\mu}^2 \in \cK$ it holds
	\begin{equation*}
		\mathfrak{d}_t(\Pi_t\bm{\mu}^1,\Pi_t\bm{\mu}^2) \le \mathfrak{d}(\bm{\mu}^1,\bm{\mu}^2).
	\end{equation*}
	Since $\Pi_t$ is compatible with $\sim_t$, we define the map $\widetilde{\Pi}_t:\widetilde{\cK}_t \to \Pi_t\cK$ as $\widetilde{\Pi}_t[\bm{\mu}]_{\sim_t}=\Pi_t\bm{\mu}$, which is clearly continuous with respect of the quotient topology on $\widetilde{\cK}_t$. Furthermore, notice that if $\Pi_t\bm{\mu}^1=\Pi_t\bm{\mu}^2$, then, by definition, $\bm{\mu}^1 \sim_t \bm{\mu}^2$. Hence, $\widetilde{\Pi}_t$ is bijective. Finally, consider an open set $E \subset \widetilde{\cK}_t$. We want to prove that $\widetilde{\Pi}_tE$ is an open set. To do this, fix $[\bm{\mu}]_{\sim_t} \in E$. We want to fix a special representative $\bm{\mu} \in [\bm{\mu}]_{\sim_t}$. Precisely, let $\widetilde{\bm{\mu}}=\widetilde{\Pi}_t[\bm{\mu}]_{\sim_t}$ and define $\bm{\mu} \in C([0,T];\W_p(\R^{2d}))$ by setting $\mu_s=\widetilde{\mu}_{\min\{s,t\}}$ for $s \in [0,T]$. It is not difficult to check that $\bm{\mu} \in \cK$. In particular $\bm{\mu} \in \pi_t^{-1}(U)$, and notice that since $\pi_t^{-1}(E)$ is an open set in $\cK$, there exists $\varepsilon>0$ such that $B_\varepsilon(\bm{\mu}):=\{\bm{\nu} \in \cK: \ \mathfrak{d}(\bm{\mu},\bm{\nu})<\varepsilon\} \subset \pi_t^{-1}(E)$. 
	
	Now, we define $B_\varepsilon(\Pi_t\bm{\mu};t):=\{\bm{\nu} \in \Pi_t\cK: \ \mathfrak{d}_t(\Pi_t\bm{\mu},\bm{\nu})<\varepsilon\}$. Since $\Pi_t$ is non-expansive, we have $\Pi_t B_\varepsilon(\bm{\mu}) \subseteq B_\varepsilon(\Pi_t\bm{\mu};t)$. On the other hand, consider any $\widetilde{\bm{\nu}} \in B_\varepsilon(\Pi_t\bm{\mu};t)$ and define $\bm{\nu} \in C([0,T];\W_p(\R^{2d}))$ by setting $\nu_s=\widetilde{\nu}_{\min\{s,t\}}$ for $s \in [0,T]$. Again, it is not difficult to check that $\bm{\nu} \in \cK$ and $\mathfrak{d}(\bm{\nu},\bm{\mu})<\varepsilon$. Hence, $\widetilde{\bm{\nu}}=\Pi_t\bm{\nu}$ with $\bm{\nu} \in B_\varepsilon(\bm{\mu})$ and then $\widetilde{\bm{\nu}} \in \Pi_tB_\varepsilon(\Pi_t\bm{\mu};t)$. This shows that $\Pi_t B_\varepsilon(\bm{\mu})=B_\varepsilon(\Pi_t\bm{\mu};t)$. Recalling that $\Pi_t\bm{\mu}=\widetilde{\Pi}_t[\bm{\mu}]_{\sim_t}$, we get
	\begin{equation*}
		B_\varepsilon(\Pi_t\bm{\mu};t)=\Pi_t B_\varepsilon(\bm{\mu})=\widetilde{\Pi}_t\pi_t B_\varepsilon(\bm{\mu}) \subseteq \widetilde{\Pi}_t E, 
	\end{equation*}
	where we used the fact that since $B_\varepsilon(\bm{\mu}) \subseteq \pi_t^{-1}(E)$. Since $[\bm{\mu}]_{\sim_t} \in E$ is arbitrary, $\widetilde{\Pi}_t E$ is an open set. In conclusion, $\widetilde{\Pi}_t :\widetilde{\cK}_t \to \Pi_t\cK$ is a homeomorphism.
	
	Now we notice that $\widetilde{U}_j^\star(t)\circ \widetilde{\Pi}_t^{-1}$ is $\frac{L_{\bm{u}}}{m}$-Lipschitz. Indeed, for any $\widetilde{\bm{\mu}}^1,\widetilde{\bm{\mu}}^2 \in \Pi_t\cK$, letting $\bm{\mu}^j \in \Pi_t^{-1}\widetilde{\bm{\mu}}^j$ for $j=1,2$, it holds
	\begin{align*}
		\left|\widetilde{U}_j^{\star}(t)(\widetilde{\Pi}_t^{-1}(\widetilde{\bm{\mu}}^1))-\widetilde{U}_j^{\star}(t)(\widetilde{\Pi}_t^{-1}(\widetilde{\bm{\mu}}^2))\right|&=\left|\widetilde{U}_j^{\star}(t)([\bm{\mu}^1]_{\sim_t})-\widetilde{U}_j^{\star}(t)([\bm{\mu}^2]_{\sim_t})\right|\\
		&=\left|U_j^\star(t)(\bm{\mu}^1)-U_j^\star(t)(\bm{\mu}^2)\right|\\
		& \le \frac{L_{\bm{u}}}{m}\sup_{0 \le s \le t}\W_p(\mu^1_s,\mu^2_s)=\frac{L_{\bm{u}}}{m}\mathfrak{d}(\widetilde{\bm{\mu}}^1,\widetilde{\bm{\mu}}^2).
	\end{align*}
	Let us recall that also $\Pi_t\cK$ is a compact subset of $C([0,t];\W_p(\R^{2d}))$. Now we can use McShane's Extension Theorem (see \cite{mcshane1934extension}) to state that the function $\widetilde{u}_j^\star(t,\cdot)$ defined as
	\begin{equation*}
		\widetilde{u}_j^\star(t,\bm{\nu})=\min_{\bm{\mu} \in \Pi_t\cK}\left\{\widetilde{U}_j^\star(t)(\widetilde{\Pi}_t^{-1}\bm{\mu})+\frac{L_{\bm{u}}}{m}\mathfrak{d}(\bm{\mu},\bm{\nu})\right\}, \quad \bm{\nu} \in C([0,t];\W_p(\R^{2d})),
	\end{equation*}
	is the maximal $\frac{L_{\bm{u}}}{m}$-Lipschitz extension of $\widetilde{U}_j^\star(t) \circ \widetilde{\Pi}_t^{-1}$ on $C([0,t];\W_p(\R^{2d}))$. Finally, we set, for all $t \in [0,T]$ and $\bm{\mu} \in C([0,T];\W_p(\R^{2d}))$, 
	\begin{equation*}
		u_j^\star(t,\bm{\mu})=\widetilde{u}_j^\star(t,\Pi_t\bm{\mu})
	\end{equation*}
	and $\bm{u}^\star=(u_j^\star)_{j=1,\dots,m}$. We can clearly rewrite $u_j^\star$ as follows:
		\begin{equation*}
		u_j^\star(t,\bm{\nu})=\min_{\bm{\mu} \in \cK}\left\{U_j^\star(t)(\bm{\mu})+\frac{L_{\bm{u}}}{m}\sup_{0 \le s \le t}\W_p(\bm{\mu},\bm{\nu})\right\}, \quad \bm{\nu} \in C([0,T];\W_p(\R^{2d})),
	\end{equation*}
	so that it is clear that $u_j^\star$ is measurable in the variable $t$, as it is the minimum of a family of measurable functions. Concerning the continuity on $\bm{\mu}$, notice that for all $\bm{\mu}^1,\bm{\mu}^2 \in C([0,T];\W_p(\R^{2d}))$,
	\begin{equation*}
		\left|u_j^\star(t,\bm{\mu}^1)-u_j^\star(t,\bm{\mu}^2)\right|=\left|\widetilde{u}_j^\star(t,\Pi_t\bm{\mu}^1)-\widetilde{u}_j^\star(t,\Pi_t\bm{\mu}^2)\right| \le \frac{L_{\bm{u}}}{m}\sup_{0 \le s \le t}\W_p(\mu^1_s,\mu^2_s).
	\end{equation*}
	This shows that $\bm{u}^\star$ is continuous in the $\bm{\mu}$ variable, so that $(\cA_0)$ holds, and that $(\cA_2)$ also holds. Furthermore, notice that
	\begin{multline*}
		\left|\bm{u}^\star(t,\bm{\delta}_0)\right|^2=\sum_{j=1}^{m}\left|\widetilde{u}_j^\star(t,\Pi_t\bm{\delta}_0)\right|^2=\sum_{j=1}^{m}\left|\widetilde{U}_j^\star(t)(\widetilde{\Pi}_t^{-1}\Pi_t\bm{\delta}_0)\right|^2\\
		=\sum_{j=1}^{m}\left|\widetilde{U}_j^\star(t)([\bm{\delta}_0]_{\sim_t})\right|^2=\sum_{j=1}^{m}\left|U_j^\star(t)(\bm{\delta}_0)\right|^2 \le M_0^2,
	\end{multline*}
	by \eqref{eq:boundedness}, thus $(\cA_1)$ also holds. As a consequence, $\bm{u}^\star \in \cA$. However, if $\bm{\mu} \in \cK$, we have
	\begin{equation*}
		u_j^\star(t,\bm{\mu})=\widetilde{u}_j^\star(t,\Pi_t\bm{\mu})=\widetilde{U}_j^\star(t)(\widetilde{\Pi}_t^{-1}\Pi_t\bm{\mu})\\
		=\widetilde{U}_j^\star(t)([\bm{\mu}]_{\sim_t})=U_j^\star(t)(\bm{\mu}),
	\end{equation*}
	i.e. $\bm{U}^\star=\Pi \bm{u}^\star \in \widetilde{\cA}$.
	
	It remains to show that $\bm{U}^\star$ is solution of Problem \ref{prob:1star}. Recall that $\{\bm{U}^n\}_{n \in \N} \subset \widetilde{\cA}$, hence there exists a sequence of controls $\{\bm{u}^n\}_{n \in \N} \subset \cA$ such that $\bm{U}^n=\Pi \bm{u}^n$. Let $(\bm{\mu}^n, \bm{Y}^n)$ be the respective solutions of \eqref{PDEODE}. Furthermore, let $(\bm{\mu}^\star,\bm{Y}^\star)$ be the solution of \eqref{PDEODE} with control $\bm{u}^\star$. Observe that $\Psi:\R^m \to \R$ is a convex function and let $\Psi^\ast$ be its Legendre transform, i.e.
	\begin{equation*}
		\Psi^\ast(x)=\sup_{y \in \R^m}(y \cdot x-\Psi(y)), \quad x \in \R^m.
	\end{equation*}
	Recall that $\Psi^\ast:\R^m \to \R$ is still a convex function and that the Legendre transform is an involution, i.e., 
	\begin{equation*}
		\Psi(x)=\Psi^{\ast \ast}(x)=\sup_{y \in \R^m}(y \cdot x-\Psi^\ast(y)), \quad x \in \R^m.
	\end{equation*}
	Fix $y \in \R^m$ and let $\Psi(x;y)=y \cdot x-\Psi^\ast(y)$, which is an affine function. Then we have, for any measurable subset $E \subset [0,T]$
	\begin{align}
		\int_E \Psi(\bm{U}^n(t)(\bm{\mu}^n);y)\, dt&=y \cdot \int_E \bm{U}^n(t)(\bm{\mu}^n)\, dt-\Psi^\ast(y)|E|\\
		&=y \cdot \int_E \bm{U}^n(t)(\bm{\mu}^\star)\, dt-\Psi^\ast(y)|E|+y \cdot \mathcal{R}_n(E),\label{eq:prelimit1}
	\end{align}
	where
	\begin{equation*}
		\mathcal{R}_n(E)=\int_E \left(\bm{U}^n(t)(\bm{\mu}^n)-\bm{U}^n(t)(\bm{\mu}^\star)\right)\, dt.
	\end{equation*}
	Let us show that $\mathcal{R}_n \to 0$. To do this, first notice that
	\begin{equation*}
		\left|\mathcal{R}_n(E)\right| \le \int_E \left|\bm{u}^n(t,\bm{\mu}^n)-\bm{u}^n(t,\bm{\mu}^\star)\right| \, dt \le L_{\bm{u}}|E| \sup_{0 \le s \le T}\W_p(\bm{\mu}^,\bm{\mu}^\star).
	\end{equation*}
	By \eqref{eq:limiting}, we know that
	\begin{equation*}
		\lim_{n \to \infty}\int_E\bm{u}^n(t,\bm{\mu})\, dt=\int_E\bm{u}^\star(t,\bm{\mu})\, dt, \ \forall \bm{\mu} \in \cK,
	\end{equation*}
	hence we can use Theorem \ref{thm:PDEODEWell} to conclude that
	\begin{equation*}
		\lim_{n \to +\infty} \left|\mathcal{R}_n(E) \right|=0.
	\end{equation*}
	Taking the limit as $n \to \infty$ in \eqref{eq:prelimit1} we get
	\begin{equation*}
		\lim_{n \to +\infty}\int_E \Psi(\bm{U}^n(t)(\bm{\mu}^n);y)\, dt=y \cdot \int_E \bm{U}^\star(t)(\bm{\mu}^\star)\, dt-\Psi^\ast(y)|E|=\int_E \Psi(\bm{U}^\star(t)(\bm{\mu}^\star);y)\, dt.
	\end{equation*}
	In particular, taking the supremum only on the left-hand side, we have
	\begin{align*}
		\int_E \Psi(\bm{U}^\star(t)(\bm{\mu}^\star);y)\, dt &\le \sup_{y \in \R^m}\lim_{n \to +\infty}\int_E \Psi(\bm{U}^n(t)(\bm{\mu}^n);y)\, dt \\
		&\le \liminf_{n \to +\infty} \sup_{y \in \R^m}\int_E \Psi(\bm{U}^n(t)(\bm{\mu}^n);y)\, dt\\
		& \le \liminf_{n \to +\infty}\int_E \Psi(\bm{U}^n(t)(\bm{\mu}^n))\, dt=:\mathcal{G}(E).
	\end{align*}
	Notice that $\mathcal{G}$ is superadditive. Indeed, if we consider two measurable sets $E_1,E_2 \subset [0,T]$ with $E_1 \cap E_2=\emptyset$, then
	\begin{equation*}
		\int_{E_1 \cup E_2} \Psi(\bm{U}^n(t)(\bm{\mu}^n))\, dt=\int_{E_1} \Psi(\bm{U}^n(t)(\bm{\mu}^n))\, dt+\int_{E_2} \Psi(\bm{U}^n(t)(\bm{\mu}^n))\, dt
	\end{equation*}
	and taking the limit inferior we achieve
	\begin{equation*}
		\mathcal{G}(E_1 \cup E_2) \ge \mathcal{G}(E_1)+\mathcal{G}(E_2).
	\end{equation*}
	Hence, by the Localization Lemma (see \cite[Proposition 1.16]{braides1998approximation}), we have
	\begin{align}\label{eq:semicont1}
		\int_E \Psi(\bm{U}^\star(t)(\bm{\mu}^\star))\, dt &=\int_E \sup_{y \in \R^m}\Psi(\bm{U}^\star(t)(\bm{\mu}^\star);y)\, dt \\
		&\le \mathcal{G}(E)= \liminf_{n \to +\infty}\int_E \Psi(\bm{U}^n(t)(\bm{\mu}^n)).
	\end{align}
	Next, recall that by Theorem \ref{thm:PDEODEWell} we have
	\begin{equation*}
		\lim_{j \to \infty}\sup_{0 \le t \le T}(\W_p(\mu^j_t,\mu^\star_y)+|\bm{H}^j(t)-\bm{H}^\star(t)|)=0
	\end{equation*}
	hence, by $(\F_1)$,
	\begin{equation*}
		\lim_{j \to \infty}\cL(t,\bm{\mu}^j,\bm{Y}^j)=\cL(t,\bm{\mu}^\star,\bm{Y}^\star).
\end{equation*}
	We also recall that, still by Theorem \ref{thm:PDEODEWell}, $(\bm{\mu}^j,\bm{H}^j) \in \cK^\prime$ for all $j \in \N$. Thus, by $(\F_2)$ and dominated convergence,
	\begin{equation}\label{eq:semicont2}
		\lim_{j \to \infty}\int_0^T\cL(t,\bm{\mu}^j,\bm{Y}^j)\, dt=\int_0^T \cL(t,\bm{\mu}^\star,\bm{Y}^\star) \, dt.
	\end{equation}
	Finally, combining \eqref{eq:semicont1} and \eqref{eq:semicont2} we achieve
	\begin{equation*}
		\widetilde{\F}[\bm{U}^\star] \le \lim_{n \to \infty}\widetilde{\F}[\bm{U}^n]=\inf_{\bm{U} \in \widetilde{\cA}}\F[\bm{U}],
	\end{equation*}
	i.e., $\bm{U}^\star$ is solution of Problem \ref{prob:1star}. \qed
%\end{proof}

\newpage

\appendix

\section{Some sufficient conditions to the validity of assumption $(\mathfrak{v}_3)$} \label{appendix:equivalence}
As already underlined in the introduction, it could be difficult to verify that $(\mathfrak{v}_3)$ actually holds. For such  reason, let us give some sufficient conditions for $(\mathfrak{v}_3)$ to be true.

\begin{prop}\label{prop:equivlence}
	Let Assumptions \ref{ass:v} (except for $(\mathfrak{v}_3)$) hold true and denote
	\begin{equation*}
		{\sf D}([0,T];\R^{2d})=\{(X,V) \in C([0,T];\R^{2d}): \ X'=V\}.
	\end{equation*}
	Consider the following properties:
	\begin{itemize}
		\item[$(\mathfrak{v}_3^{\prime})$] For any $T>0$ there exists a constant  $D \ge 0$ such that for any $t \in [0,T]$, any couple of measures ${\bm{\mu}}^1,{\bm{\mu}}^2 \in \W_p(C([0,T];\R^{2d}))$ supported on ${\sf D}([0,T];\R^{2d})$ with $\mu^1_0=\mu^2_0$ and any $\bm{\gamma} \in \Pi({\bm{\mu}}^1,{\bm{\mu}}^2)$ it holds
		\begin{multline}\label{eq:condD4}
			\int_{\R^{2d}\times \R^{2d}}|v_1-v_2|^{p-2}(\mathfrak{v}[t,{\bm{\mu}}^1](z_1)-\mathfrak{v}[t,{\bm{\mu}}^2](z_2))\cdot(v_1-v_2)\, d{\gamma}_t(z_1,z_2) \\ \le D \sup_{0 \le s \le t}\left(\int_{\R^{2d}\times \R^{2d}}|z_1-z_2|^p \, d{\gamma}_s(z_1,z_2)\right)
		\end{multline}
		\item[$(\mathfrak{v}_3^{\prime\prime})$] For any $T>0$ there exists a constant $D \ge 0$ such that for any $t \in [0,T]$, any ${\bm{\mu}}^1,{\bm{\mu}}^2 \in \W_p(C([0,T];\R^{2d}))$ supported on ${\sf D}([0,T];\R^{2d})$ with $\mu^1_0=\mu_0^2$ and $z_1,z_2 \in \R^{2d}$
		\begin{equation*}
			\left|\mathfrak{v}[t,{\bm{\mu}}^1](z_1)-\mathfrak{v}[t,{\bm{\mu}}^2](z_2)\right| \le D(\sup_{0 \le s \le t}\W_p(\mu_s^1,\mu_s^2)+|z_1-z_2|).
		\end{equation*}
	\end{itemize}
	Then $(\mathfrak{v}_3^{\prime \prime}) \Rightarrow (\mathfrak{v}_3^{\prime}) \Rightarrow (\mathfrak{v}_3)$. 
\end{prop}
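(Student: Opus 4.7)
The plan is to establish the two implications in succession.

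For $(\mathfrak{v}_3'') \Rightarrow (\mathfrak{v}_3')$, I would fix $t \in [0,T]$ and a coupling $\bm{\gamma} \in \Pi(\bm{\mu}^1,\bm{\mu}^2)$, and first combine Cauchy--Schwarz on the inner product with the Lipschitz-type bound of $(\mathfrak{v}_3'')$ to obtain the pointwise estimate
\begin{equation*}
|v_1-v_2|^{p-2}\bigl(\mathfrak{v}[t,\bm{\mu}^1](z_1)-\mathfrak{v}[t,\bm{\mu}^2](z_2)\bigr)\cdot(v_1-v_2)\le D\,|v_1-v_2|^{p-1}\Bigl(\sup_{0\le s\le t}\W_p(\mu^1_s,\mu^2_s)+|z_1-z_2|\Bigr).
\end{equation*}
Next, I would apply Young's inequality with conjugate exponents $\frac{p}{p-1}$ and $p$ to bound the right-hand side by a multiple of $|v_1-v_2|^p+\bigl(\sup_{s\le t}\W_p(\mu^1_s,\mu^2_s)\bigr)^p+|z_1-z_2|^p$, and then use the trivial inequality $|v_1-v_2|^p\le |z_1-z_2|^p$ to eliminate the velocity contribution. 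Integrating against $\gamma_t$ and recalling that the evaluated coupling $\gamma_s:=(\operatorname{ev}_s\otimes\operatorname{ev}_s)\sharp \bm{\gamma}$ belongs to $\Pi(\mu_s^1,\mu_s^2)$, so that $\bigl(\W_p(\mu_s^1,\mu_s^2)\bigr)^p \le \int_{\R^{2d}\times\R^{2d}} |z_1-z_2|^p\, d\gamma_s$, produces \eqref{eq:condD4} with a constant depending only on $D$ and $p$.

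For $(\mathfrak{v}_3') \Rightarrow (\mathfrak{v}_3)$, I would take two processes $Z_j=(X_j,V_j)$ as in the hypothesis of $(\mathfrak{v}_3)$. Viewed as random variables with values in $C([0,T];\R^{2d})$, the condition $X_j'=V_j$ ensures that their laws $\bm{\mu}^j:=\operatorname{Law}(Z_j)\in \W_p(C([0,T];\R^{2d}))$ are supported on ${\sf D}([0,T];\R^{2d})$, while the pathwise identity $Z_1(0)=Z_2(0)$ forces $\mu_0^1=\mu_0^2$. Choosing the canonical coupling $\bm{\gamma}:=\operatorname{Law}(Z_1,Z_2)\in\Pi(\bm{\mu}^1,\bm{\mu}^2)$, whose time-$s$ evaluation $\gamma_s$ is nothing but $\operatorname{Law}(Z_1(s),Z_2(s))$, the integrals in \eqref{eq:condD4} coincide exactly with the expectations appearing on the left-hand side of $(\mathfrak{v}_3)$, while $\int|z_1-z_2|^p\,d\gamma_s=\E[|Z_1(s)-Z_2(s)|^p]$ matches the right-hand side. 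Applying $(\mathfrak{v}_3')$ pointwise at each $s\in[0,t]$ and integrating over $[0,t]$ then delivers precisely the time-integrated inequality in $(\mathfrak{v}_3)$.

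The arguments are largely bookkeeping and I do not anticipate a serious obstacle. The only slightly delicate point will be the Young step in the first implication, where one must carefully track exponents (in particular splitting off the velocity term and absorbing it via $|v_1-v_2|^p\le|z_1-z_2|^p$); the degenerate case $p=1$ is handled by interpreting $|v_1-v_2|^{-1}(\cdots)\cdot(v_1-v_2)$ directly via Cauchy--Schwarz as bounded by $|\mathfrak{v}[t,\bm{\mu}^1](z_1)-\mathfrak{v}[t,\bm{\mu}^2](z_2)|$, in which case the Young step is not needed. Integrability of all integrands is guaranteed by the growth assumption $(\mathfrak{v}_1)$ together with the membership $\bm{\mu}^j\in \W_p(C([0,T];\R^{2d}))$, and the constant produced in each step is uniform over $\bm{\gamma}$, as required.
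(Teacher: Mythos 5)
Your proposal is correct and follows essentially the same route as the paper's proof: the implication $(\mathfrak{v}_3') \Rightarrow (\mathfrak{v}_3)$ is handled identically via the canonical coupling $\bm{\gamma}={\rm Law}(Z_1,Z_2)$ and time-integration of the pointwise inequality, and the implication $(\mathfrak{v}_3'') \Rightarrow (\mathfrak{v}_3')$ is handled with exactly the same chain of Cauchy--Schwarz, the Lipschitz-type bound, Young's inequality for $p>1$ (and the direct estimate for $p=1$), the absorption $|v_1-v_2|\le|z_1-z_2|$, and the bound $\W_p^p(\mu^1_s,\mu^2_s)\le\int|z_1-z_2|^p\,d\gamma_s$. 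No gaps.
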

\begin{proof}
	To show that $(\mathfrak{v}_3^{\prime}) \Rightarrow (\mathfrak{v}_3)$, consider any two processes $Z_j \in L^p(\Omega;C([0,T];\R^{2d}))$ as in Assumption $(\mathfrak{v}_3)$ and let $\bm{\mu}^j={\rm Law}(Z_j)$ for $j=1,2$. Then $\bm{\mu}^j$ is supported on ${\sf D}([0,T];\R^{2d})$ and $\mu_0^1=\mu_0^2$. Set also $\gamma={\rm Law}(Z_1,Z_2) \in \Pi(\bm{\mu}^1,\bm{\mu}^2)$. Then, by $(\mathfrak{v}_3^\prime)$, we have
	\begin{multline*}
		\E\left[|V_1(t)-V_2(t)|^{p-2}(\mathfrak{v}[t,\bm{\mu}^1](Z_1(t))-\mathfrak{v}[t,\bm{\mu}^1](Z_2(t)))\cdot (V_1(t)-V_2(t))\right] \\
		\le D \sup_{0 \le s \le t}\E[|Z_1(s)-Z_2(s)|^p].
	\end{multline*}
	Integrating on $[0,T]$ we get $(\mathfrak{v}_3)$.

	Now let us show that $(\mathfrak{v}_3^{\prime \prime}) \Rightarrow (\mathfrak{v}_3^{\prime})$. Fix ${\bm{\mu}}^1,{\bm{\mu}}^2 \in \W_p (C([0,T];\R^{2d}))$ and $\bm{\gamma} \in \Pi({\bm{\mu}}^1,{\bm{\mu}}^2)$. First consider the case $p=1$. Observe that 
	\begin{align*}
		\int_{\R^{2d}\times \R^{2d}}&|v_1-v_2|^{-1}(\mathfrak{v}[t,{\bm{\mu}}^1](z_1)-\mathfrak{v}[t,{\bm{\mu}}^2](z_2))\cdot(v_1-v_2)\, d\gamma_t (z_1,z_2) \\
		&\le \int_{\R^{2d}\times \R^{2d}}|\mathfrak{v}[t,{\bm{\mu}}^1](z_1)-\mathfrak{v}[t,{\bm{\mu}}^2](z_2)|\, d\gamma_t(z_1,z_2)\\
		&\le D\int_{\R^{2d}\times \R^{2d}} \sup_{0\le s\le t}\W_1(\mu^1_s,\mu^2_s)\, d\gamma_t(z_1,z_2)+D\int_{\R^{2d}\times \R^{2d}}|z_1-z_2|\, d\gamma_t(z_1,z_2)\\
		&\le D \sup_{0\le s\le t}\W_1(\mu^1_s,\mu^2_s) + D \sup_{0\le s\le t} \int_{\R^{2d}\times \R^{2d}}|z_1-z_2|\, d\gamma_s(z_1,z_2)\\
		&\le 2 D \sup_{0\le s\le t} \int_{\R^{2d}\times \R^{2d}}|z_1-z_2|\, d\gamma_s(z_1,z_2)
	\end{align*}
	On the other hand, if $p>1$ we get 
	\begin{align*}
		\int_{\R^{2d}\times \R^{2d}}&|v_1-v_2|^{p-2}(\mathfrak{v}[t,{\bm{\mu}}^1](z_1)-\mathfrak{v}[t,{\bm{\mu}}^2](z_2))\cdot(v_1-v_2)\, d{\gamma}_t(z_1,z_2) \\
		&\le \int_{\R^{2d}\times \R^{2d}}|v_1-v_2|^{p-1}|\mathfrak{v}[t,{\bm{\mu}}^1](z_1)-\mathfrak{v}[t,{\bm{\mu}}^2](z_2)|\, d{\gamma}_t(z_1,z_2)\\
		&\le D\int_{\R^{2d}\times \R^{2d}}|v_1-v_2|^{p-1}\sup_ {0 \le s \le t} \W_p(\mu^1_s,\mu^2_s)\, d{\gamma}_t(z_1,z_2)\\
		&\quad +D\int_{\R^{2d}\times \R^{2d}}|v_1-v_2|^{p-1}|z_1-z_2|\, d{\gamma}_t(z_1,z_2)\\
		&\le D\frac{p-1}{p}\int_{\R^{2d}\times \R^{2d}}|v_1-v_2|^{p} d{\gamma}_t(z_1,z_2)+\frac{D}{p}\int_{\R^{2d}\times \R^{2d} } \sup_{0 \le s \le t} \W_p^p(\mu^1_s,\mu^2_s) d{\gamma}_t(z_1,z_2)\\
		&\quad +D\int_{\R^{2d} \times \R^{2d}}|z_1-z_2|^p d{\gamma}_t(z_1,z_2) \le 2D \sup_{0 \le s \le t} \int_{\R^{2d}\times \R^{2d}}|z_1-z_2|^p\, d{\gamma}_s(z_1,z_2).
	\end{align*}
	This ends the proof.
\end{proof}

\newpage

\section{Preliminaries for the well-posedness of the PDE-ODE system} \label{wp-pre}
In this subsection, we collect some useful preliminary results to prove the well-posedness of the PDE-ODE system in \eqref{PDEODE}.
First of all, we consider an auxiliary system and we show the existence and uniqueness of its solution, alongside with some additional
qualitative properties.

\begin{lem}\label{lem:PDEODEWell1} 
	Let Assumptions \ref{ass:v}, \ref{ass:w}, \ref{ass:F} and \ref{ass:A} hold.
	Let $T>0$, $\bm{u} \in \cA$ and $\bm{\mu} \in C([0,T];\W_p(\R^{2d}))$ be fixed, then the system 
	\begin{equation}\label{PDEODE2}
		\begin{cases}
			\dot{\mathbf{Y}_{\bm{\mu}}}(t)=\mathbf{W}_{\bm{\mu}}(t)=F[t,\bm{\mu}](\bm{Y}_{\bm{\mu}})+\mathbf{u}(t,\bm{\mu}) & t \in (0,T]\\
			\mu_0=\overline{\mu}, \qquad \mathbf{Y}_{\bm{\mu}}(0)=\overline{\mathbf{Y}},
		\end{cases}
	\end{equation}
	admits a unique solution $\bm{Y}_{\bm{\mu}}$. Furthermore, setting $\bm{H}_{\mu}=(\bm{Y}_{\bm{\mu}},\bm{W}_{\bm{\mu}})$:
	\begin{itemize}
		\item[(i)] there exist three constants $C_j$, $j=1,2,3$, depending on $p,K_F,K_{\bm{u}},L_F,L_{\bm{u}},T$ and $\overline{\bm{H}}$ such that
			\begin{align}\label{eq:bounds1}
				\sup_{0 \le s \le T}|\bm{Y}_{\bm{\mu}}(s)| &\le C_1(1+\overline{M}_p(T;\bm{\mu})^{\frac{1}{p}}),  \\ \label{eq:bounds3}
				 \sup_{0 \le s \le T}|\bm{Y}_{\bm{\mu}}(s)-\bm{Y}_{\bm{\nu}}(s)| &\le C_2 \sup_{0 \le s \le T}\W_p(\mu_s,\nu_s), \\ \label{eq:bounds2}
				|\bm{Y}_{\bm{\mu}}(s)-\bm{Y}_{\bm{\mu}}(t)| &\le C_3(1+\overline{M}_p(T;\bm{\mu})^{p})|t-s|, \ \forall t \in [0,T];
			\end{align}
		\item[(ii)] if there exists a sequence $\{\bm{u}_j\}_{j \in \N} \subset \cA$ such that
	\begin{equation*}
		\lim_{j \to \infty}\int_{0}^{t}\bm{u}_j(s,\bm{\mu})dt=\int_{0}^{t}\bm{u}(s,\bm{\mu})dt, \ \forall t \in [0,T],
	\end{equation*}
	then
	\begin{equation}\label{eq:limits}
		\lim_{j \to \infty}\sup_{t \in [0,T]}|\bm{H}_{\bm{\mu}}^j(t)-\bm{H}_{\bm{\mu}}(t)|=0,
	\end{equation}
	where $\bm{H}_{\bm{\mu}}^j$ is the solution of \eqref{PDEODE2} with $\bm{u}_j$ in place of $\bm{u}$;
	\item[(iii)] for any $t \in [0,T]$, if $\bm{\mu}^1,\bm{\mu}^2 \in C([0,T];\W_p(\R^{2d}))$ are such that $\mu^1_s=\mu^2_s$ for all $s \in [0,t]$, 
	then $\bm{H}_{\bm{\mu}^1}(t)=\bm{H}_{\bm{\mu}^2}(t)$.
	\end{itemize}
\end{lem}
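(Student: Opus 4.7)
The plan is to recast \eqref{PDEODE2} as the functional integral equation
\begin{equation*}
\bm{Y}(t) = \overline{\bm{Y}} + \int_0^t F[s,\bm{\mu}](\bm{Y})\, ds + \int_0^t \bm{u}(s,\bm{\mu})\, ds
\end{equation*}
in $C([0,T];\R^m)$. For the fixed $\bm{\mu}$, the bound \eqref{eq:sublin} shows that $\bm{u}(\cdot,\bm{\mu})$ is essentially bounded by $K_{\bm{u}}(1+(\overline{M}_p(T;\bm{\mu}))^{1/p})$, so the second integral acts as a fixed Lipschitz driver. Combined with the functional Lipschitz condition $(F_2)$, the right-hand side would then be a contraction in the Bielecki norm $\|\bm{Y}\|_\lambda := \sup_{t \in [0,T]} e^{-\lambda t}|\bm{Y}(t)|$ for any $\lambda > L_F$, producing existence and uniqueness of $\bm{Y}_{\bm{\mu}} \in C([0,T];\R^m)$.

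For the bounds in (i), I would combine sublinearity in $(F_1)$ with the control bound to write
\begin{equation*}
|\bm{Y}_{\bm{\mu}}(t)| \le |\overline{\bm{Y}}| + (K_F + K_{\bm{u}})(1+(\overline{M}_p(T;\bm{\mu}))^{1/p})T + K_F\int_0^t \sup_{r\le s}|\bm{Y}_{\bm{\mu}}(r)|\, ds,
\end{equation*}
and apply Gr\"onwall to obtain \eqref{eq:bounds1}. For \eqref{eq:bounds3}, I would subtract the integral equations for $\bm{Y}_{\bm{\mu}}$ and $\bm{Y}_{\bm{\nu}}$ and use $(F_2)$ together with $(\cA_2)$ to get
\begin{equation*}
|\bm{Y}_{\bm{\mu}}(t) - \bm{Y}_{\bm{\nu}}(t)| \le L_F \int_0^t \sup_{r\le s}|\bm{Y}_{\bm{\mu}}(r) - \bm{Y}_{\bm{\nu}}(r)|\, ds + (L_F + L_{\bm{u}})T \sup_{s\le T}\W_p(\mu_s,\nu_s),
\end{equation*}
closing again with Gr\"onwall. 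The time-Lipschitz bound \eqref{eq:bounds2} would then be read off from $\dot{\bm{Y}}_{\bm{\mu}} = F[\cdot,\bm{\mu}](\bm{Y}_{\bm{\mu}}) + \bm{u}(\cdot,\bm{\mu})$, the sublinear bound, and \eqref{eq:bounds1}, using the elementary inequality $x^{1/p} \le 1 + x^p$ to match the exponent appearing in the stated form.

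The delicate point is (ii). I would set $\bm{I}_j(t) := \int_0^t (\bm{u}_j - \bm{u})(s,\bm{\mu})\, ds$; by hypothesis $\bm{I}_j(t) \to 0$ pointwise, and since each integrand is uniformly bounded by $2K_{\bm{u}}(1+(\overline{M}_p(T;\bm{\mu}))^{1/p})$, the family $\{\bm{I}_j\}$ is equi-Lipschitz on $[0,T]$. Equi-Lipschitz together with pointwise convergence to $0$ would upgrade by Arzel\`a--Ascoli to uniform convergence on $[0,T]$. Writing $\delta \bm{Y}^j := \bm{Y}^j_{\bm{\mu}} - \bm{Y}_{\bm{\mu}}$,
\begin{equation*}
\delta \bm{Y}^j(t) = \int_0^t \bigl(F[s,\bm{\mu}](\bm{Y}^j_{\bm{\mu}}) - F[s,\bm{\mu}](\bm{Y}_{\bm{\mu}})\bigr)\, ds + \bm{I}_j(t),
\end{equation*}
so $(F_2)$ and Gr\"onwall would give $\sup_t |\delta \bm{Y}^j(t)| \le e^{L_F T} \sup_t |\bm{I}_j(t)| \to 0$; the $\bm{W}$-component would then be controlled through the identity $\bm{W}^j - \bm{W} = (F[\cdot,\bm{\mu}](\bm{Y}^j_{\bm{\mu}}) - F[\cdot,\bm{\mu}](\bm{Y}_{\bm{\mu}})) + (\bm{u}_j - \bm{u})$, interpreted consistently with the measurable representation of $\bm{W}_{\bm{\mu}}$ used throughout.

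Finally, (iii) would be a direct consequence of forward uniqueness: when $\mu^1_s = \mu^2_s$ for all $s \in [0,t]$, assumptions $(F_2)$ and $(\cA_2)$ force $F[s,\bm{\mu}^1](\bm{Y}) = F[s,\bm{\mu}^2](\bm{Y})$ and $\bm{u}(s,\bm{\mu}^1) = \bm{u}(s,\bm{\mu}^2)$ for every $s \in [0,t]$ and every $\bm{Y}$, so $\bm{Y}_{\bm{\mu}^1}$ and $\bm{Y}_{\bm{\mu}^2}$ would solve the same initial value problem on $[0,t]$ and must agree there, yielding $\bm{H}_{\bm{\mu}^1}(t) = \bm{H}_{\bm{\mu}^2}(t)$. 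The hardest point will be the equi-Lipschitz/Arzel\`a--Ascoli upgrade in (ii) from pointwise to uniform convergence of the control driver $\bm{I}_j$, without which the Gr\"onwall argument for $\delta \bm{Y}^j$ would not close.
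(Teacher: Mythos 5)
Your argument mirrors the paper's proof step for step: existence and uniqueness by a Picard/contraction argument (the paper invokes Picard iteration, you a Bielecki-norm contraction, which is the same idea), the three bounds in (i) via $(F_1)$, $(F_2)$, $(\cA_2)$, \eqref{eq:sublin} and Gr\"onwall, part (ii) via equi-Lipschitz-plus-pointwise convergence of the primitive $\bm{I}_j = \bm{J}_j$, and (iii) via forward uniqueness once the data coincide on $[0,t]$. The Arzel\`a--Ascoli route you name for upgrading pointwise to uniform convergence of $\bm{I}_j$ is exactly what the paper's ``equi-Lipschitz $\Rightarrow$ uniform'' remark relies on, so there is no real divergence there.

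The one place where you (and the paper) are thin is the $\bm{W}$-component of \eqref{eq:limits}. Your identity gives $\bm{W}^j - \bm{W} = \bigl(F[\cdot,\bm{\mu}](\bm{Y}^j_{\bm{\mu}}) - F[\cdot,\bm{\mu}](\bm{Y}_{\bm{\mu}})\bigr) + (\bm{u}_j - \bm{u})(\cdot,\bm{\mu})$; the first piece is controlled uniformly by $(F_2)$ and the already-established $\bm{Y}$-convergence, but the hypothesis of (ii) only controls the \emph{primitive} of the second piece, not the integrand. Taking $\bm{u}_j(\cdot,\bm{\mu}) = \bm{u}(\cdot,\bm{\mu}) + v\sin(j\,\cdot)$ for a fixed unit vector $v$ satisfies the convergence hypothesis while $\sup_t|\bm{W}^j(t)-\bm{W}(t)|$ stays bounded away from zero, so $\sup_t|\bm{H}^j_{\bm{\mu}}(t)-\bm{H}_{\bm{\mu}}(t)|\to 0$ does not follow. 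The paper's sentence ``it is sufficient to verify the uniform convergence on $\bm{Y}$'' glosses over the same point, so this is an inherited rather than a newly introduced imprecision; your parenthetical about the ``measurable representation of $\bm{W}_{\bm{\mu}}$'' acknowledges it but does not repair it. A clean version of (ii) would either assert uniform convergence of $\bm{Y}^j_{\bm{\mu}}$ alone (together with $L^1$-in-time convergence of $\bm{W}^j_{\bm{\mu}}$), or strengthen the hypothesis on $\{\bm{u}_j\}$.
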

\begin{proof}
	First of all, existence and uniqueness of the solution of \eqref{PDEODE2} is proved by means of the Picard iteration method, 
	and for this reason we do not recall the proof here.
	
	\smallskip
	
	{\bf (i)} Now, by $(F_1)$ and \eqref{eq:sublin}, it follows
	\begin{equation*}
		\sup_{0 \le s \le t}|\bm{Y}_{\bm{\mu}}(s)| \le C\left(1+(K_F+K_{\bm{u}})(1+\overline{M}_p(T;\bm{\mu})^{\frac{1}{p}})+K_F\int_0^t\sup_{0 \le z \le \tau}|\bm{Y}_{\bm{\mu}}(z)|d\tau\right)
	\end{equation*}
	and then, by Gr\"onwall inequality,	
%	while, at the same time,
%	\begin{equation*}
%		|\bm{Y}_{\bm{\mu}}(s)| \le C\left(1+\int_0^s\sup_{0 \le z \le \tau}|\bm{H}_{\bm{\mu}}(z)|d\tau\right).
%	\end{equation*}
%	Therefore, combining the previous estimates and taking the supremum we get
%	\begin{equation*}
%		\sup_{0 \le s \le t}|\bm{H}_{\bm{\mu}}(s)| \le C\left(1+(K_F+K_{\bm{u}})(1+\overline{M}_p(T;\bm{\mu})^{\frac{1}{p}})+(K_F+1)\int_0^s\sup_{0 \le z \le \tau}|\bm{H}_{\bm{\mu}}(z)|d\tau\right),
%	\end{equation*}
%	that, by Gr\"onwall inequality, implies
	\begin{equation*}
		\sup_{0 \le s \le t}|\bm{Y}_{\bm{\mu}}(s)| \le C\left(1+(K_F+K_{\bm{u}})(1+\overline{M}_p(T;\bm{\mu})^{\frac{1}{p}})\right)e^{(K_F+1)T}.
	\end{equation*}
	leading to \eqref{eq:bounds1}. 
	
	Next, let us consider any $\bm{\mu}, \bm{\nu} \in C([0,T];\W_p(\R^{2d}))$. Then, by $(F_2)$ and $(\cA_2)$ we have
	\begin{equation*}
		\sup_{0 \le s \le t}\left|\bm{Y}_{\bm{\mu}}(s)-\bm{Y}_{\bm{\nu}}(s)\right| \le L_F\int_0^t\sup_{0 \le r \le \tau}\left|\bm{Y}_{\bm{\mu}}(r)-\bm{Y}_{\bm{\nu}}(r)\right|d\tau+(L_F+L_{\bm{u}})\int_0^t \sup_{0 \le r \le \tau}\W_p(\mu_r,\nu_r)d\tau,
	\end{equation*}
	Hence, once again, by Gr\"onwall's inequality, we infer
	\begin{equation*}
		\sup_{0 \le s \le t}\left|\bm{Y}_{\bm{\mu}}(s)-\bm{Y}_{\bm{\nu}}(s)\right| \le T(L_F+L_{\bm{u}})e^{L_FT}\sup_{0 \le s \le t}\W_p(\mu_s,\nu_s).
	\end{equation*}
	that is \eqref{eq:bounds3}.
	
	Finally, we fix $0 \le t \le s \le T$ and observe that
	\begin{align*}
		|\bm{Y}_{\bm{\mu}}(t)-\bm{Y}_{\bm{\mu}}(s)| &\le \int_s^t |F[\tau,\bm{\mu}](\bm{Y}_{\bm{\mu}})|\, d\tau+\int_s^t|\bm{u}(\tau,\bm{\mu})|\, d\tau\\
		&\le (K_F+K_{\bm{u}}+C_1(K_F+1))(1+(\overline{M}_p(T,\bm{\mu}))^{\frac{1}{p}})|t-s|.
	\end{align*}

	\smallskip
	
	\textbf{(ii)} By the assumptions of $F$ and $\bm{u}$, it is sufficient to verify the uniform convergence on $\bm{Y}$. We consider a fixed $\bm{\mu}$ and $\bm{u}$, and we assume there exists a sequence $\{\bm{u}^j\}_{j \in \N}$ satisfying the assumptions. 
		Furthermore, we denote by $(\bm{Y}^j_{\bm{\mu}},\bm{W}^j_{\bm{\mu}})$ and $(\bm{Y}_{\bm{\mu}},\bm{W}_{\bm{\mu}})$ the solutions of the ODE 
		\eqref{PDEODE2} respectively with $\bm{u}_j$ and $\bm{u}$. Then, 
		for all $0 \le s \le t \le T$
		\begin{align*}
			\left|\bm{Y}^j_{\bm{\mu}}(s)-\bm{Y}_{\bm{\mu}}(s)\right| &\le L_F\int_0^s\sup_{0 \le r \le \tau}\left|\bm{Y}^j_{\bm{\mu}}(r)-\bm{Y}_{\bm{\mu}}(r)\right|\, d\tau +\left|
			\int_0^s\left(\bm{u}^j(\tau,\bm{\mu})-\bm{u}(\tau,\bm{\mu})\right)ds\right|,
		\end{align*}
	Now we take the supremum over $[0,t]$ and we achieve 
	\begin{align*}
		\sup_{0 \le s \le t}\left|\bm{Y}^j_{\bm{\mu}}(s)-\bm{Y}_{\bm{\mu}}(s)\right| &\le (L_F+1)\int_0^t \sup_{0 \le r \le \tau}\left|\bm{Y}^j_{\bm{\mu}}(r)-\bm{Y}_{\bm{\mu}}(r)\right|\, d\tau +\mathcal{R}_j(t),
	\end{align*}
	where
	\begin{equation*}
		\mathcal{R}_j(t):=\sup_{0 \le s \le t}\left|\int_0^s\left(\bm{u}^j(s,\bm{\mu})-\bm{u}(s,\bm{\mu})\right)ds\right|,
	\end{equation*}
	which, by Gr\"onwall's inequality, implies 
	\begin{equation*}
		\sup_{0 \le s \le t}|\bm{Y}_{\bm{\mu}}^j(s)-\bm{Y}_{\bm{\mu}}(s)| \le \mathcal{R}_j(t)e^{(L_F+1)T}.
	\end{equation*}
	We only need to prove that $\lim_{j \to \infty}\mathcal{R}_j(T)=0$. To do this, consider the sequence of functions
	\begin{equation*}
		\bm{J}_j(t)=\int_0^t(\bm{u}^j(s,\bm{\mu})-\bm{u}(s,\bm{\mu}))\, ds.
	\end{equation*}
	By assumption, we have that for all $t \in [0,T]$ it holds $\lim_{j \to \infty}\bm{J}_j(t)=0$. Furthermore, it is clear that $\mathcal{R}_j(T)=\sup_{0 \le t \le T}\left|\bm{J}_j(t)\right|$, hence we want to prove that $\bm{J}_j \to 0$ uniformly in $[0,T]$. However, this is clear since the sequence $\bm{J}_j$ is equi-Lipschitz. Indeed, for all $0 \le s \le t \le T$,
	\begin{align*}
	\left|\bm{J}_j(t)-\bm{J}_j(s)\right|&=\left|\int_s^t(\bm{u}^j(\tau,\bm{\mu})-\bm{u}(\tau,\bm{\mu}))\, d\tau\right| \le \int_s^t\left|\bm{u}^j(\tau,\bm{\mu})-\bm{u}(\tau,\bm{\mu})\right| \, d\tau\\
	&\le 2K_{\bm{u}}\left(1+\left(\overline{M}_p(t;\bm{\mu})\right)^{\frac{1}{p}}\right)|t-s|.
	\end{align*}
	This shows \eqref{eq:limits}.
	
	\smallskip

	{\bf (iii)} The last statement follows by uniqueness of the solution of \eqref{PDEODE2}, together with assumptions $(F_2)$ and $(\cA_2)$, 
	which in turn imply that if $\mu_s^1=\mu_s^2$ for all $s \in [0,t]$, then $F[s,\bm{\mu}^1](y)=F[s,\bm{\mu}^2](y)$ and 
	$\bm{u}(s,\bm{\mu}^1)=\bm{u}(s,\bm{\mu}^2)$ for all $s \in [0,t]$ and $h \in \R^{2m}$.
\end{proof}

Then, for reader's convenience, we recall the definition of the map $\mathcal{S}:C([0,T];\W_p(\R^{2d})) \times \cA \times [0,T] \to \R^{2m}$:
\begin{equation*}
	\mathcal{S}[\bm{\mu},\bm{u}](t)=\bm{H}_{\bm{\mu}}(t),
\end{equation*}
where $\bm{H}_{\bm{\mu}}$ is the unique solution of \eqref{PDEODE2}. Furthermore, as already done in Section \ref{sec:optimal}, 
for any fixed $\bm{u} \in \cA$, we also define the map
\begin{equation*}
	G_{\bm{u}}:[0,T]\times C([0,T];\W_p(\R^{2d})) \times \R^{2d} \to \R^{d}
\end{equation*}
as follows:
\begin{equation*}
G_{\bm{u}}[t,\bm{\mu}](z)=\mathfrak{v}[t,\bm{\mu}](z)+\mathfrak{w}[t,\mathcal{S}[\bm{\mu},\bm{u}]](z).
\end{equation*}
Concerning $G_{\bm{u}}$, we can prove the following result.

\begin{lem}\label{lem:PDEODEWell2}
	Fix $\bm{u} \in \cA$. Then $G_{\bm{u}}$ satisfies Assumptions \ref{ass:v}, where the constants are independent of the choice of $\bm{u} \in \cA$.
\end{lem}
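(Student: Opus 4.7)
The plan is to verify each of the five conditions $(\mathfrak{v}_0)$--$(\mathfrak{v}_4)$ for $G_{\bm{u}}=\mathfrak{v}+\mathfrak{w}[\cdot,\mathcal{S}[\cdot,\bm{u}]]$. Since $\mathfrak{v}$ itself satisfies Assumptions~\ref{ass:v} by hypothesis, it suffices to show that the extra summand $\mathfrak{w}[t,\mathcal{S}[\bm{\mu},\bm{u}]](z)$ respects the same qualitative properties, and in the case of $(\mathfrak{v}_1)$--$(\mathfrak{v}_3)$ that the corresponding bounds add up correctly. The two main tools are Assumptions~\ref{ass:w} for $\mathfrak{w}$ together with the regularity of the map $\bm{\mu}\mapsto \mathcal{S}[\bm{\mu},\bm{u}]=\bm{H}_{\bm{\mu}}$ given by Lemma~\ref{lem:PDEODEWell1}: in particular the growth estimate \eqref{eq:bounds1} and the Lipschitz estimate \eqref{eq:bounds3} for the $\bm{Y}$--component, which extend to the $\bm{W}$--component through the defining ODE $\bm{W}_{\bm{\mu}}=F[\cdot,\bm{\mu}](\bm{Y}_{\bm{\mu}})+\bm{u}(\cdot,\bm{\mu})$ combined with $(F_1)$--$(F_2)$, $(\cA_1)$--$(\cA_2)$ and \eqref{eq:sublin}. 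Careful bookkeeping of the emerging constants will show that they depend only on $K_{\mathfrak{v}},K_{\mathfrak{w}},L_{\mathfrak{w}},K_F,L_F,M_{\bm{u}},L_{\bm{u}},T$ and $\overline{\bm{H}}$, hence are uniform in $\bm{u}\in\cA$.

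For $(\mathfrak{v}_0)$, measurability in $t$ is inherited directly; joint continuity in $(\bm{\mu},z)$ follows by composing the continuous map $\bm{\mu}\mapsto\bm{H}_{\bm{\mu}}$ (by \eqref{eq:bounds3} and its $\bm{W}$--analogue) with the jointly continuous $\mathfrak{w}$ of $(\mathfrak{w}_0)$. For $(\mathfrak{v}_1)$, plugging the bound $\sup_{0\le s\le T}|\bm{H}_{\bm{\mu}}(s)|\le C(1+(\overline{M}_p(T,\bm{\mu}))^{1/p})$ into $(\mathfrak{w}_1)$ produces precisely a contribution of the form allowed in $(\mathfrak{v}_1)$, with the same exponent $\beta$. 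For $(\mathfrak{v}_2)$, on a bounded set $|z_i|\le R$ the Lipschitz estimate in $z$ from $(\mathfrak{w}_2)$ upgrades to the required H\"older inequality with exponents $\alpha/3$ and $\alpha$ up to a factor depending on $R$, and combining with the H\"older bound for $\mathfrak{v}$ gives $(\mathfrak{v}_2)$. Finally, $(\mathfrak{v}_4)$ is where the causal structure enters: if $\mu^1_s=\mu^2_s$ for $s\in[0,t]$, then Lemma~\ref{lem:PDEODEWell1}(iii) yields $\bm{H}_{\bm{\mu}^1}(s)=\bm{H}_{\bm{\mu}^2}(s)$ for all $s\in[0,t]$, so by $(\mathfrak{w}_2)$ the right-hand side vanishes and $\mathfrak{w}[t,\bm{H}_{\bm{\mu}^1}]=\mathfrak{w}[t,\bm{H}_{\bm{\mu}^2}]$; combined with $(\mathfrak{v}_4)$ for $\mathfrak{v}$ this gives the claim for $G_{\bm{u}}$.

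The main technical step, and the only nontrivial one, will be $(\mathfrak{v}_3)$. Given $Z_1,Z_2$ as in its statement, I would split
\begin{align*}
\bigl(G_{\bm{u}}[s,\bm{\mu}^1](Z_1(s))-G_{\bm{u}}[s,\bm{\mu}^2](Z_2(s))\bigr)\cdot\bigl(V_1(s)-V_2(s)\bigr)=A_{\mathfrak{v}}(s)+A_{\mathfrak{w}}(s),
\end{align*}
where $A_{\mathfrak{v}}$ is the $\mathfrak{v}$--contribution, already controlled by $(\mathfrak{v}_3)$ applied to $\mathfrak{v}$, while $A_{\mathfrak{w}}$ is the $\mathfrak{w}$--contribution. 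For the latter, $(\mathfrak{w}_2)$ applied to $\bm{H}^j=\mathcal{S}[\bm{\mu}^j,\bm{u}]$ yields
\begin{align*}
|V_1(s)-V_2(s)|^{p-2}|A_{\mathfrak{w}}(s)|\le L_{\mathfrak{w}}|V_1-V_2|^{p-1}\Bigl(\sup_{0\le r\le s}|\bm{H}^1(r)-\bm{H}^2(r)|+|Z_1(s)-Z_2(s)|\Bigr).
\end{align*}
The supremum on the right is dominated by $C\sup_{0\le r\le s}\W_p(\mu^1_r,\mu^2_r)$ by combining \eqref{eq:bounds3} for the $\bm{Y}$--component with the analogous estimate on $\bm{W}^1-\bm{W}^2$ obtained from $(F_2)$ and $(\cA_2)$, and then by $C\sup_{0\le r\le s}(\E[|Z_1(r)-Z_2(r)|^p])^{1/p}$ since $(Z_1(r),Z_2(r))$ is a coupling of $\mu^1_r$ and $\mu^2_r$. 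A Young inequality with exponents $p/(p-1)$ and $p$ converts the cross term $|V_1-V_2|^{p-1}(\cdots)$ into a sum of contributions of order $|V_1(s)-V_2(s)|^p$, $|Z_1(s)-Z_2(s)|^p$ and $\sup_{0\le r\le s}\E[|Z_1(r)-Z_2(r)|^p]$; taking expectations, integrating in $s\in[0,t]$ and absorbing everything into $\sup_{0\le z\le s}\E[|Z_1(z)-Z_2(z)|^p]$ finally delivers $(\mathfrak{v}_3)$ for $G_{\bm{u}}$ with a constant independent of $\bm{u}\in\cA$.
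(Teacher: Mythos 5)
Your proposal is correct and follows essentially the same route as the paper: split $G_{\bm{u}}=\mathfrak{v}+\mathfrak{w}[\cdot,\mathcal{S}[\cdot,\bm{u}]]$, observe $\mathfrak{v}$ already satisfies Assumptions~\ref{ass:v}, and verify each of $(\mathfrak{v}_0)$--$(\mathfrak{v}_4)$ for the $\mathfrak{w}$-piece using $(\mathfrak{w}_0)$--$(\mathfrak{w}_2)$ together with the stability and growth estimates for $\bm{\mu}\mapsto\bm{H}_{\bm{\mu}}$ from Lemma~\ref{lem:PDEODEWell1}. The only difference is that for $(\mathfrak{v}_3)$ you derive the dissipativity bound for the $\mathfrak{w}$-contribution directly via Young's inequality and the coupling bound $\W_p(\mu^1_r,\mu^2_r)\le(\E[|Z_1(r)-Z_2(r)|^p])^{1/p}$, whereas the paper first establishes the stronger Lipschitz condition $(\mathfrak{v}_3'')$ for the $\mathfrak{w}$-piece and then invokes Proposition~\ref{prop:equivlence}, which encapsulates exactly the computation you carry out inline.
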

\begin{proof}
	Since $\mathfrak{v}$ already satisfies Assumptions \ref{ass:v}, it is sufficient to verify that $\mathfrak{w}[\cdot,\mathcal{S}[\cdot,\bm{u}]](\cdot)$ satisfies the same assumptions. First, we notice that $(\mathfrak{v}_0)$ is satisfied. Indeed, by \eqref{eq:bounds3} and the assumptions on $F$, we know that $\mathcal{S}[\cdot,\bm{u}]$ is continuous. Since by $(\mathfrak{w}_0)$ is a Carath\'eodory map, the composition $\mathfrak{w}[\cdot,\mathcal{S}[\cdot,\bm{u}]](\cdot)$ is also a Carath\'eodory map. Next, we prove $(\mathfrak{v}_1)$. To do this, observe that by $(\mathfrak{w}_1)$ 
	\begin{equation*}
		|\mathfrak{w}[t,\mathcal{S}[\bm{\mu},\bm{u}]](z)| \le K_{\mathfrak{w}}(1+\sup_{0 \le s \le t}|\mathcal{S}[\bm{\mu},\bm{u}](s)|+|x|^{\frac{\beta}{3}}+|v|^{\beta}),
	\end{equation*}
	and then, by \eqref{eq:bounds1} and the assumptions on $F$ and $\cA$ we have
	\begin{equation*}
		|\mathfrak{w}[t,\mathcal{S}[\bm{\mu},\bm{u}]](z)| \le K_{\mathfrak{w}}(1+K_F)(1+C_1+K_F+K_{\bm{u}})(1+\overline{M}_p(T;\bm{\mu})^{\frac{1}{p}}+|x|^{\frac{\beta}{3}}+|v|^{\beta}),
	\end{equation*}
	Assumption $(\mathfrak{v}_2)$ clearly follows by $(\mathfrak{w}_2)$. Now we prove $(\mathfrak{v}_3)$. To do this, let $\bm{\mu}^1,\bm{\mu}^2 \in \W_p(C([0,T];\R^{2d}))$, $z_1,z_2 \in \R^{2d}$ and observe that, by $(\mathfrak{w}_2)$
	\begin{align*}
		|\mathfrak{w}[t,\mathcal{S}[\bm{\mu}^1,\bm{u}]](z_1)-&\mathfrak{w}[t,\mathcal{S}[\bm{\mu}^2,\bm{u}]](z_2)| \le L_{\mathfrak{w}}(\sup_{0 \le s \le t}|\mathcal{S}[\bm{\mu}^1,\bm{u}](s)-\mathcal{S}[\bm{\mu}^2,\bm{u}](s)|+|z_1-z_2|)\\
		&\le L_{\mathfrak{w}}(1+L_F+L_{\bm{u}}(1+C_2)(\sup_{0 \le s \le T}\W_p(\mu_s^1,\mu_s^2)+|z_1-z_2|),
	\end{align*}
	where we also used \eqref{eq:bounds3} and the assumptions on $F$ and $\cA$. The latter inequality implies $(\mathfrak{v}_3)$ (see Appendix~\ref{appendix:equivalence} for more details). Finally, $(\mathfrak{v}_4)$ follows once we observe that if $\mu^1_s=\mu^2_s$ for all $s \in [0,t]$, then $\mathcal{S}[\bm{\mu}^1,\bm{u}](s)=\mathcal{S}[\bm{\mu}^2,\bm{u}](s)$ for all $s \in [0,t]$ by Lemma \ref{lem:PDEODEWell1}.
\end{proof}

%\bigskip
%
%
%Furthermore, once this is established we recall the definition of
%\begin{equation*}
%	K_{G}:=K_{\mathfrak{v}}+K_{\mathfrak{w}}(1+C_1),
%\end{equation*}
%where $C_1$ is defined in Lemma \ref{lem:PDEODEWell1}, and we denote by
%\begin{equation*}
%	\mathcal{K}:=\left\{\bm{\mu} \in C([0,T];\W_p(\R^{2d})): \ \overline{M}_p(T,\bm{\mu})+\overline{M}_{\widetilde{\Phi}_p(\cdot;K_{G})}(T;\bm{\mu})+\sup_{\substack{0 \le s,t \le T \\ t \not = s }}\frac{\W_p(\mu_t,\mu_s)}{|t-s|^{\gamma_p}} \le \mathcal{C}(K_G)\right\},
%\end{equation*}
%where $\widetilde{\Phi}$ and $\mathcal{C}$ are the functions defined in Theorem \ref{thm:main1} and $\gamma_p=\frac{1}{\max\{2,p\}}$. We also set
%\begin{equation*}
%	\mathcal{C}_1(K):=(C_1+C_3)\left(1+(\mathcal{C}(K))^{\frac{1}{p}}\right),
%\end{equation*}
%where $C_1$ and $C_3$ are defined in Lemma \ref{lem:PDEODEWell1}, and
%\begin{equation*}
%	\mathcal{K}_1:=\left\{\bm{H} \in C([0,T];\R^{2m}): \ \sup_{0 \le s \le T}|\bm{H}(s)| +\sup_{\substack{0 \le t,s \le T \\ t \not = s }}\frac{|\bm{H}(t)-\bm{H}(s)|}{|t-s|} \le \mathcal{C}_1(K)\right\}.
%\end{equation*}
The following lemma is a consequence of the Arzel\'a-Ascoli Theorem (see \cite[Page 81]{kelley1963linear}) and the characterization of compact subsets of $\W_p(\R^{2d})$ (see \cite[Proposition 7.1.5]{ambrosio2005gradient}). It will be useful in the setting of the control problem.
\begin{lem}\label{lem:PDEODEWell3}
	$\mathcal{K}$ and $\mathcal{K}_1$ are compact subsets respectively of $C([0,T];\W_p(\R^{2d}))$ and $C([0,T];\R^{m})$.
\end{lem}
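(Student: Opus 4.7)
The plan is to apply an Arzelà--Ascoli argument in each case, combined with the characterization of $\W_p$-compactness in terms of tightness and uniform $p$-integrability.

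The case of $\cK_1$ is the simpler one: the defining conditions bound the sup-norm by $\mathcal{C}_1(K_G)$ (giving pointwise boundedness in $\R^m$, hence pointwise relative compactness since $\R^m$ is finite dimensional) and bound the Lipschitz constant by $\mathcal{C}_1(K_G)$ (giving uniform equicontinuity on $[0,T]$). Classical Arzelà--Ascoli gives relative compactness in $C([0,T];\R^m)$, and both defining seminorms are lower semicontinuous under uniform convergence, so $\cK_1$ is closed and therefore compact.

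For $\cK$, I would invoke the Arzelà--Ascoli-type criterion for curves valued in a complete metric space: it suffices to check equicontinuity of $\cK$ in $C([0,T];\W_p(\R^{2d}))$ and pointwise relative compactness, i.e. that for each $t\in[0,T]$ the slice $\{\mu_t : \bm{\mu}\in\cK\}$ is relatively compact in $\W_p(\R^{2d})$. Equicontinuity is immediate from the uniform Hölder bound $\W_p(\mu_t,\mu_s)\le \mathcal{C}(K_G)|t-s|^{\gamma_p}$ built into the definition of $\cK$. For pointwise relative compactness, Proposition 7.1.5 of Ambrosio--Gigli--Savaré characterizes relative compactness in $\W_p$ as tightness plus uniform $p$-integrability. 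Tightness follows from Markov's inequality and the uniform bound $M_p(\mu_t)\le \mathcal{C}(K_G)$. The uniform $p$-integrability
\[
\lim_{R\to\infty}\sup_{\bm{\mu}\in\cK,\,t\in[0,T]}\int_{\{|z|>R\}}|z|^p\, d\mu_t(z)=0
\]
follows from the uniform bound on $\overline{M}_{\widetilde{\Phi}_p(\cdot;K_G)}(T,\bm{\mu})$ via the de la Vallée--Poussin theorem: indeed $\widetilde{\Phi}(\cdot;K_G)$ is a Young function, so $\widetilde{\Phi}_p(r;K_G)=\widetilde{\Phi}(r^p;K_G)$ grows superlinearly in $r^p$, and a uniform bound on this moment is exactly the hypothesis in de la Vallée--Poussin that produces uniform integrability of the family $\{|z|^p\}_{\mu_t}$.

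Closedness of $\cK$ under uniform $\W_p$-convergence is routine: the Hölder seminorm passes to the limit trivially, the bound on $\overline{M}_p$ passes since $\W_p$-convergence preserves $p$-th moments, and the bound on $\overline{M}_{\widetilde{\Phi}_p(\cdot;K_G)}$ passes by Fatou's lemma since $\W_p$-convergence implies narrow convergence and $\widetilde{\Phi}_p(\cdot;K_G)$ is nonnegative and continuous. The only nonroutine step is the verification of uniform $p$-integrability via the $\widetilde{\Phi}_p$-moment bound, and even this is a direct application of de la Vallée--Poussin once one recognizes $\widetilde{\Phi}(\cdot;K_G)$ as a Young function; beyond that, the proof is a textbook application of the two compactness criteria cited in the statement.
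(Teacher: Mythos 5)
Your proof is correct and follows the same overall strategy as the paper: Arzel\`a--Ascoli for both sets, and for $\cK$ the AGS characterization of $\W_p$-compactness via tightness plus uniform $p$-integrability. The one place where you diverge is the uniform $p$-integrability step. The paper does not apply de la Vall\'ee--Poussin directly at the level of measures; instead it first constructs, via Kuratowski's isomorphism theorem and Skorokhod's representation theorem, a common probability space on which every $\mu_t\in\cK_t$ is realized as the law of a random variable, and only then invokes de la Vall\'ee--Poussin for a family of random variables on that fixed space. This detour is needed because the paper's own earlier statement of de la Vall\'ee--Poussin (Section 2, citing Meyer) is phrased in terms of random variables on a single probability space, so the authors manufactured such a space to match that formulation. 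Your direct measure-theoretic application, i.e.\ reading the bound $\sup_{\bm{\mu}\in\cK,\,t}\int\widetilde{\Phi}(|z|^p;K_G)\,d\mu_t(z)\le\mathcal{C}(K_G)$ together with the superlinearity of the Young function $\widetilde{\Phi}(\cdot;K_G)$ (one can even make it elementary: since $\widetilde{\Phi}(r)/r$ is nondecreasing, $\int_{\{|z|>R\}}|z|^p\,d\mu_t\le\frac{R^p}{\widetilde{\Phi}(R^p;K_G)}\,\mathcal{C}(K_G)\to0$ as $R\to\infty$), buys a cleaner and shorter argument without the Polish-space machinery. You also make the closedness of $\cK$ explicit where the paper only asserts it; your justification via lower semicontinuity of the seminorms and Fatou under narrow convergence is correct.
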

\begin{proof}
	Concerning $\cK_1$, we notice that the functions belonging to it are equibounded and equi-Lipschitz, hence we can apply the Arzel\'a-Ascoli Theorem. Concerning $\cK$, let us first observe that the curves of probability mesures belonging to it are equi-H\"older with respect to the Wasserstein distance, hence equi-continuous. Furthermore, it is clear that $\cK$ is closed. Hence, it is only necessry to show that for for all $t \in [0,T]$ the set
	\begin{equation*}
		\cK_t:=\{\mu_t \in \W_p(\R^{2d}): \ \bm{\mu} \in \cK\}
		\end{equation*} 
	is relatively compact. Notice that $\cK_t \subset \W_p(\R^{2d})$. We now prove that $\cK_t$ is uniformly tight. Fix $\varepsilon>0$ and let $\delta=\frac{2(\mathcal{C}(K_G))^{\frac{1}{p}}}{\varepsilon}$. Then, by Markov's inequality, setting $B_\delta:=\{z \in \R^{2d}: \ |z|<\delta\}$,
	\begin{equation*}
		\mu_t(\R^{2d}\setminus B_\delta) \le \frac{M_1(\mu_t)}{\delta} \le \frac{M_p(\mu_t)^{\frac{1}{p}}}{\delta} \le \frac{(\mathcal{C}(K_G))^{\frac{1}{p}}}{\delta}<\varepsilon.
	\end{equation*}
	Since $\delta$ is independent of the choice of $\mu_t \in \cK_t$, this shows that $\cK_t$ is uniformly tight. Now we show that $\cK_t$ has uniformly integrable $p$-moments. To do this, we claim that there exists a probability measure $\widetilde{{\sf P}}$ on $(\R^{2d},\cB(\R^{2d}))$ such that for all $\mu_t \in \cK_t$ there exists a random variable $Z$ on the probability space $(\R^{2d},\cB(\R^{2d}),\widetilde{{\sf P}})$ with ${\rm Law}(Z)=\mu_t$. Despite this property is well-known in measure theory, we provide here a proof for completeness. Recall that $(\R^{2d},\cB(\R^{2d}))$ and $([0,1], \cB([0,1]))$ are uncountable Polish spaces, hence by Kuratowski's Isomorphism Theorem (see \cite[Theorem 15.6]{kechris2012classical}) we know that there exists a Borel isomorphism $f:[0,1] \to \R^{2d}$. We equip $([0,1], \cB([0,1]))$ with the Lebesgue measures ${\sf Leb}$ and we define $\widetilde{\sf P}=f \sharp {\sf Leb}$, i.e. the pushforward of ${\sf Leb}$ on $(\R^{2d},\cB(\R^{2d}))$ through the Borel isomorphism $f$. Furthermore, for any $\mu_t \in \cK_t$, denote $\widetilde{\mu}_t=f^{-1}\sharp \mu_t$ the pushforward of $\mu_t$ on $([0,1],\cB([0,1]))$ through the Borel isomorphism $f^{-1}$. Notice that both ${\sf P}$ and $\widetilde{\mu}_t$ are probability measures. In particular, by Skorokhod's representation theorem (see \cite[Section 3.12]{williams1991probability}) we know that there exists a random variable $\widetilde{Z}$ on $([0,1],\cB([0,1]),{\sf Leb})$ such that $\widetilde{\mu}_t={\rm Law}(\widetilde{Z})$. We set $Z=f\circ \widetilde{Z}$, which is measurable since it is composition of measurable functions. Furthermore, for any Borel set $B \in \cB(\R^{2d})$, we have
	\begin{equation*}
		\widetilde{\sf P}(Z \in B)={\sf Leb}(\widetilde{Z} \in f^{-1}(B))=\widetilde{\mu}_t(f^{-1}(B))=\mu_t(B),
	\end{equation*}
	i.e. ${\rm Law}(Z)=\mu_t$. Let 
	\begin{equation*}
		\widetilde{\cK}_t:=\{Z \in \cM(\R^{2d},\cB(\R^{2d}),\widetilde{\sf P}): \ {\rm Law}(Z) \in \cK_t\}.
	\end{equation*}
	Then, by the previous argument, $\widetilde{\cK}_t \not = \emptyset$ and for all $\mu_t \in \cK_t$ there exists $Z \in \widetilde{\cK}_t$ such that ${\rm Law}(Z)=\mu_t$. Hence, to prove that $\cK_t$ has uniformly integrable $p$-moments, it is sufficient to show that $\widetilde{\cK}_t$ is uniformly $L^p$-integrable, or, equivalently, that $\{|Z|^p, \ Z \in \widetilde{\cK}_t\}$ is uniformly integrable. However, it is clear that, for any $Z \in \widetilde{\cK}_t$ with ${\rm Law}(Z)=\mu_t$ for some $\bm{\mu} \in \cK$,
	\begin{equation*}
		\E\left[\widetilde{\Phi}_p(|Z|;K_G)\right] \le \overline{M}_{\widetilde{\Phi}_p(\cdot;K_G)}(T,\bm{\mu}) \le \mathcal{C}(K_G),
	\end{equation*}
	where the right-hand side of the previous inequality is independent of $Z$. Hence, by the de la Vall\'ee-Poussin Theorem, $\{|Z|^p, \ Z \in \widetilde{\cK}_t\}$ is uniformly integrable and then $\cK_t$ has uniformly integrable $p$-moments. By \cite[Proposition 7.1.5]{ambrosio2005gradient}, we know that $\cK_t$ is relatively compact, hence $\cK$ is equicontinuous, closed and pointwise relatively compact, which in turn implies, by the Arzel\'a-Ascoli theorem, that $\cK$ is compact. 
\end{proof}

Finally, we observe that $\cK^\prime:=\cK \times \cK_1$ is a compact subset of $C([0,T];\W_p(\R^{2d}))\times C([0,T];\R^{m})$. Then we are able to describe the behaviour of the field $G_{\bm{u}}$ with respect to suitable variations of the control $\bm{u} \in \cA$.
\begin{lem}\label{lem:PDEODEWell4}
	Let $\{\bm{u}_j\}_{j \in \N}\subset \cA$ and $\bm{u} \in \cA$ be such that
	\begin{equation*}
		\lim_{j \to \infty}\int_0^t \bm{u}_j(s,\bm{\mu})\, ds=\int_0^t \bm{u}(s,\bm{\mu})\, ds, \quad \forall t \in [0,T], \ \forall \bm{\mu} \in \cK.
	\end{equation*}
	Then, for any $t \in [0,T]$, $\bm{\mu} \in \cK$ and $z \in \R^{2d}$
	\begin{equation*}
		\lim_{j \to \infty}G_{\bm{u}_j}[t,\bm{\mu}](z)=G_{\bm{u}}[t,\bm{\mu}](z).
	\end{equation*}
\end{lem}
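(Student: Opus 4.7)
The plan is to reduce the claim directly to assertion (ii) of Lemma \ref{lem:PDEODEWell1} combined with the regularity of $\mathfrak{w}$.

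First I would observe that since $\mathfrak{v}[t,\bm{\mu}](z)$ does not depend on the control, one has
\begin{equation*}
	G_{\bm{u}_j}[t,\bm{\mu}](z)-G_{\bm{u}}[t,\bm{\mu}](z)=\mathfrak{w}[t,\mathcal{S}[\bm{\mu},\bm{u}_j]](z)-\mathfrak{w}[t,\mathcal{S}[\bm{\mu},\bm{u}]](z),
\end{equation*}
so it suffices to show that the right-hand side vanishes as $j\to\infty$. Fixing $\bm{\mu}\in\cK$ and $z\in\R^{2d}$, the task is therefore to prove that $\mathcal{S}[\bm{\mu},\bm{u}_j]\to \mathcal{S}[\bm{\mu},\bm{u}]$ uniformly on $[0,T]$, and then to transport this convergence through $\mathfrak{w}$.

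Next I would invoke Lemma \ref{lem:PDEODEWell1}(ii) with the given $\bm{\mu}\in\cK$. The hypothesis of the present lemma says precisely that $\int_0^t \bm{u}_j(s,\bm{\mu})\,ds \to \int_0^t \bm{u}(s,\bm{\mu})\,ds$ for every $t\in[0,T]$, which is the assumption required in (ii). Thus, writing $\bm{H}^j_{\bm{\mu}}=\mathcal{S}[\bm{\mu},\bm{u}_j]$ and $\bm{H}_{\bm{\mu}}=\mathcal{S}[\bm{\mu},\bm{u}]$, we obtain
\begin{equation*}
	\lim_{j\to\infty}\sup_{t\in[0,T]}|\bm{H}^j_{\bm{\mu}}(t)-\bm{H}_{\bm{\mu}}(t)|=0.
\end{equation*}

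Finally, I would conclude by using the regularity of $\mathfrak{w}$. The Lipschitz estimate $(\mathfrak{w}_2)$ applied pointwise in the spatial variable yields
\begin{equation*}
	\left|\mathfrak{w}[t,\bm{H}^j_{\bm{\mu}}](z)-\mathfrak{w}[t,\bm{H}_{\bm{\mu}}](z)\right|\le L_{\mathfrak{w}}\sup_{0\le s\le t}|\bm{H}^j_{\bm{\mu}}(s)-\bm{H}_{\bm{\mu}}(s)|,
\end{equation*}
which tends to $0$ by the previous step. (Alternatively, continuity of $\mathfrak{w}$ in its second argument, granted by $(\mathfrak{w}_0)$, would suffice.) This gives the desired pointwise convergence of $G_{\bm{u}_j}$ to $G_{\bm{u}}$.

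There is no substantive obstacle: every nontrivial ingredient has already been isolated in the preceding lemmas, so the proof is essentially a composition of (ii) of Lemma \ref{lem:PDEODEWell1} with $(\mathfrak{w}_2)$. The only point to be mindful of is that one must apply Lemma \ref{lem:PDEODEWell1}(ii) with the same fixed $\bm{\mu}\in\cK$ that appears in the statement, which is exactly how the hypothesis $\int_0^t\bm{u}_j(s,\bm{\mu})\,ds\to\int_0^t\bm{u}(s,\bm{\mu})\,ds$ is formulated.
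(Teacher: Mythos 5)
Your proof is correct and follows essentially the same path as the paper's: the $\mathfrak{v}$ terms cancel, the Lipschitz estimate $(\mathfrak{w}_2)$ bounds the difference by $L_{\mathfrak{w}}\sup_{0\le s\le t}|\mathcal{S}[\bm{\mu},\bm{u}_j](s)-\mathcal{S}[\bm{\mu},\bm{u}](s)|$, and Lemma \ref{lem:PDEODEWell1}(ii) sends this to zero. The paper simply records the Lipschitz bound first and then invokes the lemma in one line; the content is identical.
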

\begin{proof}
	Fix $\bm{\mu} \in \cK$, $t \in [0,T]$ and $z \in \R^{2d}$. Then we have
	\begin{align*}
		|G_{\bm{u}_j}[t,\bm{\mu}](z)-G_{\bm{u}}[t,\bm{\mu}](z)|&=|\mathfrak{w}[t,\mathcal{S}[\bm{\mu},\bm{u}_j]](z)-\mathfrak{w}[t,\mathcal{S}[\bm{\mu},\bm{u}]](z)|\\
		&\le L_{\mathfrak{w}}\sup_{0 \le s \le t}|\mathcal{S}[\bm{\mu},\bm{u}_j](s)-\mathcal{S}[\bm{\mu},\bm{u}](s)|.
	\end{align*}
	Taking the limit as $j \to \infty$, we get the statement by Lemma \ref{lem:PDEODEWell1}.
\end{proof}

\newpage

\section{Other Possible Control Classes}\label{sec:opcc}
It could be possible, for modelling restrictions, that we can apply controls with some more restrictive assumptions than the ones considered for $\cA$. Hence, in this section, we consider three possible subclasses of $\cA$. Since the proofs of the existence of the solutions to the related optimal control problems differ from Theorem \ref{main2} only of some technical details, we will only underline the parts in which the proof are actually different.

\subsection{Controls satisfying the Oscillation Restriction Criterion}

First, let us assume that the controls cannot \textit{oscillate too much}. Precisely, we consider the class of admissible controls $\cA_{\rm ORC} \subset \cA$ satisfying the additional condition
\begin{itemize}
	\item[$(\cA_{\rm ORC})$] For all compact sets $\cK \subset C([0,T];\W_p(\R^{2d}))$, all measurable sets $A \subseteq [0,T]$ and all $\varepsilon>0$, there exists a measurable set $B \subset A$ such that
	\begin{equation*}
		\frac{1}{|B|}\int_{B} \sup_{\bm{\mu} \in \cK}\left|\bm{u}(t,\bm{\mu})-\frac{1}{|B|}\int_{B}\bm{u}(s,\bm{\mu})\, ds\right|\, dt < \varepsilon.
	\end{equation*}
\end{itemize}
Assumption $(\cA_{\rm ORC})$ can be recognized as a special form of the \textit{Oscillation Reduction Criterion}, introduced in \cite{girardi1991weak} for real-valued functions and then discussed in \cite{j1994weak} for Bochner integrals. With this assumption, we can actually relax the requirements on the cost functional $\F$. Precisely, we assume that $\F$ satisfies $(\F_0)$, $(\F_1)$, $(\F_2)$ and
\begin{itemize}
	\item[$(\F_{\rm ORC})$] $\Psi:\R^m \to \R$ is continuous.
\end{itemize}
We are interested in the following optimal control problem:

\begin{ProblemSpecBox}{Problem $2$}{$2$}\label{prob:2}
	Find $\bm{u}^\star \in \cA_{\rm ORC}$ such that
	\begin{equation*}
		\F[\bm{u}^\star]=\min_{\bm{u} \in \cA_{\rm ORC}}\F[\bm{u}].
	\end{equation*}
\end{ProblemSpecBox}

\begin{thm}\label{thm:ORC}
	Under $(\F_0)$, $(\F_1)$, $(\F_2)$ and $(\F_{\rm ORC})$, Problem \ref{prob:2} admits at least a solution.
\end{thm}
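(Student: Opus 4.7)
The plan is to adapt the proof of Theorem \ref{main2}, replacing the weak compactness argument (based on the generalized Dunford--Pettis theorem of \cite{balder1997extension} and Legendre duality on $\Psi$) by a strong / a.e.-pointwise compactness argument that exploits the Oscillation Restriction Criterion, so that only continuity of $\Psi$ is needed.

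First I would start as in the proof of Theorem \ref{main2}: fix a minimizing sequence $\{\bm{u}^n\}\subset\cA_{\rm ORC}$, set $\bm{U}^n=\Pi\bm{u}^n$ and observe that, by $(\cA_1)$, $(\cA_2)$ and \eqref{eq:sublin}, each $\bm{U}^n(t)$ lies in the compact subset $\widetilde{\cK}\subset C(\cK;\R^m)$ produced by Arzel\`a--Ascoli, uniformly in $t\in[0,T]$ and $n\in\N$. This already yields uniform boundedness, uniform integrability and uniform tightness, and hence, via the generalized Dunford--Pettis theorem of \cite{balder1997extension}, a weak limit $\bm{U}^\star\in L^1([0,T];C(\cK;\R^m))$ along a subsequence, in the sense of \eqref{eq:limiting}. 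The admissibility of $\bm{U}^\star$ (extension to some $\bm{u}^\star\in\cA$ via the Lebesgue-point analysis yielding \eqref{eq:Lipschitz2} and \eqref{eq:boundedness}, followed by the McShane extension) would then be obtained verbatim as in the proof of Theorem \ref{main2}.

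The crucial new ingredient is to use the ORC condition to upgrade the mode of convergence. Since the $\bm{U}^n$ are $\widetilde{\cK}$-valued and each satisfies a Bourgain-type oscillation bound uniformly in the relevant parameters, a standard Bourgain / Girardi-type argument (cf.\ \cite{girardi1991weak,j1994weak}) allows us to pass, along a further subsequence that we do not relabel, from weak to a.e.\ pointwise convergence $\bm{U}^n(t)\to\bm{U}^\star(t)$ in $C(\cK;\R^m)$. That $\bm{u}^\star$ itself remains in $\cA_{\rm ORC}$ (and not merely in $\cA$) would then be verified by transferring the oscillation bound from a close approximant $\bm{u}^n$: given a compact $\cK'$, a measurable $A\subseteq[0,T]$ and $\varepsilon>0$, one picks $n$ large so that $\bm{U}^n$ approximates $\bm{U}^\star$ well in $L^1(A)$, applies $(\cA_{\rm ORC})$ to $\bm{u}^n$ with tolerance $\varepsilon/2$ to select $B\subset A$, and uses the closeness to conclude the oscillation bound for $\bm{u}^\star$ on $B$ with tolerance $\varepsilon$.

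Finally, the passage to the limit in $\F$ is now straightforward. The Lagrangian term is handled as in \eqref{eq:semicont2} via Theorem \ref{thm:PDEODEWell}, $(\F_1)$, $(\F_2)$ and dominated convergence. For the control term, a.e.\ pointwise convergence $\bm{U}^n(t)\to\bm{U}^\star(t)$ in $C(\cK;\R^m)$ combined with the uniform convergence $\bm{\mu}^n\to\bm{\mu}^\star$ from Theorem \ref{thm:PDEODEWell} gives $\bm{u}^n(t,\bm{\mu}^n)\to\bm{u}^\star(t,\bm{\mu}^\star)$ for a.e.\ $t\in[0,T]$, with the uniform bound $K_{\bm{u}}(1+\mathcal{C}(K_G)^{1/p})$. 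Continuity of $\Psi$ under $(\F_{\rm ORC})$ together with bounded convergence then yield $\int_0^T\Psi(\bm{u}^n(t,\bm{\mu}^n))\,dt\to\int_0^T\Psi(\bm{u}^\star(t,\bm{\mu}^\star))\,dt$, proving optimality of $\bm{u}^\star$. The hard part will be precisely the upgrade step: correctly invoking (or re-proving, in the Bochner space $L^1([0,T];C(\cK;\R^m))$) the Bourgain-type result that turns the weak limit into a pointwise-a.e.\ limit, and simultaneously verifying that $\cA_{\rm ORC}$ is stable under the resulting limit operation; the remainder is essentially a rewriting of the Theorem \ref{main2} argument with the Legendre duality step removed.
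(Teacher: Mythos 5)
Your overall strategy coincides with the paper's: invoke Balder's generalized Dunford--Pettis machinery under the ORC hypothesis, extract a subsequence with a.e.\ pointwise convergence $\bm{U}^n(t)\to\bm{U}^\star(t)$ in $C(\cK;\R^m)$, combine with the stability $\bm{\mu}^n\to\bm{\mu}^\star$ from Theorem~\ref{thm:PDEODEWell}, and pass to the limit in $\int_0^T\Psi$ by dominated convergence using continuity of $\Psi$ and the uniform bound $M_\Psi$; the Lagrangian term is unchanged from Theorem~\ref{main2}. The only cosmetic difference is that you phrase the compactness step as ``weak limit via \cite[Theorem~1.3]{balder1997extension}, then upgrade to a.e.\ convergence by a Bourgain/Girardi argument,'' whereas the paper directly invokes \cite[Theorem~1.4]{balder1997extension}, which delivers convergence in measure under the Oscillation Restriction Criterion in one step; you are not gaining anything by splitting it, and in fact you are essentially re-deriving precisely what Theorem~1.4 already packages, so you should just cite it.

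The one genuinely new ingredient you add --- verifying $\bm{u}^\star\in\cA_{\rm ORC}$ rather than merely $\bm{u}^\star\in\cA$ --- is a step the paper leaves implicit, so the instinct to address it is good, but your sketched transfer argument has a real gap. Choosing $n$ so that $\int_A\sup_{\bm{\mu}}|\bm{u}^n(t,\bm{\mu})-\bm{u}^\star(t,\bm{\mu})|\,dt$ is small and then selecting $B\subset A$ via $(\cA_{\rm ORC})$ for $\bm{u}^n$ gives no control on $\frac{1}{|B|}\int_B\sup_{\bm{\mu}}|\bm{u}^n-\bm{u}^\star|\,dt$, because $|B|$ depends on $n$ and may be arbitrarily small relative to $|A|$; $L^1(A)$-closeness simply does not imply closeness of the $B$-averages. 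To repair this you would need to first pass to a.e.\ convergence, then apply Egorov to obtain uniform convergence of $\sup_{\bm{\mu}\in\cK'}|\bm{u}^n-\bm{u}^\star|$ off an exceptional set of measure strictly less than $|A|$, run the ORC selection inside $A$ intersected with the Egorov set, and finally handle the fact that $(\cA_{\rm ORC})$ is required for \emph{every} compact $\cK'\subset C([0,T];\W_p(\R^{2d}))$, not just the fixed $\cK$ on which the Balder convergence was obtained (this last point needs a diagonal argument, exploiting the uniform $(\cA_2)$-Lipschitz bound and Arzel\`a--Ascoli on each $\cK'$). As written, the transfer step does not go through.
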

\begin{proof}
	We proceed as in the proof of Theorem \ref{main2} except that this time $\bm{U}^n \to \bm{U}^\star$ in measure by \cite[Theorem 1.4]{balder1997extension}. Up to a non-relabelled subsequence, we can assume $\bm{U}^n(t) \to \bm{U}^\star(t)$ in $C(\cK;\R^{m})$ for a.a. $t \in [0,T]$. Moreover, if we denote by $(\bm{\mu}^n,\bm{Y}^n)$ the solutions of \eqref{PDEODE} with controls $\bm{u}^n$ and $(\bm{\mu}^\star,\bm{Y}^\star)$ the solution of \eqref{PDEODE} with control $\bm{u}^\star$, we also know that, by Theorem \ref{thm:PDEODEWell}, $\bm{\mu}^n \to \bm{\mu}^\star$ in $C([0,T];\W_p(\R^{2d}))$. Hence $\lim_{n \to \infty}\bm{U}^n(t)(\bm{\mu}^n)=\bm{U}^\star(t)(\bm{\mu}^\star)$ for a.a. $t \in [0,T]$. Furthermore, recall that $\left|\Psi(\bm{U}^n(t)(\bm{\mu}^n))\right| \le M_\Psi$, where $M_\Psi$ is defined in the proof of Theorem \ref{main2}. Hence, by dominated convergence, we get
	\begin{equation*}
		\lim_{n \to \infty}\int_0^T \Psi(\bm{U}^n(t)(\bm{\mu}^n))\, dt = \int_0^T \Psi(\bm{U}^\star(t)(\bm{\mu}^\star))\, dt.
	\end{equation*}
	The remainder of the proof is exactly the same sa the one of Theorem \ref{main2}.
\end{proof}

\subsection{Controls with current-state dependence on $\bm{\mu}$}
It could be the case that the control we want to apply cannot depend on the whole trajectory of $\bm{\mu}$, but only on the current state. To model this case, we consider the class of admissible controls $\cA_{\rm EV} \subset \cA$ satisfying the additional condition:
\begin{itemize}
	\item[$(\cA_{\rm EV})$] There exists a function $\bm{u}^\sharp: [0,T]\times \W_p(\R^{2d}) \to \R^m$ such that $\bm{u}(t,\bm{\mu})=\bm{u}^\sharp(t,\mu_t)$ for all $\bm{\mu} \in C([0,T];\W_p(\R^{2d}))$ and
	\begin{equation*}
		|\bm{u}^\sharp(t,\mu)-\bm{u}^\sharp(t,\nu)| \le \frac{L_{\bm{u}}}{m}\W_p(\mu,\nu)
	\end{equation*}
	for all $t \in [0,T]$ and $\mu,\nu \in \W_p(\R^{2d})$.
\end{itemize}
We consider then the optimal control problem:
\begin{ProblemSpecBox}{Problem $3$}{$3$}\label{prob:3}
	Find $\bm{u}^\star \in \cA_{\rm EV}$ such that
	\begin{equation*}
		\F[\bm{u}^\star]=\min_{\bm{u} \in \cA_{\rm EV}}\F[\bm{u}].
	\end{equation*}
\end{ProblemSpecBox}

\begin{thm}\label{thm:EV}
	Under $(\F_0)$, $(\F_1)$, $(\F_2)$ and $(\F_3)$, Problem \ref{prob:3} admits at least a solution.
\end{thm}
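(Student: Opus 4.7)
The plan is to follow the blueprint of the proof of Theorem~\ref{main2}, exploiting the fact that admissible controls in $\cA_{\rm EV}$ depend on $\bm{\mu}$ only through the current state $\mu_t$, which eliminates the need for the quotient $\sim_t$ and the projections $\pi_t,\widetilde{\Pi}_t$. First I would identify the correct compact set on which to work: define
\begin{equation*}
    \mathcal{K}_{\rm ev}:=\{\mu_t:\bm{\mu}\in\cK,\ t\in[0,T]\}\subset\W_p(\R^{2d}),
\end{equation*}
which is compact as the image of the compact set $[0,T]\times\cK$ under the jointly continuous evaluation map $(t,\bm{\mu})\mapsto\mu_t$ (joint continuity follows from the uniform H\"older bound built into $\cK$ via Lemma~\ref{lem:PDEODEWell3}). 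For $\bm{u}\in\cA_{\rm EV}$, set $\bm{U}^\sharp(t):=\bm{u}^\sharp(t,\cdot)|_{\mathcal{K}_{\rm ev}}$; by $(\cA_{\rm EV})$ each $\bm{U}^\sharp(t)$ is $(L_{\bm{u}}/m)$-Lipschitz on $\mathcal{K}_{\rm ev}$ and bounded by $K_{\bm{u}}(1+\mathcal{C}(K_G)^{1/p})$, so by Arzel\`a--Ascoli all $\bm{U}^\sharp(t)$ lie in a common compact subset $\widetilde{\mathcal{K}}\subset C(\mathcal{K}_{\rm ev};\R^m)$.

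Next, given a minimizing sequence $\{\bm{u}^n\}_{n\in\N}\subset\cA_{\rm EV}$, the corresponding $\bm{U}^{n,\sharp}:[0,T]\to\widetilde{\mathcal{K}}$ are automatically uniformly integrable and uniformly tight (take $\Gamma_\varepsilon\equiv\widetilde{\mathcal{K}}$). Invoking Balder's generalized Dunford--Pettis theorem \cite[Theorem 1.3]{balder1997extension}, up to a subsequence there exists $\bm{U}^{\star,\sharp}\in L^1([0,T];C(\mathcal{K}_{\rm ev};\R^m))$ with
\begin{equation*}
    \lim_{n\to\infty}\sup_{\mu\in\mathcal{K}_{\rm ev}}\frac{1}{|E|}\Bigl|\int_E(\bm{U}^{n,\sharp}(t)(\mu)-\bm{U}^{\star,\sharp}(t)(\mu))\,dt\Bigr|=0
\end{equation*}
for every measurable $E\subset[0,T]$. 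Repeating the Lebesgue-point argument of Theorem~\ref{main2} on $\mathcal{K}_{\rm ev}$ (which here is even simpler because the relevant domain is already a compact metric space inside $\W_p(\R^{2d})$, with no need for $\widetilde{\Pi}_t$), one obtains, after redefining $\bm{U}^{\star,\sharp}\equiv 0$ off Lebesgue points, that $\bm{U}^{\star,\sharp}(t)$ is $(L_{\bm{u}}/m)$-Lipschitz on $\mathcal{K}_{\rm ev}$ and satisfies $|\bm{U}^{\star,\sharp}(t)(\delta_0)|\le M_{\bm{u}}$ for every $t\in[0,T]$.

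I would then extend coordinate-wise by McShane's theorem to a $(L_{\bm{u}}/m)$-Lipschitz map $\bm{u}^{\star,\sharp}(t,\cdot):\W_p(\R^{2d})\to\R^m$ and set $\bm{u}^\star(t,\bm{\mu}):=\bm{u}^{\star,\sharp}(t,\mu_t)$. Measurability in $t$ (as a minimum of measurable functions) and continuity in $\bm{\mu}$ (from the Lipschitz bound composed with $\bm{\mu}\mapsto\mu_t$), together with the $\delta_0$-bound, yield $\bm{u}^\star\in\cA_{\rm EV}$. For optimality, let $(\bm{\mu}^n,\bm{Y}^n)$ and $(\bm{\mu}^\star,\bm{Y}^\star)$ be the solutions of \eqref{PDEODE} with controls $\bm{u}^n$ and $\bm{u}^\star$; since $\mu^n_s\in\mathcal{K}_{\rm ev}$ for every $s$, the Balder convergence specializes to $\int_0^t\bm{u}^n(s,\bm{\nu})\,ds\to\int_0^t\bm{u}^\star(s,\bm{\nu})\,ds$ for every $\bm{\nu}\in\cK$, and Theorem~\ref{thm:PDEODEWell} gives uniform convergence $(\bm{\mu}^n,\bm{Y}^n)\to(\bm{\mu}^\star,\bm{Y}^\star)$. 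The Legendre-transform lower-semicontinuity argument for the $\Psi$-part of $\F$ and dominated convergence for the $\cL$-part then transfer verbatim from the proof of Theorem~\ref{main2}, yielding $\F[\bm{u}^\star]\le\liminf_n\F[\bm{u}^n]=\inf_{\cA_{\rm EV}}\F$.

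The technically delicate point, and the only genuine deviation from Theorem~\ref{main2}, is ensuring that the limit control truly belongs to $\cA_{\rm EV}$: one must verify that the $(L_{\bm{u}}/m)$-Lipschitz property survives the passage to the limit \emph{everywhere} on $\W_p(\R^{2d})$, not merely on $\mathcal{K}_{\rm ev}$. This is resolved precisely by the McShane extension, which preserves the Lipschitz constant, combined with the convention $\bm{U}^{\star,\sharp}(t)\equiv 0$ off the Lebesgue set (mimicking the corresponding step in Theorem~\ref{main2}). I do not expect any other substantive obstacle, since the current-state structure makes the quotient topology step unnecessary.
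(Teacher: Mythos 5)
Your route differs from the paper's in one structural decision: you apply Balder's theorem directly in $L^1([0,T];C(\mathcal{K}_{\rm ev};\R^m))$, whereas the paper reuses the Balder/Lebesgue-point machinery of Theorem~\ref{main2} wholesale on $C(\cK;\R^m)$ (curves), first obtains a limit control $\bm{u}^\star\in\cA$, and then separately shows that the $(\cA_{\rm EV})$ hypothesis on the minimizing sequence forces the sharper Lipschitz bound $|u_j^\star(t,\bm{\mu})-u_j^\star(t,\bm{\nu})|\le\frac{L_{\bm u}}{m}\W_p(\mu_t,\nu_t)$, from which the factorization through $\mu_t$ follows (via constant curves $\bm{\mu}^\sharp$ and the set $\cK^\sharp$, which coincides with your $\mathcal{K}_{\rm ev}$). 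Your version is conceptually leaner, since it avoids the detour through $\cA$ and the constant-curve identification, and your compactness argument for $\mathcal{K}_{\rm ev}$ via joint continuity of $(t,\bm{\mu})\mapsto\mu_t$ is correct.

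The place where you are glossing over a genuine step is the claim that the Balder convergence ``specializes'' to $\int_0^t\bm{u}^n(s,\bm{\nu})\,ds\to\int_0^t\bm{u}^\star(s,\bm{\nu})\,ds$ for $\bm{\nu}\in\cK$. The convergence you obtain from Balder reads $\sup_{\mu\in\mathcal{K}_{\rm ev}}|E|^{-1}\bigl|\int_E(\bm{U}^{n,\sharp}(t)(\mu)-\bm{U}^{\star,\sharp}(t)(\mu))\,dt\bigr|\to 0$ with a \emph{fixed} measure $\mu$ inside the time integral, whereas what you need to feed Theorem~\ref{thm:PDEODEWell} (and, later, the $\mathcal{R}_n(E)$ estimate and the passage to the limit in the Legendre-transform argument) is $\int_0^t(\bm{U}^{n,\sharp}(s)-\bm{U}^{\star,\sharp}(s))(\nu_s)\,ds\to 0$ with $\nu_s$ varying in $s$. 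This does not follow by mere restriction: you must approximate $s\mapsto\nu_s$ by piecewise-constant curves using the uniform H\"older modulus built into $\cK$, apply the Balder convergence on each subinterval with the frozen value, and control the approximation error via the \emph{equi}-Lipschitz bound on $\bm{U}^{n,\sharp}(s)$ and $\bm{U}^{\star,\sharp}(s)$ — the latter only available \emph{after} the Lebesgue-point redefinition $\bm{U}^{\star,\sharp}(t)\equiv 0$ off $E_{\sf Leb}$ has been made. The paper never encounters this difficulty precisely because, by staying in $C(\cK;\R^m)$, its test objects are whole curves: taking $E=[0,t]$ and the fixed curve $\bm{\nu}$ already yields exactly the hypothesis of Theorem~\ref{thm:PDEODEWell}. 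With that approximation argument spelled out, your proposal becomes a complete and legitimate alternative to the paper's proof.
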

\begin{proof}
	Recall that, arguing as in Theorem \ref{main2}, if we consider a minimizing sequence $\{\bm{u}^n\}_{n \in \N}\subset \cA_{\rm EV}$, then there exists a function $\bm{u}^\star \in \cA$ such that
	\begin{equation*}
		\lim_{n \to \infty}\sup_{\bm{\mu} \in \cK}\frac{1}{|E|}\left|\int_{E} \left(\bm{u}^n(t,\bm{\mu})-\bm{u}^\star(t,\bm{\mu})\right)\right| \, dt =0. 
	\end{equation*}
	Hence, to prove the statement, it is sufficient to show that $\bm{u}^\star \in \cA_{\rm EV}$. To do this, consider $\bm{\mu},\bm{\nu} \in \cK$ and observe that, by $(\cA_{\rm EV})$, for any $j=1,\dots,m$, $t \in E_{\sf Leb}$ and $h>0$ small enough,
	\begin{equation*}
		\frac{1}{h}\left|\int_{t}^{t+h} \left(u_j^n(s,\bm{\mu})-u_j^\star(s,\bm{\nu})\right) \, ds\right| \le \frac{L_{\bm{u}}}{m}\frac{1}{h}\int_t^{t+h}\W_p(\mu_s,\nu_s)\, ds. 
	\end{equation*}
	Taking the limit as $n \to \infty$ and then as $h \downarrow 0$, we get
	\begin{equation}\label{eq:Lipschitz1ujst}
		\left|u_j^\star(t,\bm{\mu})-u_j^\star(t,\bm{\nu})\right| \le \frac{L_{\bm{u}}}{m}\W_p(\mu_t,\nu_t). 
	\end{equation}
	This holds for all $t \in [0,t]$ once we assume that $u_j^\star(t,\cdot)\equiv 0$ for $t \not \in E_{\sf Leb}$. Now observe that if $\bm{\mu} \in \cK$, then $\mu_t \in \cK^\sharp$, where
	\begin{equation*}
		\cK^\sharp=\left\{\mu \in \W_p(\R^{2d}): \ M_p(\mu)+M_{\widetilde{\Phi}(\cdot;K_G)}(\mu)\le \mathcal{C}(K_G)\right\}.
	\end{equation*}
	On the other hand, if $\mu \in \cK^\sharp$, then $\bm{\mu}^\sharp \in C([0,T];\W_p(\R^{2d}))$ defined as $\mu^\sharp_t \equiv \mu$ for all $t \in [0,t]$ belongs to $\cK$. Hence, we define $\bm{u}^{\sharp,\star}:\cK^\sharp \to \R^m$ as follows
	\begin{equation*}
		\bm{u}^{\sharp,\star}(t,\mu)=\bm{u}^\star(t,\bm{\mu}^\sharp).
	\end{equation*}
	Then, for all $\bm{\mu} \in \cK$ and $t \in [0,T]$, by a simple application of \eqref{eq:Lipschitz1ujst}, it must hold
	\begin{equation}\label{eq:ustar}
		\bm{u}^\star(t,\bm{\mu})=\bm{u}^{\sharp,\star}(t,\mu_t).
	\end{equation}
	Using McShane's Extension Theorem as in the proof of Theorem \ref{main2}, we extend $\bm{u}^{\sharp,\star}(t,\cdot)$ to the whole space $\W_p(\R^{2d})$ and then we define $\bm{u}^\star(t,\cdot)$ on $C([0,T];\W_p(\R^{2d}))$ by means of \eqref{eq:ustar}. As a consequence, $\bm{u}^\star \in \cA_{\rm EV}$, concluding the proof.
\end{proof}

\subsection{Controls with separated variables} 
Among the controls in $\mathcal{A}_{\rm EV}$, one could consider the ones whose dependence on $t$ and $\mu_t$ is \textit{separated}. To do this, let $\ell \in \N$ and fix three positive constants $M_{\bm{h}}$, $M_{\bm{g}}$ and $L_{\bm{g}}$. We define the set $\mathcal{A}^\prime$ as the class of couples $(\bm{h},\bm{g})$ satisfying the following assumptions:
\begin{ProblemSpecBox}{Assumptions on $(\bm{h},\bm{g}) \in \mathcal{A}^\prime$: $(\mathcal{A}^\prime)$}{$(\mathcal{A}^\prime)$}\label{ass:Asep}
	\begin{itemize}
		\item[$(\mathcal{A}^\prime_0)$] $\bm{h} \in L^1([0,T];\R^{m \times \ell})$ and $\bm{g} \in C(\W_p(\R^{2d});\R^{\ell})$.
		\item[$(\mathcal{A}^\prime_1)$] For all $t \in [0,T]$ it holds
		\begin{equation*}
			|\bm{g}(\delta_0)| \le M_{\bm{g}},
		\end{equation*}
		where $\bm{\delta}_0 \in \W_p(\R^{2d})$ is the Dirac delta measure concentrated on $0 \in\R^{2d}$.
		\item[$(\cA^\prime_2)$] It holds
		\begin{equation*}
			|g_j(\mu)-g_j(\nu)| \le \frac{L_{\bm{g}}}{m}\W_p(\mu,\nu),
		\end{equation*}
		for all $\mu,\nu \in \W_p(\R^{2d})$, $j=1,\dots,m$ and $t \in [0,T]$.
		\item[$(\cA^\prime_3)$] For almost all $t \in [0,T]$ it holds
		\begin{equation*}
			|\bm{h}(t)| \le M_{\bm{h}},
		\end{equation*}
		where $|\bm{h}(t)|$ is the Frobenius norm of $\bm{h}(t)$.
	\end{itemize}
\end{ProblemSpecBox}
Now fix $(\bm{h},\bm{g}) \in \cA^\prime$ and consider the function $\bm{u}: (t,\bm{\mu}) \mapsto \bm{h}(t)\bm{g}(\mu_t)$. Clearly, $\bm{u}$ is a Carath\'eodory map. Furthermore, since the Frobenius norm is sub-multiplicative, for any $t \in [0,T]$
\begin{equation*}
	|\bm{u}(t,\bm{\delta}_0)| \le |\bm{h}(t)||\bm{g}(\delta_0)| \le M_{\bm{h}}M_{\bm{g}},
\end{equation*}
while, denoting by $\bm{h}_j$ the $j$-th row of $\bm{h}$, for all $t \in [0,T]$ and $\bm{\mu},\bm{\nu} \in C([0,T];\W_p(\R^{2d}))$,
\begin{equation*}
	|u_j(t,\bm{\mu})-u_j(t,\bm{\nu})| \le |\bm{h}_j(t)||\bm{g}(\mu_t)-\bm{g}(\nu_t)| \le M_{\bm{h}}L_{\bm{g}}\sup_{0 \le t \le T}\W_p(\mu_t,\nu_t),
\end{equation*}
hence for $M_{\bm{u}}=M_{\bm{h}}M_{\bm{g}}$ and $L_{\bm{u}}=mM_{\bm{h}}L_{\bm{g}}$, we have that $\bm{u} \in \cA_{\rm EV}$. Setting $M_{\bm{u}}$ and $L_{\bm{u}}$ as declared, we can define the call of admissible controls $\cA_{\rm SV} \subset \cA_{\rm EV}$ satisfying the additional condition
\begin{itemize}
	\item[$(\cA_{\rm SV})$] There exists $(\bm{h},\bm{g}) \in \cA^\prime$ such that $\bm{u}(t,\bm{\mu})=\bm{h}(t)\bm{g}(\mu_t)$.
\end{itemize}

We are interested in the following optimal control problem:

\begin{ProblemSpecBox}{Problem $4$}{$4$}\label{prob:4}
	Find $\bm{u}^\star \in \cA_{\rm SV}$ such that
	\begin{equation*}
		\F[\bm{u}^\star]=\min_{\bm{u} \in \cA_{\rm SV}}\F[\bm{u}].
	\end{equation*}
\end{ProblemSpecBox}

Before proceeding with the proof of the existence of a solution for Problem \ref{prob:4}, let us notice that, with a similar argument as in the proof of Theorem \ref{thm:EV}, it is possible to reduce the problem on $\cA_{\rm SV}$ to an analogous one on $\cA^\prime$. Indeed, consider the function $\Psi^\prime:\R^{m \times \ell} \times \R^{\ell} \to \R$ and the functional $\F^\prime:\cA^\prime \to \R$ defined as follows:
\begin{equation}\label{eq:Psiprime}
	\Psi^\prime(\bm{h},\bm{g})=\Psi(\bm{h}\bm{g}), \quad \forall (\bm{h},\bm{g}) \in \R^{m \times \ell} \times \R^\ell
\end{equation}
and
\begin{equation*}
	\F^\prime[\bm{h},\bm{g}]=\int_0^T \cL(t,\bm{\mu},\bm{Y})\, dt+\int_0^T\Psi^\prime(\bm{h}(t), \bm{g}(\mu_t))\, dt, \quad \forall (\bm{h},\bm{g}) \in \cA^\prime,
\end{equation*}
where $(\bm{\mu},\bm{Y})$ is the solution of \eqref{PDEODE} with control $\bm{u}(t,\bm{\mu})=\bm{h}(t)\bm{g}(\mu_t)$. Notice that if $\bm{u} \in \cA_{\rm SV}$, then $\bm{u}(t,\bm{\mu})=\bm{h}(t)\bm{g}(\mu_t)$ for some $(\bm{h},\bm{g}) \in \cA^\prime$ and $\F[\bm{u}]=\F^\prime[\bm{h},\bm{g}]$. On the other hand, if $(\bm{h},\bm{g}) \in \cA^\prime$, then $\bm{u}(t,\bm{\mu})=\bm{h}(t)\bm{g}(\mu_t)$ belongs to $\cA_{\rm SV}$ and $\F[\bm{u}]=\F^\prime[\bm{h},\bm{g}]$. Hence, solving Problem \ref{prob:4} is equivalent to solving

\begin{ProblemSpecBox}{Problem $5$}{$5$}\label{prob:5}
	Find $(\bm{h}^\star, \bm{g}^\star) \in \cA^\prime$ such that
	\begin{equation*}
		\F^\prime[\bm{h}^\star,\bm{g}^\star]=\min_{(\bm{h},\bm{g}) \in \cA^\prime}\F^\prime[\bm{u}].
	\end{equation*}
\end{ProblemSpecBox}

Notice that if we start from Problem \ref{prob:4}, then $\Psi^\prime$ must satisfy \eqref{eq:Psiprime} in Problem \ref{prob:5}. However, we could directly consider Problem \ref{prob:5} and be less restrictive on the dependence of $\Psi^\prime$ on the separated variables $\bm{h}$ and $\bm{g}$. Indeed, we can assume that $\F^\prime$ satisfies the following assumptions:
\begin{ProblemSpecBox}{Assumptions on $\F^\prime$: $(\F^\prime)$}{$(\F^\prime)$}\label{ass:Fprime}
	\begin{itemize}
		\item[$(\mathcal{F}^\prime_0)$] $\cL:[0,T]\times C([0,T];\W_p(\R^{2d})) \times C([0,T];\R^{2m})$ and $\Psi^\prime:\R^{m \times \ell} \times \R^\ell \to \R$ are measurable and bounded from below;
		\item[$(\mathcal{F}_1)$] $\cL$ is continuous in the variables $(\bm{\mu},\bm{Y})$.
		\item[$(\F_2)$] There exists a function $M_{\cL} \in L^1(0,T)$ such that for a.a. $t \in [0,T]$ and all $(\bm{\mu},\bm{Y}) \in \cK^\prime$
		\begin{equation*}
			|\cL(t,\bm{\mu},\bm{Y})| \le M_{\cK^\prime}(t)
		\end{equation*} 
		\item[$(\F^\prime_3)$] $\Psi:\R^{m \times \ell} \times \R^\ell \to \R$ is convex in the first variable and continuous in the second one.
	\end{itemize}
\end{ProblemSpecBox}

\begin{thm}\label{thm:SV}
	Problem \ref{prob:5} admits at least a solution.
\end{thm}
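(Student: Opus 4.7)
The plan is to apply the direct method of the calculus of variations, tailored to the split structure $\bm u(t,\bm\mu)=\bm h(t)\bm g(\mu_t)$: the time dependence in $\bm h$ will be handled by weak-$\ast$ compactness in $L^\infty$, while the measure dependence in $\bm g$ will be controlled by Arzelà-Ascoli on the compact set $\cK^\sharp\subset\W_p(\R^{2d})$ appearing in the proof of Theorem~\ref{thm:EV}. Assumption $(\F^\prime_3)$ -- convexity in the first slot and continuity in the second -- is tailored to combine these two modes of convergence in the concluding lower semicontinuity step, via a Legendre-transform argument analogous to the one used for $\Psi$ in Theorem~\ref{main2}.

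I would begin with a minimizing sequence $\{(\bm h^n,\bm g^n)\}\subset\cA^\prime$. By $(\cA^\prime_3)$ the functions $\bm h^n$ are uniformly bounded in $L^\infty([0,T];\R^{m\times\ell})$, so Banach-Alaoglu yields a (non-relabelled) subsequence with $\bm h^n\overset{\ast}{\rightharpoonup}\bm h^\star$, with $|\bm h^\star(t)|\le M_{\bm h}$ a.e. by the lower semicontinuity of the $L^\infty$-norm. By $(\cA^\prime_1)$--$(\cA^\prime_2)$ the family $\{\bm g^n\}$ is equibounded and equi-Lipschitz on the compact metric space $\cK^\sharp$, so Arzelà-Ascoli delivers a further subsequence with $\bm g^n\to\bm g^\star$ uniformly on $\cK^\sharp$. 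A McShane extension, as in the proof of Theorem~\ref{main2}, promotes $\bm g^\star$ to a Lipschitz map on $\W_p(\R^{2d})$ with the same Lipschitz constant, and all bounds pass to the limit so that $(\bm h^\star,\bm g^\star)\in\cA^\prime$.

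Next I would transfer this convergence to the PDE-ODE level via Theorem~\ref{thm:PDEODEWell}. For every $\bm\nu\in\cK$ and every $t\in[0,T]$, I split
\begin{equation*}
\int_0^t\bm h^n(s)\bm g^n(\nu_s)\,ds=\int_0^t\bm h^n(s)\bm g^\star(\nu_s)\,ds+\int_0^t\bm h^n(s)\bigl(\bm g^n(\nu_s)-\bm g^\star(\nu_s)\bigr)\,ds.
\end{equation*}
The first summand converges to $\int_0^t\bm h^\star(s)\bm g^\star(\nu_s)\,ds$ because $\bm g^\star(\nu_\cdot)$ is continuous on $[0,T]$, hence in $L^1$, while $\bm h^n\overset{\ast}{\rightharpoonup}\bm h^\star$; the second is bounded by $M_{\bm h}T\sup_{\mu\in\cK^\sharp}|\bm g^n(\mu)-\bm g^\star(\mu)|$ and vanishes. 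Thus Theorem~\ref{thm:PDEODEWell} gives uniform convergence $(\bm\mu^n,\bm H^n)\to(\bm\mu^\star,\bm H^\star)$ of the solutions of \eqref{PDEODE}, and in particular all these solutions lie in $\cK^\prime$.

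It remains to prove $\F^\prime[\bm h^\star,\bm g^\star]\le\liminf_n\F^\prime[\bm h^n,\bm g^n]$. The $\cL$-term passes to the limit by $(\F^\prime_1)$ and dominated convergence using the bound $M_{\cK^\prime}$ from $(\F^\prime_2)$. For the $\Psi^\prime$-term, the bound
\begin{equation*}
|\bm g^n(\mu^n_t)-\bm g^\star(\mu^\star_t)|\le L_{\bm g}\W_p(\mu^n_t,\mu^\star_t)+\sup_{\mu\in\cK^\sharp}|\bm g^n(\mu)-\bm g^\star(\mu)|
\end{equation*}
shows that $\bm g^n(\mu^n_\cdot)\to\bm g^\star(\mu^\star_\cdot)$ uniformly on $[0,T]$. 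Combined with $\bm h^n\overset{\ast}{\rightharpoonup}\bm h^\star$, this will yield the lim-inf inequality through a classical weak-strong lower semicontinuity result for integrands that are convex in the first variable and continuous in the second. This last step is the main obstacle: since $(\F^\prime_3)$ only provides separate convexity-continuity, I would carry it out by mimicking the Legendre-transform argument in the proof of Theorem~\ref{main2}, writing $\Psi^\prime(\bm h,\bm g)=\sup_{\bm y}(\bm y\cdot\bm h-(\Psi^\prime(\,\cdot\,,\bm g))^\ast(\bm y))$, observing that for each fixed $\bm y$ the integrand $\bm y\cdot\bm h^n(t)-(\Psi^\prime(\,\cdot\,,\bm g^n(\mu^n_t)))^\ast(\bm y)$ is affine in $\bm h$ and so its time integral converges (weak-strong), and then recovering the supremum via the superadditive Localization Lemma exactly as in Theorem~\ref{main2}. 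This delivers $\F^\prime[\bm h^\star,\bm g^\star]\le\inf_{\cA^\prime}\F^\prime$ and hence optimality of $(\bm h^\star,\bm g^\star)$.
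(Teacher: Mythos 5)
Your overall architecture matches the paper's: the same split-compactness strategy ($L^\infty$/$L^1$-weak for $\bm h^n$, Arzel\`a–Ascoli for $\bm g^n$ on a compact subset of $\W_p(\R^{2d})$), McShane extension to return to $\cA^\prime$, transfer to the state equation via Theorem~\ref{thm:PDEODEWell}, dominated convergence for the $\cL$-term. The choice of Banach–Alaoglu in $L^\infty$ rather than Dunford–Pettis in $L^1$ is harmless on the finite measure space $[0,T]$. Up to and including the uniform convergence $\bm g^n(\mu^n_\cdot)\to\bm g^\star(\mu^\star_\cdot)$, everything is fine.

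The gap is in the final lower-semicontinuity step. You conjugate $\Psi^\prime(\,\cdot\,,\bm g)$ in the first slot and then assert that, for each fixed $\bm y$,
\begin{equation*}
\int_E\Bigl(\bm y\cdot\bm h^n(t)-\bigl(\Psi^\prime(\,\cdot\,,\bm g^n(\mu^n_t))\bigr)^\ast(\bm y)\Bigr)\,dt
\end{equation*}
converges by ``weak-strong'' convergence. The linear part in $\bm h^n$ does converge, but the conjugate part does not: the map $g\mapsto(\Psi^\prime(\,\cdot\,,g))^\ast(\bm y)$ is a supremum of functions continuous in $g$, hence only lower semicontinuous. For your inequality you would need $\limsup_n(\Psi^\prime(\,\cdot\,,\bm g^n(\mu^n_t)))^\ast(\bm y)\le(\Psi^\prime(\,\cdot\,,\bm g^\star(\mu^\star_t)))^\ast(\bm y)$, i.e.\ upper semicontinuity of the conjugate in $g$, which $(\F^\prime_3)$ does not give (and which fails in general; the conjugate can even jump to $+\infty$). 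The paper avoids this by first splitting
\begin{equation*}
\int_0^T\Psi^\prime(\bm h^n,\bm g^n(\mu^n_t))\,dt=\int_0^T\bigl(\Psi^\prime(\bm h^n,\bm g^n(\mu^n_t))-\Psi^\prime(\bm h^n,\bm g^\star(\mu^\star_t))\bigr)\,dt+\int_0^T\Psi^\prime(\bm h^n,\bm g^\star(\mu^\star_t))\,dt,
\end{equation*}
killing the first integral by the uniform convergence you already established, the uniform bound $|\bm h^n|\le M_{\bm h}$, and the continuity of $\Psi^\prime$ in its second slot on compacts (with dominated convergence), and only then running the Legendre/Localization argument on the second integral, where the second argument of $\Psi^\prime$ is frozen and the conjugate no longer depends on $n$. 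You should insert this decomposition before conjugating; otherwise the step as written does not go through.
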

\begin{proof}
	Arguing as in Theorem \ref{main2}, since for all $\bm{\mu} \in \cK$ it holds $\mu_t \in \cK^\prime$ for any $t \in [0,T]$, solving Problem \ref{prob:5} is equivalent to solving  
	\begin{ProblemSpecBox}{Problem $5^\star$}{$5^\star$}\label{prob:5star}
		Find $(\bm{h}^\star, \bm{g}^\star) \in \widetilde{\cA}^\prime$ such that
		\begin{equation*}
			\widetilde{\F}^\prime[\bm{h}^\star,\bm{g}^\star]=\min_{(\bm{h},\bm{g}) \in \widetilde{\cA}^\prime}\widetilde{\F}^\prime[\bm{h},\bm{g}].
		\end{equation*}
	\end{ProblemSpecBox}
	where $\widetilde{\cA}^\prime=\Pi \cA^\prime$, $\Pi:C([0,T];\R^{m \times \ell}) \times C(\W_p(\R^{2d});\R^\ell) \to C([0,T];\R^{m \times \ell})\times C(\cK^\prime;\R^{\ell})$ is the \textit{domain restriction map}, i.e. for all $\mu \in \cK^\prime$ and $t \in [0,T]$ it holds $\Pi (\bm{h},\bm{g})(t,\mu)=(\bm{h}(t),\bm{g}(\mu))$, and for all $\Pi(\bm{h},\bm{g}) \in \widetilde{\cA}^\prime$ it holds
	\begin{equation*}
		\widetilde{\F}^\prime[\Pi(\bm{h},\bm{g})]=\F^\prime[\bm{h},\bm{g}].
	\end{equation*}
	Now consider a minimizing sequence $\{(\bm{h}^n,\bm{g}^n)\}_{n \in \N}\subseteq \widetilde{\cA}^\prime$. Since $|\bm{h}^n(t)| \le M_{\bm{h}}$ for all $n \in \N$ and almost all $t \in [0,T]$, then by the Dunford-Pettis theorem (see \cite[Theorem 4.30]{brezis2011functional}) we know that there exists a function $\bm{h}^\star \in L^1([0,T];\R^{m \times \ell})$ such that, up to a non-relabelled subsequence, $\bm{h}^n \rightharpoonup \bm{h}^\star$ in $L^1([0,T];\R^{m \times \ell})$ and it is clear, arguing again as in Theorem \ref{main2}, that $|\bm{h}^\star(t)| \le M_{\bm{h}}$ for almost all $t \in [0,T]$. Next, notice that for all $\mu \in \cK^\prime$
	\begin{equation*}
		\left|\bm{g}^n(\mu)\right| \le L_{\bm{g}}(M_p(\mu))^{\frac{1}{p}}+M_{\bm{g}} \le L_{\bm{g}}(\mathcal{C}(K_G))^{\frac{1}{p}}+M_{\bm{g}}=:\widetilde{M}_{\bm{g}}.
	\end{equation*}
	Hence $\{\bm{g}^n\}_{n \in \N} \subset C(\cK^\prime;\R^\ell)$ are equibounded and equiLipschitz (by $(\cA^\prime_2)$). Hence, up to a non-relabelled subsequence, by the Arzel\'a-Ascoli theorem, there exists $\bm{g}^\star \in C(\cK^\prime;\R^{\ell})$ such that $\bm{g}^n \to \bm{g}^\star$. With the same arguments as in Theorem \ref{main2}, one can use the McShane extension theorem to extend $\bm{g}^\star$ to a function in $C(\W_p(\R^{2d});\R^\ell)$ whose components are still $\frac{L_{\bm{g}}}{m}$-Lipschitz, hence proving that $(\bm{h}^\star,\bm{g}^\star) \in \widetilde{\cA}^\prime$. Furthermore, for all $\bm{\mu} \in \cK$, $t \in [0,T]$ and $n \in \N$, set $\bm{u}^n(t,\bm{\mu})=\bm{h}(t)\bm{g}^n(\mu_t)$ and $\bm{u}^\star(t,\bm{\mu})=\bm{h}(t)\bm{g}^\star(\mu_t)$. Then, for all $t \in [0,T]$ and $\bm{\mu} \in \cK$ we have
	\begin{align*}
		\left|\int_0^t \bm{u}^n(s,\bm{\mu})\, ds\right.&\left.-\int_0^t \bm{u}^\star(s,\bm{\mu})\, ds\right| \\
		&\le \int_0^t\left|\bm{h}^n(s)\right|\left|\bm{g}^n(\mu_s)-\bm{g}^\star(\mu_s)\right|\, ds +\left|\int_0^t\bm{g}^\star(\mu_s)\left(\bm{h}^n(s)-\bm{h}^\star(s)\right)\, ds\right|\\
		&\le M_{\bm{h}}T\sup_{\mu \in \cK}|\bm{g}^n(\mu)-\bm{g}^\star(\mu)|+\left|\int_0^t\bm{g}^\star(\mu_s)\left(\bm{h}^n(s)-\bm{h}^\star(s)\right)\, ds\right|.
	\end{align*}
	Hence, recalling that the function $s \in [0,T] \mapsto \bm{g}^\star(\mu_s)$ belongs to $L^\infty([0,T];\R^{\ell})$, we can take the limit to get
	\begin{equation*}
		\lim_{n \to \infty}\left|\int_0^t \bm{u}^n(s,\bm{\mu})\, ds-\int_0^t \bm{u}^\star(s,\bm{\mu})\, ds\right|=0.
	\end{equation*}
	As a consequence, if we denote by $(\bm{\mu}^n,\bm{Y}^n)$ the solutions of \eqref{PDEODE} with controls $\bm{u}^n$ and $(\bm{\mu}^\star,\bm{Y}^\star)$ the solution of \eqref{PDEODE} with control $\bm{u}^\star$, it holds, by Theorem \ref{thm:PDEODEWell}, $(\bm{\mu}^n,\bm{Y}^n) \to (\bm{\mu}^\star,\bm{Y}^\star)$ in $C([0,T];\W_p(\R^{2d})) \times C([0,T];\R^{m})$. Furthermore, notice that

	%	 Hence, the same arguments in Theorem \ref{main2} prove that
	%	\begin{equation*}
		%		\lim_{n \to \infty}\int_{0}^T \cL(t,\bm{\mu}^n,\bm{H}^n)\, dt = \int_0^T \cL(t,\bm{\mu}^\star,\bm{H}^\star) \, dt.
		%	\end{equation*}
	%	On the other hand, we have
	\begin{multline*}
		\int_0^T \Psi^\prime(\bm{h}^n(t),\bm{g}^n(\mu_t^n))\, dt=\int_0^T\left(\Psi^\prime(\bm{h}^n(t),\bm{g}^n(\mu_t^n))-\Psi^\prime(\bm{h}^n(t),\bm{g}^\star(\mu_t^\star))\right)\, dt \\
		+ \int_0^T \Psi^\prime(\bm{h}^n(t),\bm{g}^\star(\mu_t^\star))\, dt.
	\end{multline*}
	To handle the first integral, we observe that
	\begin{equation*}
		\int_0^T\left|\Psi^\prime(\bm{h}^n(t),\bm{g}^n(\mu_t^n))-\Psi^\prime(\bm{h}^n(t),\bm{g}^\star(\mu_t^\star))\right|\, dt \le \int_0^T\sup_{|\bm{h}| \le M_{\bm{h}}}\left|\Psi^\prime(\bm{h},\bm{g}^n(\mu_t^n))-\Psi^\prime(\bm{h},\bm{g}^\star(\mu_t^\star))\right|\, dt
	\end{equation*}
	and recall that $\left|\bm{g}^n(\mu_t^n)\right|,\left|\bm{g}^\star(\mu_t^\star)\right| \le \widetilde{M}_{\bm{g}}$, so that, by dominated convergence,
	%	\begin{equation*}
		%		\sup_{|\bm{h}| \le M_{\bm{h}}}\left|\Psi^\prime(\bm{h},\bm{g}^n(\mu_t^n))-\Psi^\prime(\bm{h},\bm{g}^\star(\mu_t^\star))\right| \le 2 \sup_{\substack{|\bm{h}| \le M_{\bm{h}}\\
				%				|\bm{g}| \le \widetilde{M}_{\bm{g}}}}\left|\Psi^\prime(\bm{h},\bm{g})\right|.
		%	\end{equation*}
	%	Thus, by dominated convergence, we have
	\begin{align}
		\lim_{n \to \infty}&\int_0^T\left|\Psi^\prime(\bm{h}^n(t),\bm{g}^n(\mu_t^n))-\Psi^\prime(\bm{h}^n(t),\bm{g}^\star(\mu_t^\star))\right|\, dt\\
		&=\lim_{n \to \infty} \int_0^T\sup_{|\bm{h}| \le M_{\bm{h}}}\left|\Psi^\prime(\bm{h},\bm{g}^n(\mu_t^n))-\Psi^\prime(\bm{h},\bm{g}^\star(\mu_t^\star))\right|\, dt=0.\label{eq:Psiprime1}
	\end{align}
	On the other hand, arguing exactly as in Theorem \ref{thm:PDEODEWell}, we have
	\begin{equation}\label{eq:Psiprime2}
		\int_0^T \Psi^\prime(\bm{h}^\star(t),\bm{g}^\star(\mu_t^\star))\, dt \le \liminf_{n \to \infty}\int_0^T \Psi^\prime(\bm{h}^n(t),\bm{g}^\star(\mu_t^\star))\, dt.
	\end{equation}
	Combining \eqref{eq:Psiprime1} and \eqref{eq:Psiprime2} we get
	\begin{equation*}
		\int_0^T \Psi^\prime(\bm{h}^\star(t),\bm{g}^\star(\mu_t^\star))\, dt \le \liminf_{n \to \infty}\int_0^T \Psi^\prime(\bm{h}^n(t),\bm{g}^n(\mu_t^\star))\, dt
	\end{equation*}
	and then we proceed as in the proof of Theorem \ref{thm:PDEODEWell}.
\end{proof}
\begin{rem}
	Notice that these are the controls considered in \cite{Ascione20236965} for the first order case, under more restrictive assumptions on the system \eqref{PDEODE}.
\end{rem}

\newpage

\section{An example of application of Theorem \ref{main2} } \label{appendix:example}

In this appendix, we provide an example of second order mean field control problem of the form of Problem \ref{prob:1} arising from a second order multi-agent system. Precisely, let $N,m \in \N$ be respectively the number of followers and leaders. Denote by $(X_N^j,V_N^j)$ the position and velocity of the generic follower, for $j=1,\dots,N$, and by $(Y_N^i,W_N^i)$ the position and velocity of the generic leader, for $i=1,\dots,m$. We assume the dynamics of the leader-follower system is described by the following set of \textit{controlled} SDEs:
\begin{equation}\label{eq:prelimitN}
	\begin{cases}
		dX_N^j(t)=V_N^j(t)dt & \begin{split} j=1,\dots,N, \\ t \in [0,T]\end{split}\\
		\displaystyle \begin{split}
			dV_N^j(t)&=\left(\frac{1}{N}\sum_{i=1}^{N}K_{1,1}(X_N^i(t)-X_N^j(t),V_N^i(t)-V_N^j(t))\right.\\
			&\left.\qquad +\frac{1}{m}\sum_{i=1}^{m}K_{1,2}(Y_N^i(t)-X_N^j(t),W_N^i(t)-V_N^j(t))\right)dt\\
			&\qquad+\sqrt{2\sigma}dB^j(t)
		\end{split} &  \begin{split} j=1,\dots,N, \\ t \in [0,T] \end{split}\\
		\displaystyle
		\begin{split}
		\dot{Y}_N^i(t)&=W_N^i(t)=\frac{1}{N}\sum_{k=1}^{N}K_{2,1}(X_N^k(t)-Y_N^i(t))
		%\right.\\
		%	&\left.\qquad 
			\\
			&+\frac{1}{m}\sum_{k=1}^{m}K_{2,2}(Y_N^k(t)-Y_N^i(t))
			%\\
			%&\qquad
			+u_N^i(t,\bm{X}_N(t),\bm{V}_N(t))
		\end{split} 
	 &  \begin{split} i=1,\dots,m, \\ t \in [0,T] \end{split}\\
		X_N^j(0)=X_0^j,\  V_N^j(0)=V_0^j & j=1,\dots,N \\ 
		Y_N^i(0)=Y_0^i & i=1,\dots,m,
	\end{cases}
\end{equation}
where
\begin{itemize}
	\item $\bm{X}_N:=(X^1_N,\dots,X^N_N)$, $\bm{V}_N:=(V^1_N,\dots,V^N_N)$ and $\bm{Z}_N:=(\bm{X}_N,\bm{V}_N)$;
	\item $K_{1,j}:\R^{2d} \to \R^d$ and $K_{2,j}:\R^{d} \to \R^d$ are $L_{\rm Ker}$-Lipschitz and $M_{\rm Ker}$-bounded interaction kernels
	such that $K_{i,j}(-x)=K_{i,j}(x)$;
	\item the initial data $(X_0^j,V_0^j)_{j \in \N}$ are independent and identically distributed (i.i.d.) random variables belonging to $\W_p(\R^{2d})$ for some $p>1$; 
	\item for all $i=1,\dots,m$ there exists a function $u^i:[0,T]\times \W_1(\R^{2d}) \to \R^d$ such that 
	\begin{equation*}
		u_N^i(t,\bm{x},\bm{v})=u^i\left(\frac{1}{N}\sum_{j=1}^{N}\delta_{x_j,v_j}\right),
	\end{equation*}
	where $\delta_{x_j,v_j}$ are Dirac delta measures concentrated in $(x_j,v_j) \in \R^{2d}$ for all $j=1,\dots,N$. In particular, $u^i$ is such that $|u^i(t,\delta_0)| \le M_{\bm{u}}$ for every $i=1, \ldots, m$ and
	\begin{equation*}
		\left|u^i_k(t,\mu)-u^i_k(t,\nu)\right| \le \frac{L_{\bm{u}}}{md}\W_1(\mu,\nu), 
	\end{equation*}
	for all $k=1, \ldots, N$, $t \in [0,t]$ and $\mu,\nu \in \W_1(\R^{2d})$;
	\item the initial data $\{Y_0^i\}_{i=1,\dots,m} \subset \R^{d}$ are deterministic;
	\item $\bm{B}=(B^1(t),\dots,B^N(t))$ is a $dN$-dimensional standard Brownian motion.
\end{itemize}

Notice that by standard theory of SDEs (see \cite[Theorem 5.2.1]{oksendal2013stochastic}) for all $N \in \N$ there exists a unique strong solution $(\bm{X}_N,\bm{V}_N,\bm{Y}_N,\bm{W}_N)$, where $\bm{X}_N=(X_N^j)_{j=1,\dots,N}$, $\bm{V}_N=(V_N^j)_{j=1,\dots,N}$, $\bm{Y}_N=(Y_N^j)_{j=1,\dots,m}$ and $\bm{W}_N=(W_N^j)_{j=1,\dots,m}$. 
We would like to send, in some sense, $N \to \infty$. Heuristically, we expect the dynamic of a single follower to converge towards the solution of the following McKean-Vlasov system of SDEs/ODEs:
\begin{equation}\label{eq:overlineXV}
	\begin{cases}
		d\overline{X}(t)=\overline{V}(t)dt & t \in [0,T]\\
		\displaystyle \begin{split}
			d\overline{V}(t)&=\left(\left(K_{1,1} \ast \mu_t\right)(\overline{X}(t),\overline{V}(t))\vphantom{\frac{1}{m}\sum_{i=1}^{m}K_{1,2}(Y_N^i(t)-\overline{X}(t),W_N^i(t)-\overline{V}(t))}\right.\\
			&\left.\qquad +\frac{1}{m}\sum_{i=1}^{m}K_{1,2}(Y_N^i(t)-\overline{X}(t),W_N^i(t)-\overline{V}(t))\right)dt\\
			&\qquad+\sqrt{2\sigma}dB(t)
		\end{split} &  t \in [0,T]\\
		\displaystyle \begin{split}
			\dot{\overline{Y}}^j(t)&=W_N^j(t)=\left(K_{2,1} \ast \mu_t\right) (\overline{Y}^j(t))
			%\\
			%&\left.\qquad 
			\\
			&\qquad
			+\frac{1}{m}\sum_{i=1}^{m}K_{2,2}(\overline{Y}^i(t)-\overline{Y}^j(t))
			+u^j(t,\mu_t)
		\end{split} &  \begin{split}
		j=1,\dots,m, \\
		t \in [0,T]
		\end{split} \\
		\overline{X}(0)=\overline{X}_0,\  \overline{V}(0)=\overline{V}_0 \\ 
		\overline{Y}^j(0)=Y_0^j & j=1,\dots,m, \\
		\bm{\mu}={\rm Law}(\overline{X},\overline{V}),
	\end{cases}
\end{equation}
where
\begin{align*}
	(K_{1,1}\ast \mu_t)(x,v)&=\int_{\R^{2d}}K_{1,j}(\xi-x,\nu-v)d\mu_t(\xi,\nu),\\
	(K_{2,1}\ast \mu_t)(y)&=\int_{\R^{2d}}K_{2,j}(\xi-y)d\mu_t(\xi,\nu).
\end{align*}
We set now $\overline{Z}=(\overline{X},\overline{V})$, $\overline{\bm{W}}(t)=(\overline{W}^j(t))_{j=1,\dots,m}$, $\overline{\bm{Y}}(t)=(\overline{Y}^j(t))_{j=1,\dots,m}$, $\overline{\bm{H}}=(\overline{\bm{Y}},\overline{\bm{W}})$, $\bm{u}(t,\bm{\mu})=(u^j(t,\mu_t))_{j=1,\dots,m}$,
\begin{align*}
	F[t,\bm{\mu}](\overline{\bm{Y}})&:=\left(\left(K_{2,1} \ast \mu_t\right) (\overline{Y}^j(t)) +\frac{1}{m}\sum_{i=1}^{m}K_{2,2}(\overline{Y}^i(t)-\overline{Y}^j(t))\right)_{j=1,\dots,m},\\
	\mathfrak{v}[t,\bm{\mu}](\overline{Z})&:=\left(K_{1,1} \ast \mu_t\right)(\overline{X}(t),\overline{V}(t)),\\
	\mathfrak{w}[t,\overline{\bm{H}}](\overline{Z})&:=\frac{1}{m}\sum_{i=1}^{m}K_{1,2}(Y_N^i(t)-\overline{X}(t),W_N^i(t)-\overline{V}(t)),
\end{align*}
and $\bm{Y}_0=(Y_0^j)_{j=1,\dots,m}$. With this notation, we get exactly equation \eqref{eq:MKVSDEaux2}. Notice that, clearly, $\mathfrak{v}$ satisfies $(\cB_0)$ and $(\cB_4)$ and in particular
\begin{align}\label{eq:boundedv}
	\left|\mathfrak{v}[t,\bm{\mu}](z)\right| \le \int_{\R^{2d}}\left|K_{1,1}(\xi-x,\nu-v)\right|d\mu_t(\xi,\nu) \le M_{\rm Ker},
\end{align}
that implies $(\cB_1)$, while
\begin{align}
	\left|\mathfrak{v}[t,\bm{\mu}](z_1)-\mathfrak{v}[t,\bm{\mu}](z_2)\right| &\le \int_{\R^{2d}}\left|K_{1,1}(\xi-x_1,\nu-v_1)-K_{1,1}(\xi-x_2,\nu-v_2)\right|d\mu_t(\xi,\nu) \\
	&\le L_{\rm Ker}(|x_1-x_2|+|v_1-v_2|), \label{eq:Lipschitzv}
\end{align}
which, combined with \eqref{eq:boundedv}, leads to $(\cB_2)$. Furthermore, for any $\bm{\mu}^1,\bm{\mu}^2 \in \W_1(C([0,T];\R^{2d}))$ let $\gamma \in \Pi(\bm{\mu}^1,\bm{\mu}^2)$ and denote by $(Z_1,Z_2) \in L^1(\Omega;C([0,T];\R^{4d}))$ a random variable such that $\gamma={\rm Law}(Z_1,Z_2)$. It holds
\begin{align*}
	\left|\mathfrak{v}[t,\bm{\mu}^1](z)-\mathfrak{v}[t,\bm{\mu}^2](z)\right| &\le \E\left[\left|K_{1,1}(X_1(t)-x,V_1(t)-v)-K_{1,1}(X_2(t)-x,V_2(t)-v)\right|\right]\\
	&\le L_{\rm Ker}\E\left[|Z_1(t)-Z_2(t)|\right].
	% \le L_{\rm Ker}\left(\E\left[\sup_{0 \le t \le T}|Z_1(t)-Z_2(t)|^p\right]\right)^{\frac{1}{p}}. 
\end{align*}
Since $\gamma \in \Pi(\bm{\mu}^1,\bm{\mu}^2)$ is arbitrary, taking the infimum we get
\begin{align}\label{eq:preB3}
	\left|\mathfrak{v}[t,\bm{\mu}^1](z)-\mathfrak{v}[t,\bm{\mu}^2](z)\right| \le L_{\rm Ker}\W_1\left(\bm{\mu}_1,\bm{\mu}_2\right). 
\end{align}
Hence, \eqref{eq:preB3} combined with \eqref{eq:Lipschitzv} and Proposition \ref{prop:equivlence} shows that $\mathfrak{v}$ satisfies $(\mathfrak{v}_3)$. It is also clear that $\mathfrak{w}$ satisfies Assumptions \ref{ass:w}. Concerning $F$, the fact that it satisfies Assumptions \ref{ass:F} follows in the same way as we did for $\mathfrak{v}$. Finally, notice that the control $\bm{u} \in \cA_{\rm EV}$ for some fixed constants $M_{\bm{u}}$ and $L_{\bm{u}}$. Hence, by Theorem \ref{thm:PDEODEWell}, there exist a unique solution $(\overline{\bm{\mu}},\overline{\bm{Y}})$ of \eqref{PDEODE} and $\overline{\bm{\mu}}={\rm Law}(\overline{X},\overline{V})$ where $\overline{Z}=(\overline{X},\overline{V})$ solves \eqref{eq:overlineXV} together with $\overline{\bm{Y}}$. The link between the solutions of \eqref{eq:prelimitN} and \eqref{eq:overlineXV} is underlined by the following \textit{propagation of chaos} result, whose proof is omitted since it is exactly the same as the one of \cite[Theorem 3.9]{Ascione20236965}.
\begin{thm}\label{thm:propchaos}
	Assume that ${\rm Law}(X_0^j,V_0^j)={\rm Law}(\overline{X},\overline{V})$ for all  $j \in \N$ and $(\overline{Z}^j)_{j \in \N}=(\overline{X}^j,\overline{Y}^j)_{j \in \N}$ be a sequence of i.i.d. copies of the solution $\overline{Z}=(\overline{X},\overline{Y})$ of \eqref{eq:overlineXV}. Then it holds
	\begin{equation*}
		\E\left[\max_{1 \le n \le N}\max_{0 \le t \le T}|Z^n_N(t)-\overline{Z}^n(t)|+\max_{0 \le t \le T}|\bm{H}_N(t)-\overline{\bm{H}}(t)|\right]\le P_{d,p}(N),
	\end{equation*}
	where $Z^n_N=(X^n_N,V^n_N)$ for all $n=1,\dots,N$, $H^n_N=(Y^n_N,W^n_N)$ for all $n=1,\dots,m$, $\bm{H}_N=(H^n_N)_{n=1,\dots,m}$ and $P_{d,p}$ is a function depending only on $d$ and $p$ such that $\lim_{N \to \infty}P_{d,p}(N)=0$. Furthermore, it holds
		 \begin{equation}\label{LLN}
		\lim_{N \to \infty} \E\left[\sup_{0 \le t \le T}\W_1(\mu^N_t,\overline{\mu}_t)\right]=0, \qquad \mbox{ where } \qquad 		\mu^N_t:=\frac{1}{N}\sum_{j=1}^{N}\delta_{(X_N^n(t),V_N^n(t))}.
	\end{equation}
%	
%	\begin{equation*}
%	\end{equation*}
%	 it holds
\end{thm}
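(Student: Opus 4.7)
The plan is to use a Sznitman-type coupling argument. First, I would take the i.i.d. sequence $(\overline{Z}^n, B^n)_{n\in\NN}$ where each $\overline{Z}^n=(\overline{X}^n,\overline{V}^n)$ solves \eqref{eq:overlineXV} driven by the same Brownian motion $B^n$ and with the same initial condition $(X_0^n,V_0^n)$ as the $n$-th particle in the system \eqref{eq:prelimitN}. Since all particles share the same common leader trajectory $\overline{\bm{H}}$ in the limit (but the prelimit system has its own $\bm{H}_N$), I would introduce an auxiliary leader system coupled to $\overline{\bm{\mu}}$ if needed, but more directly just compare $\bm{H}_N$ and $\overline{\bm{H}}$ along the coupled dynamics.

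Next, I would write the coupled increments. For any fixed $n\le N$, subtracting the SDE for $\overline{Z}^n$ from that for $Z^n_N$ kills the Brownian term, leaving a pathwise integral equation. The velocity difference $V_N^n(t)-\overline{V}^n(t)$ is a time integral of the drift mismatch, which I would decompose as
\begin{align*}
\frac{1}{N}\sum_{k=1}^N K_{1,1}(Z_N^k-Z_N^n) &- (K_{1,1}\ast\overline{\mu}_s)(\overline{Z}^n)\\
&= \underbrace{\tfrac{1}{N}\sum_k\bigl[K_{1,1}(Z_N^k-Z_N^n)-K_{1,1}(\overline{Z}^k-\overline{Z}^n)\bigr]}_{\text{Lipschitz term}}\\
&\quad + \underbrace{\tfrac{1}{N}\sum_k K_{1,1}(\overline{Z}^k-\overline{Z}^n)-(K_{1,1}\ast\overline{\mu}_s)(\overline{Z}^n)}_{\text{fluctuation term }\mathcal{R}_n^N(s)},
\end{align*}
together with the analogous splitting of the leader-follower term (Lipschitz in $|H_N^i-\overline{H}^i|+|Z_N^n-\overline{Z}^n|$). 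The leader equation is decomposed in the same way, noting that the control term enters as $|u^i(s,\mu_t^N)-u^i(s,\overline{\mu}_t)|$, which is $L_{\bm u}$-Lipschitz in $\W_1$ by $(\cA_{\mathrm{EV}})$ and can be bounded by $\W_1(\mu_t^N,\omu^N_t)+\W_1(\omu^N_t,\overline{\mu}_t)$, where $\omu^N_t:=\frac{1}{N}\sum_k\delta_{\overline{Z}^k(t)}$ is the empirical measure of the i.i.d. copies.

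Taking maxima over $n\le N$ and over $[0,t]$, then expectations, and putting
\[
\Delta_N(t):=\E\!\left[\max_{1\le n\le N}\sup_{0\le s\le t}|Z_N^n(s)-\overline{Z}^n(s)|\right]+\E\!\left[\sup_{0\le s\le t}|\bm{H}_N(s)-\overline{\bm{H}}(s)|\right],
\]
I obtain an inequality of the form
\[
\Delta_N(t)\le C\int_0^t\Delta_N(s)\,ds + C\,\E\!\left[\sup_{0\le s\le t}\W_1(\omu^N_s,\overline{\mu}_s)\right] + C\,\E\!\left[\max_{n\le N}\sup_{0\le s\le T}|\mathcal{R}_n^N(s)|\right],
\]
where the constant $C$ depends on $L_{\mathrm{Ker}}$, $L_{\bm u}$, $M_{\mathrm{Ker}}$ and $T$ (we do use the boundedness of the kernels here, otherwise additional moment bounds are needed; these are available from Theorem \ref{thm:main1}). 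Gr\"onwall then converts this into a pointwise-in-$N$ bound by a rate $P_{d,p}(N)$ vanishing as $N\to\infty$, provided the last two remainders do so.

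The main obstacle, and the step I would spend the most care on, is quantifying these two remainders. For $\E[\sup_s \W_1(\omu_s^N,\overline{\mu}_s)]$ I would invoke the Fournier--Guillin rate \cite{fournier2015rate} applied to the i.i.d. sample $\{\overline{Z}^n(s)\}_{n\le N}$, exchanging the supremum in $s$ with the expectation via the equi-H\"older continuity of $s\mapsto\overline{Z}^n(s)$ (uniform in $n$ by Theorem \ref{thm:main1}), a standard chaining/time-discretization trick; this produces a rate depending on $d$ and $p$, which is where $P_{d,p}(N)$ comes from. For $\max_n\sup_s|\mathcal{R}_n^N(s)|$ I would use the fact that, conditionally on $\overline{Z}^n$, the remaining $\overline{Z}^k$'s are i.i.d. with law $\overline{\mu}_s$, so $\mathcal{R}_n^N(s)$ is a centered sum with variance $O(1/N)$; an $L^2$ bound together with a uniform-in-$n$ Doob-type argument and the maximum over $n\le N$ yields the same asymptotics. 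Plugging these two estimates back and Gr\"onwall gives the first bound. The empirical-measure statement \eqref{LLN} follows immediately because
\[
\E\!\left[\sup_{0\le t\le T}\W_1(\mu_t^N,\overline{\mu}_t)\right]\le \E\!\left[\sup_t\W_1(\mu_t^N,\omu_t^N)\right]+\E\!\left[\sup_t\W_1(\omu_t^N,\overline{\mu}_t)\right],
\]
where the first term is bounded by $\Delta_N(T)\to 0$ via the definition of $\W_1$ using the synchronous coupling $(Z_N^n,\overline{Z}^n)$, and the second by Fournier--Guillin.
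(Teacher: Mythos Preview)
Your proposal is correct and follows exactly the Sznitman-type coupling approach that the paper relies on; indeed, the paper omits the proof entirely and simply refers to \cite[Theorem~3.9]{Ascione20236965}, whose argument is precisely the one you outline. One small simplification: since $K_{1,1}$ is globally $L_{\rm Ker}$-Lipschitz, the fluctuation term satisfies $|\mathcal{R}_n^N(s)|\le L_{\rm Ker}\,\W_1(\omu_s^N,\overline{\mu}_s)$ uniformly in $n$, so it can be absorbed into the empirical-measure remainder and the separate ``Doob-type'' step is unnecessary.
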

Hence, we also have, in \eqref{LLN}, a \textit{Law of Large Numbers} which guarantees that the empirical measure $\bm{\mu}^N$ of the followers in multiagent system \eqref{eq:prelimit1} converges towards the solution of mean field limit PDE-ODE system \eqref{PDEODE}. Furthermore, with the notation introduced in Theorem \ref{thm:propchaos}, we can rewrite \eqref{eq:prelimitN} as follows:
\begin{equation}\label{eq:prelimitN2}
	\begin{cases}
		d\bm{X}_N(t)=\bm{V}_N(t)dt & t \in [0,T],\\
		\displaystyle \begin{split}
			d\bm{V}_N(t)&=\left(\mathfrak{v}[t,\bm{\mu}^N](Z_N^j(t))+\mathfrak{w}[t,\bm{H}_N](Z_N^j(t))\right)_{j=1,\dots,N}dt\\
			&\quad+\sqrt{2\sigma}d\bm{B}(t)
		\end{split} &  t \in [0,T],\\
		\dot{\bm{Y}}_N(t)=\bm{W}_N(t)=F[t,\bm{\mu}^N](\bm{H}_N)+\bm{u}(t,\bm{\mu}^N) & t \in [0,T],\\
		\bm{X}_N(0)=(X_0^j)_{j=1,\dots,N},\  \bm{V}_N(0)=(V_0^j)_{j=1,\dots,N},\\ 
		\bm{Y}_N(0)=\bm{Y}_0 .
	\end{cases}
\end{equation}
%where $\bm{X}_N=(X^j_N)_{j=1,\dots,N}$, $\bm{V}_N=(V_N^j)_{j=1,\dots,N}$ and $\bm{Z}_N=(\bm{X}_N,\bm{V}_N)$.

Now assume that we wanted to control \eqref{eq:prelimitN2}. Precisely, for any $N \in \N$ we consider the functional $\F^N:\cA_{\rm EV} \to \R$ defined as
\begin{equation*}
	\F^N[\bm{u}]=\E\left[\int_0^T \cL(t,\bm{\mu}^N,\bm{Y}^N)\, dt+\int_0^T \Psi\left(\bm{u}(t,\bm{\mu}^N)\right)\right]
\end{equation*}
under assumptions $(\F_0)$, $(\F_1)$, $(\F_2)$ and $(\F_3)$. We are interested in the following optimal control problem:

\begin{ProblemSpecBox}{Problem $\mbox{MA}_N$}{$\mbox{MA}_N$}\label{prob:MAN}
	Find $\bm{u}^N \in \cA_{\rm EV}$ such that
	\begin{equation*}
		\F^N[\bm{u}^N]=\min_{\bm{u} \in \cA_{\rm EV}}\F^N[\bm{u}].
	\end{equation*}
\end{ProblemSpecBox}

To show that Problem \ref{prob:MAN} is well-posed, one needs some preliminaries. First of all, let us define
\begin{equation*}
	\cK_n:=\left\{\bm{\mu} \in C([0,T];\W_1(\R^{2d})): \ \overline{M}_p(T;\bm{\mu})+\sup_{\substack{0 \le s,t \le T \\ s \not = t}}\frac{\W_1(\mu_s,\mu_t)}{|t-s|^{\frac{1}{4}}}\le n\right\}.
\end{equation*}
The fact that $\cK_n \subset C([0,T];\W_1(\R^{2d}))$ is compact follows as in Lemma \ref{lem:PDEODEWell3}. Then, we need the following stability result.
\begin{lem}\label{lem:stabilN}
	Let $\{\bm{u}^j\}_{j \in \N}\subset \cA_{\rm EV}$ and $\bm{u}^\star \in \cA_{\rm EV}$ such that
	\begin{equation*}
		\lim_{j \to \infty}\int_0^t \bm{u}^j(s,\bm{\mu})\, ds=\int_0^t \bm{u}(s,\bm{\mu})\, ds \ \forall t \in [0,T], \ \forall \bm{\mu} \in \bigcup_{n \in \N}\cK_n.
	\end{equation*}
	Denote by $\bm{\mu}^{N,j}$ and $\bm{\mu}^{N,\star}$ the empirical measures of the followers in \eqref{eq:prelimitN2} with controls respectively $\bm{u}^j$ and $\bm{u}^\star$ and by $\bm{H}_N^j$ and $\bm{H}^\star$ the dynamics of the respective leaders. Then
	\begin{equation}\label{eq:Nstab}
	\lim_{j \to \infty}\E\left[\sup_{0 \le t \le T}\W_1(\bm{\mu}^{N,j}_t,\bm{\mu}^{N,\star}_t)+\sup_{0 \le t \le T}|\bm{H}_N^j(t)-\bm{H}_N^\star(t)|\right]=0.
	\end{equation}
\end{lem}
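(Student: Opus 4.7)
The approach mirrors the Gr\"onwall-plus-dominated-convergence scheme already used in Lemma \ref{lem:PDEODEWell1}(ii) and Corollary \ref{cor:stab}, but carried out pathwise for the random empirical measures of the $N$-particle system \eqref{eq:prelimitN2}.  Denote by $Z^{\star,k}_N=(X^{\star,k}_N,V^{\star,k}_N)$ and $Z^{j,k}_N=(X^{j,k}_N,V^{j,k}_N)$ the followers associated respectively with $\bm{u}^\star$ and $\bm{u}^j$, and set $\delta Z^k=Z^{j,k}_N-Z^{\star,k}_N$, $\delta\bm{H}=\bm{H}_N^j-\bm{H}_N^\star$.  The kernels $K_{i,\ell}$ are bounded by $M_{\rm Ker}$ and $L_{\rm Ker}$-Lipschitz, the controls have the sublinear bound \eqref{eq:sublin}, and for empirical measures one has the elementary estimate $\mathcal W_1(\bm{\mu}^{N,j}_s,\bm{\mu}^{N,\star}_s)\le \tfrac1N\sum_k|\delta Z^k(s)|$.

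The first step is a pathwise moment estimate for $\bm{\mu}^{N,\star}$.  Since the drift in the velocity equation is bounded by $2M_{\rm Ker}$ (from the kernels) while the leader ODE has drift of size $\le 2M_{\rm Ker}+K_{\bm u}(1+\overline{M}_p(T;\bm{\mu}^{N,\star})^{1/p})$, a direct Gr\"onwall argument yields, for a.a.~$\omega$,
\begin{equation*}
    \max_{1\le k\le N}\sup_{0\le t\le T}|Z^{\star,k}_N(t)|+\sup_{0\le t\le T}|\bm H^\star_N(t)|\le C\Bigl(1+\max_k|Z_0^k|+\max_k\sup_{0\le t\le T}|B^k(t)|\Bigr),
\end{equation*}
with $C$ deterministic.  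The same estimates, applied to increments, give the H\"older bound $\mathcal W_1(\mu^{N,\star}_t,\mu^{N,\star}_s)\le \widetilde C(\omega)|t-s|^{1/2}$.  Combining these, there exists a random integer $n(\omega)<\infty$ for a.e.~$\omega$ such that $\bm{\mu}^{N,\star}(\omega)\in\mathcal K_{n(\omega)}$.  The hypothesis on $\{\bm u^j\}$ therefore applies pointwise in $\omega$, yielding
\begin{equation*}
    \lim_{j\to\infty}\int_0^t\bigl(\bm u^j(s,\bm{\mu}^{N,\star}(\omega))-\bm u^\star(s,\bm{\mu}^{N,\star}(\omega))\bigr)\,ds=0,\qquad\forall t\in[0,T],\ \text{a.s.}
\end{equation*}
Moreover the sequence of functions $t\mapsto\bm J_j(t,\omega):=\int_0^t(\bm u^j-\bm u^\star)(s,\bm{\mu}^{N,\star}(\omega))\,ds$ is equi-Lipschitz with constant $2K_{\bm u}(1+\overline M_p(T;\bm\mu^{N,\star})^{1/p})$, hence the convergence is actually uniform in $t$ for a.e.~$\omega$, exactly as in Lemma \ref{lem:PDEODEWell1}(ii).

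Next I set up the comparison.  Writing the integral equations, using Lipschitz-ness of $K_{1,1},K_{1,2},K_{2,1},K_{2,2}$ and splitting the control term as
\begin{equation*}
    \bm u^j(s,\bm{\mu}^{N,j})-\bm u^\star(s,\bm{\mu}^{N,\star})=\bigl[\bm u^j(s,\bm{\mu}^{N,j})-\bm u^j(s,\bm{\mu}^{N,\star})\bigr]+\bigl[\bm u^j(s,\bm{\mu}^{N,\star})-\bm u^\star(s,\bm{\mu}^{N,\star})\bigr],
\end{equation*}
together with $(\cA_2)$ applied to $\bm u^j$ (Lipschitz constant $L_{\bm u}$ independent of $j$), one obtains, pathwise,
\begin{equation*}
    \Phi_j(t,\omega):=\max_k\sup_{0\le s\le t}|\delta Z^k(s)|+\sup_{0\le s\le t}|\delta\bm H(s)|\le C\int_0^t\Phi_j(s,\omega)\,ds+\|\bm J_j(\cdot,\omega)\|_{L^\infty(0,T)},
\end{equation*}
where $C$ depends only on $L_{\rm Ker},L_{\bm u},L_F$.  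Gr\"onwall's inequality then gives
\begin{equation*}
    \sup_{0\le t\le T}\Phi_j(t,\omega)\le e^{CT}\|\bm J_j(\cdot,\omega)\|_{L^\infty(0,T)}\xrightarrow[j\to\infty]{}0\quad\text{for a.e.~}\omega.
\end{equation*}

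Finally, to pass to expectations, note that $\|\bm J_j(\cdot,\omega)\|_{L^\infty(0,T)}$ is dominated by the $\omega$-measurable random variable $2K_{\bm u}T(1+\overline M_p(T;\bm{\mu}^{N,\star}(\omega))^{1/p})$, which by the first step has all moments.  Hence $\sup_{0\le t\le T}\Phi_j(\cdot,\omega)$ is dominated by an integrable random variable, and the dominated convergence theorem yields \eqref{eq:Nstab}.  The main obstacle is the bookkeeping for the first step, namely producing a \emph{measurable} $n(\omega)$ so that the hypothesis on $\{\bm u^j\}$ may be invoked $\omega$ by $\omega$, and controlling $\|\bm J_j\|_{L^\infty(0,T)}$ uniformly enough in $j$ to legitimize the dominated convergence; both reduce to the deterministic pathwise bounds obtained from boundedness of the interaction kernels.
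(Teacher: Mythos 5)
Your proposal follows essentially the same route as the paper: a pathwise Gr\"onwall comparison of the two $N$-particle systems after splitting the control difference via $(\cA_2)$, a verification that $\bm{\mu}^{N,\star}(\omega)$ lies in some $\cK_n$ (so that the convergence hypothesis on $\{\bm u^j\}$ and the equi-Lipschitz uniform-convergence argument from Lemma~\ref{lem:PDEODEWell1}(ii) apply pathwise), and a final passage to expectations by dominated convergence using the integrable pathwise bound on $\|\bm J_j\|_{L^\infty}$. The paper phrases the Gr\"onwall step slightly differently (working with the full vector $\bm{Z}_N^j-\bm{Z}_N^\star$ rather than $\max_k|\delta Z^k|$, and deferring the verification that $\bm{\mu}^{N,\star}$ is H\"older to the end), but the structure and all substantive ideas coincide.
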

\begin{proof}
Arguing as in Lemma \ref{lem:PDEODEWell1}, we have
	\begin{align}\label{eq:bound1Rj}
		\sup_{0 \le s \le t}\left|\bm{Y}_N^j(s)-\bm{Y}_N^\star(s)\right| \le \mathcal{R}^j(t)e^{(L_F+1)T},
\end{align}
where this time
\begin{align}
	\mathcal{R}^j(t)&:=\sup_{0 \le s \le t}\left|\int_0^s (\bm{u}^j(\tau,\bm{\mu}^{N,j})-\bm{u}^\star(\tau,\bm{\mu}^{N,\star}))\, ds\right|\\
	&\le \int_0^t |\bm{u}^j(s,\bm{\mu}^{N,j})-\bm{u}^j(s,\bm{\mu}^{N,\star})|\, ds+\sup_{0 \le s \le t}\left|\int_0^s (\bm{u}^j(\tau,\bm{\mu}^{N,\star})-\bm{u}^\star(\tau,\bm{\mu}^{N,\star}))\, ds\right|\\
	&\le L_{\bm{\mu}}\int_0^t\W_1(\mu_s^{N,j},\mu^{N,\star}_s)\, ds+\sup_{0 \le s \le t}\left|\int_0^s (\bm{u}^j(\tau,\bm{\mu}^{N,\star})-\bm{u}^\star(\tau,\bm{\mu}^{N,\star}))\, ds\right|\\
	&=:L_{\bm{\mu}}\int_0^t\W_1(\mu_s^{N,j},\mu^{N,\star}_s)\, ds+\mathcal{R}_1^j(t). \label{eq:bound2Rj}
\end{align}
Furthermore, notice that, by using $(\mathfrak{w}_2)$, \eqref{eq:Lipschitzv} and \eqref{eq:preB3},
\begin{multline*}
	|\bm{Z}^j_N(t)-\bm{Z}^\star_N(t)| \le C\left( \int_0^t\W_1(\bm{\mu}^{N,j}_s,\bm{\mu}^{N,\star}_s)\, ds\right. \\\left.
	+\int_0^t|\bm{Z}^j_N(s)-\bm{Z}^\star_N(s)|\, ds+\sup_{0 \le s \le t}|\bm{H}_N^j(s)-\bm{H}_N^\star(s)|\right)
\end{multline*}
that, by Gr\"onwall's inequality, \eqref{eq:bound1Rj} and \eqref{eq:bound2Rj}, leads to
\begin{align*}
	\W_1(\bm{\mu}^{N,j}_t,\bm{\mu}^{N,\star}_t) \le |\bm{Z}^j_N(t)-\bm{Z}^\star_N(t)| \le C\left( \int_0^t \W_1(\bm{\mu}^{N,j}_s,\bm{\mu}^{N,\star}_s)\, ds+\mathcal{R}_1^j(t)\right).
\end{align*}
Using again Gr\"onwall's inequality, combining it with \eqref{eq:bound1Rj} and taking the supremum and the expectation, we finally achieve
\begin{equation*}
\E\left[\sup_{0 \le t \le T}\W_1(\bm{\mu}^{N,j}_t,\bm{\mu}^{N,\star}_t)+\sup_{0 \le t \le T}|\bm{Y}_N^j(t)-\bm{Y}_N^\star(t)|\right] \le C\E\left[\mathcal{R}_1^j(T)\right].	
\end{equation*}
Once we show that $\bm{\mu}^{N,\star}$ is H\"older-continuous, the fact that $\lim_{j \to \infty}\mathcal{R}_1^j(T)=0$ follows by the same arguments as in Theorem \ref{thm:PDEODEWell}. On the other hand, by \eqref{eq:sublin} and Proposition \ref{prop:Young}, we know that we can use the dominated convergence theorem to deduce 
\begin{equation*}
	\lim_{j \to \infty}\E\left[\sup_{0 \le t \le T}\W_1(\bm{\mu}^{N,j}_t,\bm{\mu}^{N,\star}_t)+\sup_{0 \le t \le T}|\bm{Y}_N^j(t)-\bm{Y}_N^\star(t)|\right]=0.	
\end{equation*}
Finally, \eqref{eq:Nstab} follows by observing that
\begin{equation*}
	\E\left[\sup_{0 \le t \le T}|\bm{W}_N^j(t)-\bm{W}_N^\star(t)|\right] \le C\E\left[\sup_{0 \le t \le T}\W_1(\bm{\mu}^{N,j}_t,\bm{\mu}^{N,\star}_t)+\sup_{0 \le t \le T}|\bm{Y}_N^j(t)-\bm{Y}_N^\star(t)|\right]
\end{equation*}
for some constant $C>0$.

Hence, it only remains to prove the H\"older continuity of $\bm{\mu}^{N,\star}$. To do this, let $0 \le s \le t \le T$ and recall that
\begin{equation}\label{eq:bounded}
	|\bm{V}_N^\star(t)| \le |\bm{V}_N(0)|+2M_{\rm Ker}T+\sqrt{2\sigma}M_{\bm{B}}(T), \qquad \mbox{ where } \qquad 
	M_{\bm{B}}(t)=\sup_{0 \le s \le t}|B(s)|.
\end{equation}
and
\begin{equation*}
	|\bm{V}_N^\star(t)-\bm{V}_N^\star(s)| \le 2M_{\rm Ker}|t-s|+\sqrt{2\sigma}L_{\bm{B}} |t-s|^{\frac{1}{4}}, \quad \mbox{ where } \quad 
	L_{\bm{B}}:=\sup_{0 \le s < t \le T}\frac{|\bm{B}(t)-\bm{B}(s)|}{|t-s|^\gamma},
\end{equation*}
so that
\begin{equation*}
	|\bm{Z}_N^\star(t)-\bm{Z}_N^\star(s)| \le (4M_{\rm Ker}+\sqrt{2\sigma}M_{\bm{B}}(T))|t-s|+\sqrt{2\sigma}L_{\bm{B}} |t-s|^{\gamma}.
\end{equation*}
Taking the expectation and recalling that $\E[M_{\bm{B}}(T)],\E[L_{\bm{B}}]<\infty$ we get that $\bm{\mu}^{N,\star} \in \bigcup_{n \in \N}\cK_n$ and we conclude the proof.
\end{proof}
Now we are ready to prove the existence of a solution for Problem \ref{prob:MAN}. Since the proof is really similar to Theorem \ref{thm:EV}, we only underline the main differences.
\begin{thm}
	Under $(\F_0)$, $(\F_1)$, $(\F_2)$ and $(\F_3)$, Problem \ref{prob:MAN} admits at least a solution.
\end{thm}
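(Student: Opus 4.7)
The argument adapts to the stochastic $N$-particle setting the proof of Theorem~\ref{thm:EV}, with Lemma~\ref{lem:stabilN} playing the role of Theorem~\ref{thm:PDEODEWell}. First I would extract a candidate minimizer, then verify it belongs to $\cA_{\rm EV}$, and finally pass to the limit in the cost functional $\F^N$.

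\emph{Extraction of a candidate.} Let $\{\bm{u}^n\}_{n\in\N}\subset\cA_{\rm EV}$ be a minimizing sequence, with associated current-state representatives $\bm{u}^{n,\sharp}:[0,T]\times\W_1(\R^{2d})\to\R^m$ supplied by $(\cA_{\rm EV})$. For every integer $R\ge 1$, the set
$$
\cK^\sharp_R:=\bigl\{\mu\in\W_1(\R^{2d}): \mathrm{supp}(\mu)\subset\overline{B}_R(0)\bigr\}
$$
is compact in $\W_1(\R^{2d})$, and on it the family $\{\bm{u}^{n,\sharp}(t,\cdot)\}_n$ is equibounded and $L_{\bm{u}}/m$-Lipschitz. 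Applying \cite[Theorem~1.3]{balder1997extension} and a diagonal extraction along $R\uparrow\infty$ yields, up to a non-relabelled subsequence, a function $\bm{u}^{\star,\sharp}$ defined on $\bigcup_R\cK^\sharp_R$ such that, for every $R$ and every measurable $E\subset[0,T]$,
$$
\lim_{n\to\infty}\sup_{\mu\in\cK^\sharp_R}\frac{1}{|E|}\left|\int_E\bigl(\bm{u}^{n,\sharp}(t,\mu)-\bm{u}^{\star,\sharp}(t,\mu)\bigr)\,dt\right|=0.
$$

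\emph{Membership in $\cA_{\rm EV}$.} Repeating the Lebesgue-point argument of the proof of Theorem~\ref{main2}, the $L_{\bm{u}}/m$-Lipschitz bound in $\mu$ and the estimate $|\bm{u}^{\star,\sharp}(t,\delta_0)|\le M_{\bm{u}}$ are inherited at almost every $t$. McShane's extension theorem then produces $\bm{u}^{\star,\sharp}(t,\cdot)\in C(\W_1(\R^{2d});\R^m)$ without enlarging these constants, so $\bm{u}^\star(t,\bm{\mu}):=\bm{u}^{\star,\sharp}(t,\mu_t)$ defines an element of $\cA_{\rm EV}$.

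\emph{Minimality.} By \eqref{eq:bounded}, for almost every $\omega\in\Omega$ the trajectories of \eqref{eq:prelimitN2} driven by either $\bm{u}^n$ or $\bm{u}^\star$ take values in $\overline{B}_{R(\omega)}(0)$ for some $\omega$-dependent $R(\omega)$, so $\bm{\mu}^{N,n}_t(\omega),\bm{\mu}^{N,\star}_t(\omega)\in\cK^\sharp_{R(\omega)}$ and their lifts to curves lie in $\bigcup_n\cK_n$. Using the common Lipschitz bound on $\bm{u}^{n,\sharp}$, the integrated convergence on $\bigcup_R\cK^\sharp_R$ extends to all $\bm{\mu}\in\bigcup_n\cK_n$, so Lemma~\ref{lem:stabilN} applies and gives
$$
\lim_{n\to\infty}\E\!\left[\sup_{0\le t\le T}\W_1(\bm{\mu}^{N,n}_t,\bm{\mu}^{N,\star}_t)+\sup_{0\le t\le T}|\bm{H}_N^n(t)-\bm{H}_N^\star(t)|\right]=0.
$$
For the $\cL$-contribution, assumptions $(\F_1)$--$(\F_2)$ together with dominated convergence in expectation (the majorant $M_{\cK^\prime}\in L^1(0,T)$ is available since the empirical curves almost surely live in $\bigcup_n\cK_n$) yield $\int_0^T\cL(t,\bm{\mu}^{N,n},\bm{Y}^{N,n})\,dt\to\int_0^T\cL(t,\bm{\mu}^{N,\star},\bm{Y}^{N,\star})\,dt$ in $L^1(\Omega)$. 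For the $\Psi$-contribution, the Legendre-transform/localization trick used in the proof of Theorem~\ref{main2} applies verbatim once expectations are inserted: for each $y\in\R^m$ the affine proxy $x\mapsto y\cdot x-\Psi^{*}(y)$ passes to the limit, and \cite[Proposition~1.16]{braides1998approximation} restores $\Psi=\sup_{y\in\R^m}\bigl(y\cdot(\cdot)-\Psi^{*}(y)\bigr)$ by lower semicontinuity under expectation, so $\F^N[\bm{u}^\star]\le\liminf_n\F^N[\bm{u}^n]$.

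\emph{Main obstacle.} The genuinely new difficulty with respect to Theorem~\ref{thm:EV} is that the empirical measures $\bm{\mu}^{N,n}_t$ are random elements of $\W_1(\R^{2d})$ and do not a priori sit in a single deterministic compact subset; the diagonal exhaustion by the $\cK^\sharp_R$ is what bridges this gap, with the integrability of the random radius $R(\omega)$ (ensured by \eqref{eq:bounded} and the exponential tail bounds of \cite[Theorem 1.3.2]{fernique1974regularite}) providing the dominating function needed to commute limits with expectation in the last step.
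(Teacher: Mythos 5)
Your proposal is correct and follows essentially the same route as the paper: diagonal extraction of a weak-$L^1$ limit on an exhausting family of compact subsets of $\W_1(\R^{2d})$, a Lebesgue-point plus McShane argument to place the candidate in $\cA_{\rm EV}$, and then Lemma~\ref{lem:stabilN} combined with dominated convergence for $\cL$ and the Legendre-transform/localization-plus-Fatou argument for $\Psi$. The only technical departure is your choice of exhaustion by compactly supported measures $\cK^\sharp_R=\{\mu:\operatorname{supp}\mu\subset\overline{B}_R(0)\}$ rather than the paper's moment-bounded sets $\cK^\sharp_n=\{\mu:M_p(\mu)\le n\}$; this forces the extra density/equi-Lipschitz step you invoke to carry the integrated convergence over to $\bigcup_n\cK_n$, whereas with the paper's choice the empirical curves (which a.s.\ have finite $p$-moments by \eqref{eq:bounded} and Fernique) already lie directly in the exhaustion, making that step unnecessary.
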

\begin{proof}
	First, we recall that $\W_p(\R^{2d})$ for $p>1$ is a hemicompact dense subset of $\W_1(\R^{2d})$. Precisely, for all $n \in \N$ define
	\begin{equation*}
		\cK^\sharp_n:=\left\{\mu \in \W_1(\R^{2d}): \ M_p(\mu)\le n\right\}
	\end{equation*}
	and observe that $\bigcup_{n \in \N}\cK^\sharp_n=\W_p(\R^{2d})$. The fact that $\cK^\sharp_n$ are compact subsets of $\W_1(\R^{2d})$ is shown as in Lemma \ref{lem:PDEODEWell3}. Consider a minimizing sequence $\{\bm{u}^{j}\}_{j \in \N} \subset \cA_{\rm EV}$ for $\F^{N}$. Arguing as in Theorem \ref{thm:EV} and using a diagonal argument we know that there exists a function $\bm{u}^{\star}:[0,T]\times \bigcup_{n \in \N}\cK_n \to \R^m$ such that, up to a non-relabelled subsequence, for all $\bm{\mu} \in \bigcup_{n \in \N}\cK_n$
	\begin{equation}\label{eq:convergence}
		\lim_{n \to \infty}\frac{1}{|E|}\left|\int_E \left(\bm{u}^{n}(t,\bm{\mu})- \bm{u}^{\star}(t,\bm{\mu})\right)\, dt\right|=0
	\end{equation}
	and, for all $j=1,\dots,md$, $t \in [0,T]$ and $\bm{\mu},\bm{\nu} \in \bigcup_{n \in \N} \cK_n$,
	\begin{equation}\label{eq:Lipschitzustarj}
		|u^\star_j(t,\bm{\mu})-u^\star_j(t,\bm{\nu})| \le \frac{L_{\bm{u}}}{md}\W_1(\mu_t,\nu_t).
	\end{equation}
	The fact that $\bm{u}^\star \in (\cA_{\rm EV})$ follows as in Theorem \ref{thm:EV}. Furthermore, by \eqref{eq:convergence}, denoting by $(\bm{Z}^{N,j},\bm{Y}^j_N)$ and $(\bm{Z}^{N,\star},\bm{Y}^j_N)$ the solutions of \eqref{eq:prelimitN2} with controls $\bm{u}^j$ and $\bm{u}^\star$ repsectively and by $\bm{\mu}^{N,j}$ and $\bm{\mu}^{N,\star}$ the corresponding empirical measures of the followers, we have \eqref{eq:Nstab} and then, up to a subsequence, almost surely
	\begin{equation*}
		\lim_{j \to \infty}\sup_{0 \le t \le T}\W_1(\mu^{N,j}_t,\mu^{N,\star}_t)+\sup_{0 \le t \le T}|\bm{H}_N^j(t)-\bm{H}_N^\star(t)|=0.
	\end{equation*}
	Hence, by $(\F_1)$, $(\F_2)$ and the dominated convergence theorem, we have
	\begin{equation*}
		\lim_{j \to +\infty}\E\left[\int_0^T \cL(t,\bm{\mu}^{N,j},\bm{Y}_N^j)\, dt\right]=\E\left[\int_0^T \cL(t,\bm{\mu}^{N,\star},\bm{Y}_N^\star)\, dt\right].
	\end{equation*}
	On the other hand, the same arguments as in Theorem \ref{main2} and a simple application of Fatou's Lemma lead to
	\begin{equation*}
		\E\left[\int_0^T \Psi(\bm{u}^{N,\star}(t,\bm{\mu}^{N,\star}))\, dt\right] \le \liminf_{j \to \infty}\E\left[\int_0^T \Psi(\bm{u}^{N,j}(t,\bm{\mu}^{N,j}))\, dt\right].
	\end{equation*}
		The remainder of the proof follows as in Theorem \ref{main2}.
	%Hence, in particular, for all $\mu \in \W_p(\R^{2d})$,
	%\begin{equation*}
	%	\lim_{j \to \infty}\frac{1}{|E|}\int_E \bm{u}^{j}(t,\mu)\, dt=\frac{1}{|E|}\int_E \bm{u}^{\star}(t,\mu)\, dt
	%\end{equation*}
	%and $\bm{u}^{\star}(t,\mu)$ can be extended to $\W_1(\R^{2d})$ by continuity, so that it belongs to $\cA^\sharp$. Next, notice that if 
\end{proof}
However, it is really difficult to provide (even numerically) a solution of Problem \ref{prob:MAN} for a big value of $N$. For such a reason, one tries to use a mean field limit approach, justified by Theorem \ref{thm:propchaos}, to provide a suitable approximation of the optimal control. Indeed, on \eqref{eq:MKVSDEaux2}, one can consider the optimal control Problem \ref{prob:3}, whose solution exists by Theorem \ref{thm:EV} and is much more affordable, at least from the numerical point of view, since it involves a system of a PDE and $2md$ ODEs, as in \eqref{PDEODE}, in place of a system of $2(N+m)d$ SDEs as in \eqref{eq:prelimitN2}. However, it remains to show that a solution of Problem \ref{prob:3} actually approximates, in some sense, a solution of Problem \ref{prob:MAN}. We will do this for a slightly different problem. First we consider the following assumptions, that are more restrictive that the ones we considered in Section \ref{subsec:CP}.
\begin{ProblemSpecBox}{Assumptions on $\F^N$: $(\F^N)$}{$(\F^N)$}\label{ass:Fprime2}
	\begin{itemize}
		\item[$(\mathcal{F}_0)$] $\cL:[0,T]\times C([0,T];\W_p(\R^{2d})) \times C([0,T];\R^{m})$ and $\Psi:\R^{m} \to \R$ are measurable and bounded from below;
		\item[$(\mathcal{F}_1^N)$] There exists a non-negative increasing concave function $\varpi_{\cL}:\R_0^+ \to \R_0^+$ such that
		\begin{equation*}
			\left|\cL(t,\bm{\mu}^1,\bm{Y}^1)-\cL(t,\bm{\mu}^2,\bm{Y}^2)\right| \le \varpi_{\cL}\left(\sup_{0 \le s \le t}\W_1(\mu_s^1,\mu_s^2)+\sup_{0 \le s \le t}|\bm{Y}^1(s)-\bm{Y}^2(s)|\right)
		\end{equation*}
		\item[$(\F_2)$] There exists a function $M_{\cL}^\prime \in L^1(0,T)$ such that for a.a. $t \in [0,T]$, for all $\bm{\mu} \in C([0,T];\W_p(\R^{2d}))$ and for all $\bm{Y} \in C([0,T];\R^{m})$,
		\begin{equation*}
			|\cL(t,\bm{\mu},\bm{Y})| \le M_{\cL}^\prime(t).
		\end{equation*} 
		\item[$(\F_3)$] $\Psi:\R^{m} \to \R$ is convex.
		%\item[$(\cA_3)$]  If $\bm{\mu}^1, \bm{\mu}^2 \in C([0,T];\W_p(\R^{2d}))$ are such that $\mu_s^1=\mu_s^2$ for all $s \in [0,t]$, then $\bm{u}(t,\bm{\mu}^1)=\bm{u}(t,\bm{\mu}^2)$.
	\end{itemize}
\end{ProblemSpecBox}
Furthermore, we have to further reduce the set of controls we are working on. Precisely, we define the class of controls $\cA_{\rm EVB} \subset \cA_{\rm EV}$ satisfying $(\cA_0)$, $(\cA_2)$, $(\cA_{\rm EV})$ and
\begin{itemize}
	\item[$(\cA_{B})$] For all $t \in [0,T]$ and $\bm{\mu} \in C([0,T];\W_1(\R^{2d}))$ it holds
	\begin{equation*}
		|\bm{u}(t,\bm{\mu})| \le M_{\bm{u}}.
	\end{equation*}
\end{itemize}
Assumption $(\cA_B)$ clearly implies $(\cA_1)$, but it is much more restrictive. We consider then the following optimal control problems:

\newpage

\begin{ProblemSpecBox}{Problem $3_B$}{$3_B$}\label{prob:3B}
	Find $\bm{u}^\star \in \cA_{\rm EVB}$ such that
	\begin{equation*}
		\F[\bm{u}^\star]=\min_{\bm{u} \in \cA_{\rm EVB}}\F[\bm{u}].
	\end{equation*}
\end{ProblemSpecBox}

\begin{ProblemSpecBox}{Problem $\mbox{MAB}_N$}{$\mbox{MAB}_N$}\label{prob:MABN}
	Find $\bm{u}^\star \in \cA_{\rm EVB}$ such that
	\begin{equation*}
		\F^N[\bm{u}^\star]=\min_{\bm{u} \in \cA_{\rm EVB}}\F^N[\bm{u}].
	\end{equation*}
\end{ProblemSpecBox}

Notice that in both cases the optimal solution $\bm{u}^\star$ is obtained by considering a minimizing sequence $\{\bm{u}^j\}_{j \in \N}\subset \cA_{\rm EVB}$ such that for any measurable $E \subset [0,T]$ and $\bm{\mu} \in \cK$ in case of Problem \ref{prob:3B} or $\bm{\mu} \in \bigcup_{n \in \N}\cK_n$ in case of Problem \ref{prob:MABN}
\begin{equation*}
	\lim_{j \to \infty}\frac{1}{|E|}\int_{E} \bm{u}^j(t,\bm{\mu})\, dt=\frac{1}{|E|}\int_{E} \bm{u}^\star(t,\bm{\mu})\, dt.
\end{equation*}
Theorem \ref{thm:EV} already guarantees that $\bm{u}^\star \in \cA_{\rm EV}$. Furthermore, for all $t \in E_{\sf Leb}$ and $h>0$ small enough
\begin{equation*}
	\frac{1}{h}\left|\int_{t}^{t+h} \bm{u}^j(t,\bm{\mu})\, dt\right|\le M_{\bm{u}}.
\end{equation*}
Hence, taking the limit as $j \to \infty$ and then $h \downarrow 0$, one show that $\bm{u}^\star \in \cA_{EVB}$. As a consequence, we have the following result:
\begin{thm}
	There exists at least a solution for both Problems \ref{prob:3B} and \ref{prob:MABN}.
\end{thm}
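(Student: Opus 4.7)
The overall strategy is to imitate the existence proofs for Problems \ref{prob:3} and \ref{prob:MAN} (the unbounded-control versions in $\cA_{\rm EV}$) and to show that the extra uniform bound in $(\cA_B)$ is preserved in the limit. Since $\cA_{\rm EVB} \subseteq \cA_{\rm EV}$, once a candidate minimizer $\bm{u}^\star$ of the enlarged problem turns out to lie in $\cA_{\rm EVB}$, it automatically solves the restricted problem: the lower semicontinuity inequality $\F[\bm{u}^\star] \le \liminf_j \F[\bm{u}^j]$, obtained along a minimizing sequence $\{\bm{u}^j\}_{j\in\N}\subset\cA_{\rm EVB}$, matches $\inf_{\cA_{\rm EVB}}\F$ from below, while the reverse inequality is trivial.

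For Problem \ref{prob:3B} I would fix a minimizing sequence $\{\bm{u}^j\}_{j\in\N}\subset \cA_{\rm EVB}$ and repeat the extraction step of Theorem \ref{thm:EV}: up to a subsequence there exists $\bm{u}^\star \in \cA_{\rm EV}$ with
\[
\lim_{j \to \infty}\sup_{\bm{\mu} \in \cK}\frac{1}{|E|}\left|\int_{E}\bigl(\bm{u}^j(t,\bm{\mu}) - \bm{u}^\star(t,\bm{\mu})\bigr)\, dt\right| = 0
\]
for every measurable $E \subseteq [0,T]$, and the lower semicontinuity of $\F$ along this subsequence is inherited verbatim from the argument in Theorem \ref{main2}. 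To upgrade $\bm{u}^\star$ to $\cA_{\rm EVB}$, I would invoke Lebesgue's differentiation theorem on $t \mapsto \bm{u}^\star(t,\cdot) \in L^1([0,T];C(\cK;\R^m))$: there is a full-measure set $E_{\sf Leb} \subset [0,T]$ such that for each $t \in E_{\sf Leb}$, $\frac{1}{h}\int_t^{t+h} \bm{u}^\star(s,\bm{\mu})\, ds \to \bm{u}^\star(t,\bm{\mu})$ uniformly in $\bm{\mu}\in\cK$ as $h \downarrow 0$. Since $(\cA_B)$ gives $\bigl|\frac{1}{h}\int_t^{t+h} \bm{u}^j(s,\bm{\mu})\, ds\bigr| \le M_{\bm{u}}$, passing to the limit first in $j$ and then in $h$ yields $|\bm{u}^\star(t,\bm{\mu})| \le M_{\bm{u}}$ for all $t \in E_{\sf Leb}$ and $\bm{\mu}\in\cK$. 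Redefining $\bm{u}^\star(t,\cdot) \equiv 0$ on the $t$-null set $[0,T]\setminus E_{\sf Leb}$ preserves measurability in $t$, the Lipschitz estimate $(\cA_2)$, and the current-state structure $(\cA_{\rm EV})$, delivering a representative in $\cA_{\rm EVB}$.

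For Problem \ref{prob:MABN} the plan is the same, with Theorem \ref{thm:EV} replaced by the existence proof of Problem \ref{prob:MAN}: the diagonal extraction over the hemicompact family $\{\cK_n\}_{n\in\N}$ produces $\bm{u}^\star$ satisfying the averaged-convergence property on every $\bm{\mu} \in \bigcup_n \cK_n$, and Lemma \ref{lem:stabilN} guarantees the almost-sure convergence (up to subsequence) of the state trajectories $(\bm{\mu}^{N,j}, \bm{H}_N^j)$ to $(\bm{\mu}^{N,\star}, \bm{H}_N^\star)$. The cost inequality then goes through exactly as in the proof for Problem \ref{prob:MAN}: $(\F_1^N)$ together with $(\F_2)$ and dominated convergence take care of the Lagrangian term, while Legendre duality plus Fatou handle the convex term involving $\Psi$. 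The same Lebesgue-point argument as above, now applied pointwise for each $\bm{\mu} \in \bigcup_n \cK_n$ and a full-measure $t$-set independent of $\bm{\mu}$ (obtained by countable intersection over a dense subfamily and extension via the Lipschitz estimate $(\cA_2)$ in $\bm{\mu}$), gives $\bm{u}^\star \in \cA_{\rm EVB}$.

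The only real subtlety, as anticipated in the excerpt's concluding remark, is notational: one must ensure that a single redefinition of $\bm{u}^\star$ on a $t$-null set simultaneously recovers the uniform bound $(\cA_B)$ for every $\bm{\mu}$, while preserving $t$-measurability, $\bm{\mu}$-continuity, and the Lipschitz property. This is achieved by first proving the pointwise bound on a dense countable subfamily of $\cK$ (or of $\bigcup_n\cK_n$), using separability to take a common full-measure $t$-set, and then extending the bound to every $\bm{\mu}$ by the uniform Lipschitz continuity in $\bm{\mu}$. Setting $\bm{u}^\star$ to zero off this common set is compatible with all required structural properties, closing the argument.
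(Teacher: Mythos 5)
Your proof is correct and follows essentially the same route as the paper: a minimizing sequence in $\cA_{\rm EVB}$ is extracted using the machinery of Theorem \ref{thm:EV} (respectively, the proof for Problem \ref{prob:MAN}), yielding a limit $\bm{u}^\star \in \cA_{\rm EV}$, and the extra bound $(\cA_B)$ is recovered by averaging over small intervals, passing to the limit in $j$, and then letting $h\downarrow 0$ at Lebesgue points. The paper's argument is exactly this; your additional discussion of taking a common Lebesgue-point set (via the $L^1([0,T];C(\cK;\R^m))$-valued framing, as in the proof of Theorem \ref{main2}) and redefining $\bm{u}^\star$ to vanish off it is a careful elaboration of what the paper leaves implicit, not a different method.
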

Now, we can prove the following result:
\begin{thm}\label{thm:Gammalim}
	Let $\{\bm{u}^N\}_{N \in \N}\subset \cA_{\rm EVB}$ and $\bm{u}^\star \in \cA_{\rm EVB}$ be such that for all $t \in [0,T]$ and $\bm{\mu} \in \bigcup_{n \in \N}\cK_n$
	\begin{equation*}
		\lim_{N \to \infty} \int_{0}^{t}\bm{u}^N(s,\bm{\mu})\, ds=\int_{0}^{t}\bm{u}^\star(s,\bm{\mu})\, ds.
	\end{equation*}
	Then
	\begin{equation*}
		\F[\bm{u}^\star] \le \liminf_{N \to \infty}\F^N[\bm{u}^N].
	\end{equation*}
	Furthermore for all $\bm{u} \in \cA_{\rm EVB}$ it holds
	\begin{equation*}
		\lim_{N \to \infty}\F^N[\bm{u}]=\F[\bm{u}].
	\end{equation*}
	Finally, it holds
	\begin{equation*}
		\lim_{N \to \infty}\min_{\bm{u} \in \cA_{\rm EVB}}\F^N[\bm{u}]=\min_{\bm{u} \in \cA_{\rm EVB}}\F[\bm{u}].
	\end{equation*}
\end{thm}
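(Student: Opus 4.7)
The plan is to proceed in three steps, corresponding to the three assertions, and to reduce everything to the propagation of chaos (Theorem \ref{thm:propchaos}), the stability result Lemma \ref{lem:stabilN}, the mean-field stability of Theorem \ref{thm:PDEODEWell}, and a convex duality argument in the spirit of the proof of Theorem \ref{main2}.

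\textbf{Pointwise convergence.} Fix $\bm{u}\in\cA_{\rm EVB}$ and let $(\bm{\mu}^N,\bm{Y}_N)$ be the empirical/leader dynamics from \eqref{eq:prelimitN2} with control $\bm{u}$, and $(\bm{\mu},\bm{Y})$ the unique solution of \eqref{PDEODE} given by Theorem \ref{thm:PDEODEWell}. By propagation of chaos one has $\E\bigl[\sup_{[0,T]}\bigl(\W_1(\mu^N_t,\mu_t)+|\bm{Y}_N(t)-\bm{Y}(t)|\bigr)\bigr]\to 0$. Using $(\F_1^N)$ and Jensen's inequality on the concave modulus $\varpi_{\cL}$, we obtain
\begin{equation*}
\E\bigl[|\cL(t,\bm{\mu}^N,\bm{Y}_N)-\cL(t,\bm{\mu},\bm{Y})|\bigr]\le \varpi_{\cL}\!\left(\E\bigl[\textstyle\sup_{[0,t]}\W_1(\mu^N_s,\mu_s)+\sup_{[0,t]}|\bm{Y}_N(s)-\bm{Y}(s)|\bigr]\right)\to 0,
\end{equation*}
and $(\F_2)$ lets us apply dominated convergence to pass to the limit in $\int_0^T\E[\cL]\,dt$. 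For the control cost term, $(\cA_2)$ gives $|\bm{u}(t,\bm{\mu}^N)-\bm{u}(t,\bm{\mu})|\le L_{\bm{u}}\sup_{[0,t]}\W_1(\mu_s^N,\mu_s)$, hence a.s.\ convergence along a subsequence; since $\Psi$ is convex (thus continuous) and $|\bm{u}|\le M_{\bm{u}}$ by $(\cA_B)$, another application of dominated convergence yields $\F^N[\bm{u}]\to\F[\bm{u}]$.

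\textbf{Liminf inequality.} Let $(\bm{\mu}^N,\bm{Y}_N)$ now be driven by $\bm{u}^N$, and let $(\bm{\mu}^{N,\star},\bm{Y}_N^\star)$ be the same system driven by the fixed control $\bm{u}^\star$; denote by $(\bm{\mu}^\star,\bm{Y}^\star)$ the solution of \eqref{PDEODE} with $\bm{u}^\star$. By Lemma \ref{lem:stabilN} applied with $\bm{u}^j=\bm{u}^N$, we have $\E\bigl[\sup_{[0,T]}(\W_1(\mu^N_t,\mu_t^{N,\star})+|\bm{Y}_N-\bm{Y}_N^\star|)\bigr]\to 0$, and propagation of chaos gives $\E\bigl[\sup_{[0,T]}(\W_1(\mu^{N,\star}_t,\mu^\star_t)+|\bm{Y}_N^\star-\bm{Y}^\star|)\bigr]\to 0$; the triangle inequality then yields convergence of $(\bm{\mu}^N,\bm{Y}_N)$ to $(\bm{\mu}^\star,\bm{Y}^\star)$ in the same sense. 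Repeating the $\varpi_{\cL}$-Jensen estimate of Step 1 with $\bm{u}^N$ shows that $\int_0^T\E[\cL(t,\bm{\mu}^N,\bm{Y}_N)]\,dt\to\int_0^T\E[\cL(t,\bm{\mu}^\star,\bm{Y}^\star)]\,dt$. For the control cost, we use the Legendre involution $\Psi=\Psi^{\ast\ast}$ and, for each affine minorant $\Psi(\cdot;y)=y\cdot(\cdot)-\Psi^\ast(y)$, write (for any measurable $E\subset[0,T]$)
\begin{equation*}
\int_E\E\bigl[\Psi(\bm{u}^N(t,\bm{\mu}^N);y)\bigr]\,dt=y\cdot\int_E\E[\bm{u}^N(t,\bm{\mu}^\star)]\,dt-\Psi^\ast(y)|E|+y\cdot \mathcal{R}_N(E),
\end{equation*}
where $|\mathcal{R}_N(E)|\le L_{\bm{u}}|E|\,\E[\sup_{[0,T]}\W_1(\mu^N_s,\mu^{N,\star}_s)]\to 0$ by Lemma \ref{lem:stabilN}, and the first term converges, by dominated convergence together with propagation of chaos for the fixed control $\bm{u}^\star$ and the assumption on $\bm{u}^N$, to $y\cdot\int_E\bm{u}^\star(t,\bm{\mu}^\star)\,dt-\Psi^\ast(y)|E|=\int_E\Psi(\bm{u}^\star(t,\bm{\mu}^\star);y)\,dt$. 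Taking the supremum in $y$, the superadditivity of $\liminf$, and the Localization Lemma (as in the proof of Theorem \ref{main2}) yield
\begin{equation*}
\int_0^T\Psi(\bm{u}^\star(t,\bm{\mu}^\star))\,dt\le \liminf_{N\to\infty}\int_0^T\E\bigl[\Psi(\bm{u}^N(t,\bm{\mu}^N))\bigr]\,dt,
\end{equation*}
and summing the two contributions delivers $\F[\bm{u}^\star]\le \liminf_N\F^N[\bm{u}^N]$.

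\textbf{Convergence of minima.} The $\limsup$ bound is immediate: pick any minimizer $\bm{u}_\infty\in\cA_{\rm EVB}$ of $\F$ (which exists by Theorem \ref{thm:EV}, after verifying that its proof works inside the narrower class $\cA_{\rm EVB}$ using $(\cA_B)$) and apply Step 1 to get $\limsup_N\min\F^N\le \lim_N\F^N[\bm{u}_\infty]=\F[\bm{u}_\infty]=\min\F$. For the matching $\liminf$ bound, let $\bm{u}^N$ minimize $\F^N$ over $\cA_{\rm EVB}$. The $\bm{U}^N=\Pi\bm{u}^N$ are uniformly bounded (by $(\cA_B)$) and equi-Lipschitz in $\bm{\mu}$ (by $(\cA_2)$); arguing exactly as in the proof of Theorem \ref{main2}, up to a subsequence they admit a limit $\bm{U}^\star$ in the Balder sense, which identifies a $\bm{u}^\star\in\cA_{\rm EVB}$ (the McShane extension producing an element of $\cA_{\rm EV}$ automatically inherits the bound $M_{\bm{u}}$ from $\bm{u}^N$, so $(\cA_B)$ survives). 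Step 2 applied to this subsequence then gives $\min\F\le \F[\bm{u}^\star]\le \liminf_N\F^N[\bm{u}^N]=\liminf_N\min\F^N$, closing the loop.

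\textbf{Main obstacle.} The delicate point is the liminf step for $\int\Psi(\bm{u}^N(t,\bm{\mu}^N))\,dt$: neither $\bm{u}^N$ nor $\bm{\mu}^N$ is fixed, and only the \emph{time-integrated} convergence of $\bm{u}^N$ against a fixed $\bm{\mu}$ is assumed, so the Legendre/duality trick must be combined with a separate argument showing that the measure argument can be frozen at $\bm{\mu}^\star$ up to a remainder $\mathcal{R}_N$ controlled purely by Lemma \ref{lem:stabilN}. Establishing that this remainder is genuinely $o(1)$, uniformly in the minorant $y$ once one passes to $\sup_y$, is what forces the propagation-of-chaos stability under varying controls.
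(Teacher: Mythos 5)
Your overall architecture — pointwise convergence first, then the $\Gamma$--liminf inequality via the Legendre decomposition of $\Psi$ together with a Localization-Lemma argument, then convergence of minima by combining the recovery sequence with compactness of the near-minimizers — matches the paper's plan, and the pointwise-convergence step and the convergence-of-minima step essentially reproduce what the paper does (the latter being delegated to \cite[Proposition~4.7]{Ascione20236965} in the text).

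Where you diverge is in the liminf step, and this is where a gap appears. You compare the $N$-particle system driven by $\bm{u}^N$ with the \emph{$N$-particle} system driven by $\bm{u}^\star$ (invoking Lemma~\ref{lem:stabilN}), and then pass from the latter to the mean-field system with $\bm{u}^\star$ by propagation of chaos. But Lemma~\ref{lem:stabilN} is a statement for \emph{fixed} $N$ as $j\to\infty$; you need it along the diagonal $j=N$, $N\to\infty$, and this is not what the lemma says. The obstruction is concrete: the error quantity in that proof is $\mathcal{R}_1^j(T)=\sup_{0\le s\le T}\bigl|\int_0^s(\bm{u}^j(\tau,\bm{\mu}^{N,\star})-\bm{u}^\star(\tau,\bm{\mu}^{N,\star}))\,d\tau\bigr|$, and when $j=N$ the (random) measure $\bm{\mu}^{N,\star}$ also varies with $N$, so the pointwise-in-$\bm{\mu}$ weak convergence hypothesis on $\bm{u}^N$ cannot be used directly. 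Making this work requires (i) upgrading the weak convergence to uniform convergence on each compact $\cK_n$ via equi-Lipschitzness from $(\cA_2)$ and equi-boundedness from $(\cA_B)$, and (ii) an $N$-uniform tightness bound $\sup_N\bP(\bm{\mu}^{N,\star}\notin\cK_n)\to 0$ as $n\to\infty$ from the moment estimates in the lemma's proof. None of this is in your sketch, and it is genuinely extra work, not a formality.

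The paper's decomposition sidesteps this entirely by inserting the \emph{deterministic} mean-field solution $\overline{\bm{\mu}}^N$ driven by $\bm{u}^N$ as the intermediate object: (L1) compares $\E[\cL(t,\bm{\mu}^N,\bm{Y}_N)]$ with $\cL(t,\overline{\bm{\mu}}^N,\overline{\bm{Y}}^N)$ by propagation of chaos — whose rate $P_{d,p}(N)$ is explicitly control-independent over $\cA_{\rm EVB}$ since the controls are uniformly bounded and Lipschitz — and then (L2) compares $\cL(t,\overline{\bm{\mu}}^N,\overline{\bm{Y}}^N)$ with $\cL(t,\overline{\bm{\mu}}^\star,\overline{\bm{Y}}^\star)$ using Theorem~\ref{thm:PDEODEWell}, a stability statement for deterministic curves in the fixed compact $\cK$, so no $\omega$-by-$\omega$ uniformity issue arises. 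The same split is used inside the Legendre argument for $\Psi$. I would recommend rewriting your liminf step along the paper's $\text{(L1)}+\text{(L2)}$ decomposition, or at least stating and proving the uniform-in-$N$ version of Lemma~\ref{lem:stabilN} that your route needs.

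One small additional point: in the convergence-of-minima step you assert that the McShane extension of the limiting $\bm{u}^\star$ ``automatically inherits the bound $M_{\bm{u}}$,'' hence $(\cA_B)$ survives. That is not automatic — McShane's formula preserves the Lipschitz constant, not the $L^\infty$ bound, off the compact where the original function is defined. The paper only checks the bound at Lebesgue points on $\bigcup_n\cK_n$; to obtain $(\cA_B)$ on all of $C([0,T];\W_1(\R^{2d}))$ one should compose the extension with a $1$-Lipschitz truncation at level $M_{\bm{u}}$, which preserves both $(\cA_2)$ and the values on $\bigcup_n\cK_n$. This is minor but should be said.
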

\begin{proof}
	Let $\{\bm{u}^N\}_{N \in \N}\subset \cA_{\rm EVB}$ and $\bm{u}^\star \in \cA_{\rm EVB}$ as in the statement. Denote by $(\bm{Z}^N,\bm{H}_N)$ the solutions of \eqref{eq:prelimitN2} with controls $\bm{u}^N$, by $\bm{\mu}^N$ the empirical measures of the followers $\bm{Z}^{N}$ and by $(\overline{\bm{\mu}}^N,\overline{\bm{Y}}^N)$ the solutions of \eqref{PDEODE} with controls $\bm{u}^N$. Arguing as in the proof of \cite[Theorem 4.6]{Ascione20236965} and using Theorem \ref{thm:propchaos}, we have
	\begin{equation}\label{eq:L1}
		\lim_{N \to \infty}\left|\E\left[\int_0^T \cL\left(t,\bm{\mu}^{N},\bm{Y}_N\right)\, dt\right]-\int_0^T \cL\left(t,\overline{\bm{\mu}}^{N},\overline{\bm{Y}}_N\right)\, dt\right|=0.
	\end{equation}
	On the other hand, by Theorem \ref{thm:PDEODEWell} and Assumptions $(\F_1^N)$ (that implies $(\F_1)$) and $(\F_2)$, we have
	\begin{equation}\label{eq:L2}
		\lim_{N \to \infty}\left|\int_0^T \cL\left(t,\overline{\bm{\mu}}^{N},\overline{\bm{Y}}_N\right)\, dt-\int_0^T \cL\left(t,\overline{\bm{\mu}}^{\star},\overline{\bm{Y}}_\star\right)\right|=0,
	\end{equation}
	where $(\overline{\bm{\mu}}^\star,\overline{\bm{Y}}_\star)$ is the solution of \eqref{PDEODE} with control $\bm{u}^\star$. Concerning $\Psi$, the same arguments as in the proof of Theorem \ref{main2}, together with Theorem \ref{thm:propchaos}, and Fatou's lemma lead to
	\begin{equation*}
		\int_0^T \Psi(\bm{u}^\star(t,\bm{\mu}^\star))\, dt \le \liminf_{N \to \infty} \E\left[\int_0^T \Psi(\bm{u}^N(t,\bm{\mu}^N))\, dt\right].
	\end{equation*}
	This, together with \eqref{eq:L1} and \eqref{eq:L2} prove that
	\begin{equation*}
		\F[\bm{u}^\star] \le \liminf_{N \to \infty}\F^N[\bm{u}^N].
	\end{equation*}
	Now consider a control $\bm{u} \in \cA_{\rm EVB}$, denote by $(\bm{Z}^N,\bm{Y}_N)$ the solutions of \eqref{eq:prelimitN2} with controls $\bm{u}$, by $\bm{\mu}^N$ the empirical measures of the followers $\bm{Z}^{N}$ and by $(\overline{\bm{\mu}},\overline{\bm{Y}})$ the solutions of \eqref{PDEODE} with controls $\bm{u}$. Since $\Psi$ is uniformly continuous on $\{z \in \R^m: \ |z| \le M_{\bm{u}}\}$, we can consider a concave modulus of continuity $\varpi_2$ and argue as in \cite[Theorem 4.6]{Ascione20236965}, by using also $(\cA_2)$ and Theorem \ref{thm:propchaos},
	\begin{equation}\label{eq:Psi1}
		\lim_{N \to \infty}\left|\E\left[\int_0^T \Psi\left(\bm{u}(t,\bm{\mu}^{N})\right)\, dt\right]-\int_0^T \Psi\left(\bm{u}(t,\overline{\bm{\mu}})\right)\, dt\right|=0.
	\end{equation}
	Combining \eqref{eq:L1} and \eqref{eq:Psi1} we achieve
	\begin{equation*}
		\lim_{N \to \infty}\F^N[\bm{u}]=\F[\bm{u}].
	\end{equation*}
	Finally, the last statement follows as in \cite[Proposition 4.7]{Ascione20236965}.
\end{proof}
\begin{rem}
	It is worth noticing that the solution $\bm{u}^\star$ of Problem \ref{prob:3B} is an \textit{approximate solution} of Problem \ref{prob:MABN} in the sense that it can be used to provide a control whose cost is \textit{nearly minimal}. However, if $\bm{u}^N$ is a solution of Problem \ref{prob:MABN}, one cannot guarantee that $\bm{u}^\star$ approximate $\bm{u}^N$, but only that $\F^N[\bm{u}^\star]$ is near $\F^N[\bm{u}^N]$.
\end{rem}

\newpage

\newpage

\bibliographystyle{abbrv}
\bibliography{biblio} 

\begin{thebibliography}{10}

\bibitem{albi2016invisible}
G.~Albi, M.~Bongini, E.~Cristiani, and D.~Kalise.
\newblock Invisible control of self-organizing agents leaving unknown
  environments.
\newblock {\em SIAM Journal on Applied Mathematics}, 76(4):1683--1710, 2016.

\bibitem{albi2017mean}
G.~Albi, Y.-P. Choi, M.~Fornasier, and D.~Kalise.
\newblock Mean field control hierarchy.
\newblock {\em Applied Mathematics \& Optimization}, 76(1):93--135, 2017.

\bibitem{albi2020mathematical}
G.~Albi, E.~Cristiani, L.~Pareschi, and D.~Peri.
\newblock Mathematical models and methods for crowd dynamics control.
\newblock {\em Crowd Dynamics, Volume 2: Theory, Models, and Applications},
  pages 159--197, 2020.

\bibitem{ambrosio2005gradient}
L.~Ambrosio, N.~Gigli, and G.~Savar{\'e}.
\newblock {\em Gradient flows: in metric spaces and in the space of probability
  measures}.
\newblock Springer Science \& Business Media, 2005.

\bibitem{Anceschi2025}
F.~Anceschi, G.~Ascione, D.~Castorina, and F.~Solombrino.
\newblock Well-posedness of {Kolmogorov-Fokker-Planck} equations with unbounded
  drift.
\newblock {\em Journal of Mathematical Analysis and Applications},
  543(2):Article number 128909, 2025.

\bibitem{arendt2011vector}
W.~Arendt, C.~J.~K. Batty, M.~Hieber, and F.~Neubrander.
\newblock {\em Vector-valued Laplace Transforms and Cauchy Problems},
  volume~96.
\newblock Springer Science \& Business Media, 2011.

\bibitem{armbruster2017elastic}
D.~Armbruster, S.~Martin, and A.~Thatcher.
\newblock Elastic and inelastic collisions of swarms.
\newblock {\em Physica D: Nonlinear Phenomena}, 344:45--57, 2017.

\bibitem{Ascione20236965}
G.~Ascione, D.~Castorina, and F.~Solombrino.
\newblock Mean-field sparse optimal control of systems with additive white
  noise.
\newblock {\em SIAM Journal on Mathematical Analysis}, 55(6):6965 – 6990,
  2023.

\bibitem{auletta2022herding}
F.~Auletta, D.~Fiore, M.~J. Richardson, and M.~di~Bernardo.
\newblock Herding stochastic autonomous agents via local control rules and
  online target selection strategies.
\newblock {\em Autonomous Robots}, 46(3):469--481, 2022.

\bibitem{balder1997extension}
E.~Balder.
\newblock An extension of the {Dunford-Pettis} theorem with applications to a
  {Bocce-type} oscillation restriction criterion.
\newblock {\em Rijksuniversiteit Utrecht. Mathematisch Instituut : preprint},
  1021, 1997.

\bibitem{billingsley2013convergence}
P.~Billingsley.
\newblock {\em Convergence of Probability Measures}.
\newblock John Wiley \& Sons, 2013.

\bibitem{braides1998approximation}
A.~Braides.
\newblock {\em Approximation of free-discontinuity problems}.
\newblock Number 1694. Springer Science \& Business Media, 1998.

\bibitem{brezis2011functional}
H.~Br{\'e}zis.
\newblock {\em Functional analysis, Sobolev spaces and partial differential
  equations}, volume~2.
\newblock Springer, 2011.

\bibitem{calabrese2021spontaneous}
C.~Calabrese, M.~Lombardi, E.~Bollt, P.~De~Lellis, B.~G. Bardy, and
  M.~Di~Bernardo.
\newblock Spontaneous emergence of leadership patterns drives synchronization
  in complex human networks.
\newblock {\em Scientific Reports}, 11(1):18379, 2021.

\bibitem{camazine2020self}
S.~Camazine, J.-L. Deneubourg, N.~R. Franks, J.~Sneyd, G.~Theraula, and
  E.~Bonabeau.
\newblock {\em Self-organization in biological systems}.
\newblock Princeton University Press, 2020.

\bibitem{cavagnari2023lagrangian}
G.~Cavagnari, G.~Savar{\'e}, and G.~E. Sodini.
\newblock {A Lagrangian approach to totally dissipative evolutions in
  Wasserstein spaces}.
\newblock {\em arXiv preprint arXiv:2305.05211}, 2023.

\bibitem{cavagnari2023dissipative}
G.~Cavagnari, G.~Savar{\'e}, and G.~E. Sodini.
\newblock {Dissipative probability vector fields and generation of evolution
  semigroups in Wasserstein spaces}.
\newblock {\em Probability Theory and Related Fields}, 185(3):1087--1182, 2023.

\bibitem{cherny2002uniqueness}
A.~S. Cherny.
\newblock On the uniqueness in law and the pathwise uniqueness for stochastic
  differential equations.
\newblock {\em Theory of Probability \& Its Applications}, 46(3):406--419,
  2002.

\bibitem{cucker2007emergent}
F.~Cucker and S.~Smale.
\newblock Emergent behavior in flocks.
\newblock {\em IEEE Transactions on automatic control}, 52(5):852--862, 2007.

\bibitem{during2009boltzmann}
B.~D{\"u}ring, P.~Markowich, J.-F. Pietschmann, and M.-T. Wolfram.
\newblock Boltzmann and {F}okker--{P}lanck equations modelling opinion
  formation in the presence of strong leaders.
\newblock {\em Proceedings of the Royal Society A: Mathematical, Physical and
  Engineering Sciences}, 465(2112):3687--3708, 2009.

\bibitem{fernique1974regularite}
X.~Fernique.
\newblock Regularite des trajectoires des fonctions aleatoires gaussiennes.
\newblock In {\em Ecole d’Et{\'e} de Probabilit{\'e}s de Saint-Flour
  IV—1974}.

\bibitem{girardi1991weak}
M.~Girardi.
\newblock {Weak vs. norm compactness in $L_1$: the Bocce criterion}.
\newblock {\em Studia Mathematica}, 98(1):95--97, 1991.

\bibitem{hofbauer1998evolutionary}
J.~Hofbauer, K.~Sigmund, et~al.
\newblock {\em Evolutionary games and population dynamics}.
\newblock Cambridge University Press, 1998.

\bibitem{j1994weak}
E.~J.~Balder, M.~Girardi, and V.~Jalby.
\newblock {From weak to strong types of $\mathcal{L}_E^1$-convergence by the
  Bocce criterion}.
\newblock {\em Studia Mathematica}, 111(3):241--262, 1994.

\bibitem{kechris2012classical}
A.~Kechris.
\newblock {\em Classical Descriptive Set Theory}, volume 156.
\newblock Springer Science \& Business Media, 2012.

\bibitem{keller1970initiation}
E.~F. Keller and L.~A. Segel.
\newblock Initiation of slime mold aggregation viewed as an instability.
\newblock {\em Journal of Theoretical Biology}, 26(3):399--415, 1970.

\bibitem{kelley1963linear}
J.~L. Kelley and I.~Namioka.
\newblock {\em Linear Topological Spaces}.
\newblock Springer, 1963.

\bibitem{ko2020asymptotic}
D.~Ko and E.~Zuazua.
\newblock Asymptotic behavior and control of a “guidance by repulsion”
  model.
\newblock {\em Mathematical Models and Methods in Applied Sciences},
  30(04):765--804, 2020.

\bibitem{lama2024shepherding}
A.~Lama and M.~di~Bernardo.
\newblock Shepherding and herdability in complex multiagent systems.
\newblock {\em Physical Review Research}, 6(3):L032012, 2024.

\bibitem{lim2020quantitative}
T.~S. Lim, Y.~Lu, and J.~H. Nolen.
\newblock Quantitative propagation of chaos in a bimolecular chemical
  reaction-diffusion model.
\newblock {\em SIAM Journal on Mathematical Analysis}, 52(2):2098--2133, 2020.

\bibitem{lombardi2020nonverbal}
M.~Lombardi, W.~H. Warren, and M.~di~Bernardo.
\newblock Nonverbal leadership emergence in walking groups.
\newblock {\em Scientific reports}, 10(1):18948, 2020.

\bibitem{maffettone2022continuification}
G.~C. Maffettone, A.~Boldini, M.~Di~Bernardo, and M.~Porfiri.
\newblock Continuification control of large-scale multiagent systems in a ring.
\newblock {\em IEEE Control Systems Letters}, 7:841--846, 2022.

\bibitem{maffettone2023continuification}
G.~C. Maffettone, M.~Porfiri, and M.~Di~Bernardo.
\newblock Continuification control of large-scale multiagent systems under
  limited sensing and structural perturbations.
\newblock {\em IEEE Control Systems Letters}, 7:2425--2430, 2023.

\bibitem{martinelli2022multicellular}
V.~Martinelli, D.~Salzano, D.~Fiore, and M.~di~Bernardo.
\newblock {Multicellular PI control for gene regulation in microbial
  consortia}.
\newblock {\em IEEE Control Systems Letters}, 6:3373--3378, 2022.

\bibitem{mcshane1934extension}
E.~McShane.
\newblock Extension of range of functions.
\newblock {\em Bull. Amer. Math. Soc.}, 40(12):837--842, 1934.

\bibitem{meyer1966probability}
P.~A. Meyer.
\newblock {\em Probability and Potentials}.
\newblock Blaisdell Publishing Company, 1966.

\bibitem{mozgunov2018review}
P.~Mozgunov, M.~Beccuti, A.~Horvath, T.~Jaki, R.~Sirovich, and E.~Bibbona.
\newblock A review of the deterministic and diffusion approximations for
  stochastic chemical reaction networks.
\newblock {\em Reaction Kinetics, Mechanisms and Catalysis}, 123(2):289--312,
  2018.

\bibitem{oelschlager1989derivation}
K.~Oelschl{\"a}ger.
\newblock On the derivation of reaction-diffusion equations as limit dynamics
  of systems of moderately interacting stochastic processes.
\newblock {\em Probability Theory and Related Fields}, 82(4):565--586, 1989.

\bibitem{oksendal2013stochastic}
B.~Oksendal.
\newblock {\em Stochastic differential equations: an introduction with
  applications}.
\newblock Springer Science \& Business Media, 2013.

\bibitem{orlando2023mean}
G.~Orlando.
\newblock Mean-field optimal control in a multi-agent interaction model for
  prevention of maritime crime.
\newblock {\em Advances in Continuous and Discrete Models}, 2023(1):24, 2023.

\bibitem{piacentini2018traffic}
G.~Piacentini, P.~Goatin, and A.~Ferrara.
\newblock Traffic control via moving bottleneck of coordinated vehicles.
\newblock {\em IFAC-PapersOnLine}, 51(9):13--18, 2018.

\bibitem{pick2012function}
L.~Pick, A.~Kufner, O.~John, and S.~Fuc{\'\i}k.
\newblock {\em Function spaces, 1}.
\newblock Walter de Gruyter, 2012.

\bibitem{revuzyor}
D.~Revuz and M.~Yor.
\newblock {\em Continuous martingales and {B}rownian motion}, volume 293 of
  {\em Grundlehren der mathematischen Wissenschaften [Fundamental Principles of
  Mathematical Sciences]}.
\newblock Springer-Verlag, Berlin, third edition, 1999.

\bibitem{salzano2023vivo}
D.~Salzano, B.~Shannon, C.~Grierson, L.~Marucci, N.~J. Savery, and
  M.~di~Bernardo.
\newblock In-vivo distributed multicellular control of gene expression in
  microbial consortia.
\newblock {\em bioRxiv}, pages 2023--11, 2023.

\bibitem{toscani2006kinetic}
G.~Toscani.
\newblock Kinetic models of opinion formation.
\newblock {\em Communications in mathematical sciences}, 4(3):481--496, 2006.

\bibitem{villani}
C.~Villani.
\newblock {\em Optimal transport}, volume 338 of {\em Grundlehren der
  mathematischen Wissenschaften [Fundamental Principles of Mathematical
  Sciences]}.
\newblock Springer-Verlag, Berlin, 2009.
\newblock Old and new.

\bibitem{wang2019compactness}
Y.~Wang, X.~Zhu, and P.~Kloeden.
\newblock Compactness in {L}ebesgue--{B}ochner spaces of random variables and
  the existence of mean-square random attractors.
\newblock {\em Stochastics and Dynamics}, 19(04):1950032, 2019.

\bibitem{williams1991probability}
D.~Williams.
\newblock {\em Probability with Martingales}.
\newblock Cambridge University Press, 1991.

\bibitem{zahugi2013oil}
E.~M.~H. Zahugi, M.~M. Shanta, and T.~Prasad.
\newblock Oil spill cleaning up using swarm of robots.
\newblock In {\em Advances in Computing and Information Technology: Proceedings
  of the Second International Conference on Advances in Computing and
  Information Technology (ACITY) July 13-15, 2012, Chennai, India-Volume 3},
  pages 215--224. Springer, 2013.

\end{thebibliography}

\end{document}